\theoremstyle{plain}
\newtheorem{thm}[subsection]{Theorem}
\newtheorem{prop}[subsection]{Proposition}
\newtheorem{assumption}[subsection]{Basic Assumption}
\theoremstyle{definition}
\newtheorem{defn}[subsection]{Definition}
\theoremstyle{remark}
\newtheorem{rem}[subsection]{Remark}
\let\c@equation\c@subsection 
\newcommand{\ab}{{ \mathsf{ab} }}
\newcommand{\Ho}{{ \mathsf{Ho} }}
\newcommand{\sSet}{{ \mathsf{sSet} }}
\newcommand{\Chaincx}{{ \mathsf{Ch} }}
\newcommand{\sChaincx}{{ \mathsf{sCh} }}
\newcommand{\Mod}{{ \mathsf{Mod} }}
\newcommand{\sMod}{{ \mathsf{sMod} }}
\newcommand{\sSpectra}{{ \mathsf{sSp}^\Sigma }}
\newcommand{\Spectra}{{ \mathsf{Sp}^\Sigma }}
\newcommand{\C}{{ \mathsf{C} }}
\newcommand{\sC}{{ \mathsf{sC} }}
\newcommand{\D}{{ \mathsf{D} }}
\newcommand{\M}{{ \mathsf{M} }}
\newcommand{\sM}{{ \mathsf{sM} }}
\newcommand{\SymArray}{{ \mathsf{SymArray} }}
\newcommand{\SymSeq}{{ \mathsf{SymSeq} }}
\newcommand{\sSymArray}{{ \mathsf{sSymArray} }}
\newcommand{\sSymSeq}{{ \mathsf{sSymSeq} }}
\newcommand{\Set}{{ \mathsf{Set} }}
\newcommand{\Rt}{{ \mathsf{Rt} }}
\newcommand{\Lt}{{ \mathsf{Lt} }}
\newcommand{\Alg}{{ \mathsf{Alg} }}
\newcommand{\sRt}{{ \mathsf{sRt} }}
\newcommand{\sLt}{{ \mathsf{sLt} }}
\newcommand{\sAlg}{{ \mathsf{sAlg} }}
\newcommand{\LL}{{ \mathsf{L} }}
\newcommand{\unit}{{ \mathsf{k} }}
\newcommand{\AlgO}{{ \Alg_\capO }}
\newcommand{\LtO}{{ \Lt_\capO }}
\newcommand{\RtO}{{ \Rt_\capO }}
\newcommand{\sAlgO}{{ \sAlg_\capO }}
\newcommand{\sLtO}{{ \sLt_\capO }}
\newcommand{\sRtO}{{ \sRt_\capO }}
\newcommand{\capO}{{ \mathcal{O} }}
\newcommand{\realzn}{\mathrm{R}}
\newcommand{\NN}{{ \mathrm{N} }}
\newcommand{\Sk}{{ \mathrm{sk} }}
\newcommand{\Ev}{{ \mathrm{Ev} }}
\newcommand{\id}{{ \mathrm{id} }}
\newcommand{\op}{{ \mathrm{op} }}
\newcommand{\pr}{{ \mathrm{pr} }}
\newcommand{\Smash}{{ \,\wedge\, }}
\newcommand{\ol}[1]{{ \overline{{#1}} }}
\newcommand{\tensor}{{ \otimes }}
\newcommand{\tensorcheck}{{ \check{\tensor} }}
\newcommand{\tensordot}{{ \dot{\tensor} }}
\newcommand{\wequiv}{{ \ \simeq \ }}
\newcommand{\iso}{{ \cong }}
\newcommand{\Iso}{{  \ \cong \ }}
\newcommand{\rarrow}{{ \longrightarrow }}
\newcommand{\functor}[3]{{ {#1}\colon\thinspace{#2}\rarrow{#3} }}
\newcommand{\function}[3]{{ {#1}\colon\thinspace{#2}\rarrow{#3} }}
\newcommand{\subsetof}{{ \ \subset\ }}
\newcommand{\monic}{ \ar@{ >->} }
\DeclareMathOperator*{\hocolim}{hocolim}
\DeclareMathOperator*{\colim}{colim}
\DeclareMathOperator{\Map}{Map}
\DeclareMathOperator{\BAR}{Bar}
\DeclareMathOperator{\Ab}{Ab}
\DeclareMathOperator{\Tot}{Tot}
\DeclareMathOperator{\U}{U}
\title[Bar constructions and Quillen homology]{Bar constructions and Quillen homology of modules over operads}
\author{John E. Harper}
\address{Institut de G\'eom\'etrie, Alg\`ebre et Topologie, EPFL, CH-1015 Lausanne, Switzerland}
\address{Department of Mathematics, University of Notre Dame, Notre Dame, IN 46556, USA}
\email{john.edward.harper@gmail.com}
\urladdr{http://sma.epfl.ch/~harper/}
\begin{document}
\maketitle

\section{Introduction}

There are many situations in algebraic topology, homotopy theory, and homological algebra in which operads parametrize interesting algebraic structures \cite{Fresse_lie_theory, Goerss_Hopkins_moduli_spaces, Kriz_May, Mandell, McClure_Smith_conjecture}. In many of these, there is a notion of abelianization or stabilization which provides a notion of homology \cite{Basterra, Basterra_Mandell, Goerss_f2_algebras, Schwede_cotangent, Schwede_algebraic}. In these contexts, Quillen's derived functor notion of homology is not just a graded collection of abelian groups, but a geometric object like a chain complex or spectrum, and distinct algebraic structures tend to have distinct notions of Quillen homology. For commutative algebras this is the cotangent complex appearing in Andr\'e-Quillen homology, and for the empty algebraic structure on spaces this is a chain complex calculating the singular homology of spaces. A useful introduction to Quillen homology is given in \cite{Goerss_Schemmerhorn}; see also the original articles \cite{Quillen, Quillen_rings}. In this paper we are interested in Quillen homology of algebras and modules over operads in symmetric spectra \cite{Hovey_Shipley_Smith} and unbounded chain complexes.  

Quillen homology provides very interesting invariants even in the case of simple algebraic structures such as commutative algebras; in \cite{Miller} Miller proves the Sullivan conjecture on maps from classifying spaces, and in his proof Quillen's derived functor notion of homology for commutative algebras is a critical ingredient.  This suggests that Quillen homology---for the larger class of algebraic structures parametrized by an action of an operad---will provide interesting and  useful invariants.

Consider any catgory $\C$ with all small limits, and with terminal object denoted by $*$. Let $\C_\ab$ denote the category of abelian group objects in $(\C,\times,*)$ and define \emph{abelianization} $\Ab$ to be the left adjoint 
\begin{align*}
\xymatrix{
  \C\ar@<0.5ex>[r]^-{\Ab} & \C_\ab\ar@<0.5ex>[l]^-{U}
}
\end{align*}
of the forgetful functor $U$, if it exists. Then if $\C$ and $\C_\ab$ are equipped with an appropriate homotopy theoretic structure, Quillen homology is the total left derived functor of abelianization; i.e., if $X\in\C$ then \emph{Quillen homology} of $X$ is by definition $\LL\Ab(X)$. This derived functor notion of homology is interesting in several contexts, including algebras and modules over augmented operads $\capO$ in unbounded chain complexes over a commutative ring $\unit$. In this context, the abelianization-forgetful adjunctions take the form of ``change of operads'' adjunctions
\begin{align*}
\xymatrix{
	\AlgO\ar@<0.5ex>[r]^-{I\circ_\capO(-)} & 
  \Alg_I=\Chaincx_\unit=(\AlgO)_\ab,\ar@<0.5ex>[l]
}\quad\quad
\xymatrix{
  \LtO\ar@<0.5ex>[r]^-{I\circ_\capO-} & 
  \Lt_{I}=\SymSeq=(\LtO)_\ab,\ar@<0.5ex>[l]
}
\end{align*}
with left adjoints on top, provided that $\capO[\mathbf{0}]=*$ and $\capO[\mathbf{1}]=\unit$; hence in this setting, abelianization is the ``indecomposables'' functor. Here, we denote by $\Chaincx_\unit$, $\AlgO$, $\SymSeq$, and $\LtO$ the categories of unbounded chain complexes over $\unit$, $\capO$-algebras, symmetric sequences, and left $\capO$-modules, respectively (Definitions \ref{defn:two_contexts}, \ref{defn:symmetric_sequences}, and \ref{defn:algebras_and_modules}).

When passing from the context of chain complexes to the context of symmetric spectra, abelian group objects appear less meaningful, and the interesting topological notion of homology is derived ``indecomposables''. If $X$ is an algebra or left module over an augmented operad $\capO$ in symmetric spectra, there are ``change of operads'' adjunctions
\begin{align*}
\xymatrix{
  \AlgO\ar@<0.5ex>[r]^-{I\circ_\capO(-)} & 
  \Alg_I=\Spectra,\ar@<0.5ex>[l]
}\quad\quad
\xymatrix{
  \LtO\ar@<0.5ex>[r]^-{I\circ_\capO-} & 
  \Lt_{I}=\SymSeq,\ar@<0.5ex>[l]
}
\end{align*}
with left adjoints on top.  If $\capO[\mathbf{0}]=*$ and $\capO[\mathbf{1}]=S$, then \emph{topological Quillen homology} (or Quillen homology) of $X$ is defined by $I\circ^\LL_\capO (X)$ for $\capO$-algebras and $I\circ^\LL_\capO X$ for left $\capO$-modules; hence in this setting, Quillen homology is the total left derived functor of ``indecomposables''. Here, we denote by $\Spectra$ the category of symmetric spectra (Definition \ref{defn:two_contexts}).

Using tools developed in the author's earlier homotopy theoretic work \cite{Harper_Modules, Harper_Spectra}, we show that the desired Quillen homology functors are well-defined and can be calculated as the realization of simplicial bar constructions (Definitions \ref{defn:realization}, \ref{defn:realzn_SymSeq}, and \ref{defn:simplicial_bar_constructions}), modulo cofibrancy conditions. The main theorem is this.

\begin{thm}\label{MainTheorem2}
Let $\capO$ be an augmented operad in symmetric spectra or unbounded chain complexes over $\unit$. Let $X$ be an $\capO$-algebra (resp. left $\capO$-module) and consider $\AlgO$ (resp. $\LtO$) with any of the model structures in Definition \ref{defn:stable_flat_positive_model_structures} or \ref{defn:model_structures_chain_complexes}. If the simplicial bar construction $\BAR(\capO,\capO,X)$ is objectwise cofibrant in $\AlgO$ (resp. $\LtO$), then there is a zig-zag of weak equivalences
\begin{align*}
  I\circ_\capO^\LL (X) &\wequiv
  |\BAR(I,\capO,X)|\\
  \Bigl(
  \text{resp.}\quad
  I\circ_\capO^\LL X &\wequiv
  |\BAR(I,\capO,X)|
  \Bigr)
\end{align*}
natural in such $X$; here, $\unit$ is any field of characteristic zero. In particular, in the context of symmetric spectra (resp. unbounded chain complexes over $\unit$), Quillen homology of $X$ is weakly equivalent to realization of the indicated simplicial bar construction, provided that $\capO[\mathbf{0}]=*$ and $\capO[\mathbf{1}]=S$ (resp. $\capO[\mathbf{0}]=*$ and $\capO[\mathbf{1}]=\unit$). 
\end{thm}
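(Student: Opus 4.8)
The plan is to identify the total left derived functor of the indecomposables functor with the realization of a simplicial bar construction, using the standard two-sided bar construction technology together with the homotopical properties of the relevant model structures established in \cite{Harper_Modules, Harper_Spectra}.

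\begin{proof}[Proof proposal]
The strategy is to compute $I\circ^\LL_\capO(-)$ by resolving $X$ via a bar construction in $\AlgO$ (resp.\ $\LtO$). First I would recall that for any augmented operad $\capO$, the simplicial object $\BAR(\capO,\capO,X)$ with $n$-simplices $\capO\circ(\capO^{\circ n})\circ(X)$ is a simplicial resolution of $X$: it comes equipped with an extra degeneracy exhibiting a simplicial contraction onto the constant simplicial object $X$, so $|\BAR(\capO,\capO,X)|\wequiv X$ in the underlying category and in $\AlgO$ (resp.\ $\LtO$), once one knows realization is homotopically well-behaved on the relevant objects. Under the objectwise cofibrancy hypothesis, the geometric realization $|\BAR(\capO,\capO,X)|$ is a cofibrant replacement of $X$ in $\AlgO$ (resp.\ $\LtO$); here one uses that the simplicial bar construction is Reedy cofibrant --- its latching maps are built from cofibrations between cofibrant objects by the cellular/filtration arguments of \cite{Harper_Modules, Harper_Spectra} --- and that the model structures in Definitions \ref{defn:stable_flat_positive_model_structures} and \ref{defn:model_structures_chain_complexes} are such that realization of Reedy cofibrant simplicial objects computes the homotopy colimit. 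Then the total left derived functor is computed by
\begin{align*}
  I\circ^\LL_\capO(X)
  \wequiv
  I\circ_\capO |\BAR(\capO,\capO,X)|.
\end{align*}

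The second step is to commute the left adjoint $I\circ_\capO(-)$ past geometric realization and identify the result. Since $I\circ_\capO(-)$ is a left adjoint it preserves colimits, and in particular it commutes with realization (a coend over $\Delta$), giving a natural isomorphism $I\circ_\capO|\BAR(\capO,\capO,X)|\iso |I\circ_\capO\BAR(\capO,\capO,X)|$. Levelwise, $I\circ_\capO(\capO\circ\capO^{\circ n}\circ X)\iso I\circ\capO^{\circ n}\circ X$, which is precisely the $n$-simplices of $\BAR(I,\capO,X)$; checking that the face and degeneracy maps match is a routine verification using that $I\circ_\capO(-)$ sends the $\capO$-action maps to the augmentation $\capO\to I$ followed by composition. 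Hence $I\circ_\capO\BAR(\capO,\capO,X)\iso\BAR(I,\capO,X)$ as simplicial objects, and realization is preserved, yielding the claimed zig-zag $I\circ^\LL_\capO(X)\wequiv|\BAR(I,\capO,X)|$, natural in $X$. The final ``in particular'' clauses follow immediately by specializing to operads with $\capO[\mathbf{0}]=*$ and $\capO[\mathbf{1}]=S$ (resp.\ $=\unit$), in which case $I\circ_\capO(-)$ is the indecomposables/abelianization functor identified in the introduction, so its total left derived functor is by definition Quillen homology.

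The main obstacle I expect is the homotopical bookkeeping in the first step: showing that under the objectwise cofibrancy hypothesis the bar construction $\BAR(\capO,\capO,X)$ is Reedy cofibrant and that its realization is a genuine cofibrant replacement, i.e.\ that $|\BAR(\capO,\capO,X)|\to X$ is a weak equivalence with cofibrant source in the chosen model structure. This requires a careful filtration analysis of the realization --- identifying the successive cofibers of the skeletal filtration with pushout-products of latching maps smashed with boundary inclusions $\partial\Delta[n]\to\Delta[n]$ --- and invoking the pushout-product and monoid-type axioms verified for the flat stable positive model structure in \cite{Harper_Spectra} and for unbounded chain complexes (where one needs characteristic zero so that the relevant $\Sigma_n$-coinvariants are homotopically well-behaved, i.e.\ the operadic circle product is sufficiently homotopy-invariant). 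The analogous statement that realization takes levelwise weak equivalences between Reedy cofibrant objects to weak equivalences, used when assembling the zig-zag, is also part of this package and is where the hypotheses on the model structure are essential.
\end{proof}
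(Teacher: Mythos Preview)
Your overall strategy---resolve $X$ by the bar construction, apply $I\circ_\capO(-)$, commute it past realization---is the right shape, and your second step (the levelwise identification $I\circ_\capO\BAR(\capO,\capO,X)\iso\BAR(I,\capO,X)$) is correct and matches the paper. But the first step has a real gap, and it is exactly the obstacle you flag at the end: you assert that $|\BAR(\capO,\capO,X)|$ is a \emph{cofibrant} object of $\AlgO$ (resp.\ $\LtO$), via Reedy cofibrancy of the bar construction. Objectwise cofibrancy does not give Reedy cofibrancy; the latching maps of $\BAR(\capO,\capO,X)$ are colimits in $\AlgO$ involving iterated degeneracies, and nothing in the hypothesis controls them. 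Even granting Reedy cofibrancy, you would still need that $\AlgO$ carries enough simplicial-model-category structure for realization to send Reedy cofibrant diagrams to cofibrant objects---none of this is established in the paper (realization there is only defined in the underlying category, Definitions~\ref{defn:realization} and~\ref{defn:realzn_SymSeq}, not in $\AlgO$).

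The paper sidesteps this entirely. It never claims the bar realization is cofibrant in $\AlgO$. Instead it works with $\hocolim^{\AlgO}_{\Delta^\op}$, defined as the total left derived functor for the \emph{projective} model structure on $\sAlgO$ (Definition~\ref{defn:hocolim}). The chain is: $X\simeq\hocolim^{\AlgO}_{\Delta^\op}\BAR(\capO,\capO,X)$ (Theorem~\ref{thm:fattened_replacement}, proved by taking an abstract projective cofibrant replacement of the simplicial bar construction); then $\LL f_*$ commutes with $\hocolim$ because it is left Quillen (Proposition~\ref{prop:commuting_with_hocolim}); the objectwise cofibrancy hypothesis is used \emph{only} to replace $\LL f_*$ by $f_*$ levelwise on $\BAR(\capO,\capO,X)$; and finally Theorem~\ref{MainTheorem3} identifies $U\hocolim^{\Alg_{\capO'}}_{\Delta^\op}$ with $|U(-)|$ in the underlying category. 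So the cofibrancy hypothesis does far less work than in your argument, and the hard analysis (Proposition~\ref{prop:comparing_realzn_with_hocolim_LtO}, the filtrations of Section~\ref{sec:filtrations_of_certain_pushouts}) goes into showing that realization in the \emph{underlying} category computes the projective $\hocolim$ after forgetting---not into producing a cofibrant model in $\AlgO$.
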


\begin{rem}
\label{rem:satisfying_the_conditions}
If $\capO$ is a cofibrant operad (Definition \ref{defn:cofibrant_operads}) and $X$ is a cofibrant $\capO$-algebra (resp. cofibrant left $\capO$-module), then by Proposition \ref{prop:cofibrant_operads} the simplicial bar construction $\BAR(\capO,\capO,X)$ is objectwise cofibrant in $\AlgO$ (resp. $\LtO$). More generally, if $\capO$ is an operad, $X$ is a cofibrant $\capO$-algebra (resp. cofibrant left $\capO$-module), and the forgetful functor from $\AlgO$ (resp. $\LtO$) to the underlying category preserves cofibrant objects, then the simplicial bar construction $\BAR(\capO,\capO,X)$ is objectwise cofibrant in $\AlgO$ (resp. $\LtO$); if $\unit$ is a field of characteristic zero, then this condition on the forgetful functor is satisfied for any operad $\capO$ in $\Chaincx_\unit$, since every object in $\Chaincx_\unit$ (resp. $\SymSeq$) is cofibrant. 
\end{rem}

\begin{rem}
If $\capO$ is an operad and $X$ is an $\capO$-algebra (resp. left $\capO$-module) such that $\capO[\mathbf{0}]=*$, $\capO$ is cofibrant in the underlying category $\SymSeq$, and $X$ is cofibrant in the underlying category, then the simplicial bar construction $\BAR(\capO,\capO,X)$ is objectwise cofibrant in $\AlgO$ (resp. $\LtO$).
\end{rem}

The condition in Theorem \ref{MainTheorem2}, that $\unit$ is a field of characteristic zero, ensures that the appropriate homotopy theoretic structures exist on the category of $\capO$-algebras and the category of left $\capO$-modules when $\capO$ is an arbitrary operad in unbounded chain complexes over $\unit$ \cite{Harper_Modules, Hinich}. The main results of this paper remain true when $\unit$ is a commutative ring, provided that the appropriate homotopy theoretic structures exist (Section \ref{sec:chain_complexes_ring}).

\subsection{Organization of the paper}

In Section \ref{sec:preliminaries} we recall some notation on algebras and modules over operads. In Section \ref{sec:model_structures} we recall certain model structures used in this paper and define homotopy colimits as total left derived functors of the colimit functors (Definition \ref{defn:hocolim}). In Section \ref{sec:hocolim_underlying} we warm-up with calculations of certain homotopy colimits in the underlying categories; the following is of particular interest.

\begin{thm}
\label{thm:hocolim_realzn}
If $X$ is a simplicial symmetric spectrum (resp. simplicial unbounded chain complex over $\unit$), then there is a zig-zag of weak equivalences
\begin{align*}
  \hocolim\limits_{\Delta^\op}X \wequiv |X|
\end{align*}
natural in $X$. Here, $\sSpectra$ (resp. $\sChaincx_\unit$) (Definition \ref{defn:diagram_categories}) is equipped with the projective model structure inherited from any of the monoidal model structures in Section \ref{sec:model_structures_symmetric_spectra} and $\unit$ is any commutative ring.
\end{thm}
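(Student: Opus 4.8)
The plan is to exhibit both sides as values of left Quillen functors out of the projective model structure on $\sSpectra$ (resp.\ $\sChaincx_\unit$) and to compare them through a single natural transformation. Write $\M$ for $\Spectra$ (resp.\ $\Chaincx_\unit$) and $\sM$ for the category $\sSpectra$ (resp.\ $\sChaincx_\unit$) of simplicial objects, equipped with the projective model structure unless the Reedy structure is indicated. First I would record that $\colim_{\Delta^\op}\colon\sM\rarrow\M$ is left Quillen, since its right adjoint is the constant-diagram functor and projective fibrations and projective trivial fibrations are objectwise; hence, by Definition \ref{defn:hocolim}, $\hocolim_{\Delta^\op}X\wequiv\colim_{\Delta^\op}(QX)$ for a functorial cofibrant replacement $QX\xrightarrow{\wequiv}X$ in $\sM$. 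I would then establish two facts about realization $|-|\colon\sM\rarrow\M$ (Definition \ref{defn:realization}). The first is that $|-|$ is left Quillen: it is left adjoint to the cosimplicial mapping object functor $Z\mapsto\Map(\Delta[\bullet],Z)$, and it factors as the identity functor $\sM_{\mathrm{proj}}\rarrow\sM_{\mathrm{Reedy}}$ (left Quillen, since projective cofibrations are Reedy cofibrations) followed by realization $\sM_{\mathrm{Reedy}}\rarrow\M$, the latter being left Quillen because $\Delta[\bullet]$ is a Reedy cofibrant cosimplicial simplicial set and $\M$ is tensored over $\sSet$ compatibly with the model structures; in particular $|-|$ preserves colimits, cofibrations, and cofibrant objects. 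The second fact is that $|-|$ preserves objectwise weak equivalences: for unbounded chain complexes $|-|$ is, up to natural quasi-isomorphism, the totalization of the associated double complex, which vanishes in negative simplicial degrees, so the corresponding spectral sequence converges and a levelwise quasi-isomorphism induces a quasi-isomorphism on totalizations; for symmetric spectra this is a realization lemma, reducing levelwise to the classical fact that the diagonal of a bisimplicial set preserves weak equivalences in one variable, with stability handled as in \cite{Harper_Spectra}. This second fact says that $|QX|\xrightarrow{\wequiv}|X|$ for every $X$.

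The comparison is the natural transformation $\phi\colon|-|\Rightarrow\colim_{\Delta^\op}$ induced by the unique map of cosimplicial simplicial sets $\Delta[\bullet]\rarrow\Delta[0]$ (the canonical map from realization to colimit). The heart of the argument is to show that $\phi_Y\colon|Y|\rarrow\colim_{\Delta^\op}Y$ is a weak equivalence for every projective cofibrant $Y$. I would prove this by a cell induction over the generating projective cofibrations $F_m(i)$, where $F_m$ is left adjoint to evaluation at $[m]\in\Delta$ and $i$ ranges over the generating cofibrations of $\M$. The co-Yoneda lemma in the simplicial variable identifies $|F_m(A)|\iso\Delta[m]\otimes A$ and $\colim_{\Delta^\op}F_m(A)\iso A$, where $\otimes$ denotes the $\sSet$-tensor on $\M$ (for chain complexes, $\Delta[m]\otimes A=C_*(\Delta[m];\unit)\otimes_\unit A$), and under these identifications $\phi_{F_m(A)}$ becomes the map $\Delta[m]\otimes A\rarrow\Delta[0]\otimes A=A$. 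This is a weak equivalence because $\Delta[m]\xrightarrow{\wequiv}\Delta[0]$ and tensoring with it preserves weak equivalences on the domains and codomains of the generating cofibrations of $\M$, by the pushout--product axiom; in the chain complex case $C_*(\Delta[m];\unit)\rarrow\unit$ is a quasi-isomorphism between a bounded complex of free $\unit$-modules and $\unit$, which makes this step valid for arbitrary $\unit$. Since $|-|$ and $\colim_{\Delta^\op}$ preserve colimits and cofibrations, the class of $Y$ for which $\phi_Y$ is a weak equivalence is closed under pushouts along cofibrations (gluing lemma, using that $|-|$ and $\colim_{\Delta^\op}$ send cofibrant objects to cofibrant objects), transfinite compositions, and retracts; hence $\phi_Y$ is a weak equivalence for every cofibrant $Y$.

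Assembling these ingredients, for any $X$ with functorial cofibrant replacement $q\colon QX\xrightarrow{\wequiv}X$ in $\sM$ I obtain the zig-zag
\begin{align*}
  \hocolim_{\Delta^\op}X \wequiv \colim_{\Delta^\op}(QX) \xleftarrow{\phi_{QX}} |QX| \xrightarrow{|q|} |X|,
\end{align*}
in which $\phi_{QX}$ is a weak equivalence by the cell induction and $|q|$ is a weak equivalence by the second fact about $|-|$, and which is natural in $X$ because $Q$ is functorial; this treats symmetric spectra and unbounded chain complexes over any commutative ring $\unit$ uniformly. The hard part will be the cell induction of the second paragraph---propagating the comparison $\phi$ through the cell structure of a general projective cofibrant object while retaining control of cofibrancy---where the monoidal/simplicial model structure axioms and the gluing lemma are the essential inputs; the only other non-formal ingredient is the realization lemma in the symmetric spectra case, which is also where the specific flat or positive flat stable model structure on $\Spectra$ enters.
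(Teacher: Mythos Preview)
Your proposal is correct and takes essentially the same approach as the paper: functorially cofibrantly replace $X$, prove by cell induction over the generating projective cofibrations that $|X^c|\to\colim_{\Delta^\op}X^c$ is a weak equivalence (the paper's Proposition~\ref{prop:comparing_hocolim_and_realzn}), and invoke that realization sends objectwise weak equivalences to weak equivalences (Proposition~\ref{prop:realzn_homotopy_meaningful}) to obtain $|X^c|\xrightarrow{\simeq}|X|$. The only differences are in the tools used for these two supporting steps: the paper handles the base case $|W\cdot\Delta[z]|\simeq W$ by an explicit simplicial homotopy (Proposition~\ref{prop:simplicial_contraction_nice}) and propagates through the cell structure via cofiber sequences along monomorphisms (implicitly using the injective stable model structure) rather than your gluing lemma and pushout--product/Reedy machinery, and it proves the realization lemma uniformly via the skeletal filtration rather than a spectral sequence in the chain-complex case.
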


Working with several model structures, we give a homotopical proof in Section \ref{sec:hocolim_calculations_algebraic} of the main theorem, once we have proved that certain homotopy colimits in $\capO$-algebras and left $\capO$-modules can be easily understood. The key result here, which is at the heart of this paper, is showing that the forgetful functor commutes with certain homotopy colimits. The theorem is this.

\begin{thm}\label{MainTheorem3}
Let $\capO$ be an operad in symmetric spectra or unbounded chain complexes over $\unit$. If $X$ is a simplicial $\capO$-algebra (resp. simplicial left $\capO$-module), then there are zig-zags of weak equivalences
\begin{align*}
  U\hocolim\limits^{\AlgO}_{\Delta^\op}X & \wequiv |U X| \wequiv
  \hocolim\limits_{\Delta^\op}U X \\
  \Bigl(
  \text{resp.}\quad
  U\hocolim\limits^{\LtO}_{\Delta^\op}X & \wequiv |U X| \wequiv
  \hocolim\limits_{\Delta^\op}U X
  \Bigr)
\end{align*}
natural in $X$. Here, $U$ is the forgetful functor, $\sAlgO$ (resp. $\sLtO$) (Definition \ref{defn:diagram_categories}) is equipped with the projective model structure inherited from any of the model structures in Definition \ref{defn:stable_flat_positive_model_structures} or  \ref{defn:model_structures_chain_complexes}, and $\unit$ is any field of characteristic zero.
\end{thm}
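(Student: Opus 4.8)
The plan is to reduce Theorem \ref{MainTheorem3} to its analogue in the underlying category, Theorem \ref{thm:hocolim_realzn}. Write $\C$ for the underlying category --- symmetric spectra, resp. unbounded chain complexes over $\unit$ --- so that the forgetful functor is $\function{U}{\AlgO}{\C}$ (resp. $\function{U}{\LtO}{\SymSeq}$); the two cases (algebras and left modules) run identically, so below I only discuss $\capO$-algebras. The second displayed weak equivalence, $|UX|\wequiv\hocolim_{\Delta^\op}UX$, is precisely Theorem \ref{thm:hocolim_realzn} applied to the simplicial object $UX$, so the content is the first weak equivalence $U\hocolim^{\AlgO}_{\Delta^\op}X\wequiv|UX|$.

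I would first assemble three facts. (i) By construction (Definition \ref{defn:realization}), realization of a simplicial $\capO$-algebra is computed degreewise in $\C$: the underlying object of $|X|$ is $|UX|$, and the $\capO$-algebra structure is obtained from that of $X$ by transporting along the lax symmetric monoidal Eilenberg--Zilber map (chain complex case) or the strong symmetric monoidal diagonal (symmetric spectra case); in particular $U|X|=|UX|$, naturally in $X$. (ii) The model structure on $\AlgO$ is created from $\C$, so $U$ preserves and reflects weak equivalences; and for any simplicial object the colimit over $\Delta^\op$ is the reflexive coequalizer of the two faces $X_1\rightrightarrows X_0$, which $U$ preserves, being a forgetful functor of algebras over an operad; hence $U\colim_{\Delta^\op}(-)=\colim_{\Delta^\op}U(-)$. (iii) Realization in $\C$ preserves objectwise weak equivalences of simplicial objects (this is already used in the proof of Theorem \ref{thm:hocolim_realzn}).

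Now choose a cofibrant replacement $\function{q}{X^c}{X}$ in the projective model structure on $\sAlgO$, so that $\hocolim^{\AlgO}_{\Delta^\op}X=\colim_{\Delta^\op}X^c$. Realizing the canonical cocone $X^c\rarrow\colim_{\Delta^\op}X^c$ gives a natural map $|X^c|\rarrow\colim_{\Delta^\op}X^c$ in $\AlgO$; applying $U$ and using (i) and (ii), this is the natural map
\begin{align*}
  |UX^c|\rarrow\colim\limits_{\Delta^\op}UX^c
\end{align*}
in $\C$. If this map is a weak equivalence, then --- using (iii) together with the fact that $UX^c\rarrow UX$ is an objectwise weak equivalence, which gives $|UX^c|\wequiv|UX|$ --- we obtain the desired zig-zag
\begin{align*}
  U\hocolim\limits^{\AlgO}_{\Delta^\op}X\equal U\colim\limits_{\Delta^\op}X^c\equal\colim\limits_{\Delta^\op}UX^c\wequiv|UX^c|\wequiv|UX|
\end{align*}
and we are done. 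So everything comes down to showing that the strict $\Delta^\op$-colimit of $UX^c$ computes its homotopy colimit, equivalently that the displayed comparison map is a weak equivalence.

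This last point is the main obstacle, because for a general operad $\capO$ the forgetful functor $U$ need not preserve cofibrations, so $UX^c$ need not be Reedy cofibrant in $\sC$ even though $X^c$ is cofibrant in $\sAlgO$. In the chain complex case there is nothing to prove: over a field of characteristic zero the realization of any simplicial object of $\Chaincx_\unit$ (resp. $\SymSeq$) computes its homotopy colimit over $\Delta^\op$ --- each latching map is a split monomorphism by the Dold--Kan decomposition --- so the comparison map is automatically a weak equivalence. In the symmetric spectra case I would exploit the point of using the positive flat stable model structures, namely that the constructions $(-)\tensor_{\Sigma_t}(-)$ and the relevant cell attachments are homotopically well behaved on flat objects for \emph{any} operad $\capO$. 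Concretely, $X^c$ is a retract of a cell complex in $\sAlgO$ whose cells are free $\capO$-algebras on free simplicial cells; since the free $\capO$-algebra functor $\capO\circ(-)$ commutes with coproducts, $U$ of such a free cell is $\capO\circ(-)$ applied degreewise to a free simplicial cell of $\C$, and its strict $\Delta^\op$-colimit is $\capO\circ A$ for a cofibrant object $A$ of $\C$ --- which agrees with the homotopy colimit because $\capO\circ(-)$ is homotopically well behaved on cofibrant objects in the positive flat setting. One then runs an induction over the skeletal/pushout filtration of $X^c$, analyzing $U$ of each pushout by the pushout-filtration results of \cite{Harper_Modules, Harper_Spectra}, to conclude that the resulting filtration of $UX^c$ has objectwise flat layers, which is exactly what is needed for the comparison map from realization to the strict $\Delta^\op$-colimit of $UX^c$ to be a weak equivalence. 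Assembling the weak equivalences above, and repeating the argument verbatim for left $\capO$-modules, completes the proof.
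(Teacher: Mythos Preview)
Your overall framework matches the paper's: take a functorial cofibrant replacement $X^c\to X$ in $\sAlgO$, use that $U$ commutes with $\colim_{\Delta^\op}$ (reflexive coequalizers) and that realization preserves objectwise weak equivalences, and reduce everything to showing that the natural map $|UX^c|\to\pi_0 UX^c$ is a weak equivalence. This is exactly the paper's Proposition~\ref{prop:comparing_realzn_with_hocolim_LtO}, and your sketch for symmetric spectra---a cell induction over $X^c$ using the pushout filtrations of \cite{Harper_Modules, Harper_Spectra}---is on the right track and is what the paper does. Be aware, though, that the induction does not go through na\"ively: the paper strengthens the hypothesis to assert that \eqref{eq:first_natural_map} is a weak equivalence for \emph{every} choice of extra free summands $\coprod_\alpha \capO\circ W_\alpha\cdot\Delta[n_\alpha]$, which is what allows the filtration quotients $\capO_A[\mathbf{t}]\tensorcheck_{\Sigma_t}(Y/X\cdot\Delta[z])^{\tensorcheck t}$ to be handled at each stage.

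Your treatment of the chain complex case, however, has a genuine gap. You argue that over a field every simplicial chain complex is Reedy cofibrant (via the Dold--Kan splitting), so realization computes the \emph{homotopy} colimit, and conclude ``the comparison map is automatically a weak equivalence.'' But the comparison map in question is $|UX^c|\to\colim_{\Delta^\op}UX^c$, i.e.\ realization versus the \emph{strict} colimit, and Reedy cofibrancy says nothing about that. For a general simplicial chain complex $Y$ the map $|Y|\to\pi_0 Y$ is not a weak equivalence: take $Y$ with $\NN Y$ concentrated in some positive simplicial degree, so that $\pi_0 Y=0$ while $|Y|\iso\Tot^\oplus\NN Y\not\simeq 0$. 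What forces $|UX^c|\to\pi_0 UX^c$ to be a weak equivalence is precisely the cell structure of $X^c$ in $\sAlgO$, and the same cell-induction argument you sketched for spectra is required here as well; the paper runs it uniformly in both contexts.
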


\begin{rem}
We sometimes decorate $\hocolim$ with $\AlgO$ or $\LtO$, as in Theorem \ref{MainTheorem3}, to emphasize these categories in the notation (Definition \ref{defn:hocolim}).
\end{rem}

A consequence of Theorem \ref{MainTheorem3} is that every $\capO$-algebra (resp. left $\capO$-module) is weakly equivalent to the homotopy colimit of its simplicial resolution. The theorem is this.

\begin{thm}\label{thm:fattened_replacement}
Let $\capO$ be an operad in symmetric spectra or unbounded chain complexes over $\unit$. If $X$ is an $\capO$-algebra (resp. left $\capO$-module), then there is a zig-zag of weak equivalences
\begin{align*}
  X & \wequiv \hocolim\limits^{\AlgO}_{\Delta^\op}\BAR(\capO,\capO,X)\\
  \Bigl(
  \text{resp.}\quad
  X &\wequiv \hocolim\limits^{\LtO}_{\Delta^\op}\BAR(\capO,\capO,X)
  \Bigr)
\end{align*}
in $\AlgO$ (resp. $\LtO$), natural in $X$. Here, $\sAlgO$ (resp. $\sLtO$) (Definition \ref{defn:diagram_categories}) is equipped with the projective model structure inherited from any of the model structures in Definition \ref{defn:stable_flat_positive_model_structures} or   \ref{defn:model_structures_chain_complexes}, and $\unit$ is any field of characteristic zero.
\end{thm}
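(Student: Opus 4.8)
The plan is to realize $X$ as the colimit over $\Delta^\op$ of the simplicial bar construction, to form the canonical comparison map from the homotopy colimit to this colimit, and to check that it becomes a weak equivalence after applying the forgetful functor $\U$. I treat the $\capO$-algebra case; the left $\capO$-module case is verbatim the same. First I would recall that $\BAR(\capO,\capO,X)$ is augmented to the constant simplicial object at $X$ and that the induced map on colimits over $\Delta^\op$ in $\AlgO$ is the canonical isomorphism $\colim_{\Delta^\op}\BAR(\capO,\capO,X)\iso X$; this is simply the reflexive coequalizer presenting $X$ as an $\capO$-algebra. Since $\colim_{\Delta^\op}\colon\sAlgO\rarrow\AlgO$ is left adjoint to the constant-diagram functor, which preserves fibrations and trivial fibrations for the projective model structure, $\colim_{\Delta^\op}$ is left Quillen, so (Definition \ref{defn:hocolim}) there is a natural comparison map
\begin{align*}
  \hocolim\limits^{\AlgO}_{\Delta^\op}\BAR(\capO,\capO,X)\ \rarrow\ \colim\limits_{\Delta^\op}\BAR(\capO,\capO,X)\ \iso\ X
\end{align*}
in $\AlgO$. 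Because the model structures of Definitions \ref{defn:stable_flat_positive_model_structures} and \ref{defn:model_structures_chain_complexes} are created by $\U$, it suffices to prove that the image of this map under $\U$ is a weak equivalence.

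Next I would pass to the underlying category. Since $\U$ preserves the reflexive coequalizer computing $\colim_{\Delta^\op}$, the target becomes $\colim_{\Delta^\op}\U\BAR(\capO,\capO,X) = \U X$, while by Theorem \ref{MainTheorem3} applied to the simplicial $\capO$-algebra $\BAR(\capO,\capO,X)$ together with Theorem \ref{thm:hocolim_realzn} the source is naturally weakly equivalent to $|\U\BAR(\capO,\capO,X)|$; under this zig-zag the comparison map is carried to the canonical map from the realization of the underlying simplicial object to its colimit. The key point is that the augmented simplicial object $\U\BAR(\capO,\capO,X)\rarrow\U X$ has extra degeneracies, given by inserting the operad unit at the outermost position; this is a morphism in the underlying category but not a morphism of $\capO$-algebras, since it changes which factor carries the $\capO$-algebra structure, so one genuinely must argue through the underlying category rather than inside $\AlgO$. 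An augmented simplicial object with extra degeneracies is simplicially homotopy equivalent, via its augmentation, to the constant simplicial object at $\U X$, so realization yields a homotopy equivalence $|\U\BAR(\capO,\capO,X)|\wequiv\U X$ coinciding with the canonical map to the colimit; hence this map is a weak equivalence. Therefore $\U$ applied to the comparison map of the first paragraph is a weak equivalence, so that map is itself a weak equivalence in $\AlgO$ (resp. $\LtO$), which gives the asserted zig-zag, natural in $X$.

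The step I expect to require the most care is the compatibility used in the second paragraph: that the natural zig-zag supplied by Theorem \ref{MainTheorem3} carries the comparison map $\hocolim^{\AlgO}_{\Delta^\op}\BAR(\capO,\capO,X)\rarrow X$ precisely to the realization-to-colimit map of the underlying simplicial object. I would obtain this by examining the construction of that zig-zag (via a projectively cofibrant replacement of $\BAR(\capO,\capO,X)$ in the simplicial diagram category, together with the canonical maps relating $\hocolim$, realization, and $\colim$), where the requisite naturality with respect to these comparison maps is visible; one must also track whether the cofibrant replacement remains well behaved after applying $\U$, which is where the flat, respectively characteristic zero, hypotheses enter. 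The remaining ingredients --- that $\U$ creates weak equivalences and preserves reflexive coequalizers, the left Quillenness of $\colim_{\Delta^\op}$, and the extra-degeneracy computation for augmented simplicial objects --- are standard.
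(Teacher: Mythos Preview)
Your argument is correct and uses the same two ingredients as the paper's proof: Theorem~\ref{MainTheorem3} and the extra-degeneracy computation (which the paper isolates as Proposition~\ref{prop:simplicial_resolution_map}). The only structural difference is how the comparison with $X$ is organized. You aim to show that the single map $\hocolim_{\Delta^\op}^{\AlgO}\BAR(\capO,\capO,X)\rarrow\colim_{\Delta^\op}\BAR(\capO,\capO,X)\iso X$ is a weak equivalence, and you correctly identify that the delicate point is tracking this map through the zig-zag of Theorem~\ref{MainTheorem3}. The paper sidesteps exactly that difficulty: instead of comparing with $X$ directly, it applies the naturality of the zig-zag in Theorem~\ref{MainTheorem3} to the augmentation map $\BAR(\capO,\capO,X)\rarrow X\cdot\Delta[0]$ \emph{as a morphism in $\sAlgO$}. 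This yields a commuting square relating $\hocolim$ and $|{-}|$ on both simplicial objects, the extra degeneracies show the realization side is a weak equivalence, hence so is $\hocolim_{\Delta^\op}\BAR(\capO,\capO,X)\rarrow\hocolim_{\Delta^\op}(X\cdot\Delta[0])$. The remaining identification $\hocolim_{\Delta^\op}(X\cdot\Delta[0])\wequiv X$ is done by a cofibrant replacement $X^c\rarrow X$ in $\AlgO$, since $X^c\cdot\Delta[0]$ is cofibrant in $\sAlgO$. So the paper's route avoids having to chase a map from $\hocolim$ to a non-simplicial target through the zig-zag; your route works too, but requires writing down the naturality square for the realization-to-colimit map that you allude to in your final paragraph.
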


Theorem \ref{thm:fattened_replacement} is a key result of this paper, and can be thought of as providing a particularly nice ``fattened'' replacement for $X$. The main theorem follows almost immediately; in fact, we prove a more general result on derived change of operads adjunctions. First we make the following observation. It turns out, we can use the techniques developed in \cite{Harper_Spectra}---in the context of symmetric spectra---to compare homotopy categories of algebras (resp. left modules) over operads in the context of unbounded chain complexes. The theorem is this.

\begin{thm}\label{MainTheorem4}
Let $\capO$ be an operad in symmetric spectra or unbounded chain complexes over $\unit$. If $\function{f}{\capO}{\capO'}$ is a map of operads, then the adjunction
\begin{align*}
\xymatrix{
  \Alg_{\capO}\ar@<0.5ex>[r]^-{f_*} & \Alg_{\capO'}\ar@<0.5ex>[l]^-{f^*}
}
\quad\quad
\Bigl(
\text{resp.}\quad
\xymatrix{
  \Lt_{\capO}\ar@<0.5ex>[r]^-{f_*} & \Lt_{\capO'}\ar@<0.5ex>[l]^-{f^*}
}
\Bigr)
\end{align*} 
is a Quillen adjunction with left adjoint on top and $f^*$ the forgetful functor. If furthermore, $f$ is an objectwise weak equivalence, then the adjunction is a Quillen equivalence, and hence induces an equivalence on the homotopy categories. Here, $\unit$ is any field of characteristic zero.
\end{thm}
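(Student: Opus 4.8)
The plan is to deduce the Quillen adjunction formally from the definition of the model structures, and then to upgrade it to a Quillen equivalence, when $f$ is an objectwise weak equivalence, by checking the derived unit on cofibrant objects via a cell-induction argument. For the adjunction, recall that in each of the model structures of Definitions \ref{defn:stable_flat_positive_model_structures} and \ref{defn:model_structures_chain_complexes}, a map in $\Alg_{\capO}$ or $\Lt_{\capO}$ is a weak equivalence (resp. fibration) exactly when its image under the forgetful functor $U$ to the underlying category is. Since $U f^* = U'$, where $U'$ denotes the forgetful functor from $\Alg_{\capO'}$ (resp. $\Lt_{\capO'}$), the functor $f^*$ both preserves and reflects fibrations, acyclic fibrations, and weak equivalences; in particular $(f_*, f^*)$ is a Quillen adjunction with left adjoint $f_*$.

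Now assume $f$ is an objectwise weak equivalence. Since $f^*$ both preserves and reflects all weak equivalences (the same identity $U f^* = U'$), the pair $(f_*, f^*)$ is a Quillen equivalence as soon as the unit map $X \rarrow f^* f_* X$ is a weak equivalence for every cofibrant $X$ in $\Alg_{\capO}$ (resp. $\Lt_{\capO}$) --- the fibrant replacement appearing in the derived unit does not affect this, as $f^*$ preserves weak equivalences. I would first verify this on free objects: if $X$ is the free $\capO$-algebra (resp. free left $\capO$-module) on a cofibrant object of the underlying category, then $f_* X$ is the corresponding free $\capO'$-algebra (resp. $\capO'$-module), and the unit map is identified, after applying $U$, with the map on circle products induced by $f$. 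This is a weak equivalence because the circle product preserves weak equivalences between suitably cofibrant objects: over a field of characteristic zero this is automatic, since $\unit[\Sigma_n]$ is semisimple, every object of $\Chaincx_{\unit}$ and $\SymSeq$ is cofibrant, and $(-) \wedge_{\Sigma_n} (-)$ is homotopically well behaved; in symmetric spectra it is precisely the reason for working with the flat positive stable model structure, as established in \cite{Harper_Spectra}.

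The general cofibrant case then follows by the standard cell-induction: every cofibrant $X$ is a retract of a cell complex built from free objects by transfinite composition of pushouts along free maps, and weak equivalences are closed under retracts, so it suffices to treat cell complexes. One propagates the free case through the cell attachments using the filtration of $\capO$-algebra (resp. left $\capO$-module) pushouts along free maps from \cite{Harper_Modules, Harper_Spectra}: the functor $f_*$ is compatible with this filtration, each associated filtration quotient of the unit map is obtained by applying $f$ to the relevant terms of $\capO$, smashing (resp. tensoring) with cofibrant objects, and passing to $\Sigma_n$-orbits --- hence is a weak equivalence --- and the gluing and transfinite composition lemmas then give the unit weak equivalence for the whole cell complex. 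I expect this propagation step to be the main obstacle: it requires the explicit description of the pushout filtration together with the homotopical input that the circle product preserves the relevant weak equivalences on each filtration quotient. In the context of symmetric spectra this was carried out in \cite{Harper_Spectra}; the content of the present theorem is that the identical argument applies in unbounded chain complexes over a field of characteristic zero, where the required cofibrancy and flatness hypotheses hold automatically by Maschke's theorem.
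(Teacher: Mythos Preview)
Your proposal is correct and matches the paper's approach: the paper's own proof simply cites \cite{Harper_Spectra} for the symmetric spectra case and asserts that the chain complex case follows by the same argument. What you have written is a faithful sketch of that cited argument---Quillen adjunction from the right adjoint preserving (acyclic) fibrations, then Quillen equivalence via a cell-induction on cofibrant objects using the pushout filtration from \cite{Harper_Modules, Harper_Spectra}---together with the observation that over a field of characteristic zero the needed homotopical control of $\Sigma_n$-coinvariants is automatic.
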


\begin{proof}
The case for symmetric spectra is proved in \cite{Harper_Spectra}, and the case for unbounded chain complexes over $\unit$ is proved by the same argument.
\end{proof}

The main theorem is a particular case of the following more general result, which follows from Theorems \ref{MainTheorem3} and \ref{thm:fattened_replacement} together with the property that left Quillen functors commute with homotopy colimits (Proposition \ref{prop:commuting_with_hocolim}). The theorem is this.

\begin{thm}\label{thm:bar_calculates_derived_circle}
Let $\function{f}{\capO}{\capO'}$ be a morphism of operads in symmetric spectra or unbounded chain complexes over $\unit$. Let $X$ be an $\capO$-algebra (resp. left $\capO$-module) and consider $\AlgO$ (resp. $\LtO$) with any of the model structures in Definition \ref{defn:stable_flat_positive_model_structures} or \ref{defn:model_structures_chain_complexes}. If the simplicial bar construction $\BAR(\capO,\capO,X)$ is objectwise cofibrant in $\AlgO$ (resp. $\LtO$), then there is a zig-zag of weak equivalences
\begin{align*}
  \LL f_*(X)&\wequiv
  |\BAR(\capO',\capO,X)|
\end{align*}
in the underlying category, natural in such $X$. Here, $\LL f_*$ is the total left derived functor of $f_*$ and $\unit$ is any field of characteristic zero.
\end{thm}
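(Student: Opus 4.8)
The plan is to assemble the asserted zig-zag from four ingredients already established above: the ``fattened replacement'' of Theorem~\ref{thm:fattened_replacement}; the fact, from Theorem~\ref{MainTheorem4}, that $(f_*,f^*)$ is a Quillen adjunction, so that the total left derived functor $\LL f_*$ exists; the property, from Proposition~\ref{prop:commuting_with_hocolim}, that left Quillen functors commute with homotopy colimits; and the identification, from Theorem~\ref{MainTheorem3}, of the forgetful functor applied to a homotopy colimit over $\Delta^\op$ with a realization.

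First I would apply Theorem~\ref{thm:fattened_replacement} to obtain a natural zig-zag of weak equivalences $X\wequiv\hocolim\limits^{\AlgO}_{\Delta^\op}\BAR(\capO,\capO,X)$ in $\AlgO$ (resp.\ in $\LtO$). Applying $\LL f_*$ and commuting it past the homotopy colimit via Proposition~\ref{prop:commuting_with_hocolim} produces a natural zig-zag $\LL f_*(X)\wequiv\hocolim\limits^{\Alg_{\capO'}}_{\Delta^\op}\bigl(\LL f_*\,\BAR(\capO,\capO,X)\bigr)$, where $\LL f_*$ on the right-hand side is applied objectwise. The cofibrancy hypothesis enters precisely here: since $\BAR(\capO,\capO,X)$ is objectwise cofibrant, the objectwise value of $\LL f_*$ is naturally weakly equivalent to that of the underived functor $f_*$, and since homotopy colimits preserve objectwise weak equivalences, the above is weakly equivalent to $\hocolim\limits^{\Alg_{\capO'}}_{\Delta^\op}f_*\,\BAR(\capO,\capO,X)$. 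A levelwise computation---using that $f_*$ is the functor $\capO'\circ_\capO(-)$ and, being a left adjoint, commutes with the colimits and composition products defining the simplicial bar construction---identifies $f_*\,\BAR(\capO,\capO,X)$ with $\BAR(\capO',\capO,X)$ as a simplicial $\capO'$-algebra (resp.\ simplicial left $\capO'$-module). Applying the forgetful functor and invoking Theorem~\ref{MainTheorem3} then yields the natural zig-zag $\LL f_*(X)\wequiv\hocolim\limits^{\Alg_{\capO'}}_{\Delta^\op}\BAR(\capO',\capO,X)\wequiv|\BAR(\capO',\capO,X)|$ in the underlying category; naturality in $X$ is inherited from that of each step.

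Since each ingredient is already in place, the bulk of the argument is bookkeeping. The step requiring the most care is the passage of $\LL f_*$ through the homotopy colimit computed in the projective model structure on the relevant simplicial category, together with its objectwise computation by the underived $f_*$; this is exactly what the objectwise-cofibrancy hypothesis on $\BAR(\capO,\capO,X)$ is there to enable. The only genuinely computational point, the levelwise identification $f_*\,\BAR(\capO,\capO,X)\iso\BAR(\capO',\capO,X)$, is routine, since $f_*$ is a left adjoint and therefore commutes with the colimits and composition products out of which the bar construction is built.
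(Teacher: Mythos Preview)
Your proposal is correct and follows essentially the same route as the paper's proof: apply Theorem~\ref{thm:fattened_replacement}, commute $\LL f_*$ past the homotopy colimit via Proposition~\ref{prop:commuting_with_hocolim}, use the objectwise cofibrancy hypothesis to pass from $\LL f_*$ to $f_*$, identify $f_*\BAR(\capO,\capO,X)\iso\BAR(\capO',\capO,X)$, and finish with Theorem~\ref{MainTheorem3}. The paper's argument is terser but the chain of weak equivalences is identical.
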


In Sections \ref{sec:non_sigma_operads}, \ref{sec:chain_complexes_ring}, and \ref{sec:right_modules}, we indicate analogous results for the case of non-$\Sigma$ operads, operads in chain complexes over a commutative ring, and right modules over operads, respectively. Several proofs concerning homotopical analysis of the realization functors are deferred to Section \ref{sec:proofs}; the following is of particular interest.

\begin{prop}
Let $\function{f}{X}{Y}$ be a morphism of simplicial symmetric spectra (resp. simplicial unbounded chain complexes over $\unit$). If $f$ is an objectwise weak equivalence, then $\function{|f|}{|X|}{|Y|}$ is a weak equivalence. Here, $\unit$ is any commutative ring.
\end{prop}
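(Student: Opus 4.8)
The plan is to reduce the statement to a homotopy-theoretic fact about homotopy colimits over $\Delta^\op$ and then invoke Theorem \ref{thm:hocolim_realzn}. Recall that realization $|{-}|$ of a simplicial object can be built either as a coend $X\tensor_{\Delta}\Delta[-]$ (topological-type realization) or, in the stable contexts at hand, as the total complex / diagonal-type construction; in either case the point is that $|{-}|$ is a specific functorial model for the homotopy colimit $\hocolim_{\Delta^\op}$. So the first step is to recall from Theorem \ref{thm:hocolim_realzn} that for simplicial symmetric spectra (resp. simplicial unbounded chain complexes over $\unit$) equipped with the projective model structure on $\sSpectra$ (resp. $\sChaincx_\unit$) inherited from any of the monoidal model structures of Section \ref{sec:model_structures_symmetric_spectra}, there is a natural zig-zag of weak equivalences $\hocolim_{\Delta^\op}X\wequiv|X|$.

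The second step is standard homotopical algebra: the homotopy colimit functor $\hocolim_{\Delta^\op}$, being a total left derived functor of the colimit functor $\colim_{\Delta^\op}\colon\sSpectra\rarrow\Spectra$, sends objectwise weak equivalences between objects of $\sSpectra$ to weak equivalences in $\Spectra$. Concretely, one first replaces $X$ and $Y$ by projectively cofibrant objects $QX\wequiv X$ and $QY\wequiv Y$, uses the fact that an objectwise weak equivalence $f\colon X\to Y$ lifts to a weak equivalence $Qf\colon QX\to QY$ of cofibrant replacements (by the usual lifting properties, since the projective model structure is a model structure), and then applies $\colim_{\Delta^\op}$, which is a left Quillen functor and hence preserves weak equivalences between cofibrant objects (Ken Brown's lemma). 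This gives a natural zig-zag $\hocolim_{\Delta^\op}X\wequiv\hocolim_{\Delta^\op}Y$. The same argument applies verbatim in $\sChaincx_\unit$.

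The third step is to splice the zig-zags together. Using naturality of the comparison in Theorem \ref{thm:hocolim_realzn}, we obtain a commutative (up to the stated zig-zags) diagram relating $|X|$ to $\hocolim_{\Delta^\op}X$, the map induced by $f$ on homotopy colimits, and the analogous zig-zag relating $|Y|$ to $\hocolim_{\Delta^\op}Y$. Since the outer legs and the middle map are all weak equivalences, the induced map $|f|\colon|X|\rarrow|Y|$ is a weak equivalence by the two-out-of-three property along the resulting zig-zag.

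The main obstacle is the second step: verifying that the model for $|{-}|$ used elsewhere in the paper genuinely agrees, up to natural weak equivalence, with a derived functor of $\colim_{\Delta^\op}$ — that is, that Theorem \ref{thm:hocolim_realzn} is applicable here rather than merely to some cofibrant-resolved variant — and, relatedly, keeping track of which of the several model structures in Section \ref{sec:model_structures_symmetric_spectra} one is working in, since the projective model structure on $\sSpectra$ and the monoidal hypotheses need to be compatible with the coend formula defining $|{-}|$. Once Theorem \ref{thm:hocolim_realzn} is granted with its stated naturality, however, the remaining argument is a formal two-out-of-three chase and presents no real difficulty; a careful treatment of the cofibrancy bookkeeping for the realization functor is exactly what is deferred to Section \ref{sec:proofs}.
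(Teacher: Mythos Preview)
Your argument is circular. You propose to deduce Proposition~\ref{prop:realzn_homotopy_meaningful} from Theorem~\ref{thm:hocolim_realzn}, but in the paper Theorem~\ref{thm:hocolim_realzn} is proved \emph{using} Proposition~\ref{prop:realzn_homotopy_meaningful}: in the proof of Theorem~\ref{thm:hocolim_realzn} one factors $\emptyset\rarrow X^c\rarrow X$ and must then know that the map $|X^c|\rarrow |X|$ (labelled $(**)$ there) is a weak equivalence, which is exactly an instance of the proposition you are trying to prove. So ``once Theorem~\ref{thm:hocolim_realzn} is granted'' is not available to you here; that theorem is downstream, not upstream.

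The paper's proof in Section~\ref{sec:proofs} is a direct argument and does not go through homotopy colimits at all. One builds the skeletal filtration $\realzn_0(X)\rarrow\realzn_1(X)\rarrow\cdots$ of $|X|$ (Propositions~\ref{prop:skeletal_filtration_useful} and \ref{prop:skeletal_glueing}), observes that the successive quotients are $(X_n/DX_n)\Smash(\Delta[n]/\partial\Delta[n])$ (resp.\ $(X_n/DX_n)\tensor(\NN\unit\Delta[n]/\NN\unit\partial\Delta[n])$), and then reduces to showing that $Df_n\colon DX_n\rarrow DY_n$ is a weak equivalence for each $n$. This last point is handled by an inductive filtration of the degenerate subobjects (Propositions~\ref{prop:degenerate_subobject_filtration} and \ref{prop:degenerate_subobject}). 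The real content is this hands-on analysis of the skeletal filtration and degenerate subobjects, not ``cofibrancy bookkeeping'' for a comparison with $\hocolim$.
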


In Section \ref{sec:cofibrant_operads} we prove that the forgetful functor from $\capO$-algebras (resp. left $\capO$-modules) to the underlying category preserves cofibrant objects, provided that $\capO$ is a cofibrant operad; this is used in Remark \ref{rem:satisfying_the_conditions}.

\subsection{Relationship to previous work}

One of the results of Basterra \cite{Basterra} is that in the context of $S$-modules \cite{EKMM}, and for non-unital commutative $S$-algebras, the total left derived ``indecomposables'' functor is well-defined and can be calculated as realization of a simplicial bar construction. Theorem~\ref{MainTheorem2} improves this result to algebras and left modules over any augmented operad in symmetric spectra, and also provides a simplified homotopical proof---in the context of symmetric spectra---of Basterra's original result.

One of the theorems of Fresse~\cite{Fresse} is that in the context of non-negative chain complexes over a field of characteristic zero, and for left modules and augmented operads which are trivial at zero---such modules do not include algebras over operads---then under additional conditions, the total left derived ``indecomposables'' functor is well-defined and can be calculated as realization of a simplicial bar construction. Theorem~\ref{MainTheorem2} improves this result to include algebras over augmented operads. Theorem \ref{MainTheorem2} also improves this result to the context of unbounded chain complexes over a field of characteristic zero, and to left modules over augmented operads (not necessarily trivial at zero), and also provides a simplified homotopical proof of Fresse's original result.

One of the theorems of Hinich~\cite{Hinich} is that for unbounded chain complexes over a field of characteristic zero, a morphism of operads which is an objectwise weak equivalence induces a Quillen equivalence between categories of algebras over operads. Theorem~\ref{MainTheorem4} improves this result to the category of left modules over operads.

\subsection*{Acknowledgments}

The author would like to thank Bill Dwyer for his constant encouragement and invaluable help and advice, and Benoit Fresse and Charles Rezk for helpful comments. The research for part of this paper was carried out, while the author was a visiting researcher at the Thematic Program on Geometric Applications of Homotopy Theory at the Fields Institute, Toronto.

\section{Preliminaries on algebras and modules over operads}
\label{sec:preliminaries}

The purpose of this section is to recall various definitions and constructions associated to symmetric sequences and algebras and modules over operads. In this paper, we work in the following two contexts.

\begin{defn}\ 
\label{defn:two_contexts}
\begin{itemize}
\item Denote by $(\Spectra,\Smash,S)$ the closed symmetric monoidal category of symmetric spectra \cite{Hovey_Shipley_Smith, Schwede_book_project}.
\item Denote by $(\Chaincx_\unit,\tensor,\unit)$ the closed symmetric monoidal category of unbounded chain complexes over $\unit$ \cite{Hovey, MacLane_homology}. 
\end{itemize}
Here, $\unit$ is any commutative ring. Both categories have all small limits and colimits; the null object is denoted by $*$. 

\end{defn}

\begin{rem}
By \emph{closed} we mean there exists a functor 
\begin{align*}
  \bigl(\Spectra\bigr)^\op\times\Spectra\rarrow \Spectra, 
  \quad\quad &(Y,Z)\longmapsto \Map(Y,Z),\\
  \Bigl(
  \text{resp.}\quad
  \Chaincx_\unit^\op\times\Chaincx_\unit\rarrow \Chaincx_\unit, 
  \quad\quad &(Y,Z)\longmapsto \Map(Y,Z),
  \Bigr)
\end{align*}
which we call \emph{mapping object}, which fits into isomorphisms 
\begin{align*}
   \hom(X\Smash Y,Z)&\Iso \hom(X,\Map(Y,Z))\\
   \Bigl(
   \text{resp.}\quad
   \hom(X\tensor Y,Z)&\Iso \hom(X,\Map(Y,Z))
   \Bigr)
\end{align*}
natural in $X,Y,Z$.
\end{rem}

\subsection{Symmetric sequences, tensor products, and circle products}

The purpose of this section is to recall certain details---of two monoidal structures on symmetric sequences---which will be needed in this paper. A fuller account of the material in this section is given in \cite{Harper_Modules}, which was largely influenced by the development in \cite{Rezk}; see also \cite{Fresse_lie_theory, Fresse_modules, Kapranov_Manin}. 

Define the sets $\mathbf{n}:=\{1,\dots,n\}$ for each $n\geq 0$, where $\mathbf{0}:=\emptyset$ denotes the empty set. If $T$ is a finite set, define $|T|$ to be the number of elements in $T$.

\begin{defn}
\label{defn:symmetric_sequences}
Let $n\geq 0$.
\begin{itemize}
\item $\Sigma$ is the category of finite sets and their bijections. 
\item A \emph{symmetric sequence} in $\Spectra$ (resp. $\Chaincx_\unit$) is a functor $\functor{A}{\Sigma^{\op}}{\Spectra}$ (resp. $\functor{A}{\Sigma^{\op}}{\Chaincx_\unit}$). Denote by $\SymSeq$ the category of symmetric sequences in $\Spectra$ (resp. $\Chaincx_\unit$) and their natural transformations. 
\item A symmetric sequence $A$ is \emph{concentrated at $n$} if $A[\mathbf{r}]=*$ for all $r\neq n$.
\end{itemize}
\end{defn}

To remain consistent with \cite{Harper_Spectra}, and to avoid confusion with other tensor products appearing in this paper, we use the following $\tensorcheck$ notation.

\begin{defn}
Consider symmetric sequences in $\Spectra$ (resp. in $\Chaincx_\unit$).
Let $A_1,\dotsc,A_t\in\SymSeq$. The \emph{tensor products} $A_1\tensorcheck\dotsb\tensorcheck A_t\in\SymSeq$ are the left Kan extensions of objectwise smash (resp. objectwise tensor) along coproduct of sets
\begin{align*}
\xymatrix{
  (\Sigma^{\op})^{\times t}
  \ar[rr]^-{A_1\times\dotsb\times A_t}\ar[d]^{\coprod} & &
  \bigl(\Spectra\bigr)^{\times t}\ar[r]^-{\Smash} & \Spectra \\
  \Sigma^{\op}\ar[rrr]^{A_1\tensorcheck\dotsb\tensorcheck
  A_t}_{\text{left Kan extension}} & & & \Spectra
}\quad
\xymatrix{
  (\Sigma^{\op})^{\times t}
  \ar[rr]^-{A_1\times\dotsb\times A_t}\ar[d]^{\coprod} & &
  \bigl(\Chaincx_\unit\bigr)^{\times t}\ar[r]^-{\tensor} & \Chaincx_\unit \\
  \Sigma^{\op}\ar[rrr]^{A_1\tensorcheck\dotsb\tensorcheck
  A_t}_{\text{left Kan extension}} & & & \Chaincx_\unit
}
\end{align*}
\end{defn}

The following calculations will be useful when working with tensor products.

\begin{prop}
Consider symmetric sequences in $\Spectra$ (resp. in $\Chaincx_\unit$). Let $A_1,\dotsc,A_t\in\SymSeq$ and $R\in\Sigma$, with $r:=|R|$. There are natural isomorphisms
\begin{align*}
  (A_1\tensorcheck\dotsb\tensorcheck A_t)[R]&\Iso\ 
  \coprod_{\substack{\function{\pi}{R}{\mathbf{t}}\\ \text{in $\Set$}}}
  A_1[\pi^{-1}(1)]\Smash\dotsb\Smash
  A_t[\pi^{-1}(t)],\\
  &\Iso
  \coprod_{r_1+\dotsb +r_t=r}A_1[\mathbf{r_1}]\Smash\dotsb\Smash 
  A_t[\mathbf{r_t}]\underset{{\Sigma_{r_1}\times\dotsb\times
  \Sigma_{r_t}}}{\cdot}\Sigma_{r}\\
  \text{resp.}\quad
  (A_1\tensorcheck\dotsb\tensorcheck A_t)[R]&\Iso\ 
  \coprod_{\substack{\function{\pi}{R}{\mathbf{t}}\\ \text{in $\Set$}}}
  A_1[\pi^{-1}(1)]\tensor\dotsb\tensor
  A_t[\pi^{-1}(t)],\\
  &\Iso
  \coprod_{r_1+\dotsb +r_t=r}A_1[\mathbf{r_1}]\tensor\dotsb\tensor 
  A_t[\mathbf{r_t}]\underset{{\Sigma_{r_1}\times\dotsb\times
  \Sigma_{r_t}}}{\cdot}\Sigma_{r}.
\end{align*}
\end{prop}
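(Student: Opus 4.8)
The statement is a standard pointwise formula for an iterated left Kan extension, so the plan is to unwind the definition of $A_1\tensorcheck\dotsb\tensorcheck A_t$ as the left Kan extension of $\Smash\circ(A_1\times\dotsb\times A_t)$ (resp.\ $\tensor\circ(A_1\times\dotsb\times A_t)$) along the coproduct functor $\functor{\coprod}{(\Sigma^\op)^{\times t}}{\Sigma^\op}$, and then evaluate at an object $R\in\Sigma$ using the coend (pointwise) formula for Kan extensions. First I would write the pointwise left Kan extension at $R$ as a coend over $(\Sigma)^{\times t}$ of the form $\coprod$ of $A_1[T_1]\Smash\dotsb\Smash A_t[T_t]$ tensored with the hom-set $\Sigma^\op(\coprod_i T_i, R)=\Sigma(R,\coprod_i T_i)$, where $(T_1,\dots,T_t)$ ranges over objects of $(\Sigma)^{\times t}$; since $\Sigma$ is a groupoid, this coend is simply a coproduct over isomorphism classes, i.e.\ a colimit over the action groupoid. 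A bijection $R\xrightarrow{\iso}\coprod_i T_i$ is precisely the data of a partition of $R$ into $t$ labelled blocks together with bijections of each block with $T_i$; modding out by the isomorphisms in $(\Sigma)^{\times t}$ lets me replace each $T_i$ by the preimage block, which yields exactly the first displayed isomorphism indexed by functions $\function{\pi}{R}{\mathbf t}$ in $\Set$, with block $\pi^{-1}(i)$.

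For the second isomorphism I would observe that the set of functions $\function{\pi}{R}{\mathbf t}$ is acted on by $\Sigma_R\iso\Sigma_r$, with orbits indexed by the tuples $(r_1,\dots,r_t)$ of block sizes (where $r_i=|\pi^{-1}(i)|$ and $r_1+\dots+r_t=r$), and the stabilizer of the ``standard'' partition $\{1,\dots,r_1\},\{r_1+1,\dots,r_1+r_2\},\dots$ is $\Sigma_{r_1}\times\dotsb\times\Sigma_{r_t}$. Choosing a representative in each orbit and using that $A_j[\pi^{-1}(i)]\iso A_j[\mathbf{r_i}]$ via the chosen identifications (here the $\Sigma$-action on the symmetric sequences $A_j$ enters), I can reassemble the coproduct over all $\pi$ as a coproduct over block-size tuples of the induced objects $A_1[\mathbf{r_1}]\Smash\dotsb\Smash A_t[\mathbf{r_t}]\underset{\Sigma_{r_1}\times\dotsb\times\Sigma_{r_t}}{\cdot}\Sigma_r$, which is the second display. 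The chain-complex case is formally identical with $\Smash$ replaced by $\tensor$ and $\cdot$ denoting the analogous tensoring of a chain complex with a free $\unit$-module on a set.

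Naturality in the $A_i$ and in $R$ is immediate from the universal property of the Kan extension (or of the coend), so no separate argument is needed there. I do not expect a genuine obstacle in this proof; the only point requiring care is bookkeeping the groupoid quotient correctly—specifically checking that the equivalence relation identifying $(T_1,\dots,T_t,\varphi\colon R\xrightarrow{\iso}\coprod T_i)$ with its translates under $(\Sigma)^{\times t}$ is exactly ``same induced labelled partition of $R$'', and then that the residual $\Sigma_r$-action in the second formula matches the induced permutation action on the $A_j$. This is routine but is the step where an error would most plausibly creep in, so I would spell out the bijection $\{\,\pi\colon R\to\mathbf t\,\}\iso\coprod_{r_1+\dots+r_t=r}\Sigma_{r_1}\times\dotsb\times\Sigma_{r_t}\backslash\Sigma_r$ explicitly before concluding.
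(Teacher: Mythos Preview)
Your approach is correct and is the standard way to establish this formula: unwind the pointwise coend description of the left Kan extension along $\coprod\colon(\Sigma^\op)^{\times t}\to\Sigma^\op$, use that $\Sigma$ is a groupoid to reduce the coend to a coproduct over labelled partitions of $R$, and then collect orbits under $\Sigma_r$ to pass to the induced-representation form. The bookkeeping you flag (that the equivalence relation coming from the coend is exactly ``same labelled partition of $R$'', and that the residual $\Sigma_r$-action matches the one on the $A_j$) is indeed the only place requiring care, and your plan to spell out the bijection with $\coprod_{r_1+\dots+r_t=r}(\Sigma_{r_1}\times\dotsb\times\Sigma_{r_t})\backslash\Sigma_r$ handles it.

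For comparison: the paper does not actually supply a proof of this proposition. It is recorded as a calculation that ``will be useful when working with tensor products'', and the surrounding section refers the reader to \cite{Harper_Modules} (influenced by \cite{Rezk}; see also \cite{Fresse_lie_theory, Fresse_modules, Kapranov_Manin}) for a fuller account. So there is no paper proof to compare against here; your argument is exactly the kind of verification those references would give.
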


Here, $\Set$ is the category of sets and their maps. It will be conceptually useful to extend the definition of tensor powers $A^{\tensorcheck t}$ to situations in which the integers $t$ are replaced by a finite set $T$.

\begin{defn}
Consider symmetric sequences in $\Spectra$ (resp. in $\Chaincx_\unit$).
Let $A\in\SymSeq$ and $R,T\in\Sigma$. The \emph{tensor powers} $A^{\tensorcheck T}\in\SymSeq$ are defined objectwise by
\begin{align*}
  (A^{\tensorcheck\emptyset})[R]:= 
  \coprod_{\substack{\function{\pi}{R}{\emptyset}\\ \text{in $\Set$}}}
  S,\quad\quad
  &(A^{\tensorcheck T})[R]:= 
  \coprod_{\substack{\function{\pi}{R}{T}\\ \text{in $\Set$}}}
  \Smash_{t\in T} A[\pi^{-1}(t)]\quad(T\neq\emptyset)\\
  \Bigl(
  \text{resp.}\quad
  (A^{\tensorcheck\emptyset})[R]:= 
  \coprod_{\substack{\function{\pi}{R}{\emptyset}\\ \text{in $\Set$}}}
  \unit,\quad\quad
  &(A^{\tensorcheck T})[R]:= 
  \coprod_{\substack{\function{\pi}{R}{T}\\ \text{in $\Set$}}}
  \tensor_{t\in T} A[\pi^{-1}(t)]\quad(T\neq\emptyset)
  \Bigr).
\end{align*}
Note that there are no functions $\function{\pi}{R}{\emptyset}$ in $\Set$ unless $R=\emptyset$. We will use the abbreviation $A^{\tensorcheck 0}:=A^{\tensorcheck\emptyset}$.  
\end{defn}

\begin{defn}\label{defn:circle_product}
Consider symmetric sequences in $\Spectra$ (resp. in $\Chaincx_\unit$). Let $A,B,C\in\SymSeq$, and $r,t\geq 0$. The \emph{circle product} (or composition product) $A\circ B\in\SymSeq$ is defined objectwise by the coend
\begin{align*}
  (A\circ B)[\mathbf{r}] := A\Smash_\Sigma (B^{\tensorcheck-})[\mathbf{r}]
  &\Iso 
  \coprod_{t\geq 0}A[\mathbf{t}]\Smash_{\Sigma_t}
  (B^{\tensorcheck t})[\mathbf{r}]\\
  \Bigl(
  \text{resp.}\quad
  (A\circ B)[\mathbf{r}] := A\tensor_\Sigma (B^{\tensorcheck-})[\mathbf{r}]
  &\Iso 
  \coprod_{t\geq 0}A[\mathbf{t}]\tensor_{\Sigma_t}
  (B^{\tensorcheck t})[\mathbf{r}]
  \Bigr),
\end{align*}
and the \emph{mapping sequence} $\Map^\circ(B,C)\in\SymSeq$ is defined objectwise by the end
\begin{align*}
  \Map^\circ(B,C)[\mathbf{t}] := \Map((B^{\tensorcheck \mathbf{t}})[-],C)^\Sigma \Iso 
  \prod_{r\geq 0}\Map((B^{\tensorcheck \mathbf{t}})[\mathbf{r}],
  C[\mathbf{r}])^{\Sigma_r}.
\end{align*}
\end{defn}

These mapping sequences---which arise explicitly in Sections  \ref{sec:modules_algebras} and \ref{sec:cofibrant_operads}---are part of a closed monoidal category structure on symmetric sequences and fit into isomorphisms
\begin{align}
\label{eq:circle_mapping_sequence_adjunction}
  \hom(A\circ B,C)&\Iso\hom(A,\Map^\circ(B,C))
\end{align}
natural in symmetric sequences $A,B,C$.

\begin{prop}
Consider symmetric sequences in $\Spectra$ (resp. in $\Chaincx_\unit$). 
\begin{itemize}
\item [(a)] $(\SymSeq,\tensorcheck,1)$ has the structure of a closed symmetric monoidal category with all small limits and colimits. The unit for $\tensorcheck$ denoted ``$1$'' is the symmetric sequence concentrated at $0$ with value $S$ (resp. $\unit$).
\item [(b)] $(\SymSeq,\circ,I)$ has the structure of a closed monoidal category with all small limits and colimits. The unit for $\circ$ denoted ``$I$'' is the symmetric sequence concentrated at $1$ with value $S$ (resp. $\unit$). Circle product is not symmetric.
\end{itemize}
\end{prop}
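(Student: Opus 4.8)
The plan is to verify each of the three kinds of structure in turn: the symmetric monoidal structure $(\SymSeq,\tensorcheck,1)$, the monoidal structure $(\SymSeq,\circ,I)$, and the closedness of each. For part (a), the associativity and symmetry constraints for $\tensorcheck$ are inherited from those of $(\Spectra,\Smash,S)$ (resp. $(\Chaincx_\unit,\tensor,\unit)$) via the universal property of left Kan extension: since $\coprod\colon (\Sigma^\op)^{\times t}\to\Sigma^\op$ is (strong) symmetric monoidal and $\Smash$ is symmetric monoidal, the iterated left Kan extensions $A_1\tensorcheck\dotsb\tensorcheck A_t$ assemble into a symmetric monoidal functor, and one checks that both $(A_1\tensorcheck A_2)\tensorcheck A_3$ and $A_1\tensorcheck(A_2\tensorcheck A_3)$ agree with the triple tensor product $A_1\tensorcheck A_2\tensorcheck A_3$ defined directly as the Kan extension along the triple coproduct; the coherence axioms then follow from those downstairs. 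Concretely one can also argue pointwise using the first displayed formula of the preceding Proposition: $(A_1\tensorcheck A_2)[R]\iso\coprod_{\pi\colon R\to\mathbf 2} A_1[\pi^{-1}(1)]\Smash A_2[\pi^{-1}(2)]$, and iterating gives a sum over $\pi\colon R\to\mathbf 3$, visibly symmetric in the three factors and associative. The unit $1$, concentrated at $0$ with value $S$ (resp.\ $\unit$), works because a function $R\to\mathbf 1$ exists only for... one sees that tensoring with $1$ forces $\pi^{-1}$ of the extra point to be empty, and $A[\emptyset]\Smash S\iso A[\emptyset]$. Small limits and colimits exist because $\SymSeq$ is a diagram category $\mathsf{Fun}(\Sigma^\op,\Spectra)$ and $\Spectra$ (resp.\ $\Chaincx_\unit$) is complete and cocomplete.

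For part (b), the circle product $A\circ B$ is defined by the coend formula in Definition~\ref{defn:circle_product}; associativity $(A\circ B)\circ C\iso A\circ(B\circ C)$ follows from the identity $(B\circ C)^{\tensorcheck t}\iso B^{\tensorcheck t}\circ C$ (compatibility of tensor powers with circle product), together with a Fubini-type interchange of coends; this is the standard computation identifying both sides with a sum indexed by trees of depth two, or more precisely with $\coprod A[\mathbf t]\Smash_{\Sigma_t}(B^{\tensorcheck t})\circ C$. The unit $I$, concentrated at $1$, satisfies $A\circ I\iso A$ because $(I^{\tensorcheck t})[\mathbf r]$ is $S$ (resp.\ $\unit$) summed over bijections $\mathbf r\to\mathbf t$, so $A\tensorcheck_\Sigma(I^{\tensorcheck-})[\mathbf r]\iso A[\mathbf r]$; and $I\circ B\iso B$ because only the $t=1$ summand of $\coprod_t I[\mathbf t]\Smash_{\Sigma_t}(B^{\tensorcheck t})[\mathbf r]$ is nonzero, giving $B^{\tensorcheck 1}\iso B$. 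Non-symmetry of $\circ$ is immediate from, e.g., comparing $A\circ B$ and $B\circ A$ when $A,B$ are concentrated in distinct arities.

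Closedness in both cases is where the real work lies. For $\tensorcheck$, the internal hom is built from the closed structure on $\Spectra$ (resp.\ $\Chaincx_\unit$) by the usual end formula, and the adjunction $\hom(A\tensorcheck B,C)\iso\hom(A,\underline{\Map}(B,C))$ is obtained by unwinding the Kan-extension description of $A\tensorcheck B$ and the pointwise mapping objects; since $\tensorcheck$ preserves colimits in each variable separately (left Kan extensions and $\Smash$ both do), an adjoint functor argument also suffices. For $\circ$, the key point is that $A\mapsto A\circ B$ preserves all colimits—it is defined by a coend involving $\Smash_{\Sigma_t}$, both of which are colimits—hence has a right adjoint; identifying that right adjoint with the explicit mapping sequence $\Map^\circ(B,C)$ of Definition~\ref{defn:circle_product} gives the isomorphism~\eqref{eq:circle_mapping_sequence_adjunction}. \textbf{The main obstacle} is the associativity isomorphism for $\circ$: one must carefully establish the compatibility $(B\circ C)^{\tensorcheck t}\iso\bigl(B^{\tensorcheck t}\bigr)\circ C$ of tensor powers with circle products, track the $\Sigma$-equivariance through the various coends and quotients, and verify the pentagon; all the other verifications are formal consequences of the monoidal structure downstairs and of standard (co)limit manipulations. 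Since a fuller account is given in \cite{Harper_Modules} and the analogous development in \cite{Rezk}, it suffices here to indicate these identifications and refer to those sources for the coherence bookkeeping.
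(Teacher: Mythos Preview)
Your proposal is correct and in fact goes further than the paper does: the paper states this proposition without proof, relying on the remark at the start of the section that ``a fuller account of the material in this section is given in \cite{Harper_Modules}, which was largely influenced by the development in \cite{Rezk}.'' Your closing sentence deferring the coherence bookkeeping to these references is therefore exactly in line with the paper's own treatment, and the sketch you provide beforehand is a reasonable outline of what those references contain.
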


\begin{defn}
Let $Z$ be a symmetric spectrum (resp. unbounded chain complex over $\unit$). Define $\hat{Z}\in\SymSeq$ to be the symmetric sequence concentrated at $0$ with value $Z$.
\end{defn}

The category $\Spectra$ (resp. $\Chaincx_\unit$) embeds in $\SymSeq$ as the full subcategory of symmetric sequences concentrated at $0$, via the functor 
\begin{align*}
  \Spectra\rarrow\SymSeq,\quad\quad &Z\longmapsto \hat{Z}\\
  \Bigl(
  \text{resp.}\quad
  \Chaincx_\unit\rarrow\SymSeq,\quad\quad &Z\longmapsto \hat{Z}
  \Bigr).
\end{align*}

\begin{defn}\label{defn:corresponding_functor}
Consider symmetric sequences in $\Spectra$ (resp. in $\Chaincx_\unit$). Let $\capO$ be a symmetric sequence and $Z\in\Spectra$ (resp. $Z\in\Chaincx_\unit$). The corresponding functor $\functor{\capO}{\Spectra}{\Spectra}$ (resp. $\functor{\capO}{\Chaincx_\unit}{\Chaincx_\unit}$) is defined objectwise by,
\begin{align*}
  \capO(Z)&:=\capO\circ(Z):=\coprod\limits_{t\geq 0}\capO[\mathbf{t}]
  \Smash_{\Sigma_t}Z^{\wedge t}\Iso (\capO\circ\hat{Z})[\mathbf{0}]\\
  \Bigl(
  \text{resp.}\quad
  \capO(Z)&:=\capO\circ(Z):=\coprod\limits_{t\geq 0}\capO[\mathbf{t}]
  \tensor_{\Sigma_t}Z^{\tensor t}\Iso (\capO\circ\hat{Z})[\mathbf{0}]
  \Bigr).
\end{align*}
\end{defn}

\subsection{Algebras and modules over operads}
\label{sec:modules_algebras}

The purpose of this section is to recall certain definitions and properties of algebras and modules over operads that will be needed in this paper. A useful introduction to operads and their algebras is given in \cite{Kriz_May}; see also the original article \cite{May}.

\begin{defn}
An \emph{operad} is a monoid object in $(\SymSeq,\circ,I)$ and a \emph{morphism of operads} is a morphism of monoid objects in $(\SymSeq,\circ,I)$.
\end{defn}

An introduction to monoid objects and monoidal categories is given in \cite[VII]{MacLane_categories}. Each operad $\capO$ in symmetric spectra (resp. unbounded chain complexes over $\unit$) determines a functor $\function{\capO}{\Spectra}{\Spectra}$  (resp. $\function{\capO}{\Chaincx_\unit}{\Chaincx_\unit}$) (Definition \ref{defn:corresponding_functor}) together with natural transformations $\function{m}{\capO\capO}{\capO}$ and $\function{\eta}{\id}{\capO}$ which give the functor $\function{\capO}{\Spectra}{\Spectra}$ (resp. $\function{\capO}{\Chaincx_\unit}{\Chaincx_\unit}$) the structure of a monad (or triple). One perspective offered in \cite[I.2 and I.3]{Kriz_May} is that operads determine particularly manageable monads. For a useful introduction to monads and their algebras, see \cite[VI]{MacLane_categories}.

\begin{defn}
\label{defn:algebras_and_modules}
Let $\capO$ be an operad in symmetric spectra (resp. unbounded chain complexes over $\unit$).
\begin{itemize}
\item A \emph{left $\capO$-module} is an object in $(\SymSeq,\circ,I)$ with a left action of $\capO$ and a \emph{morphism of left $\capO$-modules} is a map which respects the left $\capO$-module structure. Denote by $\LtO$ the category of left $\capO$-modules and their morphisms.
\item A \emph{right $\capO$-module} is an object in $(\SymSeq,\circ,I)$ with a right action of $\capO$ and a \emph{morphism of right $\capO$-modules} is a map which respects the right $\capO$-module structure.  Denote by $\RtO$ the category of right $\capO$-modules and their morphisms.
\item An \emph{$\capO$-algebra} is an object in $\Spectra$ (resp. $\Chaincx_\unit$) with a left action of the monad $\functor{\capO}{\Spectra}{\Spectra}$ (resp. $\functor{\capO}{\Chaincx_\unit}{\Chaincx_\unit}$) and a \emph{morphism of $\capO$-algebras} is a map in $\Spectra$ (resp. $\Chaincx_\unit$) which respects the left action of the monad. Denote by $\AlgO$ the category of $\capO$-algebras and their morphisms.
\end{itemize}
\end{defn}

It is easy to verify that an $\capO$-algebra is the same as an object $X$ in $\Spectra$ (resp. $\Chaincx_\unit$) with a left $\capO$-module structure on $\hat{X}$, and if $X$ and $X'$ are $\capO$-algebras, then a morphism of $\capO$-algebras is the same as a map $\function{f}{X}{X'}$ in $\Spectra$ (resp. $\Chaincx_\unit$) such that $\function{\hat{f}}{\hat{X}}{\hat{X'}}$ is a morphism of left $\capO$-modules. In other words, an algebra over an operad $\capO$ is the same as a left $\capO$-module which is concentrated at $0$, and $\AlgO$ embeds in $\LtO$ as the full subcategory of left $\capO$-modules concentrated at $0$, via the functor 
\begin{align*}
  \AlgO\rarrow\LtO,\quad\quad Z\longmapsto \hat{Z}.
\end{align*}
It follows easily from \eqref{eq:circle_mapping_sequence_adjunction} that giving a symmetric sequence $Y$ a left $\capO$-module structure is the same as giving a morphism of operads 
\begin{align}\label{eq:operad_action}
  \function{m}{\capO}{\Map^\circ(Y,Y)}.
\end{align}
Similarly, giving an object $X$ in $\Spectra$ (resp. in $\Chaincx_\unit$) an $\capO$-algebra structure is the same as giving a morphism of operads
\begin{align*}
  \function{m}{\capO}{\Map^\circ(\hat{X},\hat{X})}.
\end{align*}
This is the original definition given in \cite{May} of an $\capO$-algebra structure on $X$, where $\Map^\circ(\hat{X},\hat{X})$ is called the \emph{endomorphism operad} of $X$. These correspondences will be particularly useful in the Section \ref{sec:cofibrant_operads}.

\begin{prop}\label{prop:basic_properties_LtO}
Let $\capO$ be an operad in symmetric spectra (resp. unbounded chain complexes over $\unit$).
\begin{itemize}
\item[(a)] There are adjunctions
\begin{align*}
\xymatrix{
  \SymSeq\ar@<0.5ex>[r]^-{\capO\circ-} & \LtO,\ar@<0.5ex>[l]^-{U}
}\quad\quad
\xymatrix{
  \Spectra\ar@<0.5ex>[r]^-{\capO\circ(-)} & \AlgO,\ar@<0.5ex>[l]^-{U}
}
\quad\quad
\Bigl(
\text{resp.}\quad
\xymatrix{
  \Chaincx_\unit\ar@<0.5ex>[r]^-{\capO\circ(-)} & 
  \AlgO,\ar@<0.5ex>[l]^-{U}
}
\Bigr)
\end{align*}
with left adjoints on top and $U$ the forgetful functor.
\item[(b)] All small colimits exist in $\LtO$ and $\AlgO$, and both reflexive coequalizers and filtered colimits are preserved by the forgetful functors.
\item[(c)] All small limits exist in $\LtO$ and $\AlgO$, and are preserved by the forgetful functors.
\end{itemize}
\end{prop}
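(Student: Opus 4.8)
The plan is to identify $\LtO$ (resp.\ $\AlgO$) with the category of algebras over the monad $\capO\circ-$ on $\SymSeq$ (resp.\ $\capO(-)$ on $\Spectra$, resp.\ on $\Chaincx_\unit$), whose monad structure comes from the monoid structure maps $m,\eta$ of $\capO$, and then to deduce (a)--(c) from standard facts about Eilenberg--Moore categories together with one genuinely analytic input. For part (a) I would exhibit the free left $\capO$-module on $A\in\SymSeq$ as $\capO\circ A$ (with action $m\circ\id_A$): a morphism of left $\capO$-modules $\function{f}{\capO\circ A}{Y}$ restricts along $A\Iso I\circ A\xrightarrow{\eta\circ\id_A}\capO\circ A$ to a map $A\rarrow UY$ in $\SymSeq$, and a map $\function{g}{A}{UY}$ extends to $\capO\circ A\xrightarrow{\id\circ g}\capO\circ Y\xrightarrow{\alpha}Y$, where $\alpha$ is the left action on $Y$; the monoid axioms for $\capO$ and the module axioms for $Y$ then show these assignments are mutually inverse and natural, which gives the first adjunction, and the remaining two are the Kleisli adjunctions of the monad $\capO(-)$, handled identically.

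For part (c) I would use that the forgetful functor out of the Eilenberg--Moore category of any monad creates small limits: given a small diagram in $\LtO$ (resp.\ $\AlgO$), form the limit $L$ of the underlying diagram in the base category (this exists since $\SymSeq$, $\Spectra$, and $\Chaincx_\unit$ are complete, cf.\ Definition~\ref{defn:two_contexts}), and observe that the action maps of the diagram assemble, via the universal property of $L$, into a unique action exhibiting $L$ as the limit in $\LtO$ (resp.\ $\AlgO$). Hence all small limits exist and are preserved by $U$.

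Part (b) is where the monoidal structure must be used, and I expect the one nontrivial point to be hidden here. From Definition~\ref{defn:circle_product}, $\capO\circ B\Iso\coprod_{t\geq 0}\capO[\mathbf{t}]\Smash_{\Sigma_t}B^{\tensorcheck t}$; since coproducts, the functors $\capO[\mathbf{t}]\Smash-$, and the $\Sigma_t$-orbit functors all preserve arbitrary colimits, the monad $\capO\circ-$ preserves any class of colimits preserved by all of the tensor-power functors $B\longmapsto B^{\tensorcheck t}$, and similarly $\capO(-)$ on $\Spectra$ (resp.\ $\Chaincx_\unit$) via $Z\longmapsto Z^{\wedge t}$ (resp.\ $Z\longmapsto Z^{\tensor t}$). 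I would then argue that these power functors preserve filtered colimits and reflexive coequalizers, using that $\tensorcheck$ (resp.\ $\Smash$, $\tensor$) preserves colimits in each variable and that the diagonal $\J\rarrow\J^{\times t}$ is final whenever $\J$ is filtered or is the shape category for reflexive coequalizers---this preservation statement, together with the cocompleteness of the base categories, is exactly what is established in \cite{Harper_Modules}. With that in hand the rest is the standard construction of colimits in a category of algebras over a monad: a coproduct of free left modules (resp.\ algebras) is free on the coproduct formed in the base category; reflexive coequalizers and filtered colimits in $\LtO$ (resp.\ $\AlgO$) are created by $U$ because $\capO\circ-$ (resp.\ $\capO(-)$) preserves them; and an arbitrary small colimit is realized as a reflexive coequalizer of a pair of maps between coproducts of free objects. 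This yields all small colimits in $\LtO$ and $\AlgO$ and the asserted preservation properties of the forgetful functors. The main obstacle is the colimit-preservation of the tensor powers; everything else is formal category theory, and one could instead simply cite \cite{Harper_Modules} for all of~(b).
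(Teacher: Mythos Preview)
Your argument is correct and is exactly the standard monad-theoretic proof one would expect; the paper itself does not prove this proposition but states it as a recalled preliminary fact (Section~\ref{sec:preliminaries} is explicitly for recalling definitions and constructions), implicitly deferring to \cite{Harper_Modules} and general category theory. One terminological slip: in part~(a) you write ``Kleisli adjunctions'' for the $\AlgO$ cases, but you mean the Eilenberg--Moore adjunctions---the Kleisli category consists only of free algebras, whereas $\AlgO$ is the full category of algebras over the monad---though your actual description of the adjunction is correct. Your identification of the key analytic input in~(b), namely that the tensor-power functors preserve reflexive coequalizers and filtered colimits (because these are sifted and the monoidal product is separately cocontinuous), matches precisely what the paper later isolates as Proposition~\ref{prop:well_behaved_wrt_tensor_and_circle}, citing \cite{Rezk}, \cite{Harper_Modules}, \cite{EKMM}, and \cite{Goerss_Hopkins}.
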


We will recall the definition of reflexive coequalizers in Section \ref{sec:reflexive_coequalizers}.

\section{Model structures and homotopy colimits}
\label{sec:model_structures}

The purpose of this section is to define homotopy colimits as total left derived functors of the colimit functors. Model categories provide a setting in which one can do homotopy theory, and in particular, provide a framework for constructing and calculating such derived functors. A useful introduction to model categories is given in \cite{Dwyer_Spalinski}; see also \cite{Chacholski_Scherer, Goerss_Jardine, Hirschhorn, Hovey} and the original articles \cite{Quillen, Quillen_rational}. The extra structure of a cofibrantly generated model category is described in \cite[2.2]{Schwede_Shipley}; for further discussion see \cite[Chapter 11]{Hirschhorn} and \cite[2.1]{Hovey}. When we refer to the extra structure of a monoidal model category, we are using \cite[3.1]{Schwede_Shipley}. 

\subsection{Model structures}
\label{sec:model_structures_symmetric_spectra}

The purpose of this section is to recall certain model category structures that will be needed in this paper. In the case of symmetric spectra, we use several different model structures, each of which has the same weak equivalences. 

The \emph{stable model structure} on $\Spectra$, which has weak equivalences the stable equivalences and fibrations the stable fibrations, is one of several model category structures that is proved in \cite{Hovey_Shipley_Smith} to exist on symmetric spectra. When working with commutative ring spectra, or more generally, algebras over operads in spectra, the following positive variant of the stable model structure is useful. The \emph{positive stable model structure} on $\Spectra$, which has weak equivalences the stable equivalences and fibrations the positive stable fibrations, is proved in \cite{Mandell_May_Schwede_Shipley} to exist on symmetric spectra. It is often useful to work with the following flat variant of the (positive) stable model structure, since the flat variant has more cofibrations. The \emph{(positive) flat stable model structure} on $\Spectra$, which has weak equivalences the stable equivalences and fibrations the (positive) flat stable fibrations, is proved in \cite{Shipley_comm_ring} to exist on symmetric spectra. In addition to the references cited above, see also \cite[Section 4]{Harper_Spectra} for a description of the cofibrations in each model structure on symmetric spectra described above.

\begin{rem}
\label{rem:flat_notation}
For ease of notational purposes, we have followed Schwede \cite{Schwede_book_project} in using the term \emph{flat} (e.g., flat stable model structure) for what is called $S$ (e.g., stable $S$-model structure) in \cite{Hovey_Shipley_Smith, Schwede, Shipley_comm_ring}. For some of the good properties of the flat stable model structure, see \cite[5.3.7 and 5.3.10]{Hovey_Shipley_Smith}.
\end{rem}

Each model structure on symmetric spectra described above is cofibrantly generated in which the generating cofibrations and acyclic cofibrations have small domains, and that with respect to each model structure $(\Spectra,\Smash,S)$ is a monoidal model category. It is easy to check that the diagram category $\SymSeq$ inherits corresponding projective model category structures, where the weak equivalences (resp. fibrations) are the objectwise weak equivalences (resp. objectwise fibrations). We refer to these model structures by the names above (e.g., the \emph{positive flat stable} model structure on $\SymSeq$). Each of these model structures is cofibrantly generated in which the generating cofibrations and acyclic cofibrations have small domains. Furthermore, with respect to each model structure $(\SymSeq,\tensor,1)$ is a monoidal model category. 

In this paper we will often use implicitly a model structure on $\Spectra$, called the \emph{injective stable model structure} in \cite{Hovey_Shipley_Smith}, which has weak equivalences the stable equivalences and cofibrations the monomorphisms \cite[5.3]{Hovey_Shipley_Smith}; for instance, in the proof of Proposition \ref{prop:comparing_hocolim_and_realzn} and other similar arguments. The injective stable model structure on symmetric spectra is useful---it has more cofibrations than the flat stable model structure---but it is not a monoidal model structure on $(\Spectra,\Smash,S)$.

One of the advantages of the positive (flat) stable model structures on symmetric spectra described above is that they induce corresponding model category structures on algebras (resp. left modules) over an operad. It is proved in \cite{Harper_Spectra} that the following model category structures exist.

\begin{defn}
\label{defn:stable_flat_positive_model_structures}
Let $\capO$ be an operad in symmetric spectra. 
\begin{itemize}
\item[(a)] The \emph{positive flat stable model structure} on $\AlgO$ (resp. $\LtO$) has weak equivalences the stable equivalences (resp. objectwise stable equivalences) and fibrations the positive flat stable fibrations (resp. objectwise positive flat stable fibrations).
\item[(b)] The \emph{positive stable model structure} on $\AlgO$ (resp. $\LtO$) has weak equivalences the stable equivalences (resp. objectwise stable equivalences) and fibrations the positive stable fibrations (resp. objectwise positive stable fibrations).
\end{itemize}
\end{defn}

\begin{rem}
In this paper, we give our proofs for the (positive) flat stable model structure when working in the context of symmetric spectra. Our results remain true for the (positive) stable model structure, since it is easily checked that every (positive) stable cofibration is a (positive) flat stable cofibration.
\end{rem}

\begin{defn}
\label{defn:model_structures_chain_complexes}
Let $\capO$ be an operad in unbounded chain complexes over $\unit$. 
\begin{itemize}
\item[(a)] The model structure on $\Chaincx_\unit$ (resp. $\SymSeq$) has weak equivalences the homology isomorphisms (resp. objectwise homology isomorphisms) and fibrations the dimensionwise surjections (resp. objectwise dimensionwise surjections); here, $\unit$ is any commutative ring.
\item[(b)] The model structure on $\AlgO$ (resp. $\LtO$) has weak equivalences the homology isomorphisms (resp. objectwise homology isomorphisms) and fibrations the dimensionwise surjections (resp. objectwise dimensionwise surjections); here, $\unit$ is any field of characteristic zero.
\end{itemize}
\end{defn}

It is proved in \cite{Hovey} that the model structure described in Definition \ref{defn:model_structures_chain_complexes}(a) exists on unbounded chain complexes over $\unit$, and it is easy to check that the diagram category $\SymSeq$ inherits the corresponding projective model category structure. The model structures described in Definition \ref{defn:model_structures_chain_complexes}(b) are proved to exist in \cite{Harper_Modules}; for the case of $\capO$-algebras, see also the earlier paper \cite{Hinich} which uses different arguments.

\subsection{Homotopy colimits and simplicial objects}
The purpose of this section is to define homotopy colimits of simplicial objects. Useful introductions to simplicial sets are given in \cite{Dwyer_Henn, Gabriel_Zisman, Goerss_Jardine, May_simplicial}; see also the presentations in \cite{Goerss_Schemmerhorn, Hovey, Weibel}. 

Define the totally ordered sets $[n]:=\{0,1,\dotsc,n\}$ for each $n\geq 0$, and given their natural ordering.

\begin{defn} 
\label{defn:diagram_categories}
Let $\M$ be a category with all small limits and colimits.
\begin{itemize}
\item $\Delta$ is the category with objects the totally ordered sets $[n]$ for $n\geq 0$ and morphisms the maps of sets $\function{\xi}{[n]}{[n']}$ which respect the ordering; i.e., such that $k\leq l$ implies $\xi(k)\leq\xi(l)$.
\item A \emph{simplicial object} in $\M$ is a functor $\functor{X}{\Delta^{\op}}{\M}$. Denote by $\sM:=\M^{\Delta^\op}$ the category of simplicial objects in $\M$ and their natural transformations.
\item If $X\in\sM$, we will sometimes use the notation
$\pi_0 X := \colim\bigl(\functor{X}{\Delta^\op}{\M}\bigr)$.
\item If $X\in\sM$ and $n\geq 0$, we usually use the notation $X_n:=X([n])$.
\item If $\D$ is a small category and $\function{X}{\D}{\M}$ is a functor, we will sometimes use the notation 
\begin{align*}
  \colim^\M_\D X:=\colim_\D X
\end{align*} 
to emphasize the target category $\M$ of the colimit functor $\M^\D\rarrow\M$.
\item $\emptyset$ denotes an initial object in $\M$ and $*$ denotes a terminal object in $\M$.
\item For each $n\geq 0$, the \emph{standard $n$-simplex} $\Delta[n]$ is the simplicial set with $k$-simplices the morphisms in $\Delta$ from $[k]$ to $[n]$; i.e.,
$\Delta[n]_k:=\hom_\Delta([k],[n])$.
\end{itemize}
\end{defn}

In particular, we denote by $\sSet$ the category of simplicial sets and by $\sSet_*$ the category of pointed simplicial sets.

\begin{defn}
Let $\M$ be a category with all small colimits. If $X\in\sM$ (resp. $X\in\M$) and $K\in\sSet$, then $X\cdot K\in\sM$ is defined objectwise by
\begin{align*}
  (X\cdot K)_n:=\coprod\limits_{K_n}X_n
  \quad\quad
  \Bigl(\text{resp.}\quad
  (X\cdot K)_n:=\coprod\limits_{K_n}X
  \Bigr)
\end{align*}
the coproduct in $\M$, indexed over the set $K_n$, of copies of $X_n$ (resp. $X$). Let $z\geq 0$ and define the \emph{evaluation} functor $\functor{\Ev_z}{\sM}{\M}$ objectwise by $\Ev_z(X):=X_z$. 
\end{defn}

If $\M$ is any of the model categories defined above---or more generally, if $\M$ is a cofibrantly generated model category---then it is easy to check that the diagram category $\sM$ inherits a corresponding projective model category structure, where the weak equivalences (resp. fibrations) are the objectwise weak equivalences (resp. objectwise fibrations). In each case, the model structure on the diagram category $\sM$ is cofibrantly generated, and is created by the set of adjunctions
\begin{align*}
\xymatrix{
  \M\ar@<0.5ex>[r]^-{-\cdot\Delta[z]} & 
  \sM\ar@<0.5ex>[l]^-{\Ev_z}
},\quad z\geq 0,
\end{align*}
with left adjoints on top. Since the right adjoints $\Ev_z$ commute with filtered colimits, the smallness conditions needed for the (possibly transfinite) small object arguments are satisfied. We refer to these model structures by the names above (e.g., the \emph{positive flat stable} model structure on $\sAlgO$).

\begin{defn}
\label{defn:hocolim}
Let $\M$ be a cofibrantly generated model category. The \emph{homotopy colimit} functor $\hocolim\limits_{\Delta^\op}$ is the total left derived functor 
\begin{align*}
\xymatrix{
  \sM\ar[d]\ar[r]^{\colim\limits_{\Delta^\op}} & \M\ar[rr] && \Ho(\M)\\
  \Ho(\sM)\ar[rrr]^{\hocolim\limits_{\Delta^\op}}_{\text{total left derived functor}} &&& \Ho(\M)
}
\end{align*}
of the colimit functor $\sM\rarrow\M$. We will sometimes use the notation
$
  \hocolim\limits^\M_{\Delta^\op}:=\hocolim\limits_{\Delta^\op}
$
to emphasize the category $\M$.
\end{defn}

\begin{rem}
It is easy to check that the right adjoint $\M\rarrow\sM$ of the colimit functor preserves fibrations and acyclic fibrations, hence by \cite[9.7]{Dwyer_Spalinski} the homotopy colimit functor is well-defined.
\end{rem}

\subsection{Homotopy colimits commute with left {Q}uillen functors}

The purpose of this section is to prove Proposition \ref{prop:hocolim_commutes_with_left_quillen_M}, which verifies that homotopy colimits commute with left Quillen functors.

\begin{prop}
\label{prop:cofibrant_diagrams_are_objectwise_cofibrant}
Let $M$ be a cofibrantly generated model category. If $Z\in\sM$ is a cofibrant diagram, then $Z$ is objectwise cofibrant.
\end{prop}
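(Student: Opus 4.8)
The plan is to reduce the statement to the standard fact that, in a cofibrantly generated model category, every cofibrant object is a retract of a cell complex built from the generating cofibrations, and then to observe that the evaluation functors $\Ev_z\colon\sM\rarrow\M$ are left adjoints whose right adjoints $-\cdot\Delta[z]$ carry fibrations and acyclic fibrations to objectwise fibrations and objectwise acyclic fibrations (this is exactly how the projective model structure on $\sM$ was defined above), so each $\Ev_z$ is a left Quillen functor and in particular preserves cofibrant objects.

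First I would recall that, since $\sM$ is cofibrantly generated with generating cofibrations obtained by applying $-\cdot\Delta[z]$ to the generating cofibrations of $\M$ (as in Definition \ref{defn:diagram_categories} and the surrounding discussion), a cofibrant diagram $Z\in\sM$ is a retract of a (possibly transfinite) cell complex $Z'$, i.e. of a colimit of a sequence $\emptyset = Z'_0 \rarrow Z'_1 \rarrow \dotsb$ in which each $Z'_\lambda \rarrow Z'_{\lambda+1}$ is a pushout of a coproduct of maps of the form $A\cdot\Delta[z]\rarrow B\cdot\Delta[z]$ with $A\rarrow B$ a generating cofibration of $\M$. Applying $\Ev_n$ for a fixed $n\geq 0$: since $\Ev_n$ is a left adjoint it preserves colimits and pushouts, and $\Ev_n(A\cdot\Delta[z]) = \coprod_{\Delta[z]_n} A$, so $\Ev_n$ sends the generating-cofibration cells of $\sM$ to coproducts of generating cofibrations of $\M$, which are cofibrations. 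Hence $\Ev_n(Z')$ is a cell complex in $\M$, so cofibrant, and $\Ev_n(Z)$ is a retract of it, hence cofibrant. Since $n$ was arbitrary, $Z$ is objectwise cofibrant.

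Alternatively, and more cleanly, I would simply note that each adjunction $(-\cdot\Delta[z], \Ev_z)$ is a Quillen adjunction: the right adjoint $\Ev_z$ visibly preserves objectwise fibrations and objectwise acyclic fibrations because they are detected levelwise, so $-\cdot\Delta[z]$ is left Quillen; but we want the other direction, namely that $\Ev_z$ preserves cofibrant objects, so the relevant observation is that $\Ev_z$ is itself \emph{left} Quillen — its right adjoint is $Y\longmapsto Y\cdot\Delta[z]^{?}$; here one should instead argue directly that $\Ev_z$ preserves cofibrations, which follows from the cell-complex description above since cofibrations in $\sM$ are retracts of transfinite compositions of pushouts of the generating cofibrations. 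Either route gives the conclusion; I would present the cell-complex argument as the main line since it is self-contained.

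The only mild obstacle is bookkeeping with the transfinite small object argument — one must be careful that $\Ev_n$ commutes with the relevant transfinite compositions, but this is immediate since $\Ev_n$ is a left adjoint and hence preserves all colimits, and the smallness/transfinite subtleties are inessential here because we only need that $\Ev_n$ takes cell complexes to cell complexes and retracts to retracts. So I expect the proof to be short, with the verification that $\Ev_n$ carries generating cofibrations of $\sM$ to cofibrations of $\M$ being the one point that deserves an explicit sentence.
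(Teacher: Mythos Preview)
Your main cell-complex argument is correct and is essentially the paper's proof: the paper shows that for a generating cofibration $X\rarrow Y$ in $\M$, the map $X\cdot\Delta[z]\rarrow Y\cdot\Delta[z]$ is objectwise a coproduct of copies of $X\rarrow Y$, hence objectwise a cofibration, and then closes under pushout, transfinite composition, and retract. Your phrasing (fix $n$, apply $\Ev_n$, which preserves colimits and sends generating cofibrations of $\sM$ to coproducts of generating cofibrations of $\M$) is the same content, organized per level rather than per map.

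One slip worth fixing: in your opening paragraph you call $-\cdot\Delta[z]$ the right adjoint of $\Ev_z$, but the adjunction set up in the paper is $(-\cdot\Delta[z])\dashv\Ev_z$, so $-\cdot\Delta[z]$ is the \emph{left} adjoint; you catch this yourself in the third paragraph. This does not affect your main argument, since $\Ev_n$ preserves colimits simply because colimits in $\sM=\M^{\Delta^\op}$ are computed objectwise. Incidentally, your abandoned ``$\Ev_z$ is left Quillen'' route can be completed: $\Ev_z$ does have a right adjoint, namely right Kan extension $X\longmapsto\bigl([n]\mapsto X^{\hom_\Delta([z],[n])}\bigr)$, and this preserves objectwise (acyclic) fibrations since products do; but the cell argument you settled on is what the paper does and is more direct.
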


\begin{proof}
Let $X\rarrow Y$ be a generating cofibration in $\M$, $z\geq 0$, and consider the pushout diagram
\begin{align}
\label{eq:simplicial_glueing_M}
\xymatrix{
  X\cdot\Delta[z]\ar[d]^{(*)}\ar[r] & Z_0\ar[d]^{(**)}\\
  Y\cdot\Delta[z]\ar[r] & Z_1
}
\end{align}
in $\sM$. Assume $Z_0$ is objectwise cofibrant; let's verify $Z_1$ is objectwise cofibrant. Since $(*)$ is objectwise a cofibration---the coproduct of a set of cofibrations in $\M$ is a cofibration---we know $(**)$ is objectwise a cofibration, and hence $Z_1$ is objectwise cofibrant. Consider a sequence
\begin{align}
\label{eq:transfinite_composition_M}
\xymatrix{
  Z_0\ar[r] & Z_1\ar[r] & Z_2\ar[r] & \cdots
}
\end{align}
of pushouts of maps as in \eqref{eq:simplicial_glueing_M}. Assume $Z_0$ is objectwise cofibrant; we want to show that $Z_\infty:=\colim_k Z_k$ is objectwise cofibrant. Since each map in \eqref{eq:transfinite_composition_M} is objectwise a cofibration, we know the induced map $Z_0\rarrow Z_\infty$ is objectwise a cofibration, and hence $Z_\infty$ is objectwise cofibrant. Noting that every cofibration $\emptyset\rarrow Z$ in $\sM$ is a retract of a (possibly transfinite) composition of pushouts of maps as in \eqref{eq:simplicial_glueing_M}, starting with $Z_0=\emptyset$, finishes the proof.
\end{proof}

\begin{prop}
\label{prop:hocolim_commutes_with_left_quillen_M}
Let $M$ and $M'$ be cofibrantly generated model categories. Consider any Quillen adjunction 
\begin{align*}
\xymatrix{
  \M\ar@<0.5ex>[r]^-{F} & 
  \M'\ar@<0.5ex>[l]^-{G}
}
\end{align*}
with left adjoint on top. If $X\in\sM$, then there is a zig-zag of weak equivalences
\begin{align*}
  \LL F\bigl(\hocolim\limits_{\Delta^\op}X\bigr)\wequiv
  \hocolim\limits_{\Delta^\op}\LL F(X)
\end{align*}
natural in $X$. Here, $\LL F$ is the total left derived functor of $F$.
\end{prop}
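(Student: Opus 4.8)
The plan is to deduce the statement from the formal principle that total left derived functors compose. Since $\colim_{\Delta^\op}\colon\sM\rarrow\M$ is a left adjoint, it preserves all colimits, and in particular it commutes with the objectwise functor $F$: the square of functors whose horizontal arrows are the colimit functors $\colim_{\Delta^\op}$ and whose vertical arrows are $F$ commutes up to natural isomorphism, $F\circ\colim_{\Delta^\op}\iso\colim_{\Delta^\op}\circ F$ as functors $\sM\rarrow\M'$. Furthermore each of the four functors in this square is a left Quillen functor: the two colimit functors $\colim_{\Delta^\op}$ are left Quillen by the remark following Definition \ref{defn:hocolim} (their right adjoints, the constant-diagram functors, preserve fibrations and acyclic fibrations), and the objectwise functor $F\colon\sM\rarrow\sM'$ is left Quillen because its objectwise right adjoint $G$ preserves objectwise fibrations and objectwise acyclic fibrations (as $G\colon\M'\rarrow\M$ does), which are precisely the fibrations and acyclic fibrations of the projective model structures. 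Since the total left derived functor of a composite of left Quillen functors is naturally isomorphic to the composite of the total left derived functors, and naturally isomorphic left Quillen functors have naturally isomorphic total left derived functors, this yields
\begin{align*}
  \LL F\circ\hocolim_{\Delta^\op}
  &\iso\LL F\circ\LL\bigl(\colim_{\Delta^\op}\bigr)\\
  &\iso\LL\bigl(F\circ\colim_{\Delta^\op}\bigr)
  \iso\LL\bigl(\colim_{\Delta^\op}\circ F\bigr)\\
  &\iso\LL\bigl(\colim_{\Delta^\op}\bigr)\circ\LL F
  \iso\hocolim_{\Delta^\op}\circ\LL F,
\end{align*}
where on the right $\LL F$ denotes the total left derived functor of the objectwise functor $F\colon\sM\rarrow\sM'$, and every natural isomorphism of functors on homotopy categories is witnessed by a natural zig-zag of weak equivalences.

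To make this concrete and to keep explicit track of naturality, I would argue directly with cofibrant approximations. Given $X\in\sM$, choose a functorial cofibrant approximation $\function{q}{\tilde X}{X}$ in the projective model structure on $\sM$. Then $\hocolim_{\Delta^\op}X\iso\colim_{\Delta^\op}\tilde X$, and $\colim_{\Delta^\op}\tilde X$ is cofibrant in $\M$ because $\colim_{\Delta^\op}$ is left Quillen; hence $\LL F\bigl(\hocolim_{\Delta^\op}X\bigr)\iso F\bigl(\colim_{\Delta^\op}\tilde X\bigr)$. On the other hand $F\tilde X$ is cofibrant in $\sM'$, and since $\tilde X$ is a cofibrant approximation of $X$ we have $\LL F(X)\iso F\tilde X$, so $\hocolim_{\Delta^\op}\LL F(X)\iso\colim_{\Delta^\op}F\tilde X$. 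The natural isomorphism $F\circ\colim_{\Delta^\op}\iso\colim_{\Delta^\op}\circ F$ of functors $\sM\rarrow\M'$ then identifies $F\bigl(\colim_{\Delta^\op}\tilde X\bigr)$ with $\colim_{\Delta^\op}F\tilde X$, functoriality of $\tilde{(-)}$ makes the whole identification natural in $X$, and replacing the resulting isomorphisms in the homotopy categories by zig-zags of weak equivalences finishes the argument.

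I expect the only real work to be the routine verifications behind these formal moves --- that $F\colon\sM\rarrow\sM'$ is left Quillen for the projective model structures, that $\colim_{\Delta^\op}$ is left Quillen, and that total left derived functors compose --- together with the bookkeeping needed to produce honest natural zig-zags of weak equivalences rather than mere isomorphisms of homotopy categories. There is no essential obstacle here: the content of the proposition is just that left adjoints commute with colimits and that left Quillen functors compose to left Quillen functors.
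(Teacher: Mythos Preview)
Your concrete argument is essentially the paper's proof: cofibrantly replace $X$ in $\sM$, then chain together $\LL F(\hocolim X)\simeq F(\colim X^c)\cong\colim F(X^c)\simeq\hocolim\LL F(X)$ using that $\colim_{\Delta^\op}$ and the objectwise $F$ are left Quillen. The one ingredient the paper invokes explicitly that you leave implicit is Proposition~\ref{prop:cofibrant_diagrams_are_objectwise_cofibrant} (cofibrant objects of $\sM$ are objectwise cofibrant), which is what reconciles your reading of $\LL F(X)$ as the derived functor of $F\colon\sM\to\sM'$ with the objectwise reading $[n]\mapsto\LL F(X_n)$ and justifies the step $F(X^c)\simeq\LL F(X^c)$.
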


\begin{proof}
Consider any $X\in\sM$. The map $\emptyset\rarrow X$ factors functorially $\emptyset\rarrow X^c\rarrow X$ in $\sM$ as a cofibration followed by an acyclic fibration. This gives natural zig-zags of weak equivalences
\begin{align*}
  \LL F\bigl(\hocolim\limits_{\Delta^\op}X\bigr)&\wequiv
  \LL F\bigl(\hocolim\limits_{\Delta^\op}X^c\bigr)\wequiv
  \LL F\bigl(\colim\limits_{\Delta^\op}X^c\bigr)\wequiv
  F\bigl(\colim\limits_{\Delta^\op}X^c\bigr)\\
  &\Iso
  \colim\limits_{\Delta^\op}F(X^c)\wequiv
  \hocolim\limits_{\Delta^\op}F(X^c)\wequiv
  \hocolim\limits_{\Delta^\op}\LL F(X^c)\\
  &\wequiv
  \hocolim\limits_{\Delta^\op}\LL F(X)
\end{align*}
which follow immediately from Proposition \ref{prop:cofibrant_diagrams_are_objectwise_cofibrant} and the following observation: the right adjoint of the functor
\begin{align}
\label{eq:induced_left_quillen}
  \function{F}{\sM}{\sM'},&\quad\quad 
  Y\longmapsto F(Y)\quad\quad \bigl(n\longmapsto F(Y_n)\bigr),
\end{align}
preserves fibrations and acyclic fibrations, hence by \cite[9.7]{Dwyer_Spalinski} the functor \eqref{eq:induced_left_quillen} preserves cofibrant diagrams.
\end{proof}

\section{Homotopy colimits in the underlying categories}
\label{sec:hocolim_underlying}

The purpose of this section is to prove Theorems \ref{thm:hocolim_realzn} and \ref{thm:calculate_hocolim_with_realization_symseq_nice}, which calculate certain homotopy colimits in the underlying categories. The arguments provide a useful warm-up for proving Theorem \ref{MainTheorem3}, which calculates certain homotopy colimits in algebras and modules over operads (Section \ref{sec:hocolim_calculations_algebraic}).

\begin{assumption}
From now on in this section, we assume that $\unit$ is any commutative ring.
\end{assumption}

Denote by $\Mod_\unit$ the category of $\unit$-modules and by $\Chaincx_\unit^+$ the category of non-negative chain complexes over $\unit$. There are adjunctions
\begin{align*}
\xymatrix{
  \sSet\ar@<0.5ex>[r]^-{(-)_+} & 
  \sSet_*\ar@<0.5ex>[l]^-{U}\ar@<0.5ex>[r]^-{S\tensor G_0} &
  \Spectra,\ar@<0.5ex>[l]
}\quad\quad
\xymatrix{
  \sSet\ar@<0.5ex>[r]^-{\unit} & 
  \sMod_\unit\ar@<0.5ex>[r]^-{\NN}\ar@<0.5ex>[l]^-{U} & 
  \Chaincx_\unit^+\ar@<0.5ex>[r]\ar@<0.5ex>[l] & 
  \Chaincx_\unit,\ar@<0.5ex>[l]
}
\end{align*}
with left adjoints on top, $U$ the forgetful functor, $\NN$ the normalization functor (Definition \ref{defn:normalization_functor}) appearing in the Dold-Kan correspondence \cite[III.2]{Goerss_Jardine}, \cite[8.4]{Weibel}, and the right-hand functor on top the natural inclusion of categories. We will denote by $\functor{\NN\unit}{\sSet}{\Chaincx_\unit}$ the composition of the left adjoints on the right-hand side.

\begin{rem}
The functor $S\tensor G_0$ is left adjoint to ``evaluation at $0$''; the notation agrees with \cite{Harper_Spectra} and \cite[after 2.2.5]{Hovey_Shipley_Smith}. Let $X\in\Spectra$ and $K\in\sSet_*$. There are natural isomorphisms $X\Smash K\Iso X\Smash (S\tensor G_0 K)$ in $\Spectra$.
\end{rem}

\begin{rem}
\label{rem:pointed_and_unpointed_realization}
If $X\in\sSet_*$, there are natural isomorphisms $X\times_\Delta\Delta[-]\Iso X\Smash_\Delta\Delta[-]_+$.
\end{rem}

\begin{defn} 
\label{defn:realization}
The \emph{realization} functors $|-|$ for simplicial symmetric spectra and simplicial unbounded chain complexes over $\unit$ are defined objectwise by the coends
\begin{align*}
  \functor{|-|}{\sSpectra}{\Spectra},
  &\quad\quad
  X\longmapsto |X|:=X\Smash_{\Delta}\Delta[-]_+\ ,\\
  \functor{|-|}{\sChaincx_\unit}{\Chaincx_\unit},
  &\quad\quad
  X\longmapsto |X|:=X\tensor_{\Delta}\NN\unit\Delta[-].
\end{align*}
\end{defn}

\begin{prop}
\label{prop:realzns_fit_into_adjunctions}
The realization functors fit into adjunctions
\begin{align*}
\xymatrix{
  \sSpectra
  \ar@<0.5ex>[r]^-{|-|} & \Spectra,\ar@<0.5ex>[l]
}\quad\quad
\xymatrix{
  \sChaincx_\unit
  \ar@<0.5ex>[r]^-{|-|} & \Chaincx_\unit,\ar@<0.5ex>[l]
}
\end{align*}
with left adjoints on top. Each adjunction is a Quillen pair.
\end{prop}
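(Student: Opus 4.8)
The plan is, for each of the two realization functors, first to exhibit an explicit right adjoint by a coend--end manipulation, and then to verify the Quillen condition by reducing to the generating cofibrations of the projective model structure on $\sSpectra$ (resp. $\sChaincx_\unit$). For the adjunction in the symmetric spectra case, recall that each of the model structures on $\Spectra$ in play is a $\sSet_*$-model category, so $\Spectra$ is cotensored over $\sSet_*$; write $Z^K$ for the cotensor of $Z\in\Spectra$ by $K\in\sSet_*$, characterized by $\hom_{\Spectra}(A\Smash K,Z)\Iso\hom_{\Spectra}(A,Z^K)$ natural in $A$. Define $\functor{\RR}{\Spectra}{\sSpectra}$ by $\RR(Z)_n:=Z^{\Delta[n]_+}$, with simplicial structure induced by the cosimplicial structure of $\Delta[-]$. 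Since $|X|=X\Smash_\Delta\Delta[-]_+$ is a coend, the natural isomorphisms
\begin{align*}
  \hom_{\Spectra}(|X|,Z)
  &\Iso \hom_{\Spectra}\Bigl(\int^{[n]\in\Delta}X_n\Smash\Delta[n]_+,\,Z\Bigr)
  \Iso \int_{[n]\in\Delta}\hom_{\Spectra}(X_n\Smash\Delta[n]_+,Z)\\
  &\Iso \int_{[n]\in\Delta}\hom_{\Spectra}(X_n,Z^{\Delta[n]_+})
  \Iso \hom_{\sSpectra}(X,\RR Z)
\end{align*}
exhibit $|-|$ as left adjoint to $\RR$. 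The chain complex case is identical, now using the closed structure of $\Chaincx_\unit$: set $\RR(Z)_n:=\Map(\NN\unit\Delta[n],Z)$ and run the same computation with $\Smash$ replaced by $\tensor$ and $\Delta[-]_+$ by $\NN\unit\Delta[-]$.

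For the Quillen condition, recall that the projective model structure on $\sSpectra$ is cofibrantly generated with generating (acyclic) cofibrations the maps $i\cdot\Delta[z]$, where $i$ runs over the generating (acyclic) cofibrations of $\Spectra$ and $z\geq 0$ and $-\cdot\Delta[z]$ is left adjoint to $\Ev_z$. A density calculation identifies the composite $|-|\circ(-\cdot\Delta[z])$: from $(A\cdot\Delta[z])_n=\coprod_{\Delta[z]_n}A\Iso A\Smash(\Delta[z]_n)_+$ and the co-Yoneda isomorphism $\Delta[z]\Iso\int^{[n]}\Delta[z]_n\cdot\Delta[n]$ in $\sSet$ --- to which $(-)_+$ and $A\Smash-$ apply, being left adjoints --- one obtains
\begin{align*}
  |A\cdot\Delta[z]|\ \Iso\ A\Smash\bigl(\textstyle\int^{[n]\in\Delta}(\Delta[z]_n)_+\Smash\Delta[n]_+\bigr)\ \Iso\ A\Smash\Delta[z]_+,
\end{align*}
natural in $A$; thus $|-|\circ(-\cdot\Delta[z])\Iso-\Smash\Delta[z]_+$. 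The latter is a left Quillen endofunctor of $\Spectra$, since $*\rarrow\Delta[z]_+$ is a cofibration in $\sSet_*$ and the pushout--product axiom for the $\sSet_*$-model category $\Spectra$ then makes $-\Smash\Delta[z]_+$ preserve cofibrations and acyclic cofibrations. Being a left adjoint, $|-|$ preserves colimits, and every cofibration (resp. acyclic cofibration) of $\sSpectra$ is a retract of a transfinite composite of pushouts of maps $i\cdot\Delta[z]$ with $i$ a generating (acyclic) cofibration; since $|i\cdot\Delta[z]|\Iso i\Smash\Delta[z]_+$ is then a cofibration (resp. acyclic cofibration) of $\Spectra$, it follows that $|-|$ preserves cofibrations and acyclic cofibrations, so $|-|\dashv\RR$ is a Quillen pair. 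For chain complexes the same argument --- using that $\NN\unit$ is a composite of left adjoints, hence preserves colimits --- gives $|-|\circ(-\cdot\Delta[z])\Iso-\tensor\NN\unit\Delta[z]$; here $\NN\unit\Delta[z]$ is a bounded-below complex of free, hence projective, $\unit$-modules and so is cofibrant, and $-\tensor\NN\unit\Delta[z]$ is left Quillen because $(\Chaincx_\unit,\tensor,\unit)$ is a monoidal model category.

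The argument involves no real difficulty; the points requiring care are the density bookkeeping and recalling that the ambient model structures are compatible with the relevant enrichment --- namely that the (positive) flat stable model structures on $\Spectra$ are $\sSet_*$-model categories, and that the projective model structure on $\Chaincx_\unit$ is a cofibrantly generated monoidal model category for an arbitrary commutative ring $\unit$ (with $\NN\unit\Delta[z]$ cofibrant).
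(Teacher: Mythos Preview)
Your proof is correct, and your right adjoints coincide with the paper's (since $Z^{\Delta[n]_+}\Iso\Map(S\tensor G_0\Delta[n]_+,Z)$). Where you diverge is in the Quillen verification: you check that the \emph{left} adjoint preserves (acyclic) cofibrations, via a co-Yoneda identification $|A\cdot\Delta[z]|\Iso A\Smash\Delta[z]_+$ (resp.\ $A\tensor\NN\unit\Delta[z]$) and the cofibrant generation of the projective model structure; the paper instead checks that the \emph{right} adjoint preserves (acyclic) fibrations. The paper's route is shorter: since fibrations in the projective model structure on $\sSpectra$ (resp.\ $\sChaincx_\unit$) are objectwise, it suffices to show that $\Map(S\tensor G_0\Delta[m]_+,-)$ (resp.\ $\Map(\NN\unit\Delta[m],-)$) preserves (acyclic) fibrations for each $m\geq 0$, and this follows immediately from the monoidal model category axiom once one observes that $S\tensor G_0\Delta[m]_+$ and $\NN\unit\Delta[m]$ are cofibrant. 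Your approach buys an explicit computation of $|-|$ on the generators, which is sometimes useful elsewhere, but here it introduces the density bookkeeping and an appeal to the $\sSet_*$-enrichment of $\Spectra$ that the paper avoids by working entirely inside the closed monoidal structure.
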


\begin{proof}
Consider the case of $\sSpectra$ (resp. $\sChaincx_\unit$). Using the universal property of coends, it is easy to verify that the functor given objectwise by
\begin{align*}
  \Map(S\tensor G_0\Delta[-]_+,Y)\quad\quad
  \Bigl(
  \text{resp.}\quad
  \Map(\NN\unit\Delta[-],Y)
  \Bigr)
\end{align*}
is a right adjoint of $|-|$. To check the adjunctions form Quillen pairs, it is enough to verify the right adjoints preserve fibrations and acyclic fibrations; since the model structures on $\Spectra$ and $\Chaincx_\unit$ are monoidal model categories, this follows by noting that $S\tensor G_0\Delta[m]_+$ and $\NN\unit\Delta[m]$ are cofibrant for each $m\geq 0$.
\end{proof}

\begin{prop}
Let $X$ be a symmetric spectrum (resp. unbounded chain complex over $\unit$).  There are isomorphisms $|X\cdot\Delta[0]|\Iso X$, natural in $X$.
\end{prop}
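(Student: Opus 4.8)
The plan is to identify $X\cdot\Delta[0]$ with the constant simplicial object on $X$, and then compute the realization of a constant simplicial object. For the first step, observe that $\Delta[0]_n=\hom_\Delta([n],[0])$ is a one-element set for each $n\geq 0$, and each structure map $\Delta[0]_{n'}\rarrow\Delta[0]_n$ is the identity map of a one-element set; hence $(X\cdot\Delta[0])_n=\coprod_{\Delta[0]_n}X\Iso X$ and every simplicial structure map of $X\cdot\Delta[0]$ is an identity. Thus $X\cdot\Delta[0]$ is isomorphic, naturally in $X$, to the constant simplicial object $c_X$ on $X$.

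For the second step, unwind the defining coend. In the symmetric spectra case, $|c_X|=c_X\Smash_\Delta\Delta[-]_+=\int^{[n]\in\Delta}X\Smash\Delta[n]_+$, the coend over $\Delta$ of the bifunctor $([m],[n])\longmapsto X\Smash\Delta[n]_+$. Since this bifunctor is constant in the contravariant variable $[m]$, the coequalizer presentation of the coend reduces to the coequalizer presentation of $\colim_{[n]\in\Delta}$, so $|c_X|\Iso\colim_{[n]\in\Delta}\bigl(X\Smash\Delta[n]_+\bigr)$. Now $X\Smash-$ (smashing over pointed simplicial sets) and $(-)_+$ are both left adjoints, hence commute with colimits, giving $|c_X|\Iso X\Smash\bigl(\colim_{[n]\in\Delta}\Delta[n]\bigr)_+$. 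Finally, $[0]$ is a terminal object of $\Delta$ (there is a unique order-preserving map $[n]\rarrow[0]$), so $\colim_{[n]\in\Delta}\Delta[n]\Iso\Delta[0]$, and therefore $|c_X|\Iso X\Smash\Delta[0]_+\Iso X\Smash S^0\Iso X$. The chain complex case is word for word the same after replacing $\Smash\Delta[-]_+$ by $\tensor\NN\unit\Delta[-]$ and $(-)_+$ by the left adjoint $\NN\unit$: one obtains $|c_X|\Iso X\tensor\NN\unit\bigl(\colim_{[n]\in\Delta}\Delta[n]\bigr)\Iso X\tensor\NN\unit\Delta[0]\Iso X\tensor\unit\Iso X$, where $\NN\unit\Delta[0]\Iso\unit$ since the simplicial $\unit$-module $\unit(\Delta[0])$ is constant on $\unit$, so its normalization is $\unit$ concentrated in degree $0$.

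Every isomorphism used above is natural in $X$, so the composite isomorphism $|X\cdot\Delta[0]|\Iso X$ is natural in $X$. There is no genuine obstacle; the one point that deserves a moment of care is the passage from the realization coend to $\colim_{[n]\in\Delta}$, which rests only on the elementary fact that $\int^c H(c,c)\Iso\colim_c G$ whenever the bifunctor satisfies $H(c,d)=G(d)$ (compare the two coequalizer presentations). A shorter route, which I could use instead, exploits that $|-|$ is a left adjoint (Proposition \ref{prop:realzns_fit_into_adjunctions}) and that $-\cdot\Delta[0]$ is left adjoint to $\Ev_0$: this yields $\hom(|X\cdot\Delta[0]|,Y)\Iso\hom(X,\Map(\NN\unit\Delta[0],Y))\Iso\hom(X,\Map(\unit,Y))\Iso\hom(X,Y)$ in the chain case (and the analogous chain of isomorphisms in the spectra case, using that $S\tensor G_0\Delta[0]_+$ is the sphere spectrum $S$), whence the claim by the Yoneda lemma; but the coend computation above seems the most transparent.
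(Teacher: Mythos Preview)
Your proof is correct. You compute the coend directly by identifying $X\cdot\Delta[0]$ with the constant simplicial object, reducing the coend to a colimit over $\Delta$ (since the bifunctor is constant in the contravariant variable), and then using that $[0]$ is terminal in $\Delta$. Each step is sound.

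The paper's proof is different and consists of a single sentence: ``This follows from uniqueness of left adjoints (up to isomorphism).'' The point is that $-\cdot\Delta[0]$ is left adjoint to $\Ev_0$ and $|-|$ is left adjoint to $Y\mapsto\Map(S\tensor G_0\Delta[-]_+,Y)$ (resp.\ $Y\mapsto\Map(\NN\unit\Delta[-],Y)$); composing, $|(-)\cdot\Delta[0]|$ is left adjoint to $Y\mapsto\Map(S\tensor G_0\Delta[0]_+,Y)\Iso\Map(S,Y)\Iso Y$ (resp.\ $Y\mapsto\Map(\NN\unit\Delta[0],Y)\Iso\Map(\unit,Y)\Iso Y$), hence is naturally isomorphic to the identity. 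You in fact sketch exactly this argument in your final paragraph as the ``shorter route,'' so you have both proofs in hand. The paper's approach is quicker and avoids unwinding the coend, while your main argument is more self-contained and makes the isomorphism explicit; either is perfectly adequate here.
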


\begin{proof}
This follows from uniqueness of left adjoints (up to isomorphism).
\end{proof}

\subsection{Homotopy colimits of simplicial objects in $\Spectra$ and $\Chaincx_\unit$}
\label{sec:sym_spectra_and_unbounded_chain_cxs}

The purpose of this section is to prove Theorem \ref{thm:hocolim_realzn}. A first step is to establish some of the good properties of realization and to recall the notion of simplicially homotopic maps. 

The following proposition is motivated by a similar argument given in \cite[IV.1.7]{Goerss_Jardine} and \cite[X.2.4]{EKMM} in the contexts of bisimplicial sets and proper simplicial spectra, respectively; see also \cite[A]{Dugger_Isaksen} and \cite[Chapter 18]{Hirschhorn} for related arguments. We defer the proof to Section \ref{sec:proofs}.

\begin{prop}\label{prop:realzn_homotopy_meaningful}
Let $\function{f}{X}{Y}$ be a morphism of simplicial symmetric spectra (resp. simplicial unbounded chain complexes over $\unit$). If $f$ is an objectwise weak equivalence, then $\function{|f|}{|X|}{|Y|}$ is a weak equivalence.
\end{prop}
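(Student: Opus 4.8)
The plan is to reduce the statement to a skeletal/coskeletal filtration argument on the realization, exactly as in the classical cases cited (\cite[IV.1.7]{Goerss_Jardine}, \cite[X.2.4]{EKMM}). Write $|X|$ as the coend $X \Smash_\Delta \Delta[-]_+$ (resp. $X \tensor_\Delta \NN\unit\Delta[-]$). Using the skeletal filtration of the cosimplicial object $\Delta[-]$, one obtains a filtration of $|X|$ by sub-objects $\Sk_n|X|$ with associated graded pieces built out of the latching objects of $X$ smashed with the boundary-of-simplex quotients $\Delta[n]/\partial\Delta[n]$ (resp. their normalized chain analogues). Concretely, for each $n\geq 0$ there is a pushout square
\begin{align*}
\xymatrix{
  L_n X \Smash (\partial\Delta[n])_+ \cup_{L_n X \Smash (\partial\Delta[n])_+} X_n \Smash (\partial\Delta[n])_+ \ar[d]\ar[r] & \Sk_{n-1}|X| \ar[d]\\
  X_n \Smash (\Delta[n])_+ \ar[r] & \Sk_n|X|
}
\end{align*}
so that the cofiber of $\Sk_{n-1}|X| \to \Sk_n|X|$ is $(X_n/L_n X) \Smash (\Delta[n]/\partial\Delta[n])_+$ (resp. $(X_n/L_n X) \tensor \NN\unit(\Delta[n]/\partial\Delta[n])$, which is a shifted copy of $X_n/L_n X$). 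Here $L_n X$ denotes the $n$-th latching object.

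First I would record that in both $\Spectra$ and $\Chaincx_\unit$ the smash (resp. tensor) with a fixed simplicial set, and the quotient $\Delta[n]/\partial\Delta[n]$, are homotopically well-behaved: in particular $X_n/L_n X$ is computed correctly and the relevant maps are cofibrations after suitable arrangement. Then, given $\function{f}{X}{Y}$ an objectwise weak equivalence, I would show by induction on $n$ that $\Sk_n|f|$ is a weak equivalence. The base case $n=0$ is $|f|$ restricted to $X_0 \Smash (\Delta[0])_+ \iso X_0$, which is $f_0$, a weak equivalence by hypothesis. For the inductive step, one uses that $f$ objectwise a weak equivalence forces $X_n/L_n X \to Y_n/L_n Y$ to be a weak equivalence (this needs the latching maps, or a cofibrant-replacement dodge, to be handled—see below), hence the induced map on cofibers of the skeletal filtration is a weak equivalence; combined with the inductive hypothesis and the long exact sequence / gluing lemma in the stable (resp. chain) homotopy category, $\Sk_n|f|$ is a weak equivalence. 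Finally, since $|X| = \colim_n \Sk_n |X|$ along cofibrations and stable equivalences (resp. homology isomorphisms) are closed under filtered colimits along cofibrations, passing to the colimit over $n$ gives that $|f|$ is a weak equivalence.

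The main obstacle is the handling of the latching objects $L_n X$: without a Reedy-cofibrancy hypothesis on $X$ (which is not assumed here—$X$ is an arbitrary simplicial object), the naive filtration does not split into cofibrations, and $X_n/L_n X$ need not compute the correct homotopy type. The standard remedy, which I would follow, is to first replace the cosimplicial simplicial set $\Delta[-]$ situation by working with the ``fat'' realization or, more cleanly, to note that in these particular categories the relevant pushouts can be arranged to be homotopy pushouts regardless: in $\Chaincx_\unit$ every object is cofibrant and every monomorphism split levelwise as graded modules, so the filtration is automatically a filtration by degreewise-split monomorphisms and the gluing lemma applies unconditionally; in $\Spectra$ one invokes the flat (or injective) stable model structure, in which the latching maps of any simplicial object, after smashing with $(\partial\Delta[n] \hookrightarrow \Delta[n])_+$, are flat cofibrations, so again the skeletal filtration consists of cofibrations and the gluing lemma applies. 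With that observation in place the induction goes through cleanly; the remaining steps are the routine identification of the subquotients and the standard colimit argument, which I would not belabor.
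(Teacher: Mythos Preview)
Your overall strategy---skeletal filtration of $|X|$, induction on skeleta, identify the associated graded, pass to the colimit---is exactly the paper's approach. The filtration you write down is the paper's Proposition~\ref{prop:skeletal_glueing}, with your latching object $L_nX$ playing the role of the degenerate subobject $DX_n$; these coincide here because degeneracy maps are split monomorphisms.

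There is, however, a genuine gap at the step you yourself flag. You need that $X_n/L_nX\to Y_n/L_nY$ is a weak equivalence, and your proposed remedies do not supply this. Invoking the injective (or flat) model structure on $\Spectra$, or the degreewise splitting of monomorphisms in $\Chaincx_\unit$, only tells you that the skeletal inclusions $\realzn_{n-1}(X)\to\realzn_n(X)$ and the latching maps $L_nX\to X_n$ are cofibrations (monomorphisms), so that the gluing lemma is \emph{available}. It does not tell you that the induced map on cofibers is a weak equivalence: in the cofiber sequence $L_nX\to X_n\to X_n/L_nX$ you know only the middle map is an equivalence, and in a stable setting that is one hypothesis short. (Your claim that latching maps are automatically flat cofibrations in $\Spectra$ is also not correct; they are monomorphisms, which is why the paper works implicitly in the injective stable model structure.)

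The paper closes this gap with a separate induction (Propositions~\ref{prop:degenerate_subobject_filtration} and~\ref{prop:degenerate_subobject}): filter $DX_n$ by the partial degenerate subobjects $s_{[r]}X_{n-1}:=\bigcup_{i\leq r}s_iX_{n-1}$, and observe that each stage $s_{[r+1]}X_n$ is a pushout of $s_{[r]}X_n\leftarrow s_{[r]}X_{n-1}\to X_n$ along monomorphisms. Since the new corners are $X_k$'s and $s_{[r']}X_k$'s for strictly smaller indices, a double induction (on $n$ and then on $r$) shows $Df_n\colon DX_n\to DY_n$ is a weak equivalence for every $n$, and hence so is the map on quotients. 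Once you insert this argument, your outline becomes the paper's proof.
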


We prove the following two propositions in Section \ref{sec:proofs}.

\begin{prop}\label{prop:realzn_preserves_monomorphisms}
Let $\function{f}{X}{Y}$ be a morphism of simplicial symmetric spectra (resp. simplicial unbounded chain complexes over $\unit$). If $f$ is a monomorphism,  then $\function{|f|}{|X|}{|Y|}$ is a monomorphism.
\end{prop}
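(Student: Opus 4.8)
The plan is to reduce the statement---realization preserves monomorphisms---to a concrete calculation with the coend formula. Recall that for $X \in \sSpectra$ we have $|X| = X \Smash_\Delta \Delta[-]_+$, and similarly for $\sChaincx_\unit$ we have $|X| = X \tensor_\Delta \NN\unit\Delta[-]$. In the simplicial-set world one has the standard skeletal filtration of $|X|$ whose associated graded pieces are built from the nondegenerate simplices, and the key fact is that degeneracies split off as a direct summand. So first I would invoke (or reprove) the splitting
\begin{align*}
  X_n \Iso \coprod_{\substack{[n]\twoheadrightarrow[k]\\ \text{surjection in }\Delta}} \Nondeg_k(X),
\end{align*}
natural in $X$, where $\Nondeg_k(X)$ denotes the ``nondegenerate part'' in simplicial degree $k$---in the additive/chain-complex setting this is literally the intersection of the kernels of the codegeneracies (or equivalently a chosen complement), and in symmetric spectra the analogous decomposition holds levelwise since everything is built out of simplicial sets pointwise via $S \tensor G_0(-)$ applied to the pointed realization $X \Smash_\Delta \Delta[-]_+$. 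This decomposition is functorial in $X$ and compatible with morphisms, so a monomorphism $f\colon X \to Y$ restricts to monomorphisms $\Nondeg_k(X) \to \Nondeg_k(Y)$ on each nondegenerate part.

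Next I would set up the skeletal filtration of $|X|$: there is an exhaustive filtration $\Sk_0|X| \subset \Sk_1|X| \subset \dotsb$ with $\colim_m \Sk_m|X| \Iso |X|$, and pushout squares
\begin{align*}
\xymatrix{
  \Nondeg_m(X)\cdot \partial\Delta[m]_+ \ar[r]\ar[d] & \Sk_{m-1}|X| \ar[d]\\
  \Nondeg_m(X)\cdot \Delta[m]_+ \ar[r] & \Sk_m|X|
}
\end{align*}
(with the obvious $\NN\unit$ analogue, or $\tensor$ in place of $\cdot$, in the chain complex case). This is the standard Reedy/skeletal decomposition of a realization, and it holds in any cocomplete category tensored over $\sSet$. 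The point is that $\partial\Delta[m]_+ \to \Delta[m]_+$ is a monomorphism of pointed simplicial sets, hence $-\cdot \partial\Delta[m]_+ \to -\cdot\Delta[m]_+$ is an objectwise monomorphism (coproducts of monomorphisms are monomorphisms in $\Spectra$ and in $\Chaincx_\unit$, since these are levelwise categories of modules), and the left vertical map is therefore a monomorphism with a natural complement $\Nondeg_m(X)\cdot(\Delta[m]/\partial\Delta[m])_+$---i.e. the pushout is a ``split'' pushout along a monomorphism. Consequently $\Sk_{m-1}|X| \to \Sk_m|X|$ is a monomorphism, natural in $X$, and moreover the subquotient $\Sk_m|X|/\Sk_{m-1}|X| \Iso \Nondeg_m(X) \Smash (\Delta[m]/\partial\Delta[m])$ (resp. the $\tensor$ version) is natural in $X$.

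With these pieces in place, the argument is an induction up the filtration. Suppose $f\colon X \to Y$ is a monomorphism. On nondegenerate parts, $\Nondeg_m(f)$ is a monomorphism for every $m$, hence $\Nondeg_m(f) \Smash \id$ is a monomorphism of the subquotients (smashing with, or tensoring over $\unit$ with, a fixed simplicial set preserves monomorphisms levelwise). Assuming inductively that $\Sk_{m-1}|f|\colon \Sk_{m-1}|X| \to \Sk_{m-1}|Y|$ is a monomorphism, the map of pushout squares together with the existence of natural complements forces $\Sk_m|f|$ to be a monomorphism---concretely, $\Sk_m|X| \Iso \Sk_{m-1}|X| \oplus (\text{subquotient})$ compatibly in $X$, so $\Sk_m|f|$ is a direct sum of two monomorphisms, hence a monomorphism. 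Passing to the colimit over $m$, and using that a filtered colimit of monomorphisms along monomorphisms is a monomorphism in $\Spectra$ and $\Chaincx_\unit$ (again a levelwise statement in module categories), we conclude $|f| = \colim_m \Sk_m|f|$ is a monomorphism.

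I expect the main obstacle to be bookkeeping the naturality of the nondegenerate-part splitting and of the skeletal filtration carefully enough that the complements in the pushout squares can be chosen coherently in $X$; in the chain complex case one must be a little careful because ``nondegenerate part'' is only a complement up to choice, but the normalized chains functor $\NN$ furnishes a canonical natural choice, and in symmetric spectra everything is inherited levelwise from pointed simplicial sets where the Eilenberg--Zilber decomposition is canonical. Once that naturality is pinned down, every remaining step---coproducts of monos are monos, smashing with a simplicial set preserves monos, filtered colimits of monos are monos---is a routine levelwise verification in a category of modules, so no serious difficulty remains.
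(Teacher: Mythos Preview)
Your approach is correct and matches the paper's: both use the skeletal filtration of realization together with the normalized/nondegenerate decomposition, reducing to the fact that $\NN$ preserves monomorphisms, and the paper handles the symmetric-spectra case by deferring to the identical argument for bisimplicial sets in Goerss--Jardine. One refinement worth noting: in the chain-complex case the splitting $\Sk_m|X| \Iso \Sk_{m-1}|X| \oplus (\text{subquotient})$ you invoke holds only after forgetting differentials---the sequence $0 \to \NN\unit\partial\Delta[m] \to \NN\unit\Delta[m] \to \unit[m] \to 0$ does not split in $\Chaincx_\unit$ since $\NN\unit\Delta[m]$ is acyclic---so the paper explicitly passes to the underlying category of graded $\unit$-modules at this step; this is harmless because monomorphisms in $\Chaincx_\unit$ are detected there, but you should say so, and the naturality worry you raise in your last paragraph (about the decomposition of $X_n$) is not the real issue, as Dold--Kan handles that cleanly.
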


\begin{prop}\label{prop:tot_of_normalization}
If $X$ is a simplicial unbounded chain complex over $\unit$, then there are isomorphisms
\begin{align*}
  |X|=X\tensor_\Delta\NN\unit\Delta[-]\Iso\Tot^\oplus\NN(X)
\end{align*}
natural in $X$.
\end{prop}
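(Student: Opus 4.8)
The plan is to exhibit the desired isomorphism as the result of applying a chain of cocontinuous functors to the canonical coend presentation of $X$. Recall that, since $\Chaincx_\unit$ is cocomplete, every $X\in\sChaincx_\unit$ is canonically the coend of copowers of representables,
\begin{align*}
  X\Iso\int^{[n]\in\Delta}X_n\cdot\Delta[n],
\end{align*}
where $X_n\cdot\Delta[n]$ denotes the simplicial chain complex $[j]\mapsto\coprod_{\Delta[n]_j}X_n$; note $X_n\cdot\Delta[n]\Iso X_n\tensor\unit\Delta[n]$, the levelwise tensor over $\unit$ of the constant simplicial chain complex $X_n$ with the simplicial $\unit$-module $\unit\Delta[n]$. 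I will also use the normalization $\NN$ of the Dold--Kan correspondence for the abelian category $\Chaincx_\unit$, regarded as a functor $\sChaincx_\unit\rarrow\Chaincx^+(\Chaincx_\unit)$ to non-negatively graded chain complexes in $\Chaincx_\unit$; being (part of) an equivalence of categories it is cocontinuous. Likewise $\Tot^\oplus\colon\Chaincx^+(\Chaincx_\unit)\rarrow\Chaincx_\unit$ is additive and preserves direct sums and cokernels, hence preserves all colimits (this is where $\oplus$, rather than $\prod$, is essential).

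First I would record two elementary lemmas. \emph{(i)} For fixed $C\in\Chaincx_\unit$ and $K\in\sMod_\unit$ there is a natural isomorphism $\NN(C\tensor K)\Iso C\tensor\NN(K)$: this follows from the standard description of $\NN$ as the Moore complex modulo degeneracies, together with the facts that $C\tensor-$ commutes with the levelwise formation of the Moore complex and of the degenerate subobject and is right exact, hence preserves the quotient. \emph{(ii)} Since $\NN\unit\Delta[n]$ is concentrated in degrees $0,\dotsc,n$, for any $C\in\Chaincx_\unit$ one has $\Tot^\oplus\bigl(C\tensor\NN\unit\Delta[n]\bigr)\Iso C\tensor\NN\unit\Delta[n]$, the tensor product of chain complexes over $\unit$; here on the left $C\tensor\NN\unit\Delta[n]$ denotes the double complex $(p,q)\mapsto C_p\tensor(\NN\unit\Delta[n])_q$, and boundedness makes $\Tot^\oplus=\Tot^\prod$ and identifies its sign conventions with the Koszul conventions of the tensor product of complexes.

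Next I would assemble the computation. Applying the cocontinuous functor $\Tot^\oplus\NN$ to the coend presentation of $X$ and invoking \emph{(i)} and \emph{(ii)} (with $C=X_n$):
\begin{align*}
  \Tot^\oplus\NN(X)
  &\Iso\int^{[n]\in\Delta}\Tot^\oplus\NN\bigl(X_n\tensor\unit\Delta[n]\bigr)
  \Iso\int^{[n]\in\Delta}\Tot^\oplus\bigl(X_n\tensor\NN\unit\Delta[n]\bigr)\\
  &\Iso\int^{[n]\in\Delta}X_n\tensor\NN\unit\Delta[n]
  \Iso X\tensor_\Delta\NN\unit\Delta[-]
  \Iso|X|,
\end{align*}
the last isomorphism being the definition of realization (Definition~\ref{defn:realization}). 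Each step is natural in $X$, so the composite is natural.

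The main obstacle is bookkeeping rather than anything conceptual: one must keep disciplined track of the three gradings at play --- the internal degree of $X_n$, the simplicial degree, and the normalized simplicial degree --- and check that the signs built into $\Tot^\oplus$ agree with those built into the tensor product of chain complexes used in Definition~\ref{defn:realization}; this is exactly the content of lemma \emph{(ii)}. Everything else is formal coend manipulation. (One can equivalently organize the argument as: $X\mapsto|X|$ and $X\mapsto\Tot^\oplus\NN(X)$ are both cocontinuous functors $\sChaincx_\unit\rarrow\Chaincx_\unit$ which agree on copowers $C\cdot\Delta[m]$ of representables --- both returning $C\tensor\NN\unit\Delta[m]$, using for the left-hand side the $\sMod_\unit$-level identity $\unit\Delta[m]\tensor_\Delta\NN\unit\Delta[-]\Iso\NN\unit\Delta[m]$ --- hence agree on all of $\sChaincx_\unit$ by density of representables.)
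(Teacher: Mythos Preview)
Your argument is correct and takes a genuinely different route from the paper's. The paper first pulls $\Tot^\oplus$ out of the coend defining $|X|$, writing $|X|\Iso\Tot^\oplus\bigl(\NN\unit\Delta[-]\tensordot_\Delta X\bigr)$ where $A\tensordot B$ is the double complex with $\Tot^\oplus(A\tensordot B)=A\tensor B$; it then identifies the inner double complex with $\NN(X)$ by an inductive argument along the skeletal filtration, using pushout squares
\begin{align*}
\xymatrix{
\NN\unit\partial\Delta[n]\tensordot\NN X_n\ar[d]\ar[r] &
\NN\unit\Sk_{n-1}\Delta[-]\tensordot_\Delta X\ar[d]\\
\NN\unit\Delta[n]\tensordot\NN X_n\ar[r] &
\NN\unit\Sk_{n}\Delta[-]\tensordot_\Delta X
}
\end{align*}
in $\Chaincx(\Chaincx_\unit)$. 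You instead push the cocontinuous composite $\Tot^\oplus\NN$ into the co-Yoneda presentation $X\Iso\int^{[n]}X_n\cdot\Delta[n]$ and reduce to the elementary identification on copowers of representables. Your approach is cleaner and entirely self-contained; the paper's reuses the skeletal-filtration machinery (Propositions~\ref{prop:skeletal_filtration_useful} and~\ref{prop:skeletal_glueing}) already set up for the homotopical analysis of realization, so it dovetails with the surrounding development but is less direct when read in isolation. Your lemma \emph{(i)} is the only place one might pause---$\NN$ is defined via kernels in Definition~\ref{defn:normalization_functor} and $C\tensor-$ need not be left exact---but your use of the Moore-mod-degeneracies description (equivalently, the natural splitting $X_n\Iso DX_n\oplus\NN X_n$ from Proposition~\ref{prop:natural_decomposition}) sidesteps this cleanly.
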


\begin{defn}\label{defn:simplicial_homotopy}
Let $\M$ be a category with all small colimits. Let $\function{f,g}{X}{Y}$ be maps in $\sM$ and consider the left-hand diagram 
\begin{align}
\label{eq:simplicial_homotopy}
\xymatrix{
  X\ar@<0.5ex>[r]^-{f}\ar@<-0.5ex>[r]_-{g} & Y,
}\quad\quad
\xymatrix{
  X\Iso X\cdot\Delta[0]\ar@<0.5ex>[r]^-{\id\cdot d^1}
  \ar@<-0.5ex>[r]_-{\id\cdot d^0} &
  X\cdot\Delta[1]\ar[r]^-{H} & Y,
}
\end{align}
in $\sM$. A \emph{simplicial homotopy} from $f$ to $g$ is a map $\function{H}{X\cdot\Delta[1]}{Y}$ in $\sM$ such that the two diagrams in \eqref{eq:simplicial_homotopy} are identical. The map $f$ is \emph{simplicially homotopic} to $g$ if there exists a simplicial homotopy from $f$ to $g$.
\end{defn}

\begin{rem}
This definition of simplicial homotopy agrees with \cite[I.6]{Goerss_Jardine} and \cite[between 6.2 and 6.3]{May_simplicial}. Consider the maps $\function{d^0,d^1}{\Delta[0]}{\Delta[1]}$ appearing in \eqref{eq:simplicial_homotopy}; it is important to note that the map $d^1$ represents the vertex 0 and the map $d^0$ represents the vertex 1. Hence this definition of simplicial homotopy is the intuitively correct one \cite[I.6]{Goerss_Jardine}, and is simply the reverse of the definition that is sometimes written down in terms of relations involving face and degeneracy maps: giving a simplicial homotopy $H$ from $f$ to $g$ as in Definition \ref{defn:simplicial_homotopy} is the same as giving a simplicial homotopy from $g$ to $f$ as defined in \cite[9.1]{May}, \cite[5.1]{May_simplicial}, \cite[8.3.11]{Weibel}.
\end{rem}

An easy proof of the following is given in \cite[Proof of I.7.10]{Goerss_Jardine}.

\begin{prop}
\label{prop:simplicial_contraction_simplices}
Let $z\geq 0$ and consider the maps
\begin{align*}
\xymatrix{
  \Delta[z]\ar[r]^{r} &
  \Delta[0]\ar[r]^{s} &
  \Delta[z]
}
\end{align*}
in $\sSet$ such that the map $s$ represents the vertex 0. Then the map $sr$ is simplicially homotopic to the identity map.
\end{prop}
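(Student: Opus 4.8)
The plan is to write down an explicit simplicial homotopy $\function{H}{\Delta[z]\cdot\Delta[1]}{\Delta[z]}$ from $sr$ to $\id$ and then invoke Definition~\ref{defn:simplicial_homotopy}. First I would observe that, since here $\M=\Set$, the object $\Delta[z]\cdot\Delta[1]$ of $\sSet$ is naturally isomorphic to the product $\Delta[z]\times\Delta[1]$, so an $n$-simplex of $\Delta[z]\cdot\Delta[1]$ is precisely a pair $(\alpha,\beta)$ of order-preserving maps $\function{\alpha}{[n]}{[z]}$ and $\function{\beta}{[n]}{[1]}$. I would then define $H$ on $n$-simplices by $H(\alpha,\beta)(i):=\alpha(i)$ when $\beta(i)=1$ and $H(\alpha,\beta)(i):=0$ when $\beta(i)=0$.

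The first point to check is that $\function{H(\alpha,\beta)}{[n]}{[z]}$ is order-preserving, so that it genuinely is an $n$-simplex of $\Delta[z]$; this is a short case analysis, using that $\beta$ order-preserving forces every index $i$ with $\beta(i)=0$ to precede every index with $\beta(i)=1$, together with $0\leq\alpha(i)$ for all $i$. Naturality of $H$ in $[n]$—hence that $H$ is a morphism of simplicial sets—is then immediate, since the defining formula visibly commutes with precomposition along any $\function{\xi}{[m]}{[n]}$ in $\Delta$.

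It then remains to verify the two boundary identities demanded by Definition~\ref{defn:simplicial_homotopy}. Composing $H$ with $\id\cdot d^1$ restricts $\beta$ to the $n$-simplex of $\Delta[1]$ constant at the vertex $0$ (recall $d^1$ represents vertex $0$ and $d^0$ represents vertex $1$); then $H(\alpha,\beta)$ is the $n$-simplex of $\Delta[z]$ constant at the vertex $0$, which is exactly $sr$ applied to $\alpha$. Composing with $\id\cdot d^0$ restricts $\beta$ to the $n$-simplex constant at vertex $1$, so $H(\alpha,\beta)=\alpha$ and the composite is $\id$. Hence $H$ is a simplicial homotopy from $sr$ to $\id$, which is what we want.

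I do not expect a genuine obstacle; the one place requiring a moment's care is the monotonicity check for $H(\alpha,\beta)$, and it is precisely that check which dictates the direction of the interpolation: one collapses to the bottom vertex $0$ on the ``$\beta=0$'' side and recovers the identity on the ``$\beta=1$'' side, whereas interchanging the two roles would produce a non-monotone map. Conceptually, $H$ is nothing but the nerve of the (necessarily order-preserving) natural transformation from the constant functor at $0$ to the identity functor on the poset $[z]$, which exists because $0\leq x$ for all $x\in[z]$; one could equally well deduce the proposition from the general principle that a pointwise inequality $F(x)\leq G(x)$ between order-preserving self-maps of a poset induces a simplicial homotopy $N(F)\simeq N(G)$ on nerves.
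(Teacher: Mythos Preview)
Your proof is correct; the explicit homotopy you write down, together with the natural-transformation reinterpretation you give at the end, is exactly the standard argument the paper defers to \cite[Proof of I.7.10]{Goerss_Jardine}. The paper itself gives no proof beyond that citation, so there is nothing further to compare.
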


\begin{prop}
\label{prop:simplicial_contraction_nice}
Let $W$ be a symmetric spectrum (resp. unbounded chain complex over $\unit$) and $z\geq 0$. Consider the maps
\begin{align*}
\xymatrix{
  W\cdot\Delta[z]\ar[r]^{\id\cdot r} &
  W\cdot\Delta[0]\ar[r]^{\id\cdot s} &
  W\cdot\Delta[z]
}
\end{align*}
in $\sSpectra$ (resp. $\sChaincx_\unit$) induced by the maps
$
\xymatrix@1{
  \Delta[z]\ar[r]^{r} & 
  \Delta[0]\ar[r]^{s} & 
  \Delta[z]
}
$
in simplicial sets, such that the map $s$ represents the vertex 0. Then the map
\begin{align*}
\xymatrix{
  |W\cdot\Delta[z]|\ar[r]^-{|\id\cdot r|} &
  |W\cdot\Delta[0]|\Iso W
}
\end{align*}
in $\Spectra$ (resp. in $\Chaincx_\unit$) is a weak equivalence. 
\end{prop}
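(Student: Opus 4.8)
The plan is to exhibit $\function{r}{\Delta[z]}{\Delta[0]}$ as a simplicial homotopy equivalence, push this through the functor $W\cdot(-)$, and then check that realization carries the resulting simplicial homotopy equivalence to a weak equivalence. Since $\Delta[0]$ is terminal in $\sSet$ we have $rs=\id_{\Delta[0]}$, and by Proposition~\ref{prop:simplicial_contraction_simplices} the composite $sr$ is simplicially homotopic to $\id_{\Delta[z]}$; hence $r$ is a simplicial homotopy equivalence with homotopy inverse $s$. The functor $W\cdot(-)$ preserves simplicial homotopies, because of the natural isomorphism $(X\cdot K)\cdot L\iso X\cdot(K\times L)$, which is immediate from the objectwise formula for $\,\cdot\,$; applying $W\cdot(-)$ to a simplicial homotopy $\Delta[z]\cdot\Delta[1]\rarrow\Delta[z]$ and to $r,s$ shows that $\function{\id_W\cdot r}{W\cdot\Delta[z]}{W\cdot\Delta[0]}$ is a simplicial homotopy equivalence in $\sSpectra$ (resp. $\sChaincx_\unit$), with homotopy inverse $\id_W\cdot s$. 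Since $|W\cdot\Delta[0]|\iso W$, it therefore suffices to show that $|-|$ sends $\id_W\cdot r$ to a weak equivalence.

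For unbounded chain complexes this is clean: by Proposition~\ref{prop:tot_of_normalization}, $|X|\iso\Tot^\oplus\NN(X)$, and both the normalization functor $\NN$ (applied to simplicial objects in an additive category) and $\Tot^\oplus$ take chain homotopies to chain homotopies, while $\NN$ turns simplicial homotopies into chain homotopies. Hence $|\id_W\cdot r|\iso\Tot^\oplus\NN(\id_W\cdot r)$ is a chain homotopy equivalence, in particular a homology isomorphism.

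For symmetric spectra there is no $\Tot^\oplus\NN$ available, so instead I would first establish the natural isomorphism $|W\cdot K|\iso W\Smash K_+$ for $W\in\Spectra$ and $K\in\sSet$: using $(W\cdot K)_n\iso W\Smash(K_n)_+$ and the fact that realization is a left adjoint (Proposition~\ref{prop:realzns_fit_into_adjunctions}), one gets $|W\cdot K|=(W\cdot K)\Smash_\Delta\Delta[-]_+\iso W\Smash\bigl((K_\bullet)_+\Smash_\Delta\Delta[-]_+\bigr)\iso W\Smash K_+$, where the last isomorphism is $(-)_+$ applied to the co-Yoneda presentation $K_\bullet\times_\Delta\Delta[-]\iso K$ in $\sSet$. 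By naturality in $K$, the map $|\id_W\cdot r|$ is identified with $\function{\id_W\Smash r_+}{W\Smash\Delta[z]_+}{W\Smash\Delta[0]_+\iso W}$. Now $r_+$ is a weak equivalence of pointed simplicial sets (indeed a simplicial homotopy equivalence), and $(W\Smash r_+)_n\iso W_n\Smash r_+$ is the smash of $r_+$ with the (automatically cofibrant) pointed simplicial set $W_n$, hence a weak equivalence; so $W\Smash r_+$ is a level equivalence, hence a stable equivalence.

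The main obstacle is the symmetric spectra step: carefully setting up the isomorphism $|W\cdot K|\iso W\Smash K_+$ and checking that it intertwines $|\id_W\cdot r|$ with $\id_W\Smash r_+$ requires bookkeeping with the coends defining realization and with the identification $(W\Smash K)_n\iso W_n\Smash K$ of the $\sSet_*$-tensoring on $\Spectra$. An alternative that sidesteps the explicit formula is to transport the simplicial homotopy of the first paragraph directly through $|-|$ using $|X\cdot\Delta[1]|\iso|X|\Smash\Delta[1]_+$, obtaining a $\Delta[1]_+$-homotopy equivalence $|\id_W\cdot r|$ in $\Spectra$, and then to invoke the fact that $\Delta[1]_+$-homotopic maps of symmetric spectra become equal in $\Ho(\Spectra)$. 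In any case one cannot simply appeal to $|-|$ being a left Quillen functor, since $\id_W\cdot r$ is not itself a weak equivalence in $\sSpectra$ and $W$ need not be cofibrant.
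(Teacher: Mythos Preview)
Your argument is correct and, for the chain-complex case, identical to the paper's: both invoke Proposition~\ref{prop:tot_of_normalization} and the fact that $\Tot^\oplus\NN$ takes simplicial homotopies to chain homotopies.

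For symmetric spectra the paper is considerably terser than you are: after noting that $(\id\cdot s)(\id\cdot r)$ is simplicially homotopic to the identity, it simply says ``since every level equivalence is a weak equivalence, it follows that $|\id\cdot r|$ is a weak equivalence,'' leaving implicit the step from simplicial homotopy in $\sSpectra$ to level equivalence of realizations. What it has in mind is essentially your \emph{alternative}: realization carries a simplicial homotopy to a $\Delta[1]_+$-homotopy (levelwise, realization is the diagonal of a bisimplicial pointed set, and diagonal preserves simplicial homotopies), so $|\id\cdot r|$ is a level homotopy equivalence. Your main route---computing $|W\cdot K|\iso W\Smash K_+$ and then checking that $W\Smash r_+$ is a level equivalence because $r_+$ is a pointed simplicial homotopy equivalence---is a slightly different packaging of the same idea: it trades the general ``realization preserves simplicial homotopies'' statement for an explicit identification of the realization in this special case. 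Both arrive at a level equivalence and both are valid; your version has the advantage of making every step visible, while the paper's version (your alternative) avoids the coend bookkeeping you flag as the ``main obstacle.''
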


\begin{proof}
We know that $rs=\id$ and $sr$ is simplicially homotopic to the identity map (Proposition \ref{prop:simplicial_contraction_simplices}). Hence $(\id\cdot r)(\id\cdot s)=\id$ and $(\id\cdot s)(\id\cdot r)$ is simplicially homotopic to the identity map. In the case of symmetric spectra, since every level equivalence is a weak equivalence, it follows that $|\id\cdot r|$ is a weak equivalence. In the case of chain complexes, since $\Tot^\oplus\NN$ takes simplicially homotopic maps to chain homotopic maps, it follows from Proposition \ref{prop:tot_of_normalization} that $|\id\cdot r|$ is a weak equivalence.
\end{proof}

\begin{prop}\label{prop:comparing_hocolim_and_realzn}
If $Z$ is a cofibrant simplicial symmetric spectrum (resp. cofibrant simplicial unbounded chain complex over $\unit$), then the natural map
\begin{align*}
\xymatrix{
  |Z|\ar[r] & 
  |(\pi_0Z)\cdot\Delta[0]|\Iso \pi_0Z
}
\end{align*}
is a weak equivalence. 
\end{prop}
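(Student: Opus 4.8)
The plan is to run a cellular induction over the structure of cofibrant objects in $\sSpectra$ (resp. $\sChaincx_\unit$), reducing everything to Proposition \ref{prop:simplicial_contraction_nice}. First observe that both functors in play are left Quillen: realization $\function{|-|}{\sM}{\M}$ is left Quillen by Proposition \ref{prop:realzns_fit_into_adjunctions}, and $\function{\colim_{\Delta^\op}}{\sM}{\M}$ is left Quillen because its right adjoint is the constant-diagram functor, which trivially preserves objectwise fibrations and objectwise acyclic fibrations. The map in the statement is the component at $Z$ of the natural transformation $\function{\eta_Z}{|Z|}{\pi_0 Z}$ obtained by applying $|-|$ to the unit $Z\rarrow(\pi_0 Z)\cdot\Delta[0]$ of the adjunction $\colim_{\Delta^\op}\dashv(-)\cdot\Delta[0]$ and then using the natural isomorphism $|W\cdot\Delta[0]|\Iso W$. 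So it suffices to show $\eta_Z$ is a weak equivalence whenever $Z$ is cofibrant in $\sM$.

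Next I would reduce to the generating cells. A cofibrant $Z\in\sM$ is a retract of an object built from $\emptyset$ by a (possibly transfinite) composition of pushouts of the generating cofibrations $\function{i\cdot\Delta[z]}{X\cdot\Delta[z]}{Y\cdot\Delta[z]}$, with $\function{i}{X}{Y}$ a generating cofibration of $\M$ and $z\geq 0$; since weak equivalences are closed under retracts it is enough to treat such cellular $Z$. On a single cell, $\pi_0(W\cdot\Delta[z])=\colim_{\Delta^\op}\bigl(n\mapsto\coprod_{\Delta[z]_n}W\bigr)\Iso\coprod_{\pi_0\Delta[z]}W\Iso W$ (the functor $S\mapsto\coprod_S W$ is a left adjoint, and $\Delta[z]$ is connected), and under this identification $\eta_{W\cdot\Delta[z]}$ becomes $|\id_W\cdot r|$ where $\function{r}{\Delta[z]}{\Delta[0]}$ is the unique map; this is a weak equivalence by Proposition \ref{prop:simplicial_contraction_nice}. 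In particular $\eta_\emptyset$ is an isomorphism, which is the base of the induction.

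Then I would propagate $\eta$ along the cell attachments. Because $|-|$ and $\colim_{\Delta^\op}$ are left adjoints they carry each attaching pushout square to a pushout square in $\M$, and because they are left Quillen the attaching maps $|X\cdot\Delta[z]|\rarrow|Y\cdot\Delta[z]|$ and $X\rarrow Y$ are cofibrations. Given inductively that $\eta$ is a weak equivalence at the previous stage $Z_\alpha$, the gluing lemma---valid since $\Spectra$ and $\Chaincx_\unit$ are left proper, left properness for $\Chaincx_\unit$ over an arbitrary commutative ring being a consequence of the facts that cofibrations are monomorphisms and that a morphism of short exact sequences of complexes inducing an isomorphism on two of the three homologies induces one on the third---shows $\eta$ is a weak equivalence at $Z_{\alpha+1}$. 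At a limit ordinal, $|-|$ and $\colim_{\Delta^\op}$ take the $\lambda$-sequence to $\lambda$-sequences of cofibrations in $\M$ starting at $\emptyset$, so the transfinite composition of the weak equivalences $\eta_{Z_\alpha}$ is again a weak equivalence; this is a standard property of these cofibrantly generated model categories (compare \cite{Hirschhorn, Hovey, Hovey_Shipley_Smith}), and may alternatively be deduced by iterating left properness. This completes the induction.

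I expect the main obstacle to be precisely this homotopical bookkeeping in the inductive step: confirming that left properness and the invariance of weak equivalences under transfinite composition along cofibrations hold uniformly across all the model structures at issue---the several (flat, positive) stable structures on symmetric spectra (\cite{Hovey_Shipley_Smith, Mandell_May_Schwede_Shipley, Shipley_comm_ring}) and the projective structure on $\Chaincx_\unit$ over an arbitrary commutative ring. By contrast, the genuinely geometric content is entirely concentrated in Proposition \ref{prop:simplicial_contraction_nice}---the contractibility of $\Delta[z]$---and once that input is granted the rest of the argument is formal.
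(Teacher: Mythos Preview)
Your argument is correct and follows essentially the same cellular-induction skeleton as the paper, but the homotopical bookkeeping in the inductive step is handled differently, and the difference is worth noting.

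You push the pushout through the two left Quillen functors and invoke the gluing lemma via left properness of $\M$. The paper instead passes to cofibers: from the attaching square it extracts the row
\[
\xymatrix{
  |Z_0|\ar[r] & |Z_1|\ar[r] & |(Y/X)\cdot\Delta[z]|
}
\]
and the analogous row after applying $\pi_0$, observes that the left-hand maps are \emph{monomorphisms} (this uses Proposition~\ref{prop:realzn_preserves_monomorphisms}, not merely that $|-|$ is left Quillen), and concludes by comparing cofibers---the right-hand vertical map being exactly Proposition~\ref{prop:simplicial_contraction_nice}. The transfinite step is handled the same way, via monomorphisms rather than cofibrations. As the paper remarks in Section~\ref{sec:model_structures_symmetric_spectra}, this is secretly an appeal to the injective stable model structure on $\Spectra$, in which monomorphisms are the cofibrations.

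What each approach buys: yours is cleaner and more portable, but---as you correctly anticipate---it obliges you to check left properness uniformly across all the model structures in play (several on $\Spectra$, plus $\Chaincx_\unit$ over an arbitrary ring). The paper's cofiber argument sidesteps that verification entirely, at the cost of needing the separate input that realization preserves monomorphisms; this is a trade the paper is happy to make since Proposition~\ref{prop:realzn_preserves_monomorphisms} is needed elsewhere anyway.
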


\begin{proof}
Let $X\rarrow Y$ be a generating cofibration in $\Spectra$ (resp. $\Chaincx_\unit$) and $z\geq 0$. Consider the pushout diagram
\begin{align}
\label{eq:simplicial_glueing_chain_complex}
\xymatrix{
  X\cdot\Delta[z]\ar[d]\ar[r] & Z_0\ar[d]\\
  Y\cdot\Delta[z]\ar[r] & Z_1
}
\end{align}
in $\sSpectra$ (resp. $\sChaincx_\unit$) and the natural maps
\begin{align}
\label{eq:colim_map_first}
\xymatrix{
  |Z_0|\ar[r] & |(\pi_0Z_0)\cdot\Delta[0]|,
}\\
\label{eq:colim_map_second}
\xymatrix{
  |Z_1|\ar[r] & |(\pi_0Z_1)\cdot\Delta[0]|.
}
\end{align}
Assume \eqref{eq:colim_map_first} is a weak equivalence; let's verify \eqref{eq:colim_map_second} is a weak equivalence. Consider the commutative diagram
\begin{align*}
\xymatrix{
  |Z_0|\ar[d]\ar[r] & 
  |Z_1|\ar[d]\ar[r] & 
  |(Y/X)\cdot\Delta[z]|\ar[d] \\
  |(\pi_0Z_0)\cdot\Delta[0]|\ar[r] & 
  |(\pi_0Z_1)\cdot\Delta[0]|\ar[r] & 
  |(Y/X)\cdot\Delta[0]|
}
\end{align*}
The left-hand horizontal maps are monomorphisms, the left-hand vertical map is a weak equivalence by assumption, and the right-hand vertical map is a weak equivalence by Proposition \ref{prop:simplicial_contraction_nice}, hence the middle vertical map is a weak equivalence. Consider a sequence
\begin{align*}
\xymatrix{
  Z_0\ar[r] & Z_1\ar[r] & Z_2\ar[r] & \cdots
}
\end{align*}
of pushouts of maps as in \eqref{eq:simplicial_glueing_chain_complex}. Assume $Z_0$ makes \eqref{eq:colim_map_first} a weak equivalence; we want to show that for $Z_\infty:=\colim_k Z_k$ the natural map
\begin{align}
\label{eq:colim_map_infinity}
\xymatrix{
  |Z_\infty|\ar[r] & 
  |(\pi_0Z_\infty)\cdot\Delta[0]|
}
\end{align}
is a weak equivalence. Consider the commutative diagram
\begin{align*}
\xymatrix{
  |Z_0|\ar[d]\ar[r] & 
  |Z_1|\ar[d]\ar[r] & 
  |Z_2|\ar[d]\ar[r] & \cdots \\
  |(\pi_0Z_0)\cdot\Delta[0]|\ar[r] & 
  |(\pi_0Z_1)\cdot\Delta[0]|\ar[r] & 
  |(\pi_0Z_2)\cdot\Delta[0]|\ar[r] &
  \cdots
}
\end{align*}
We know that the horizontal maps are monomorphisms and the vertical maps are weak equivalences, hence the induced map \eqref{eq:colim_map_infinity} is a weak equivalence. Noting that every cofibration $*\rarrow Z$ in $\sSpectra$ (resp. $\sChaincx_\unit$) is a retract of a (possibly transfinite) composition of pushouts of maps as in \eqref{eq:simplicial_glueing_chain_complex}, starting with $Z_0=*$, finishes the proof.
\end{proof}

\begin{proof}[Proof of Theorem \ref{thm:hocolim_realzn}]
Consider any map $X\rarrow Y$ in $\sSpectra$ (resp. $\sChaincx_\unit$). Use functorial factorization to obtain a commutative diagram
\begin{align*}
\xymatrix{
  {*}\ar[d]\ar[r] & X^c\ar[d]\ar[r] & X\ar[d]\\
  {*}\ar[r] & Y^c\ar[r] & Y
}
\end{align*}
in $\sSpectra$ (resp. $\sChaincx_\unit$) such that each row is a cofibration followed by an acyclic fibration. Hence we get a corresponding commutative diagram
\begin{align*}
\xymatrix{
  \hocolim\limits_{\Delta^\op} X\ar[d] & 
  \hocolim\limits_{\Delta^\op} X^c\ar[d]\ar[r]\ar[l] &
  \colim\limits_{\Delta^\op} X^c\ar[d] &
  |X^c|\ar[d]\ar[l]_-{(*)}\ar[r]^-{(**)}&
  |X|\ar[d]\\
  \hocolim\limits_{\Delta^\op} Y & 
  \hocolim\limits_{\Delta^\op} Y^c\ar[l]\ar[r]&
  \colim\limits_{\Delta^\op} Y^c &
  |Y^c|\ar[l]_-{(*)}\ar[r]^-{(**)}&
  |Y|
}
\end{align*}
such that the rows are maps of weak equivalences; the maps $(*)$ and $(**)$ are weak equivalences by Propositions \ref{prop:comparing_hocolim_and_realzn} and \ref{prop:realzn_homotopy_meaningful}, respectively.
\end{proof}

\subsection{Homotopy colimits of simplicial objects in $\SymSeq$}

The purpose of this section is to observe that several properties of realization proved in Section \ref{sec:sym_spectra_and_unbounded_chain_cxs} have corresponding versions for simplicial symmetric sequences.

\begin{defn}\label{defn:realzn_SymSeq}
Consider symmetric sequences in $\Spectra$ or $\Chaincx_\unit$. The \emph{realization} functor $|-|$ for simplicial symmetric sequences is defined objectwise by
\begin{align*}
  \functor{|-|}{\sSymSeq}{\SymSeq},
  \quad\quad
  X\longmapsto |X|
  \quad\quad
  \Bigl(
  \mathbf{t}\longmapsto |X[\mathbf{t}]|
  \Bigr).
\end{align*}
\end{defn}

The following is a version of Proposition \ref{prop:realzns_fit_into_adjunctions} for simplicial symmetric sequences, and is proved by exactly the same argument.

\begin{prop}
Consider symmetric sequences in $\Spectra$ or $\Chaincx_\unit$. The realization functor fits into an adjunction
\begin{align*}
\xymatrix{
  \sSymSeq
  \ar@<0.5ex>[r]^-{|-|} & \SymSeq\ar@<0.5ex>[l]
}
\end{align*}
with left adjoint on top. This adjunction is a Quillen pair.
\end{prop}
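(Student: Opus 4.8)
The plan is to mimic verbatim the proof of Proposition~\ref{prop:realzns_fit_into_adjunctions}, since realization for simplicial symmetric sequences is defined objectwise (Definition~\ref{defn:realzn_SymSeq}), and the model structure on $\SymSeq$ and on $\sSymSeq$ is likewise the objectwise/projective one. First I would construct the right adjoint. Because $|-|\colon\sSymSeq\rarrow\SymSeq$ is computed levelwise by applying the realization of Definition~\ref{defn:realization} to $X[\mathbf{t}]$, the candidate right adjoint is given objectwise by $\mathbf{t}\longmapsto \Map(S\tensor G_0\Delta[-]_+,Y[\mathbf{t}])$ in the symmetric spectra case (resp. $\mathbf{t}\longmapsto \Map(\NN\unit\Delta[-],Y[\mathbf{t}])$ in the chain complex case); one checks via the universal property of coends, exactly as in the proof of Proposition~\ref{prop:realzns_fit_into_adjunctions}, that $\hom_{\SymSeq}(|X|,Y)\Iso\hom_{\sSymSeq}(X,R(Y))$ naturally, where $R$ is this functor---the point being that all the constructions are objectwise in $\mathbf{t}$ and natural in $\Sigma^\op$, so the levelwise adjunction assembles into an adjunction of functor categories.

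Next I would verify the Quillen pair claim. It suffices to show the right adjoint $R$ preserves fibrations and acyclic fibrations. Since fibrations (resp. acyclic fibrations) in $\SymSeq$ and $\sSymSeq$ are defined objectwise, and since $R$ is computed objectwise by the right adjoint appearing in Proposition~\ref{prop:realzns_fit_into_adjunctions} (which was shown there to preserve fibrations and acyclic fibrations using that $S\tensor G_0\Delta[m]_+$, resp. $\NN\unit\Delta[m]$, is cofibrant for each $m\geq 0$ together with the monoidal model structure on $\Spectra$, resp. $\Chaincx_\unit$), the conclusion follows immediately level by level.

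There is essentially no obstacle here; the statement is, as the paper says, proved by exactly the same argument, and the only thing to be careful about is that the objectwise formation of realization genuinely commutes with the objectwise formation of the diagram-category model structures---which it does, since both are the projective model structure built from evaluation at $\mathbf{t}\in\Sigma$. I would therefore present this as a short proof that cites Proposition~\ref{prop:realzns_fit_into_adjunctions} and notes that realization, the candidate right adjoint, fibrations, and acyclic fibrations are all defined objectwise in $\mathbf{t}$, so the argument goes through levelwise.

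\begin{proof}
This is proved by exactly the same argument as Proposition~\ref{prop:realzns_fit_into_adjunctions}, using that realization of simplicial symmetric sequences is defined objectwise (Definition~\ref{defn:realzn_SymSeq}) and that the model structures on $\SymSeq$ and $\sSymSeq$ are the objectwise (projective) ones. Indeed, using the universal property of coends it is easy to verify that the functor $\sSymSeq\larrow\SymSeq$ given objectwise by
\begin{align*}
  \mathbf{t}\longmapsto\Map(S\tensor G_0\Delta[-]_+,Y[\mathbf{t}])
  \quad\quad
  \Bigl(
  \text{resp.}\quad
  \mathbf{t}\longmapsto\Map(\NN\unit\Delta[-],Y[\mathbf{t}])
  \Bigr)
\end{align*}
is a right adjoint of $|-|$. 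Since fibrations and acyclic fibrations in $\SymSeq$ and $\sSymSeq$ are the objectwise ones, and since this right adjoint is computed objectwise by the right adjoint appearing in the proof of Proposition~\ref{prop:realzns_fit_into_adjunctions}---which preserves fibrations and acyclic fibrations, by noting that $S\tensor G_0\Delta[m]_+$ and $\NN\unit\Delta[m]$ are cofibrant for each $m\geq 0$ and that $(\Spectra,\Smash,S)$ and $(\Chaincx_\unit,\tensor,\unit)$ are monoidal model categories---it follows that this right adjoint preserves fibrations and acyclic fibrations. Hence the adjunction is a Quillen pair.
\end{proof}
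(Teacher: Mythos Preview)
Your proposal is correct and matches the paper's approach exactly: the paper simply remarks that this is ``a version of Proposition~\ref{prop:realzns_fit_into_adjunctions} for simplicial symmetric sequences, and is proved by exactly the same argument,'' without writing out any further details. Your write-up faithfully unpacks that sentence, giving the objectwise right adjoint and the objectwise check that it preserves fibrations and acyclic fibrations.
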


\begin{prop}\label{prop:realzn_homotopy_meaningful_SymSeq}
Consider symmetric sequences in $\Spectra$ or $\Chaincx_\unit$.
\begin{itemize}
\item[(a)] If $\function{f}{X}{Y}$ in $\sSymSeq$ is a weak equivalence, then $\function{|f|}{|X|}{|Y|}$ is a weak equivalence.
\item[(b)] If $\function{f}{X}{Y}$ in $\sSymSeq$ is a monomorphism, then $\function{|f|}{|X|}{|Y|}$ is a monomorphism.
\item[(c)] If $Z\in\sSymSeq$ is cofibrant, then the natural map
\begin{align*}
\xymatrix{
  |Z|\ar[r] & 
  |(\pi_0Z)\cdot\Delta[0]|\Iso \pi_0Z
}
\end{align*}
is a weak equivalence.
\end{itemize}
\end{prop}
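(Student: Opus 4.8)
The plan is to deduce all three parts from the corresponding facts about realization of simplicial symmetric spectra (resp.\ simplicial unbounded chain complexes over $\unit$) established in Section \ref{sec:sym_spectra_and_unbounded_chain_cxs}, exploiting that realization of simplicial symmetric sequences, the functor $\pi_0$, and the projective model structure on $\SymSeq$ are all defined objectwise over $\Sigma^\op$. Explicitly, there are natural isomorphisms $|X|[\mathbf{t}]\iso|X[\mathbf{t}]|$ (by Definition \ref{defn:realzn_SymSeq}) and $(\pi_0 X)[\mathbf{t}]\iso\pi_0(X[\mathbf{t}])$ (since colimits in $\SymSeq$ are computed objectwise), and, for cofibrant $Z$, the natural map $|Z|\rarrow|(\pi_0 Z)\cdot\Delta[0]|\iso\pi_0 Z$ evaluated at $\mathbf{t}$ is precisely the natural map of Proposition \ref{prop:comparing_hocolim_and_realzn} for the simplicial symmetric spectrum (resp.\ chain complex) $Z[\mathbf{t}]$.

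For part (a), a weak equivalence $\function{f}{X}{Y}$ in $\sSymSeq$ is an objectwise weak equivalence in $\SymSeq$, so for each $\mathbf{t}$ the map $\function{f[\mathbf{t}]}{X[\mathbf{t}]}{Y[\mathbf{t}]}$ is an objectwise weak equivalence of simplicial symmetric spectra (resp.\ chain complexes); Proposition \ref{prop:realzn_homotopy_meaningful} then shows $|f[\mathbf{t}]|$ is a weak equivalence for every $\mathbf{t}$, i.e.\ $|f|$ is a weak equivalence in $\SymSeq$. For part (b), a monomorphism in $\sSymSeq$ is a monomorphism objectwise over $\Sigma^\op$ (monomorphisms in a diagram category are detected objectwise), so Proposition \ref{prop:realzn_preserves_monomorphisms} applied for each $\mathbf{t}$ gives that $|f|$ is objectwise a monomorphism, hence a monomorphism.

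The one point requiring an extra argument is the cofibrancy input for part (c): I must check that if $Z\in\sSymSeq$ is cofibrant in the projective model structure, then $Z[\mathbf{t}]\in\sSpectra$ (resp.\ $\sChaincx_\unit$) is cofibrant for each $\mathbf{t}$. This holds because the evaluation functor $\function{\Ev_\mathbf{t}}{\sSymSeq}{\sSpectra}$, $Z\mapsto Z[\mathbf{t}]$, preserves cofibrations: it is a left adjoint, and it carries each generating cofibration of the projective model structure to a finite coproduct of cofibrations, which is a cofibration; since every cofibration is a retract of a transfinite composite of pushouts of generating cofibrations and $\Ev_\mathbf{t}$ commutes with all colimits, it preserves cofibrations and in particular cofibrant objects. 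Granting this, Proposition \ref{prop:comparing_hocolim_and_realzn} applied to $Z[\mathbf{t}]$ shows the natural map $|Z[\mathbf{t}]|\rarrow\pi_0(Z[\mathbf{t}])$ is a weak equivalence for every $\mathbf{t}$, hence $|Z|\rarrow\pi_0 Z$ is an objectwise weak equivalence, which completes the proof. The main (indeed only substantive) obstacle is this cofibrancy-transfer observation for evaluation functors; the remainder is a routine objectwise transcription of the results of Section \ref{sec:sym_spectra_and_unbounded_chain_cxs}.
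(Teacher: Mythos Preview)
Your treatment of parts (a) and (b) matches the paper's exactly: both are immediate objectwise applications of Propositions \ref{prop:realzn_homotopy_meaningful} and \ref{prop:realzn_preserves_monomorphisms}.

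For part (c) you take a different route from the paper. The paper does not reduce to Proposition \ref{prop:comparing_hocolim_and_realzn} objectwise; instead it says part (c) ``is proved by exactly the same argument,'' meaning one reruns the cell-induction of Proposition \ref{prop:comparing_hocolim_and_realzn} directly in $\sSymSeq$, using generating cofibrations of $\SymSeq$ in place of those of $\Spectra$ or $\Chaincx_\unit$. Your approach is instead a black-box reduction: establish that evaluation $\Ev_{\mathbf{t}}$ preserves cofibrant objects, then invoke the already-proved result for each $\mathbf{t}$. Your cofibrancy-transfer argument is correct (evaluation is cocontinuous and sends each generating cofibration of $\sSymSeq$ to a coproduct of generating cofibrations of $\sSpectra$ or $\sChaincx_\unit$), so the reduction goes through. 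The payoff of your route is modularity---no argument needs repeating---at the cost of an auxiliary lemma; the paper's route avoids that lemma but repeats the induction verbatim in a new category. Both are short and valid.
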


\begin{proof}
Parts (a) and (b) follow immediately from Propositions \ref{prop:realzn_homotopy_meaningful} and \ref{prop:realzn_preserves_monomorphisms}, respectively. Part (c) is a version of Proposition \ref{prop:comparing_hocolim_and_realzn} for simplicial symmetric sequences, and is proved by exactly the same argument.
\end{proof}

\begin{thm}
\label{thm:calculate_hocolim_with_realization_symseq_nice}
Consider symmetric sequences in $\Spectra$ or $\Chaincx_\unit$. If $X$ is a simplicial symmetric sequence, then there is a zig-zag of weak equivalences 
\begin{align*}
  \hocolim\limits_{\Delta^\op} X\wequiv |X|
\end{align*}
natural in $X$. Here, $\unit$ is any commutative ring.
\end{thm}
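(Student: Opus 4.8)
The plan is to mirror the proof of Theorem~\ref{thm:hocolim_realzn} objectwise, using the fact that realization, homotopy colimit over $\Delta^\op$, and cofibrant replacement in $\sSymSeq$ are all controlled at each spot $\mathbf{t}$ by the corresponding constructions in $\Spectra$ (resp. $\Chaincx_\unit$). First I would take an arbitrary map $X\rarrow Y$ in $\sSymSeq$ and use functorial factorization in the projective model structure on $\sSymSeq$ to produce a commutative diagram with rows $*\rarrow X^c\rarrow X$ and $*\rarrow Y^c\rarrow Y$, each a cofibration followed by an acyclic fibration. This gives the diagram relating $\hocolim_{\Delta^\op}X$, $\colim_{\Delta^\op}X^c$, and $|X^c|$, $|X|$, exactly as in the proof of Theorem~\ref{thm:hocolim_realzn}.

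The two weak equivalences that need justification are the map $|X^c|\rarrow\colim_{\Delta^\op}X^c\iso\pi_0 X^c$ and the map $|X^c|\rarrow|X|$. The first is Proposition~\ref{prop:realzn_homotopy_meaningful_SymSeq}(c), applied to the cofibrant simplicial symmetric sequence $X^c$; note that a cofibrant diagram in $\sSymSeq$ is objectwise cofibrant by Proposition~\ref{prop:cofibrant_diagrams_are_objectwise_cofibrant}, which is what makes part (c) applicable. The second is Proposition~\ref{prop:realzn_homotopy_meaningful_SymSeq}(a), since $X^c\rarrow X$ is an objectwise weak equivalence in $\sSymSeq$ (i.e., a level-$\mathbf{t}$ weak equivalence of simplicial symmetric spectra, resp. chain complexes, for each $\mathbf{t}$). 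Chasing the resulting diagram exactly as in the proof of Theorem~\ref{thm:hocolim_realzn} yields the natural zig-zag $\hocolim_{\Delta^\op}X\wequiv|X|$.

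There is essentially no serious obstacle here: the substance of the argument lives in Section~\ref{sec:sym_spectra_and_unbounded_chain_cxs}, and the present theorem is a formal consequence once the three ingredients of Proposition~\ref{prop:realzn_homotopy_meaningful_SymSeq} are in hand. The only point requiring a word of care is that all the relevant structure on $\SymSeq$ (the projective model structure, realization, and $\hocolim_{\Delta^\op}$) is defined objectwise, so that one really may reduce everything spotwise to the already-established results for $\Spectra$ and $\Chaincx_\unit$; in particular $\pi_0(-)$ of a simplicial symmetric sequence is computed objectwise, which is why the identification $\colim_{\Delta^\op}X^c\iso|(\pi_0 X^c)\cdot\Delta[0]|$ makes sense at the level of symmetric sequences. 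Thus the proof is literally: \emph{``This is proved by exactly the same argument as Theorem~\ref{thm:hocolim_realzn}, using Proposition~\ref{prop:realzn_homotopy_meaningful_SymSeq} in place of Propositions~\ref{prop:comparing_hocolim_and_realzn} and~\ref{prop:realzn_homotopy_meaningful}.''}
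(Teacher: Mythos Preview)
Your proposal is correct and matches the paper's approach exactly: the paper's proof is the single sentence ``This is proved exactly as Theorem~\ref{thm:hocolim_realzn}, except using Proposition~\ref{prop:realzn_homotopy_meaningful_SymSeq} instead of Propositions~\ref{prop:comparing_hocolim_and_realzn} and~\ref{prop:realzn_homotopy_meaningful},'' which is precisely what you wrote. Your additional remarks about objectwise control and Proposition~\ref{prop:cofibrant_diagrams_are_objectwise_cofibrant} are fine but not strictly needed, since Proposition~\ref{prop:realzn_homotopy_meaningful_SymSeq}(c) is stated directly for cofibrant objects of $\sSymSeq$.
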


\begin{proof}
This is proved exactly as Theorem \ref{thm:hocolim_realzn}, except using Proposition \ref{prop:realzn_homotopy_meaningful_SymSeq} instead of Propositions \ref{prop:comparing_hocolim_and_realzn} and \ref{prop:realzn_homotopy_meaningful}.
\end{proof}

\section{Homotopy colimits in algebras and modules over operads}
\label{sec:hocolim_calculations_algebraic}

The purpose of this section is to prove Theorems \ref{MainTheorem3}, \ref{thm:fattened_replacement}, and \ref{thm:bar_calculates_derived_circle}. Since certain properties of reflexive coequalizers will be important, we first recall these in Section \ref{sec:reflexive_coequalizers}. In Section \ref{sec:filtrations_of_certain_pushouts} we introduce filtrations of certain pushouts in (simplicial) algebras and modules over operads, which are a key ingredient in the proofs.

\begin{assumption}
From now on in this section, we assume that $\unit$ is any commutative ring, unless stated otherwise.
\end{assumption}

Later in this section we will need the stronger assumption that $\unit$ is a field of characteristic zero when we begin using model structures on algebras and modules over operads in unbounded chain complexes (Definition \ref{defn:model_structures_chain_complexes}).

\subsection{Reflexive coequalizers and colimits of simplicial objects}
\label{sec:reflexive_coequalizers}
A first step is to recall the good behavior of reflexive coequalizers with respect to tensor products and circle products. 

\begin{defn}
Let $\C$ be a category. A pair of maps of the form $\xymatrix@1{X_0 & X_1\ar@<-0.5ex>[l]_-{d_0}\ar@<+0.5ex>[l]^-{d_1}}$ in $\C$ is called a \emph{reflexive pair} if there exists $\function{s_0}{X_0}{X_1}$ in $\C$ such that $d_0s_0=\id$ and $d_1s_0=\id$. A \emph{reflexive coequalizer} is the coequalizer of a reflexive pair.
\end{defn}

The following proposition is proved in \cite[2.3.2 and 2.3.4]{Rezk}; it is also proved in \cite{Harper_Modules} and follows from the proof of \cite[II.7.2]{EKMM} or the arguments in \cite[Section 1]{Goerss_Hopkins}.

\begin{prop}
\label{prop:well_behaved_wrt_tensor_and_circle}
Let $(\C,\tensor)$ be a closed symmetric monoidal category with all small colimits. Consider symmetric sequences in $\Spectra$ or $\Chaincx_\unit$. 
\begin{itemize}
\item[(a)] If
$\xymatrix@1{X_{-1} & X_0\ar[l] & X_1\ar@<-0.5ex>[l]\ar@<+0.5ex>[l]}$ and 
$\xymatrix@1{Y_{-1} & Y_0\ar[l] & Y_1\ar@<-0.5ex>[l]\ar@<+0.5ex>[l]}$
are reflexive coequalizer diagrams in $\C$, then their objectwise tensor product
\begin{align*}
\xymatrix{
  X_{-1}\tensor Y_{-1} & X_0\tensor Y_0\ar[l]& 
  X_1\tensor Y_1\ar@<-0.5ex>[l]\ar@<+0.5ex>[l]
}
\end{align*}
is a reflexive coequalizer diagram in $\C$.
\item[(b)]  If
$\xymatrix@1{A_{-1} & A_0\ar[l] & A_1\ar@<-0.5ex>[l]\ar@<+0.5ex>[l]}$ and 
$\xymatrix@1{B_{-1} & B_0\ar[l] & B_1\ar@<-0.5ex>[l]\ar@<+0.5ex>[l]}$
are reflexive coequalizer diagrams in $\SymSeq$, then their objectwise circle product 
\begin{align*}
\xymatrix@1{
  A_{-1}\circ B_{-1} & A_0\circ B_0\ar[l]& 
  A_1\circ B_1\ar@<-0.5ex>[l]\ar@<+0.5ex>[l]
}
\end{align*}
is a reflexive coequalizer diagram in $\SymSeq$. 
\end{itemize}
\end{prop}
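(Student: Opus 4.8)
### Proof Proposal for Proposition \ref{prop:well_behaved_wrt_tensor_and_circle}

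The plan is to reduce part (b) to part (a) via the explicit pointwise formula for circle products, and to prove part (a) by a direct diagram chase exploiting the universal property of coequalizers together with the fact that tensoring with a fixed object preserves all colimits (since $(\C,\tensor)$ is closed). First I would observe that in a cocomplete category, the coequalizer of a reflexive pair $\xymatrix@1{X_0 & X_1\ar@<-0.5ex>[l]\ar@<+0.5ex>[l]}$ is the same as the colimit of the truncated simplicial diagram it generates, so the real content is the stability of reflexive coequalizers under the bifunctor $\tensor$.

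For part (a): since $-\tensor Y$ and $X\tensor-$ each preserve colimits for fixed $X$, $Y$ (closedness gives right adjoints $\Map(Y,-)$ and $\Map(X,-)$), the diagram obtained by applying $-\tensor Y_{-1}$ to the $X$-coequalizer, and applying $X_1\tensor-$ to the $Y$-coequalizer, are coequalizers. The issue is that we want the \emph{diagonal} $X_1\tensor Y_1 \rightrightarrows X_0\tensor Y_0 \to X_{-1}\tensor Y_{-1}$ to be a coequalizer, and here the two face maps $d_0\tensor d_0$ and $d_1\tensor d_1$ are not of the form ``one variable at a time.'' The standard trick, which I would carry out, is: given $\function{g}{X_0\tensor Y_0}{Z}$ coequalizing $d_0\tensor d_0$ and $d_1\tensor d_1$, use the common section $s_0$ on the $Y$-side to factor, showing $g$ also coequalizes $(d_0\tensor\id_{Y_0})$ and $(d_1\tensor\id_{Y_0})$; then it descends along $-\tensor Y_0$ to a map out of $X_{-1}\tensor Y_0$; next show the resulting map coequalizes $(\id_{X_{-1}}\tensor d_0)$ and $(\id_{X_{-1}}\tensor d_1)$ using the $X$-side section; then it descends along $X_{-1}\tensor-$ to $X_{-1}\tensor Y_{-1}$. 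Uniqueness of the induced map follows from epimorphism-ness of the two coequalizer projections. One checks the resulting pair $\xymatrix@1{X_0\tensor Y_0 & X_1\tensor Y_1\ar@<-0.5ex>[l]\ar@<+0.5ex>[l]}$ is itself reflexive, with section $s_0\tensor s_0$, so the coequalizer diagram makes sense.

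For part (b): recall from Definition \ref{defn:circle_product} that $(A\circ B)[\mathbf{r}]\iso\coprod_{t\geq 0} A[\mathbf{t}]\tensor_{\Sigma_t}(B^{\tensorcheck t})[\mathbf{r}]$ (and similarly with $\Smash$ in spectra). Coproducts, quotients by $\Sigma_t$, and the underlying tensor product of the ambient category all commute with reflexive coequalizers — the first two because they are colimits, the last by part (a) applied iteratively (a $t$-fold tensor power of reflexive coequalizers is again one, by induction using that the relevant pairs stay reflexive). So the plan is: given reflexive coequalizer diagrams $A_\bullet$ and $B_\bullet$ in $\SymSeq$, form $(B_i^{\tensorcheck t})[\mathbf{r}]$ for each $i$; by iterating part (a) these assemble into a reflexive coequalizer, $\Sigma_t$-equivariantly; then tensor with $A_i[\mathbf{t}]$ over $\Sigma_t$ (again a colimit followed by part (a)) and take the coproduct over $t$. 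Since a finite or arbitrary coproduct of reflexive coequalizer diagrams is a reflexive coequalizer diagram, the conclusion follows objectwise in $\mathbf{r}$, hence in $\SymSeq$.

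The main obstacle, and the only step requiring care, is the mixed-variable descent argument in part (a): making precise how the existence of a \emph{common} section $s_0$ for both $d_0$ and $d_1$ lets one ``split'' the diagonal coequalization into two one-variable coequalizations. This is exactly where reflexivity is essential — for a general (non-reflexive) pair the diagonal statement is false — so I would be careful to invoke $d_0 s_0 = \id = d_1 s_0$ at precisely the two points where a map out of $X_{-1}\tensor Y_0$ (resp. $X_{-1}\tensor Y_{-1}$) must be produced, and to note that the same reasoning shows the pair on $X_1\tensor Y_1$ is reflexive so that all the diagrams in the statement are literally reflexive coequalizer diagrams. Everything else is the preservation of colimits by left adjoints, which is automatic.
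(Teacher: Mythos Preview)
Your argument is correct and is precisely the standard one. The paper itself does not supply a proof of this proposition: it simply cites \cite[2.3.2 and 2.3.4]{Rezk}, \cite{Harper_Modules}, \cite[II.7.2]{EKMM}, and \cite[Section 1]{Goerss_Hopkins}. The diagram chase you outline for part (a)---using the common section to split the diagonal coequalization $g(d_0\tensor d_0)=g(d_1\tensor d_1)$ into two one-variable coequalizations and then descending twice via the left-adjoint property of $-\tensor Y_0$ and $X_{-1}\tensor-$---is exactly the argument in those references, and your reduction of (b) to (a) via the coend formula $(A\circ B)[\mathbf{r}]\iso\coprod_{t\geq 0}A[\mathbf{t}]\tensor_{\Sigma_t}(B^{\tensorcheck t})[\mathbf{r}]$, together with the observation that coproducts and $\Sigma_t$-coinvariants are colimits and hence commute with (reflexive) coequalizers, is likewise the standard route.
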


The following relationship between reflexive coequalizers and simplicial objects will be useful.

\begin{prop}
\label{prop:colim_of_simplicial_object}
Let $\M$ be a category with all small colimits. If $X\in\sM$, then its colimit is naturally isomorphic to a reflexive coequalizer of the form
\begin{align*}
  \colim_{\Delta^\op}X\Iso
  \colim\Bigl(
  \xymatrix{
    X_0 & X_1
    \ar@<-0.5ex>[l]_-{d_0}\ar@<0.5ex>[l]^-{d_1}
  }
  \Bigr)
\end{align*}
in $\M$, with $d_0$ and $d_1$ the indicated face maps of $X$.
\end{prop}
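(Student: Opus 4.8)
The plan is to prove Proposition \ref{prop:colim_of_simplicial_object} by directly comparing universal properties: I will show that, for every object $Z\in\M$, a cocone on the diagram $\function{X}{\Delta^\op}{\M}$ with vertex $Z$ is ``the same'' as a cocone with vertex $Z$ on the reflexive pair $\xymatrix@1{X_0 & X_1\ar@<-0.5ex>[l]_-{d_0}\ar@<0.5ex>[l]^-{d_1}}$, naturally in $Z$. Since a natural isomorphism between the functors corepresented by the two (hypothetical) colimits forces those colimits to be isomorphic, this yields the claim, with naturality in $X$ coming for free from the naturality of the construction.

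The key steps, in order. First I would unwind definitions: a cocone on $X$ with vertex $Z$ is a family $\{\function{f_n}{X_n}{Z}\}_{n\geq 0}$ with $f_n = f_m\circ X(\xi)$ for every $\function{\xi}{[m]}{[n]}$ in $\Delta$. Second, I would observe that because each $[n]$ receives at least one morphism from $[0]$, the compatibility condition forces $f_n = f_0\circ X(\xi)$ for any such $\xi$; hence the entire family is determined by $f_0$ alone. Third, I would identify the exact constraint on $f_0$: a family built from $f_0$ this way is well defined and genuinely a cocone precisely when $f_0\circ X(\xi)=f_0\circ X(\xi')$ for all parallel $\function{\xi,\xi'}{[0]}{[n]}$ (the remaining cocone identities then follow formally by composing, since $\psi\circ\xi$ is again a morphism $[0]\rarrow[n]$). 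Fourth, I would reduce this infinite family of constraints to the single equation $f_0\circ d_0 = f_0\circ d_1$: given two vertices $i\leq j$ of $[n]$, the morphism $\function{g}{[1]}{[n]}$ with $g(0)=i$, $g(1)=j$ factors the two vertex maps as $\xi = g\circ d^1$, $\xi' = g\circ d^0$, so $f_0\circ X(\xi) = f_0\circ d_1\circ X(g)$ and $f_0\circ X(\xi') = f_0\circ d_0\circ X(g)$, and these agree once $f_0\circ d_0=f_0\circ d_1$ (the case $n=1$); the reverse implication is immediate. That last condition is exactly ``cocone on the pair $d_0,d_1$ with vertex $Z$'', which completes the comparison. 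Finally, I would note the pair is reflexive: the degeneracy $\function{s_0}{X_0}{X_1}$ induced by the unique map $[1]\rarrow[0]$ satisfies $d_0s_0=\id=d_1s_0$ by the simplicial identities.

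I expect the only real point requiring care to be the combinatorial factorization in the fourth step — checking that every pair of parallel maps $[0]\rightrightarrows[n]$ factors through a single $[1]\rarrow[n]$ so that the $n=1$ constraint suffices — together with keeping the variance straight (each $\function{\xi}{[m]}{[n]}$ in $\Delta$ induces $X(\xi)\colon X_n\rarrow X_m$). Everything else is a routine manipulation of corepresentable functors, so I would keep those parts brief.
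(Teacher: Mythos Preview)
Your proposal is correct and is essentially a detailed elaboration of the paper's own proof, which reads in its entirety: ``This follows easily by using the simplicial identities to verify the universal property of colimits.'' You carry out precisely that verification of the universal property, and your combinatorial factorization through $[1]$ is exactly the kind of use of the structure of $\Delta$ the paper is gesturing at.
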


\begin{proof}
This follows easily by using the simplicial identities \cite[I.1]{Goerss_Jardine} to verify the universal property of colimits.
\end{proof}

\begin{prop}
\label{prop:colimits_of_simplicial_objects_and_forgetful}
Let $\capO$ be an operad in symmetric spectra or unbounded chain complexes over $\unit$. If $X$ is a simplicial $\capO$-algebra (resp. simplicial left $\capO$-module), then there are isomorphisms
\begin{align*}
  U\colim\limits^{\AlgO}_{\Delta^\op}X  \Iso 
  \colim\limits_{\Delta^\op}U X \quad\quad
  \Bigl(
  \text{resp.}\quad
  U\colim\limits^{\LtO}_{\Delta^\op}X  \Iso 
  \colim\limits_{\Delta^\op}U X
  \Bigr)
\end{align*}
natural in $X$. Here, $U$ is the forgetful functor.
\end{prop}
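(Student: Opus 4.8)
The plan is to reduce the colimit over $\Delta^\op$ to a reflexive coequalizer, and then to invoke the fact that the forgetful functors preserve reflexive coequalizers. This is a short argument built entirely from results already available in the excerpt.

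First I would apply Proposition \ref{prop:colim_of_simplicial_object} to the simplicial object $X$ in $\AlgO$ (resp. $\LtO$); this is legitimate, since $\AlgO$ (resp. $\LtO$) has all small colimits by Proposition \ref{prop:basic_properties_LtO}(b). This exhibits $\colim_{\Delta^\op}X$ as the reflexive coequalizer, computed in $\AlgO$ (resp. $\LtO$), of the pair of face maps $d_0,d_1\colon X_1\rarrow X_0$, with reflexivity witnessed by the degeneracy $s_0\colon X_0\rarrow X_1$ (so that $d_0s_0=d_1s_0=\id$). Next I would invoke Proposition \ref{prop:basic_properties_LtO}(b) once more: the forgetful functor $U$ preserves reflexive coequalizers. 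Applying $U$ to the coequalizer diagram above therefore shows that $U\colim^{\AlgO}_{\Delta^\op}X$ (resp. $U\colim^{\LtO}_{\Delta^\op}X$) is the reflexive coequalizer, now computed in the underlying category, of the pair $Ud_0,Ud_1\colon UX_1\rarrow UX_0$, with reflexivity witnessed by $Us_0$. Since the underlying category ($\Spectra$ or $\Chaincx_\unit$; resp. $\SymSeq$) also has all small colimits, a second application of Proposition \ref{prop:colim_of_simplicial_object}---this time to the simplicial object $UX$---identifies that same reflexive coequalizer with $\colim_{\Delta^\op}UX$. Composing the two identifications with the preservation statement yields the asserted isomorphism $U\colim^{\AlgO}_{\Delta^\op}X\Iso\colim_{\Delta^\op}UX$ (resp. the left $\capO$-module version).

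Naturality in $X$ is then automatic: each isomorphism in the chain comes from a functorial construction, and a morphism $X\rarrow Y$ of simplicial $\capO$-algebras (resp. left $\capO$-modules) induces compatible maps of the associated coequalizer diagrams, so the universal property of coequalizers produces the required commuting squares. I do not expect any genuine obstacle here; the only point requiring (minimal) care is to note that the reflexivity datum survives applying $U$, which it does because $U$ is a functor and the simplicial identities $d_0s_0=d_1s_0=\id$ are preserved. The only alternative subtlety---that $U$ might fail to preserve \emph{arbitrary} colimits over $\Delta^\op$---is precisely sidestepped by routing everything through reflexive coequalizers rather than through general colimits.
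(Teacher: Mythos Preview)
Your proposal is correct and follows exactly the approach of the paper, which simply cites Propositions \ref{prop:colim_of_simplicial_object} and \ref{prop:basic_properties_LtO} as an immediate consequence. You have merely spelled out the two-step reduction (colimit over $\Delta^\op$ as a reflexive coequalizer, then preservation of reflexive coequalizers by $U$) in more detail than the paper does.
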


\begin{proof}
This follows immediately from Propositions \ref{prop:colim_of_simplicial_object} and \ref{prop:basic_properties_LtO}.
\end{proof}

\subsection{Filtrations of certain pushouts}
\label{sec:filtrations_of_certain_pushouts}
The purpose of this section is to observe that several constructions and propositions proved in \cite{Harper_Spectra} have corresponding objectwise versions for $\Delta^\op$-shaped diagrams in algebras and modules over operads; the resulting filtrations will be important to several results in this paper.

\begin{defn}
\label{defn:objectwise_tensor_product_for_diagrams}
Let $(\C,\tensor)$ be a monoidal category. If $X,Y\in\sC$ then $X\tensor Y\in\sC$ is defined objectwise by
\begin{align*}
  (X\tensor Y)_n:=X_n\tensor Y_n.
\end{align*} 
\end{defn}

\begin{defn}\label{def:symmetric_array}
Let $\capO$ be an operad in symmetric spectra (resp. unbounded chain complexes over $\unit$) and consider symmetric sequences in $\Spectra$ (resp. $\Chaincx_\unit$). 
\begin{itemize}
\item If $Y\in\sSymSeq$, then $\capO\circ Y\in\sLtO$ denotes the composition of functors $\Delta^\op\xrightarrow{Y}\SymSeq\xrightarrow{\capO\circ-}\LtO$.
\item A \emph{symmetric array} in $\Spectra$ (resp. $\Chaincx_\unit$) is a symmetric sequence in $\SymSeq$; i.e. a functor $\functor{A}{\Sigma^\op}{\SymSeq}$.
\item $\SymArray:=\SymSeq^{\Sigma^\op}$ is the category of symmetric arrays in $\Spectra$ (resp. $\Chaincx_\unit$) and their natural transformations.
\end{itemize}
\end{defn}

The following proposition follows from \cite{Harper_Spectra} as indicated below.

\begin{prop}\label{prop:coproduct_modules}
Let $\capO$ be an operad in symmetric spectra or unbounded chain complexes over $\unit$, $A\in\sLtO$ (resp. $A\in\LtO$), and $Y\in\sSymSeq$ (resp. $Y\in\SymSeq$). Consider any coproduct in $\sLtO$ (resp. $\LtO$) of the form
\begin{align}\label{eq:coproduct_diagram_modules}
\xymatrix{
  A\amalg(\capO\circ Y).
}
\end{align}  
There exists $\capO_A\in\sSymArray$ (resp. $\capO_A\in\SymArray$) and natural isomorphisms
\begin{align*}
  A\amalg(\capO\circ Y) \Iso 
  \coprod\limits_{q\geq 0}\capO_A[\mathbf{q}]
  \tensorcheck_{\Sigma_q}Y^{\tensorcheck q}
\end{align*}
in the underlying category $\sSymSeq$ (resp. $\SymSeq$).
\end{prop}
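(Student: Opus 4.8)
The plan is to reduce everything to the non-simplicial (equivalently, objectwise-in-$\Delta^\op$) statement, which is already available, and then observe that both sides of the desired isomorphism are computed objectwise in $\Delta^\op$. In \cite{Harper_Spectra} one shows that, given $A\in\LtO$ and $Y\in\SymSeq$, there is a symmetric array $\capO_A\in\SymArray$ together with an isomorphism $A\amalg(\capO\circ Y)\Iso\coprod_{q\geq 0}\capO_A[\mathbf{q}]\tensorcheck_{\Sigma_q}Y^{\tensorcheck q}$ in $\SymSeq$, and the construction of $\capO_A$ and this isomorphism are natural in $A$ and $Y$ (the chain complex case being obtained by the same argument). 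So for $A\in\LtO$, $Y\in\SymSeq$ there is nothing to prove beyond invoking this; the content is entirely the simplicial case.

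For the simplicial case, first I would check that the left-hand side is objectwise: since $\sLtO=\LtO^{\Delta^\op}$ is a diagram category, small colimits in $\sLtO$—in particular the coproduct \eqref{eq:coproduct_diagram_modules}—are formed objectwise as colimits in $\LtO$, and by Definition \ref{def:symmetric_array} we have $(\capO\circ Y)_n\Iso\capO\circ Y_n$ in $\LtO$; hence $\bigl(A\amalg(\capO\circ Y)\bigr)_n\Iso A_n\amalg(\capO\circ Y_n)$, naturally in $n$. Next I would check that the right-hand side is objectwise: $\coprod_{q\geq 0}\capO_A[\mathbf{q}]\tensorcheck_{\Sigma_q}Y^{\tensorcheck q}$ is assembled from coproducts, the objectwise tensor product $\tensorcheck$ on $\sSymSeq$ (Definition \ref{defn:objectwise_tensor_product_for_diagrams} applied to the monoidal category $(\SymSeq,\tensorcheck,1)$, which in particular gives $(Y^{\tensorcheck q})_n\Iso Y_n^{\tensorcheck q}$), and $\Sigma_q$-orbits; since each of these is a colimit, the whole symmetric sequence is computed objectwise in $\Delta^\op$.

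The remaining step is to apply the non-simplicial result at each $[n]\in\Delta$ and then glue. At level $n$ it produces $(\capO_A)_n\in\SymArray$ and a natural isomorphism $A_n\amalg(\capO\circ Y_n)\Iso\coprod_{q\geq 0}(\capO_A)_n[\mathbf{q}]\tensorcheck_{\Sigma_q}Y_n^{\tensorcheck q}$ in $\SymSeq$. Because the construction of $\capO_A$ in \cite{Harper_Spectra} is functorial in $A$ and $Y$, the face and degeneracy maps of $A$ and $Y$ induce maps among the $(\capO_A)_n$, so $[n]\mapsto(\capO_A)_n$ upgrades to a functor $\Delta^\op\rarrow\SymArray$, i.e.\ an object $\capO_A\in\sSymArray$; and by the same naturality the objectwise isomorphisms assemble into a natural isomorphism in $\sSymSeq$, as required.

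The only obstacle is bookkeeping: one must confirm that the construction of $\capO_A$ in \cite{Harper_Spectra} is natural enough for the objectwise pieces to be compatible with all the simplicial structure maps, and that it is compatible with the identifications $(\capO\circ Y)_n\Iso\capO\circ Y_n$ and $(Y^{\tensorcheck q})_n\Iso Y_n^{\tensorcheck q}$. Since naturality of $\capO_A$ in $A$ and $Y$ is precisely part of what \cite{Harper_Spectra} establishes, this is a formal verification; no genuinely new construction is needed—the simplicial statement is the non-simplicial one applied objectwise.
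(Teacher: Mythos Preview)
Your proposal is correct and takes essentially the same approach as the paper: the paper's proof simply invokes \cite{Harper_Spectra} for the non-simplicial case and notes that the simplicial case follows ``in exactly the same way, except using the obvious objectwise construction of $\capO_A$,'' which is precisely the objectwise-and-naturality argument you spell out in detail.
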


\begin{rem}
Other possible notations for $\capO_A$ include $\U_\capO(A)$ or $\U(A)$; these are closer to the notation used in \cite{Elmendorf_Mandell, Mandell} and are not to be confused with the forgetful functors.
\end{rem}

\begin{proof}[Proof of Proposition \ref{prop:coproduct_modules}]
Consider the case of symmetric spectra. The case of $A\in\LtO$ and $Y\in\SymSeq$ is proved in \cite{Harper_Spectra}, and the case of $A\in\sLtO$ and $Y\in\sSymSeq$ is proved in exactly the same way, except using the obvious objectwise construction of $\capO_A$. The case of unbounded chain complexes over $\unit$ is similar.
\end{proof}

\begin{prop}\label{prop:simplicial_colimit_commutes}
Let $\capO$ be an operad in symmetric spectra or unbounded chain complexes over $\unit$, $A\in\sLtO$, $Y\in\sSymSeq$, and $t\geq 0$. There are natural isomorphisms
\begin{align}
  \label{eq:simplicial_colimit_commutes}
  \colim\limits_{\Delta^\op}^{\LtO}
  \Bigl(
  A\amalg(\capO\circ Y)
  \Bigr)
  &\Iso 
  \coprod\limits_{q\geq 0}\capO_{\pi_0 A}[\mathbf{q}]
  \tensorcheck_{\Sigma_q}(\pi_0 Y)^{\tensorcheck q},\\
  \label{eq:simplicial_colimit_commutes_array}
  \colim\limits_{\Delta^\op}
  \Bigl( 
  \capO_A[\mathbf{t}] 
  \Bigr)
  &\Iso
  \capO_{\pi_0 A}[\mathbf{t}],
\end{align}
in the underlying category $\SymSeq$. 
\end{prop}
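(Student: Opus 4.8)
The plan is to deduce both isomorphisms from the single observation that $\colim_{\Delta^\op}$ commutes, up to natural isomorphism, with the monoidal operations on symmetric sequences. The starting point is Proposition \ref{prop:colim_of_simplicial_object}, which exhibits $\colim_{\Delta^\op}Z$, for any simplicial object $Z$, as the reflexive coequalizer of the face maps $d_0,d_1\colon Z_1\rightarrow Z_0$. Feeding this presentation into Proposition \ref{prop:well_behaved_wrt_tensor_and_circle}, whose parts (a) and (b) handle tensor products and circle products respectively, yields, for $X,Y$ simplicial symmetric sequences (and likewise for simplicial objects in $\Spectra$, resp.\ $\Chaincx_\unit$), natural isomorphisms
\begin{align*}
  \colim_{\Delta^\op}(X\tensorcheck Y)&\Iso\bigl(\colim_{\Delta^\op}X\bigr)\tensorcheck\bigl(\colim_{\Delta^\op}Y\bigr),\\
  \colim_{\Delta^\op}(X\circ Y)&\Iso\bigl(\colim_{\Delta^\op}X\bigr)\circ\bigl(\colim_{\Delta^\op}Y\bigr).
\end{align*}
Since colimits commute with colimits, $\colim_{\Delta^\op}$ also commutes with small coproducts, and therefore with the tensor powers $(-)^{\tensorcheck q}$ and the coinvariants $(-)\tensorcheck_{\Sigma_q}(-)$, each of which is assembled from $\tensorcheck$ (or $\Smash$, resp.\ $\tensor$) together with small colimits.

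Next I would prove \eqref{eq:simplicial_colimit_commutes_array}, which is the heart of the matter. The fact to quote from \cite{Harper_Spectra} is that the symmetric array $\capO_A$ attached to a left $\capO$-module $A$ is built functorially from the underlying symmetric sequence of $A$ together with its $\capO$-action, using only small coproducts, the tensor product $\tensorcheck$, the circle product $\circ$, and reflexive coequalizers (for instance via the canonical reflexive presentation of $A$ by free left $\capO$-modules and the explicit description of $\capO_{\capO\circ B}$); in particular the functor $\LtO\rightarrow\SymSeq$, $A\mapsto\capO_A[\mathbf{t}]$, preserves reflexive coequalizers. By Proposition \ref{prop:colimits_of_simplicial_objects_and_forgetful} together with Proposition \ref{prop:basic_properties_LtO}, the underlying symmetric sequence of $\pi_0 A=\colim^{\LtO}_{\Delta^\op}A$ is $\colim_{\Delta^\op}$ of the underlying symmetric sequences of $A$, realized compatibly with the $\capO$-action by the same reflexive coequalizer. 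Since each ingredient of the construction of $\capO_A$ commutes with $\colim_{\Delta^\op}$ by the first paragraph, applying $\colim_{\Delta^\op}$ termwise to the construction of $\capO_A$ for the simplicial module $A$ produces the construction of $\capO_{\pi_0 A}$; evaluating at $\mathbf{t}$ gives the natural isomorphism $\colim_{\Delta^\op}\bigl(\capO_A[\mathbf{t}]\bigr)\Iso\capO_{\pi_0 A}[\mathbf{t}]$.

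Finally I would deduce \eqref{eq:simplicial_colimit_commutes} from \eqref{eq:simplicial_colimit_commutes_array}. Proposition \ref{prop:coproduct_modules} provides a natural isomorphism $A\amalg(\capO\circ Y)\Iso\coprod_{q\geq 0}\capO_A[\mathbf{q}]\tensorcheck_{\Sigma_q}Y^{\tensorcheck q}$ in $\sSymSeq$; applying $\colim^{\LtO}_{\Delta^\op}$, identifying it with the colimit computed in $\SymSeq$ via Proposition \ref{prop:colimits_of_simplicial_objects_and_forgetful}, and commuting $\colim_{\Delta^\op}$ past $\coprod_{q\geq 0}$, past $\tensorcheck_{\Sigma_q}$, and past the tensor powers $(-)^{\tensorcheck q}$ (all by the first paragraph), one obtains
\begin{align*}
  \colim^{\LtO}_{\Delta^\op}\bigl(A\amalg(\capO\circ Y)\bigr)\Iso\coprod_{q\geq 0}\bigl(\colim_{\Delta^\op}\capO_A[\mathbf{q}]\bigr)\tensorcheck_{\Sigma_q}\bigl(\colim_{\Delta^\op}Y\bigr)^{\tensorcheck q},
\end{align*}
whereupon \eqref{eq:simplicial_colimit_commutes_array} and $\colim_{\Delta^\op}Y=\pi_0 Y$ complete the argument. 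The main obstacle I anticipate is the second paragraph: one must unwind the construction of $\capO_A$ in \cite{Harper_Spectra} carefully enough to see that it uses only colimits and the two monoidal products, and verify that the termwise $\colim_{\Delta^\op}$ genuinely recovers $\capO_{\pi_0 A}$---in particular that the $\capO$-module structure on $\pi_0 A$ is the colimit of the $\capO$-module structures, which is where Propositions \ref{prop:basic_properties_LtO} and \ref{prop:colimits_of_simplicial_objects_and_forgetful} are used. Once \eqref{eq:simplicial_colimit_commutes_array} is established, \eqref{eq:simplicial_colimit_commutes} is a formal consequence.
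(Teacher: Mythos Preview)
Your argument is correct, and for \eqref{eq:simplicial_colimit_commutes_array} it is essentially the paper's: both reduce to the fact that $\colim_{\Delta^\op}$ is a reflexive coequalizer (Proposition~\ref{prop:colim_of_simplicial_object}) and that such coequalizers commute with $\tensorcheck$ and $\circ$ (Proposition~\ref{prop:well_behaved_wrt_tensor_and_circle}), so that the construction of $\capO_A$ commutes with $\pi_0$.

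The one organizational difference worth noting is in \eqref{eq:simplicial_colimit_commutes}. You deduce it from \eqref{eq:simplicial_colimit_commutes_array} by first expanding $A\amalg(\capO\circ Y)$ via Proposition~\ref{prop:coproduct_modules} and then pushing $\colim_{\Delta^\op}$ through the resulting decomposition. The paper instead works entirely at the level of $\LtO$: since colimits commute with coproducts and $\capO\circ-$ preserves reflexive coequalizers, one has
\[
  \colim^{\LtO}_{\Delta^\op}\bigl(A\amalg(\capO\circ Y)\bigr)
  \Iso (\pi_0 A)\amalg\pi_0(\capO\circ Y)
  \Iso (\pi_0 A)\amalg\bigl(\capO\circ(\pi_0 Y)\bigr),
\]
and then a single application of Proposition~\ref{prop:coproduct_modules} to the right-hand side gives the answer. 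This route is shorter and, in particular, does not require \eqref{eq:simplicial_colimit_commutes_array} as input; your route is more explicit about the underlying $\SymSeq$ decomposition but makes the two isomorphisms logically dependent when they need not be.
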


\begin{proof}
The isomorphism in \eqref{eq:simplicial_colimit_commutes} follows from the natural isomorphisms
\begin{align*}
  \colim\limits_{\Delta^\op}^{\LtO}\Bigl(A\amalg(\capO\circ Y)\Bigr)
  \Iso (\pi_0 A)\amalg\pi_0(\capO\circ Y)
  \Iso (\pi_0 A)\amalg\bigl(\capO\circ(\pi_0 Y)\bigr)
\end{align*}
in $\LtO$ together with Proposition \ref{prop:coproduct_modules}, and the isomorphism in \eqref{eq:simplicial_colimit_commutes_array} follows easily from Propositions \ref{prop:colim_of_simplicial_object} and \ref{prop:well_behaved_wrt_tensor_and_circle}.
\end{proof}

\begin{defn}\label{def:filtration_setup_modules}
Let $\function{i}{X}{Y}$ be a morphism in $\sSymSeq$ (resp. $\SymSeq$) and $t\geq 1$. Define $Q_0^t:=X^{\tensorcheck t}$ and $Q_t^t:=Y^{\tensorcheck t}$. For $0<q<t$ define $Q_q^t$ inductively by the pushout diagrams
\begin{align*}
\xymatrix{
  \Sigma_t\cdot_{\Sigma_{t-q}\times\Sigma_{q}}X^{\tensorcheck(t-q)}
  \tensorcheck Q_{q-1}^q\ar[d]^{i_*}\ar[r]^-{\pr_*} & Q_{q-1}^t\ar[d]\\
  \Sigma_t\cdot_{\Sigma_{t-q}\times\Sigma_{q}}X^{\tensorcheck(t-q)}
  \tensorcheck Y^{\tensorcheck q}\ar[r] & Q_q^t
}
\end{align*}
in $\sSymSeq^{\Sigma_t}$ (resp. $\SymSeq^{\Sigma_t}$). The maps $\pr_*$ and $i_*$ are the obvious maps induced by $i$ and the appropriate projection maps.
\end{defn}

The following proposition follows from \cite{Harper_Spectra} as indicated below.

\begin{prop}\label{prop:small_arg_pushout_modules}
Let $\capO$ be an operad in symmetric spectra or unbounded chain complexes over $\unit$, $A\in\sLtO$ (resp. $A\in\LtO$), and $\function{i}{X}{Y}$ in $\sSymSeq$ (resp. in $\SymSeq$). Consider any pushout diagram in $\sLtO$ (resp. $\LtO$) of the form
\begin{align}\label{eq:small_arg_pushout_modules}
\xymatrix{
  \capO\circ X\ar[r]^-{f}\ar[d]^{\id\circ i} & A\ar[d]^{j}\\
  \capO\circ Y\ar[r] & A\amalg_{(\capO\circ X)}(\capO\circ Y)
}
\end{align}
The pushout in \eqref{eq:small_arg_pushout_modules} is naturally isomorphic to a filtered colimit of the form
\begin{align}\label{eq:filtered_colimit_modules}
  A\amalg_{(\capO\circ X)}(\capO\circ Y)\Iso 
  \colim\Bigl(
  \xymatrix{
    A_0\ar[r]^{j_1} & A_1\ar[r]^{j_2} & A_2\ar[r]^{j_3} & \dotsb
  }
  \Bigr)
\end{align}
in the underlying category $\sSymSeq$ (resp. $\SymSeq$), with $A_0:=\capO_A[\mathbf{0}]\Iso A$ and $A_t$ defined inductively by pushout diagrams in $\sSymSeq$ (resp. $\SymSeq$) of the form
\begin{align}\label{eq:good_filtration_modules}
\xymatrix{
  \capO_A[\mathbf{t}]\tensorcheck_{\Sigma_t}Q_{t-1}^t\ar[d]^{\id\tensorcheck_{\Sigma_t}i_*}
  \ar[r]^-{f_*} & A_{t-1}\ar[d]^{j_t}\\
  \capO_A[\mathbf{t}]\tensorcheck_{\Sigma_t}Y^{\tensorcheck t}\ar[r]^-{\xi_t} & A_t
}
\end{align}
\end{prop}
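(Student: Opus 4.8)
The plan is to reduce the statement to its non-simplicial form, which is proved in \cite{Harper_Spectra}, by observing that every colimit and every construction appearing in the conclusion is performed objectwise in the $\Delta^\op$-direction. First I would record the two facts that make this reduction work. (i) By Proposition \ref{prop:basic_properties_LtO} the forgetful functor $\sLtO\rarrow\sSymSeq$ (resp.\ the forgetful functor from $\sAlgO$ to the underlying category) preserves reflexive coequalizers and filtered colimits, and a pushout along a map of the form $\id\circ i$ is built from a reflexive coequalizer of coproducts in the standard way; consequently the pushout \eqref{eq:small_arg_pushout_modules} and the filtered colimit \eqref{eq:filtered_colimit_modules} in $\sLtO$ are computed, in each simplicial degree $n$, as the analogous pushout and filtered colimit in $\LtO$. (ii) The objectwise tensor product of $\Delta^\op$-diagrams (Definition \ref{defn:objectwise_tensor_product_for_diagrams}), the objectwise circle product, and the formation of the symmetric array $\capO_A$ (Proposition \ref{prop:coproduct_modules}) are all computed degreewise, so the filtration quotients $Q_q^t$ of Definition \ref{def:filtration_setup_modules} and the array $\capO_A$ restrict, in simplicial degree $n$, to the corresponding objects built from $A_n$ and from $i_n\colon X_n\rarrow Y_n$.

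With these reductions in place the argument runs as follows. For each $n\geq 0$, apply the non-simplicial form of the proposition (from \cite{Harper_Spectra}) to the left $\capO$-module $A_n$ and to the map $i_n\colon X_n\rarrow Y_n$; this produces the objects $(A_t)_n$, the identification $(A_0)_n\Iso A_n$, and the pushout square \eqref{eq:good_filtration_modules} in simplicial degree $n$, with $(\capO_A)_n\Iso\capO_{A_n}$ and $(Q_q^t)_n\Iso Q_q^t(i_n)$. It then remains to assemble these degreewise data into simplicial objects and simplicial maps. Since $\capO_A$, the $Q_q^t$, and each pushout square of Definition \ref{def:filtration_setup_modules} and of \eqref{eq:good_filtration_modules} are built functorially out of the given data by colimits, coproducts, tensor products, and circle products, the simplicial structure maps of $A$, $X$, and $Y$ induce, via the universal properties of the pushouts, compatible structure maps on $\capO_A$, $Q_q^t$, $A_t$, and the comparison maps; hence the towers and isomorphisms of \cite{Harper_Spectra} promote to $\Delta^\op$-shaped diagrams, yielding \eqref{eq:filtered_colimit_modules} and \eqref{eq:good_filtration_modules} in $\sSymSeq$.

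The only genuine work is this last bookkeeping step: one must check that the inductively defined pushout squares are natural enough in the pair $(A,i)$ that a simplicial map $i$ produces a simplicial tower of squares. Since exactly this naturality was already verified objectwise in \cite{Harper_Spectra}, the main obstacle reduces to carrying the simplicial structure maps along, and is routine rather than substantive. The case of unbounded chain complexes over $\unit$ is identical, with $\Smash$ replaced by $\tensor$ throughout; and the case of $\capO$-algebras is the special case of left $\capO$-modules concentrated at $\mathbf{0}$ under the embedding $\AlgO\hookrightarrow\LtO$.
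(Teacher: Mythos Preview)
Your proposal is correct and follows essentially the same approach as the paper: the non-simplicial case is quoted from \cite{Harper_Spectra}, and the simplicial case is handled by observing that all the constructions involved ($\capO_A$, $Q_q^t$, the pushout squares \eqref{eq:good_filtration_modules}) are performed objectwise in the $\Delta^\op$-direction, so the filtration assembles from the degreewise filtrations. The paper phrases this as ``proved in exactly the same way, except using Proposition \ref{prop:coproduct_modules} and the obvious objectwise construction of the pushout diagrams \eqref{eq:good_filtration_modules},'' whereas you phrase it as applying the non-simplicial result degreewise and then invoking naturality to promote to simplicial objects; these are two descriptions of the same reduction.
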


\begin{proof}
Consider the case of symmetric spectra. The case of $A\in\LtO$ and $\function{i}{X}{Y}$ in $\SymSeq$ is proved in \cite{Harper_Spectra}, and the case of $A\in\sLtO$ and $\function{i}{X}{Y}$ in $\sSymSeq$ is proved in exactly the same way, except using Proposition \ref{prop:coproduct_modules} and the obvious objectwise construction of the pushout diagrams \eqref{eq:good_filtration_modules}. The case of unbounded chain complexes over $\unit$ is similar.
\end{proof}

\begin{prop}\label{prop:colim_commutes_with_filtration}
Let $\capO$ be an operad in symmetric spectra or unbounded chain complexes over $\unit$, $A\in\sLtO$, and $\function{i}{X}{Y}$ in $\sSymSeq$. Consider any pushout diagram in $\sLtO$ of the form \eqref{eq:small_arg_pushout_modules}. Then $\pi_0(-)$ commutes with the filtered diagrams in \eqref{eq:filtered_colimit_modules}; i.e., there are natural isomorphisms which make the diagram
\begin{align*}
\xymatrix{
  \pi_0(A_0)\ar[d]^{\iso}\ar[r]^-{\pi_0(j_1)} & 
  \pi_0(A_1)\ar[d]^{\iso}\ar[r]^-{\pi_0(j_2)} & 
  \pi_0(A_2)\ar[d]^{\iso}\ar[r]^-{\pi_0(j_3)} & \dotsb\\
  (\pi_0 A)_0\ar[r]^-{j_1} & 
  (\pi_0 A)_1\ar[r]^-{j_2} & 
  (\pi_0 A)_2\ar[r]^-{j_3} & \dotsb
}
\end{align*}
commute. 
\end{prop}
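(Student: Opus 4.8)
The plan is to prove, by induction on $t\geq 0$, that there are natural isomorphisms $\pi_0(A_t)\iso(\pi_0 A)_t$ compatible with the structure maps $j_t$ of the two filtrations appearing in \eqref{eq:filtered_colimit_modules}: the top one built (via Proposition \ref{prop:small_arg_pushout_modules}) from $A\in\sLtO$ and $\function{i}{X}{Y}$ in $\sSymSeq$, the bottom one built from $\pi_0 A\in\LtO$ and $\function{\pi_0 i}{\pi_0 X}{\pi_0 Y}$ in $\SymSeq$. The engine of the argument is that $\pi_0(-)=\colim_{\Delta^\op}(-)$ is a colimit, hence preserves the pushout squares \eqref{eq:good_filtration_modules}, combined with the fact---to be established first---that it also commutes with the tensor products $\tensorcheck$ and the $\Sigma_t$-orbit constructions occurring in \eqref{eq:good_filtration_modules}.

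The key point is that for $B,C\in\sSymSeq$ there is a natural isomorphism $\pi_0(B\tensorcheck C)\iso(\pi_0 B)\tensorcheck(\pi_0 C)$, where $B\tensorcheck C\in\sSymSeq$ denotes the objectwise tensor product (Definition \ref{defn:objectwise_tensor_product_for_diagrams}). To prove it I would use Proposition \ref{prop:colim_of_simplicial_object} to present $\pi_0 B$ and $\pi_0 C$ as the reflexive coequalizers of the pairs of face maps $d_0,d_1\colon B_1\rarrow B_0$ and $d_0,d_1\colon C_1\rarrow C_0$; then Proposition \ref{prop:well_behaved_wrt_tensor_and_circle}(a) identifies $(\pi_0 B)\tensorcheck(\pi_0 C)$ with the reflexive coequalizer of the objectwise tensor product of these two diagrams, which is exactly the reflexive coequalizer that Proposition \ref{prop:colim_of_simplicial_object} uses to compute $\pi_0(B\tensorcheck C)$. (This is the crucial place where one uses that $\colim_{\Delta^\op}$ of a simplicial object is a \emph{reflexive} coequalizer, since general colimits do not commute with $\tensorcheck$.) Iterating this identification---and using once more that $\pi_0$ preserves pushouts---shows that $\pi_0$ carries the iterated pushout-and-tensor construction $Q_q^t$ of Definition \ref{def:filtration_setup_modules}, built from $i$, to the corresponding $Q_q^t$ built from $\pi_0 i$, naturally and $\Sigma_t$-equivariantly. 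Since $\capO_A[\mathbf{t}]\tensorcheck_{\Sigma_t}(-)$ is itself a colimit (a coend), these identifications combine with equation \eqref{eq:simplicial_colimit_commutes_array} of Proposition \ref{prop:simplicial_colimit_commutes}---which gives $\pi_0(\capO_A[\mathbf{t}])\iso\capO_{\pi_0 A}[\mathbf{t}]$---to yield natural isomorphisms
\begin{align*}
  \pi_0\bigl(\capO_A[\mathbf{t}]\tensorcheck_{\Sigma_t}Q_{t-1}^t\bigr)
  &\iso \capO_{\pi_0 A}[\mathbf{t}]\tensorcheck_{\Sigma_t}Q_{t-1}^t,\\
  \pi_0\bigl(\capO_A[\mathbf{t}]\tensorcheck_{\Sigma_t}Y^{\tensorcheck t}\bigr)
  &\iso \capO_{\pi_0 A}[\mathbf{t}]\tensorcheck_{\Sigma_t}(\pi_0 Y)^{\tensorcheck t},
\end{align*}
where on the right the $Q$-construction is formed from $\pi_0 i$.

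With this in hand the induction is routine. For the base case, $\pi_0(A_0)=\pi_0(\capO_A[\mathbf{0}])\iso\capO_{\pi_0 A}[\mathbf{0}]=(\pi_0 A)_0$ by equation \eqref{eq:simplicial_colimit_commutes_array}. For the inductive step, assume the natural isomorphism $\pi_0(A_{t-1})\iso(\pi_0 A)_{t-1}$ and apply $\pi_0=\colim_{\Delta^\op}$ to the pushout square \eqref{eq:good_filtration_modules}; since $\pi_0$ preserves pushouts, the result is a pushout square, and under the isomorphisms of the previous paragraph together with the inductive hypothesis---and using naturality to identify $\pi_0(f_*)$ and $\pi_0(\id\tensorcheck_{\Sigma_t}i_*)$ with the corresponding maps $f_*$ and $\id\tensorcheck_{\Sigma_t}i_*$ in the $\pi_0 i$-version---this square is precisely the defining pushout \eqref{eq:good_filtration_modules} for $(\pi_0 A)_t$. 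Hence $\pi_0(A_t)\iso(\pi_0 A)_t$, compatibly with the structure maps, so that $\pi_0(j_t)$ is carried to $j_t$; this completes the induction and furnishes the commuting diagram in the statement.

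The step I expect to require the most care is not any single computation but the bookkeeping: verifying that the isomorphisms extracted from the reflexive-coequalizer argument are genuinely natural in $B,C$ and $\Sigma_t$-equivariant, and that they are compatible as $q$ and then $t$ vary, so that the two identifications of the terms in \eqref{eq:good_filtration_modules} really do intertwine the maps $f_*$, $\pr_*$, $i_*$ on both sides. This amounts to unwinding the explicit constructions of $Q_q^t$ (Definition \ref{def:filtration_setup_modules}) and of $\capO_A$ from \cite{Harper_Spectra}, exactly along the lines of the proof of Proposition \ref{prop:small_arg_pushout_modules}; nothing here is conceptually difficult, and everything else follows formally from the slogan that colimits commute with colimits.
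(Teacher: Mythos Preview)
Your proposal is correct and follows essentially the same approach as the paper: the paper's proof simply says ``This follows easily from Propositions \ref{prop:simplicial_colimit_commutes}, \ref{prop:colim_of_simplicial_object}, and \ref{prop:well_behaved_wrt_tensor_and_circle},'' and your argument is precisely the unpacking of that sentence---using \ref{prop:colim_of_simplicial_object} and \ref{prop:well_behaved_wrt_tensor_and_circle}(a) to see that $\pi_0$ commutes with $\tensorcheck$, then \eqref{eq:simplicial_colimit_commutes_array} for $\capO_A[\mathbf{t}]$, and finally the fact that $\pi_0$ preserves pushouts to run the induction on $t$.
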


\begin{proof}
This follows easily from Propositions \ref{prop:simplicial_colimit_commutes}, \ref{prop:colim_of_simplicial_object}, and \ref{prop:well_behaved_wrt_tensor_and_circle}.
\end{proof}

\subsection{Homotopy colimits of simplicial objects in $\AlgO$ and $\LtO$}
\label{sec:hocolim_simplicial_objects_algebras}
The purpose of this section is to prove Theorem \ref{MainTheorem3}, which can be understood as a homotopical version of Proposition \ref{prop:colimits_of_simplicial_objects_and_forgetful}.

\begin{prop}\label{prop:simplicial contraction}
Let $\capO$ be an operad in symmetric spectra (resp. unbounded chain complexes over $\unit$). Let $\mathcal{A}$ be a set, $W_\alpha\in\SymSeq$, and $n_\alpha\geq 0$, for each $\alpha\in\mathcal{A}$. Consider the maps
\begin{align*}
\xymatrix{
  \coprod\limits_{\alpha\in\mathcal{A}}
  (\capO\circ W_\alpha\cdot\Delta[n_\alpha])\ar@<0.5ex>[r]^{r} &
  \coprod\limits_{\alpha\in\mathcal{A}}
  (\capO\circ W_\alpha\cdot\Delta[0])\ar@<0.5ex>[r]^{s} &
  \coprod\limits_{\alpha\in\mathcal{A}}
  (\capO\circ W_\alpha\cdot\Delta[n_\alpha])
}
\end{align*}
in $\sLtO$ induced by the maps
$
\xymatrix@1{
  \Delta[n_\alpha]\ar[r]^{r_\alpha} & 
  \Delta[0]\ar[r]^{s_\alpha} & 
  \Delta[n_\alpha]
}
$
in simplicial sets, such that each map $s_\alpha$ represents the vertex 0. Then the map
\begin{align*}
\xymatrix{
  |\coprod\limits_{\alpha\in\mathcal{A}}
  (\capO\circ W_\alpha\cdot\Delta[n_\alpha])|\ar@<0.5ex>[r]^{|r|} &
  |\coprod\limits_{\alpha\in\mathcal{A}}
  (\capO\circ W_\alpha\cdot\Delta[0])|
}
\end{align*}
in $\SymSeq$ is a weak equivalence. 
\end{prop}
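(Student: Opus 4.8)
The plan is to reduce the statement to a fact about realizations that has already been established. First I would observe that the forgetful functor $U$ from $\LtO$ to $\SymSeq$ commutes with realization: since $|-|$ on $\sLtO$ and on $\sSymSeq$ are defined as the same coend construction (tensoring with $\NN\unit\Delta[-]$ or $\Delta[-]_+$), and $U$ is a left adjoint that preserves colimits (Proposition \ref{prop:basic_properties_LtO}(b), noting the relevant colimits are reflexive coequalizers and coproducts), there is a natural isomorphism $U|Z|\Iso |UZ|$ for $Z\in\sLtO$. Applying this to $Z=\coprod_\alpha(\capO\circ W_\alpha\cdot\Delta[n_\alpha])$, it suffices to check that the underlying map in $\SymSeq$ is a weak equivalence, and a map in $\SymSeq$ is a weak equivalence precisely when it is one objectwise in $\Spectra$ (resp.\ $\Chaincx_\unit$).

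Next I would compute the underlying symmetric sequence of $\coprod_\alpha(\capO\circ W_\alpha\cdot\Delta[n_\alpha])$ using the filtration machinery. By Proposition \ref{prop:coproduct_modules}, applied with $A=\emptyset$ so that $\capO_\emptyset=\capO$, the coproduct $\capO\circ\bigl(\coprod_\alpha W_\alpha\cdot\Delta[n_\alpha]\bigr)$ (which equals $\coprod_\alpha(\capO\circ W_\alpha\cdot\Delta[n_\alpha])$ since $\capO\circ-$ is a left adjoint) is naturally isomorphic in the underlying category $\sSymSeq$ to $\coprod_{q\geq 0}\capO[\mathbf{q}]\tensorcheck_{\Sigma_q}\bigl(\coprod_\alpha W_\alpha\cdot\Delta[n_\alpha]\bigr)^{\tensorcheck q}$. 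The map $r$ is induced objectwise by the $r_\alpha$, hence on this decomposition it is the identity on each $\capO[\mathbf{q}]$ smashed/tensored with the map on the $q$-th tensor power induced by $\coprod_\alpha(\id\cdot r_\alpha)$.

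The heart of the argument is then the observation that each map $\id\cdot r_\alpha$ is a simplicial homotopy equivalence with homotopy inverse $\id\cdot s_\alpha$, by Proposition \ref{prop:simplicial_contraction_simplices} — indeed $r_\alpha s_\alpha=\id$ and $s_\alpha r_\alpha$ is simplicially homotopic to the identity since $s_\alpha$ represents the vertex $0$. Simplicial homotopies are preserved by coproducts, by objectwise tensor/smash products, and by $\Sigma_q$-orbits, and realization $|-|$ carries simplicially homotopic maps to homotopic maps (in the chain complex case via $\Tot^\oplus\NN$, Proposition \ref{prop:tot_of_normalization}; in the symmetric spectra case directly, as in the proof of Proposition \ref{prop:simplicial_contraction_nice}). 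Therefore $|r|$ becomes, after applying $U$ and decomposing as above, a coproduct over $q\geq 0$ of maps of the form $\capO[\mathbf{q}]\Smash_{\Sigma_q}(-)$ applied to a homotopy equivalence; each such map is a weak equivalence (in the spectrum case because the constituent maps are level/homotopy equivalences hence stable equivalences; in the chain complex case because chain homotopy equivalences are homology isomorphisms and these are preserved by the relevant $\tensor$ and orbit constructions), and a coproduct of weak equivalences is a weak equivalence. Hence $U|r|$ is an objectwise weak equivalence in $\SymSeq$, so $|r|$ is a weak equivalence.

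The main obstacle I anticipate is bookkeeping: carefully checking that the identifications of Proposition \ref{prop:coproduct_modules} are compatible with the maps $r$ and $s$ (i.e.\ that under the filtration isomorphism, $r$ really does correspond termwise to $\coprod_\alpha(\id\cdot r_\alpha)$ on each tensor power), and then verifying that the class of weak equivalences in $\Spectra$ (resp.\ $\Chaincx_\unit$) is closed under the operations $W\mapsto\capO[\mathbf{q}]\Smash_{\Sigma_q}W^{\wedge q}$ applied to homotopy equivalences and under arbitrary coproducts. In the chain complex case over a field of characteristic zero this is straightforward since everything is cofibrant and tensoring preserves quasi-isomorphisms; in the symmetric spectra case one uses that the maps in question are built from simplicial homotopy equivalences, whose realizations are at least level equivalences, together with the good homotopical properties of smash products with cofibrant-enough objects recorded in \cite{Harper_Spectra}.
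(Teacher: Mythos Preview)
Your argument is correct but takes a longer route than the paper. The paper's proof is essentially three lines: since $r_\alpha s_\alpha=\id$ and $s_\alpha r_\alpha$ is simplicially homotopic to $\id$ (Proposition~\ref{prop:simplicial_contraction_simplices}), it follows that $rs=\id$ and $sr$ is simplicially homotopic to $\id$; then realization carries simplicial homotopy equivalences to level equivalences (spectra) or chain homotopy equivalences via $\Tot^\oplus\NN$ (chain complexes, Proposition~\ref{prop:tot_of_normalization}), hence to weak equivalences. No decomposition is needed: simplicial homotopies in the sense of Definition~\ref{defn:simplicial_homotopy} are preserved by \emph{any} functor applied levelwise, so the homotopies on each $\Delta[n_\alpha]$ pass directly to a homotopy on the full coproduct in $\sLtO$ (and hence on its underlying simplicial symmetric sequence) without ever invoking Proposition~\ref{prop:coproduct_modules} or splitting into tensor powers.

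Two remarks on your writeup. First, the forgetful functor $U$ is a \emph{right} adjoint, not a left adjoint; it does not preserve arbitrary coproducts, so your justification for $U|Z|\cong|UZ|$ does not go through as stated. Fortunately you do not need this: the realization in the statement is already taken in $\SymSeq$, i.e.\ on $UZ$, and you compute $UZ$ correctly as $\capO\circ\bigl(\coprod_\alpha W_\alpha\cdot\Delta[n_\alpha]\bigr)$ using that $\capO\circ-$ is a left adjoint. Second, the obstacles you anticipate in your last paragraph evaporate once you notice that the relevant maps are simplicial \emph{homotopy} equivalences rather than mere weak equivalences: homotopy equivalences are preserved by arbitrary functors (smash with $\capO[\mathbf{q}]$, $\Sigma_q$-orbits, coproducts), with no cofibrancy hypotheses required. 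Your piecewise verification that each summand becomes a weak equivalence, and that coproducts of weak equivalences are weak equivalences, is therefore doing more work than necessary.
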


\begin{proof}
For each $\alpha\in\mathcal{A}$, we know that $r_\alpha s_\alpha=\id$ and $s_\alpha r_\alpha$ is simplicially homotopic to the identity map (Proposition \ref{prop:simplicial_contraction_simplices}). Hence $rs=\id$ and $sr$ is simplicially homotopic to the identity map. In the case of symmetric spectra, since every level equivalence is a weak equivalence, it follows that $|r|$ is a weak equivalence. In the case of chain complexes, since $\Tot^\oplus\NN$ takes simplicially homotopic maps to chain homotopic maps, it follows from Proposition \ref{prop:tot_of_normalization} that $|r|$ is a weak equivalence.
\end{proof}

\begin{prop}\label{prop:comparing_realzn_with_hocolim_LtO}
Let $\capO$ be an operad in symmetric spectra or unbounded chain complexes over $\unit$. If $Z$ is a cofibrant simplicial $\capO$-algebra (resp. cofibrant simplicial left $\capO$-module), then the natural map
\begin{align*}
\xymatrix{
  |U Z|\ar[r] & |(\pi_0 U Z)\cdot\Delta[0]|\Iso \pi_0 U Z
}
\end{align*}
is a weak equivalence. Here, $U$ is the forgetful functor and $\unit$ is any  field of characteristic zero.
\end{prop}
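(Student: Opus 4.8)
The plan is to follow the cell-induction strategy used to prove Proposition~\ref{prop:comparing_hocolim_and_realzn}, transporting everything through the forgetful functor by means of the filtrations of Proposition~\ref{prop:small_arg_pushout_modules}. It is enough to treat left $\capO$-modules, since an $\capO$-algebra is a left $\capO$-module concentrated at $0$. Since $Z$ is cofibrant, the map from the initial object of $\sLtO$ to $Z$ is a retract of a (possibly transfinite) composition of pushouts along coproducts of generating cofibrations of $\sLtO$, each of the form $\function{\capO\circ(i\cdot\Delta[z])}{\capO\circ(X'\cdot\Delta[z])}{\capO\circ(Y'\cdot\Delta[z])}$ with $\function{i}{X'}{Y'}$ a generating cofibration of $\SymSeq$ and $z\geq 0$ (here I use that $\capO\circ-$ is a left adjoint, so it commutes with the coproduct defining $-\cdot\Delta[z]$). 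As the comparison map $|UZ|\rarrow\pi_0 UZ$ is natural in $Z$ and weak equivalences are closed under retracts, I reduce to the case in which $Z$ is built by such a transfinite composition; the base case, with $Z$ constant on the initial $\capO$-module, is immediate from $|W\cdot\Delta[0]|\iso W$ together with Proposition~\ref{prop:colimits_of_simplicial_objects_and_forgetful}.

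The inductive step analyzes a single pushout $Z_0\rarrow Z_1$ along one such generating cofibration, assuming both that $|UZ_0|\rarrow\pi_0 UZ_0$ is a weak equivalence and the companion statement that $|\capO_{UZ_0}[\mathbf{t}]|\rarrow\capO_{\pi_0 UZ_0}[\mathbf{t}]$ is a weak equivalence for all $t$. By Proposition~\ref{prop:small_arg_pushout_modules}, $UZ_1$ is a filtered colimit in $\sSymSeq$ of $(Z_1)_0 = UZ_0,(Z_1)_1,(Z_1)_2,\dotsc$, with $(Z_1)_t$ obtained from $(Z_1)_{t-1}$ by the pushout \eqref{eq:good_filtration_modules} along the monomorphism $\capO_{UZ_0}[\mathbf{t}]\tensorcheck_{\Sigma_t}Q_{t-1}^t\rarrow\capO_{UZ_0}[\mathbf{t}]\tensorcheck_{\Sigma_t}(Y'\cdot\Delta[z])^{\tensorcheck t}$, where $Q_\bullet^t$ is the filtration of Definition~\ref{def:filtration_setup_modules} for $i\cdot\Delta[z]$; by Propositions~\ref{prop:colim_commutes_with_filtration} and~\ref{prop:simplicial_colimit_commutes}, $\pi_0 UZ_1$ is the corresponding filtered colimit of $(\pi_0 UZ)_t$, built from the analogous pushouts for $i$ itself and the symmetric arrays $\capO_{\pi_0 UZ_0}$. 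Realization is a left adjoint, hence commutes with all of these colimits, and it preserves monomorphisms (Proposition~\ref{prop:realzn_homotopy_meaningful_SymSeq}), so the realized filtration of $|UZ_1|$ again consists of monomorphisms; exactly as in the proof of Proposition~\ref{prop:comparing_hocolim_and_realzn}, it therefore suffices, by induction on $t$, to check that each comparison map $|\capO_{UZ_0}[\mathbf{t}]\tensorcheck_{\Sigma_t}Q_q^t|\rarrow\capO_{\pi_0 UZ_0}[\mathbf{t}]\tensorcheck_{\Sigma_t}\bar{Q}_q^t$ (with $\bar{Q}^t_\bullet$ the filtration for $i$) is a weak equivalence.

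For these comparison maps I would use the natural isomorphism $(W\cdot\Delta[z])^{\tensorcheck t}\iso W^{\tensorcheck t}\cdot\Delta[z]^{\times t}$, with $\Sigma_t$ acting diagonally on $W^{\tensorcheck t}$ and on the $\Delta[z]$-coordinates; the objects $Q_{t-1}^t$ and $(Y'\cdot\Delta[z])^{\tensorcheck t}$ are built by iterated $\Sigma_t$-equivariant pushouts out of such tensor powers of $X'\cdot\Delta[z]$ and $Y'\cdot\Delta[z]$. Since the collapse $\Delta[z]^{\times t}\rarrow\Delta[0]$ and the section onto the $\Sigma_t$-fixed cone vertex are $\Sigma_t$-equivariant with composite $\Sigma_t$-equivariantly simplicially homotopic to the identity, the argument of Proposition~\ref{prop:simplicial contraction}, applied now to $\capO_{UZ_0}[\mathbf{t}]\tensorcheck_{\Sigma_t}(-\cdot\Delta[z]^{\times t})$ and using the flat, resp. characteristic-zero, hypotheses so that $-\tensorcheck_{\Sigma_t}-$ is homotopically well behaved and $\Tot^\oplus\NN$ carries simplicial homotopies to chain homotopies, shows that collapsing the $\Delta$-coordinates is a weak equivalence; combined with $|\capO_{UZ_0}[\mathbf{t}]\tensorcheck_{\Sigma_t}W|\iso|\capO_{UZ_0}[\mathbf{t}]|\tensorcheck_{\Sigma_t}W$ for a fixed symmetric sequence $W$, this reduces the comparison maps to the companion statement about $\capO_{UZ_0}$. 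The hard part is precisely this companion statement: $\capO_{UZ_0}[\mathbf{t}]$ is neither of the elementary free form handled by Proposition~\ref{prop:simplicial contraction} nor cofibrant in $\sSymSeq$ (indeed $\capO[\mathbf{t}]$ itself need not be cofibrant when $\capO$ is arbitrary), so Proposition~\ref{prop:realzn_homotopy_meaningful_SymSeq}(c) is not available. I would resolve it by running the induction simultaneously on $Z_0$ and on the symmetric arrays $\capO_{Z_0}[\mathbf{t}]$: by Proposition~\ref{prop:coproduct_modules} the assignment $A\mapsto\capO_A$ carries, under a cell attachment in $A$, a filtration entirely parallel to \eqref{eq:filtered_colimit_modules}–\eqref{eq:good_filtration_modules} (with terms $\capO_{A_0}[\mathbf{t+s}]\tensorcheck_{\Sigma_s}Q^s_\bullet$ in place of $\capO_{A_0}[\mathbf{t}]\tensorcheck_{\Sigma_t}Q^t_\bullet$), so the same realization–monomorphism–contraction bookkeeping applies verbatim to $\capO_{UZ_0}[\mathbf{t}]$ and closes the induction.
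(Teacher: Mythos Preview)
Your approach is valid and shares the same cell-induction skeleton as the paper, but you handle the central difficulty---controlling the symmetric arrays $\capO_A[\mathbf{t}]$---by a different device. You carry a separate ``companion statement'' that $|\capO_{UZ_0}[\mathbf{t}]|\rarrow\capO_{\pi_0 UZ_0}[\mathbf{t}]$ is a weak equivalence, and close the induction by invoking a filtration of $\capO_A$ under cell attachments; such a filtration does exist (it appears in \cite{Harper_Spectra}), but it is not stated in this paper, and your reduction via the equivariant contraction of $\Delta[z]^{\times t}$ also needs some care to ensure $-\tensorcheck_{\Sigma_t}W^{\tensorcheck t}$ preserves the relevant weak equivalences. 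The paper's proof instead \emph{strengthens the induction hypothesis}: rather than proving the comparison map for $Z$ alone, it proves it simultaneously for every coproduct $Z\amalg\coprod_{\alpha}(\capO\circ W_\alpha\cdot\Delta[n_\alpha])$ with cofibrant $W_\alpha$ and $n_\alpha\geq 0$. The base case is then exactly Proposition~\ref{prop:simplicial contraction}. In the inductive step, the filtration of Proposition~\ref{prop:small_arg_pushout_modules} has subquotient $\capO_A[\mathbf{t}]\tensorcheck_{\Sigma_t}(Y/X\cdot\Delta[z])^{\tensorcheck t}$ at level $t$, and by Proposition~\ref{prop:coproduct_modules} this is a retract summand of $A\amalg\capO\circ\bigl((Y/X)\cdot\Delta[z]\bigr)$---which is itself another instance of the strengthened hypothesis, with one additional $W_\alpha=Y/X$. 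Thus the paper never needs to analyze $\capO_A[\mathbf{t}]$ in isolation: the coproduct packaging makes the induction self-contained using only results already proved here. Your route works but imports an extra filtration result and extra homotopical bookkeeping; the paper's route is shorter and stays within the paper.
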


\begin{proof}
Let $X\rarrow Y$ be a generating cofibration in $\SymSeq$, $z\geq 0$, and consider the pushout diagram
\begin{align}
\label{eq:glueing_on_cells}
\xymatrix{
  \capO\circ X\cdot\Delta[z]\ar[r]\ar[d] & Z_0\ar[d]\\
  \capO\circ Y\cdot\Delta[z]\ar[r] & Z_1
}
\end{align}
in $\sLtO$. For each cofibrant $W_\alpha\in\SymSeq$, $n_\alpha\geq 0$, and set $\mathcal{A}$, consider the natural maps 
\begin{align}
  \label{eq:first_natural_map}
  &|Z_0\amalg\coprod\limits_{\alpha\in\mathcal{A}}
  (\capO\circ W_\alpha\cdot\Delta[n_\alpha])|\rarrow 
  |\bigl((\pi_0Z_0)\amalg\coprod\limits_{\alpha\in\mathcal{A}}
  \capO\circ W_\alpha\bigr)\cdot\Delta[0]|,\\
  \label{eq:second_natural_map}
  &|Z_1\amalg\coprod\limits_{\alpha\in\mathcal{A}}
  (\capO\circ W_\alpha\cdot\Delta[n_\alpha])|\rarrow 
  |\bigl((\pi_0Z_1)\amalg\coprod\limits_{\alpha\in\mathcal{A}}
  \capO\circ W_\alpha\bigr)\cdot\Delta[0]|,
\end{align}
and note that the diagram
\begin{align*}
\xymatrix{
  \capO\circ X\cdot\Delta[z]\ar[r]\ar[d] & 
  Z_0\amalg\coprod\limits_{\alpha\in\mathcal{A}}
  (\capO\circ W_\alpha\cdot\Delta[n_\alpha])=:A\ar[d]\\
  \capO\circ Y\cdot\Delta[z]\ar[r] & 
  Z_1\amalg\coprod\limits_{\alpha\in\mathcal{A}}
  (\capO\circ W_\alpha\cdot\Delta[n_\alpha])\Iso A_\infty
}
\end{align*}
is a pushout diagram in $\sLtO$. Assume \eqref{eq:first_natural_map} is a weak equivalence for each cofibrant $W_\alpha\in\SymSeq$, $n_\alpha\geq 0$, and set $\mathcal{A}$; let's verify \eqref{eq:second_natural_map} is a weak equivalence for each cofibrant $W_\alpha\in\SymSeq$, $n_\alpha\geq 0$, and set $\mathcal{A}$. Suppose $\mathcal{A}$ is a set, $W_\alpha\in\SymSeq$ is cofibrant, and $n_\alpha\geq 0$, for each $\alpha\in\mathcal{A}$. By Proposition \ref{prop:small_arg_pushout_modules} there are corresponding filtrations together with induced maps $\xi_t$ $(t\geq 1)$ which make the diagram
\begin{align*}
\xymatrix{
  |A_0|\ar[r]\ar[d]^{|\xi_0|} & 
  |A_1|\ar[r]\ar@{.>}[d]^{|\xi_1|} & 
  |A_2|\ar[r]\ar@{.>}[d]^{|\xi_2|} & \dotsb \\
  |(\pi_0A_0)\cdot\Delta[0]|\ar[r] & 
  |(\pi_0A_1)\cdot\Delta[0]|\ar[r] & 
  |(\pi_0A_2)\cdot\Delta[0]|\ar[r] & \dotsb
}
\end{align*}
in $\SymSeq$ commute. Since $|-|$ commutes with colimits we get
\begin{align*}
\xymatrix{
  \colim_t|A_t|\ar[r]^-{\Iso}\ar[d] & |A_\infty|\ar[d]\\
  \colim_t|(\pi_0A_t)\cdot\Delta[0]|\ar[r]^-{\Iso} &
  |(\pi_0A_\infty)\cdot\Delta[0]|
}
\end{align*}
By assumption we know that $|\xi_0|$ is a weak equivalence, and to verify \eqref{eq:second_natural_map} is a weak equivalence, it is enough to check that $|\xi_t|$ is a weak equivalence for each $t\geq 1$. Since the horizontal maps are monomorphisms and we know that there are natural isomorphisms
\begin{align*}
  |A_{t}|/|A_{t-1}|
  &\Iso|\capO_A[\mathbf{t}]  
  \tensorcheck_{\Sigma_t}(Y/X\cdot\Delta[z])^{\tensorcheck t}|,\\
  |(\pi_0A_t)\cdot\Delta[0]|/
  |(\pi_0A_{t-1})\cdot\Delta[0]|
  &\Iso|(\capO_{\pi_0A}[\mathbf{t}]  
  \tensorcheck_{\Sigma_t}(Y/X)^{\tensorcheck t})\cdot\Delta[0]|,
\end{align*} 
it is enough to verify that
\begin{align*}
\xymatrix{
  |A\amalg\capO\circ\bigl((Y/X)\cdot\Delta[z]\bigr)|\ar[r] & 
  |\bigl((\pi_0 A)\amalg \capO\circ(Y/X)\bigr)
  \cdot\Delta[0]|
}  
\end{align*}
is a weak equivalence. Noting that $Y/X$ is cofibrant finishes the argument that \eqref{eq:second_natural_map} is a weak equivalence. Consider a sequence
\begin{align*}
\xymatrix{
  Z_0\ar[r] & Z_1\ar[r] & Z_2\ar[r] & \dotsb
}
\end{align*}
of pushouts of maps as in \eqref{eq:glueing_on_cells}. Assume $Z_0$ makes \eqref{eq:first_natural_map} a weak equivalence for each cofibrant $W_\alpha\in\SymSeq$, $n_\alpha\geq 0$, and set $\mathcal{A}$;  we want to show that for $Z_\infty:=\colim_k Z_k$ the natural map
\begin{align}
  \label{eq:Z_infty_map}
  |Z_\infty\amalg\coprod\limits_{\alpha\in\mathcal{A}}
  (\capO\circ W_\alpha\cdot\Delta[n_\alpha])|\rarrow 
  |\bigl((\pi_0Z_\infty)\amalg\coprod\limits_{\alpha\in\mathcal{A}}
  \capO\circ W_\alpha\bigr)\cdot\Delta[0]|
\end{align}
is a weak equivalence for each cofibrant $W_\alpha\in\SymSeq$, $n_\alpha\geq 0$, and set $\mathcal{A}$. Consider the diagram
\begin{align*}
\xymatrix{
  |Z_0\amalg\coprod\limits_{\alpha\in\mathcal{A}}
  (\capO\circ W_\alpha\cdot\Delta[n_\alpha])|\ar[d]\ar[r] &
  |Z_1\amalg\coprod\limits_{\alpha\in\mathcal{A}}
  (\capO\circ W_\alpha\cdot\Delta[n_\alpha])|\ar[d]\ar[r] & \dotsb\\
  |\bigl((\pi_0Z_0)\amalg\coprod\limits_{\alpha\in\mathcal{A}}
  \capO\circ W_\alpha\bigr)\cdot\Delta[0]|\ar[r] &
  |\bigl((\pi_0Z_1)\amalg\coprod\limits_{\alpha\in\mathcal{A}}
  \capO\circ W_\alpha\bigr)\cdot\Delta[0]|\ar[r] & \dotsb
}
\end{align*}
in $\SymSeq$. The horizontal maps are monomorphisms and the vertical maps are weak equivalences, hence the induced map \eqref{eq:Z_infty_map} is a weak equivalence. Noting that every cofibration $\capO\circ*\cdot\Delta[0]\rarrow Z$ in $\sLtO$ is a retract of a (possibly transfinite) composition of pushouts of maps as in \eqref{eq:glueing_on_cells}, starting with $Z_0=\capO\circ*\cdot\Delta[0]$, together with Proposition~\ref{prop:simplicial contraction}, finishes the proof.
\end{proof}

\begin{proof}[Proof of Theorem \ref{MainTheorem3}]
Consider any $X\in\sAlgO$ (resp. $X\in\sLtO$). The map $\emptyset\rarrow X$ factors functorially $\emptyset\rarrow X^c\rarrow X$ in $\sAlgO$ (resp. $\sLtO$) as a cofibration followed by an acyclic fibration. This gives a diagram
\begin{align*}
&\xymatrix{
    \hocolim\limits^\AlgO_{\Delta^\op} X & 
    \hocolim\limits^\AlgO_{\Delta^\op} X^c\ar[r]\ar[l] &
    \colim\limits_{\Delta^\op} X^c &
    |X^c|\ar[l]_-{(*)}\ar[r]^-{(**)}&
    |X|
  }\\
\Bigl(
  \text{resp.}\quad
  &\xymatrix{
  \hocolim\limits^\LtO_{\Delta^\op} X & 
  \hocolim\limits^\LtO_{\Delta^\op} X^c\ar[r]\ar[l] &
  \colim\limits_{\Delta^\op} X^c &
  |X^c|\ar[l]_-{(*)}\ar[r]^-{(**)}&
  |X|
  }
\Bigr)
\end{align*}
of weak equivalences in the underlying category. The map $(*)$ is a weak equivalence by Proposition \ref{prop:comparing_realzn_with_hocolim_LtO} and the map $(**)$ is a weak equivalence by Proposition \ref{prop:realzn_homotopy_meaningful} (resp. Proposition \ref{prop:realzn_homotopy_meaningful_SymSeq}). Hence there is a zig-zag of weak equivalences
\begin{align*}
  U\hocolim\limits^{\AlgO}_{\Delta^\op}X &
  \wequiv |U X| \wequiv
  \hocolim\limits_{\Delta^\op}U X\\
  \Bigl(
  \text{resp.}\quad
  U\hocolim\limits^{\LtO}_{\Delta^\op}X &
  \wequiv |U X| \wequiv
  \hocolim\limits_{\Delta^\op}U X
\Bigr)
\end{align*}
natural in $X$, with $U$ the forgetful functor; the right-hand weak equivalence is Theorem \ref{thm:hocolim_realzn} (resp. Theorem \ref{thm:calculate_hocolim_with_realization_symseq_nice}).
\end{proof}

\subsection{Homotopy colimits and simplicial bar constructions}\label{sec:proof_of_main_theorem}

The purpose of this section is to prove Theorems \ref{thm:fattened_replacement} and \ref{thm:bar_calculates_derived_circle}. A first step is to recall that simplicial bar constructions arise whenever one has objects equipped with actions of a monoid object. A particular instance of this is the following.

\begin{defn}
\label{defn:simplicial_bar_constructions}
Let $\capO$ be an operad in symmetric spectra (resp. unbounded chain complexes over $\unit$), $X$ a right $\capO$-module, and $Y$ an $\capO$-algebra. The \emph{simplicial bar construction} (or two-sided simplicial bar construction) $\BAR(X,\capO,Y)$ in $\sSpectra$ (resp. $\sChaincx_\unit$) looks like (showing only the face maps)
\begin{align*}
\xymatrix{
  X\circ (Y) & 
  X\circ\capO\circ (Y)\ar@<-0.5ex>[l]_-{m\circ(\id)}
  \ar@<0.5ex>[l]^-{\id\circ (m)} & 
  X\circ\capO\circ\capO\circ (Y)\ar@<-1.0ex>[l]\ar[l]\ar@<1.0ex>[l] &
  \cdots\ar@<1.5ex>[l]\ar@<0.5ex>[l]\ar@<-0.5ex>[l]\ar@<-1.5ex>[l]
}
\end{align*}
and is defined objectwise by $\BAR(X,\capO,Y)_k:=X\circ\capO^{\circ k}\circ (Y)$ with the obvious face and degeneracy maps induced by the multiplication and unit maps \cite[A.1]{Gugenheim_May}, \cite[Section 7]{May_classifying_spaces}. Similarly, if $X$ is a right $\capO$-module and $Y$ is a left $\capO$-module, then the \emph{simplicial bar construction} (or two-sided simplicial bar construction) $\BAR(X,\capO,Y)$ in $\sSymSeq$ looks like (showing only the face maps)
\begin{align*}
\xymatrix{
  X\circ Y & 
  X\circ\capO\circ Y\ar@<-0.5ex>[l]_-{m\circ\id}\ar@<0.5ex>[l]^-{\id\circ m} & 
  X\circ\capO\circ\capO\circ Y\ar@<-1.0ex>[l]\ar[l]\ar@<1.0ex>[l] &
  \cdots\ar@<1.5ex>[l]\ar@<0.5ex>[l]\ar@<-0.5ex>[l]\ar@<-1.5ex>[l]
}
\end{align*}
and is defined objectwise by $\BAR(X,\capO,Y)_k:=X\circ\capO^{\circ k}\circ Y$ with the obvious face and degeneracy maps induced by the multiplication and unit maps. 
\end{defn}

Sometimes the simplicial bar construction has the additional structure of a simplicial $\capO$-algebra, simplicial left $\capO$-module, or simplicial right $\capO$-module. If $X$ is an $\capO$-algebra, then it is easy to check that there are isomorphisms 
\begin{align*}
  X\Iso\colim^{\AlgO}\Bigl(
  \xymatrix{
    \capO\circ(X) & \capO\circ\capO\circ(X)
    \ar@<-0.5ex>[l]_-{m\circ(\id)}\ar@<0.5ex>[l]^-{\id\circ(m)}
  }
  \Bigr)
  \Iso\colim^{\AlgO}_{\Delta^\op}\BAR(\capO,\capO,X)
\end{align*}
of $\capO$-algebras, natural in $X$. In other words, every $\capO$-algebra  (resp. left $\capO$-module) $X$ is naturally isomorphic to the colimit of its simplicial resolution $\BAR(\capO,\capO,X)$, and Theorem \ref{thm:fattened_replacement} can be understood as a homotopical version of this. The following proposition, which will be used in the proof of Theorem \ref{thm:fattened_replacement}, follows from \cite[9.8]{May} as we indicate below.

\begin{prop}\label{prop:simplicial_resolution_map}
Let $\capO$ be an operad in symmetric spectra or unbounded chain complexes over $\unit$. If $X$ is an $\capO$-algebra (resp. left $\capO$-module), then the natural map $|\BAR(\capO,\capO,X)|\rarrow|X\cdot\Delta[0]|\Iso X$ is a weak equivalence. Here, $\unit$ is any commutative ring.
\end{prop}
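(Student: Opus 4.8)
The plan is to recognize $\BAR(\capO,\capO,X)$ as the two-sided bar construction of the monoid $\capO$ in $(\SymSeq,\circ,I)$ acting on itself on the left and on $X$ on the right, and to exploit the \emph{extra degeneracy} that is present precisely because the left-hand slot is the monoid itself; this is exactly the situation of \cite[9.8]{May}, and the argument carries over to the present contexts. The augmentation $\epsilon\colon\BAR(\capO,\capO,X)\rarrow X\cdot\Delta[0]$ in the underlying category $\Spectra$ (resp.\ $\SymSeq$), given in simplicial degree $0$ by the $\capO$-algebra (resp.\ left $\capO$-module) structure map $\capO\circ(X)\rarrow X$ (resp.\ $\capO\circ X\rarrow X$), makes $\BAR(\capO,\capO,X)$ into an augmented simplicial object; the one simplicial identity required of $\epsilon$ is exactly the action axiom.

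Next I would exhibit the extra degeneracy $s_{-1}$ induced by the unit $\eta\colon I\rarrow\capO$ of the operad: in simplicial degree $k\geq 0$ it is $\eta$ inserted as a new left-most $\capO$-factor, i.e.\ the map $I\circ\bigl(\capO\circ\capO^{\circ k}\circ(X)\bigr)\xrightarrow{\eta\circ\id}\capO\circ\bigl(\capO\circ\capO^{\circ k}\circ(X)\bigr)$, and in simplicial degree $-1$ it is $X=I\circ(X)\xrightarrow{\eta\circ\id}\capO\circ(X)$; one then checks the extra-degeneracy identities (the identity of the form $d_0 s_{-1}=\id$ is the left unit axiom of $\capO$, and the remaining ones are immediate from the definitions). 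It should be stressed that $s_{-1}$ is a morphism in the underlying category only and does \emph{not} respect the $\capO$-algebra (resp.\ left $\capO$-module) structure; this is harmless, since the assertion concerns underlying objects. Granting the extra degeneracy, the standard argument (compare \cite[Proof of I.7.10]{Goerss_Jardine}) produces a simplicial map $\iota\colon X\cdot\Delta[0]\rarrow\BAR(\capO,\capO,X)$ in the underlying category with $\epsilon\,\iota=\id$ together with an explicit simplicial homotopy exhibiting $\iota\,\epsilon$ as simplicially homotopic to the identity; thus $\epsilon$ is a simplicial homotopy equivalence in the underlying category.

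It then remains to apply realization. Using the natural isomorphism $|X\cdot\Delta[0]|\Iso X$ together with the fact that $|-|$ sends a simplicial homotopy equivalence in $\sSpectra$ (resp.\ $\sChaincx_\unit$, resp.\ $\sSymSeq$) to a weak equivalence --- in the chain complex case because $|-|\Iso\Tot^\oplus\NN$ (Proposition \ref{prop:tot_of_normalization}) carries simplicially homotopic maps to chain homotopic maps; in the symmetric spectra case because such maps realize to maps agreeing up to level homotopy and hence up to weak equivalence, exactly as in the proof of Proposition \ref{prop:simplicial_contraction_nice}; and in the symmetric sequence case objectwise via Proposition \ref{prop:realzn_homotopy_meaningful_SymSeq} --- one concludes that $|\epsilon|\colon|\BAR(\capO,\capO,X)|\rarrow X$ is a weak equivalence, as desired; naturality in $X$ is clear since every construction above is natural. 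The only real work lies in the bookkeeping that $s_{-1}$ genuinely is a simplicial map in the underlying category when the left-hand slot is $\capO$, and in making precise, in the symmetric spectra case, that realization converts the resulting simplicial homotopy equivalence into a weak equivalence (there being no $\Tot^\oplus\NN$ model available there); for the latter one mimics the level-equivalence argument in the proof of Proposition \ref{prop:simplicial_contraction_nice}.
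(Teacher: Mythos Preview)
Your proposal is correct and follows essentially the same approach as the paper: the paper also works in the underlying category, exhibits the extra degeneracy $s_{-1}$ coming from the unit $\eta\colon I\rarrow\capO$ inserted on the left, invokes \cite[9.8]{May} to obtain the simplicial homotopy from $sr$ to the identity, and then passes to realizations using the level-equivalence argument in the spectra case and the $\Tot^\oplus\NN$ identification (Proposition~\ref{prop:tot_of_normalization}) in the chain complex case. Your write-up is in fact a bit more explicit than the paper's about why $s_{-1}$ lives only in the underlying category and about how the simplicial homotopy equivalence is converted to a weak equivalence upon realization.
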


\begin{proof}
Let $X\in\LtO$ and consider $\BAR(\capO,\capO,X)$ in the underlying category $\sSymSeq$. The unit map $\function{\eta}{I}{\capO}$ induces maps
\begin{align*}
  \function{s_{-1}:=\eta\circ\id^{\circ k}\circ\id}{\capO^{\circ k}\circ X}{\capO\circ\capO^{\circ k}\circ X},\quad\quad\text{($k\geq 0$)},
\end{align*}
in the underlying category $\SymSeq$ which satisfy the relations
\begin{align*}
  d_0s_{-1} = \id,\quad\quad
  d_is_{-1} = s_{-1}d_{i-1},\quad\quad
  s_{-1}s_j = s_{j+1}s_{-1},
\end{align*}
for all $i>0$ and $j\geq -1$. The maps $s_{-1}$ are sometimes called \emph{extra degeneracy maps} for $\BAR(\capO,\capO,X)$ since these relations are the usual simplicial identities \cite[I.1]{Goerss_Jardine} applied to the maps $s_{-1}$. Consider the maps $s$ and $r$ in $\sSymSeq$ of the form
\begin{align*}
\xymatrix{
  X\cdot\Delta[0]\ar[r]^-{s} & \BAR(\capO,\capO,X)\ar[r]^-{r} & 
  X\cdot\Delta[0]
}
\end{align*}
and induced by $X\xrightarrow{s_{-1}}\capO\circ X$ and $\capO\circ X\xrightarrow{d_0:=m} X$, respectively. It is easy to check that $rs=\id$ and by \cite[9.8]{May} the extra degeneracy maps determine a simplicial homotopy from $sr$ to the identity map; hence the map $|r|$ is a weak equivalence.
\end{proof}

\begin{proof}[Proof of Theorem \ref{thm:fattened_replacement}]
Consider any $X\in\LtO$. For ease of notation purposes, define $B(X):=\BAR(\capO,\capO,X)\in\sLtO$ and $\Delta(X):=X\cdot\Delta[0]\in\sLtO$. By Theorem \ref{MainTheorem3} there is a commutative diagram
\begin{align*}
\xymatrix{
  \hocolim\limits^\LtO_{\Delta^\op} B(X)\ar[d]^{(*)}\ar@{-}[r]^-{\wequiv} &
  |B(X)|\ar[d]^{(**)}\\
  \hocolim\limits^\LtO_{\Delta^\op} \Delta(X)\ar@{-}[r]^-{\wequiv} &
  |\Delta(X)|
}
\end{align*}
with each row a zig-zag of weak equivalences. We know that $(**)$ is a weak equivalence by Proposition \ref{prop:simplicial_resolution_map}, hence $(*)$ is a weak equivalence. The map $\emptyset\rarrow X$ factors functorially $\emptyset\rarrow X^c\rarrow X$ in $\LtO$ as a cofibration followed by an acyclic fibration. This gives natural zig-zags of weak equivalences
\begin{align*}
  \hocolim\limits^\LtO_{\Delta^\op} B(X)\wequiv
  \hocolim\limits^\LtO_{\Delta^\op} \Delta(X)\wequiv
  \hocolim\limits^\LtO_{\Delta^\op} \Delta (X^c)\wequiv
  \colim\limits^\LtO_{\Delta^\op} \Delta (X^c)\Iso 
  X^c\wequiv X
\end{align*}
in $\LtO$, which finishes the proof.
\end{proof}

The following is a special case of Proposition \ref{prop:hocolim_commutes_with_left_quillen_M}.

\begin{prop}\label{prop:commuting_with_hocolim}
Let $\function{f}{\capO}{\capO'}$ be a morphism of operads in symmetric spectra or unbounded chain complexes over $\unit$. If $X$ is a simplicial $\capO$-algebra (resp. simplicial left $\capO$-module), then there is a zig-zag of weak equivalences
\begin{align*}
  \LL f_*\bigl(\hocolim\limits^{\Alg_{\capO}}_{\Delta^\op}X\bigr)
  &\wequiv
  \hocolim\limits^{\Alg_{\capO'}}_{\Delta^\op}\LL f_* (X) \\
  \Bigl(
  \text{resp.}\quad
  \LL f_*\bigl(\hocolim\limits^{\Lt_{\capO}}_{\Delta^\op}X\bigr)
  &\wequiv
  \hocolim\limits^{\Lt_{\capO'}}_{\Delta^\op}\LL f_*(X)
    \Bigr)
\end{align*}
natural in $X$. Here, $\LL f_*$ is the total left derived functor of $f_*$ and $\unit$ is any field of characteristic zero. 
\end{prop}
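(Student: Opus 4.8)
The plan is to recognize this statement as a direct instance of Proposition \ref{prop:hocolim_commutes_with_left_quillen_M}, applied to the change of operads adjunction determined by $f$. First I would recall that a morphism of operads $\function{f}{\capO}{\capO'}$ induces an adjunction with left adjoint $f_*$ on top and right adjoint $f^*$ the forgetful functor, between $\AlgO$ and $\Alg_{\capO'}$ (resp. between $\LtO$ and $\Lt_{\capO'}$), and that by Theorem \ref{MainTheorem4} this adjunction is a Quillen adjunction. Here we use the hypothesis that $\unit$ is a field of characteristic zero, which guarantees that the model structures of Definition \ref{defn:model_structures_chain_complexes}(b) exist in the context of unbounded chain complexes (in the context of symmetric spectra, the model structures of Definition \ref{defn:stable_flat_positive_model_structures} require no restriction on $\unit$). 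In particular $\LL f_*$, the total left derived functor of $f_*$, is well-defined.

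Next I would note that, equipped with any of the model structures in Definition \ref{defn:stable_flat_positive_model_structures} or \ref{defn:model_structures_chain_complexes}, each of $\AlgO$, $\LtO$, $\Alg_{\capO'}$, and $\Lt_{\capO'}$ is a cofibrantly generated model category whose generating cofibrations and generating acyclic cofibrations have small domains; this is precisely what is recorded when these model structures are introduced in this paper, so the standing hypotheses of Proposition \ref{prop:hocolim_commutes_with_left_quillen_M} are met. Applying that proposition with $\M := \AlgO$ (resp. $\LtO$), $\M' := \Alg_{\capO'}$ (resp. $\Lt_{\capO'}$), $F := f_*$, and $G := f^*$ then produces, for every $X \in \sAlgO$ (resp. $X \in \sLtO$), the asserted natural zig-zag of weak equivalences. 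Conceptually, the mechanism is that $f_*$ sends a cofibrant simplicial resolution $X^c$ of $X$ --- which is objectwise cofibrant by Proposition \ref{prop:cofibrant_diagrams_are_objectwise_cofibrant} --- to an object that computes both $\LL f_*\bigl(\hocolim_{\Delta^\op}X\bigr)$ and $\hocolim_{\Delta^\op}\LL f_*(X)$, since $f_*$ commutes with colimits and preserves objectwise weak equivalences between objectwise cofibrant simplicial objects.

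There is essentially no obstacle here beyond careful bookkeeping. One must make sure the model structure chosen on $\AlgO$ (resp. $\LtO$) is matched by the model structure of the same flavor (positive flat stable, positive stable, or the chain complex model structure) on $\Alg_{\capO'}$ (resp. $\Lt_{\capO'}$), so that $f_* \dashv f^*$ is a Quillen adjunction for that pair of model structures by Theorem \ref{MainTheorem4}, and then the result is immediate. No new homotopy-theoretic input is required.
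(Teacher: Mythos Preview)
Your proposal is correct and is exactly the paper's approach: the paper simply records that this proposition is a special case of Proposition~\ref{prop:hocolim_commutes_with_left_quillen_M}, applied to the Quillen adjunction $f_*\dashv f^*$ of Theorem~\ref{MainTheorem4}. Your write-up in fact supplies more detail than the paper itself, which gives no further argument.
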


\begin{proof}[Proof of Theorem \ref{thm:bar_calculates_derived_circle}]
For each $X\in\LtO$, consider the zig-zags of weak equivalences
\begin{align*}
  \LL f_*(X)
  &\wequiv \LL f_* 
  \Bigl(\hocolim\limits^\LtO_{\Delta^\op}\BAR(\capO,\capO,X)\Bigr) 
  \wequiv \hocolim\limits^{\Lt_{\capO'}}_{\Delta^\op} 
  \LL f_* \bigl(\BAR(\capO,\capO,X)\bigr)\\
  &\wequiv \hocolim\limits^{\Lt_{\capO'}}_{\Delta^\op} 
  f_* \bigl(\BAR(\capO,\capO,X)\bigr)
  \wequiv \hocolim^{\Lt_{\capO'}}\limits_{\Delta^\op} \BAR(\capO',\capO,X) \\
  &\wequiv |\BAR(\capO',\capO,X)|
\end{align*}
in the underlying category $\SymSeq$; these weak equivalences follow immediately from Theorem \ref{thm:fattened_replacement}, Proposition \ref{prop:commuting_with_hocolim}, and Theorem \ref{MainTheorem3}. Argue similarly for the case of $\AlgO$.
\end{proof}

\section{Algebras and modules over non-$\Sigma$ operads}
\label{sec:non_sigma_operads}

The purpose of this section is to observe that the main results of this paper have corresponding versions for algebras and modules over non-$\Sigma$ operads. The arguments are the same as in the previous sections, except using the non-$\Sigma$ versions---described in \cite{Harper_Modules}---of the filtrations in Section \ref{sec:filtrations_of_certain_pushouts}.

\begin{defn}
Let $\capO$ be a non-$\Sigma$ operad in symmetric spectra. 
\begin{itemize}
\item[(a)] The \emph{flat stable model structure} on $\AlgO$ (resp. $\LtO$) has weak equivalences the stable equivalences (resp. objectwise stable equivalences) and fibrations the flat stable fibrations (resp. objectwise flat stable fibrations).
\item[(b)] The \emph{stable model structure} on $\AlgO$ (resp. $\LtO$) has weak equivalences the stable equivalences (resp. objectwise stable equivalences) and fibrations the stable fibrations (resp. objectwise stable fibrations).
\end{itemize}
\end{defn}

\begin{defn}
Let $\capO$ be a non-$\Sigma$ operad in unbounded chain complexes over $\unit$. The model structure on $\AlgO$ (resp. $\LtO$) has weak equivalences the homology isomorphisms (resp. objectwise homology isomorphisms) and fibrations the dimensionwise surjections (resp. objectwise dimensionwise surjections). Here, $\unit$ is any commutative ring.
\end{defn}

The above model structures are proved in \cite{Harper_Modules} to exist. The following is a non-$\Sigma$ operad version of Theorem \ref{MainTheorem3}.

\begin{thm}
Let $\capO$ be a non-$\Sigma$ operad in symmetric spectra or unbounded chain complexes over $\unit$. If $X$ is a simplicial $\capO$-algebra (resp. simplicial left $\capO$-module), then there are zig-zags of weak equivalences
\begin{align*}
  U\hocolim\limits^{\AlgO}_{\Delta^\op}X & \wequiv |U X|\wequiv
  \hocolim\limits_{\Delta^\op}U X  \\
  \Bigl(
  \text{resp.}\quad
  U\hocolim\limits^{\LtO}_{\Delta^\op}X & \wequiv |U X|\wequiv
  \hocolim\limits_{\Delta^\op}U X 
  \Bigr)
\end{align*}
natural in $X$. Here, $U$ is the forgetful functor and $\unit$ is any commutative ring.
\end{thm}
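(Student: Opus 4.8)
The plan is to run the proof of Theorem~\ref{MainTheorem3} essentially verbatim, substituting for the $\Sigma$-equivariant filtrations of Section~\ref{sec:filtrations_of_certain_pushouts} the non-$\Sigma$ filtrations developed in \cite{Harper_Modules}. First one records that Propositions~\ref{prop:coproduct_modules}, \ref{prop:simplicial_colimit_commutes}, \ref{prop:small_arg_pushout_modules}, and \ref{prop:colim_commutes_with_filtration} all have evident non-$\Sigma$ analogues: the free algebra (resp.\ left module) on a coproduct $A\amalg(\capO\circ Y)$ decomposes as $\coprod_{q\geq 0}\capO_A[\mathbf{q}]\tensorcheck Y^{\tensorcheck q}$ with no $\Sigma_q$-coinvariants, and a pushout $A\amalg_{(\capO\circ X)}(\capO\circ Y)$ carries a filtration whose $t$-th subquotient is built from the ordered (non-symmetric) pushout-product powers of $i\colon X\rarrow Y$. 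These are precisely the non-$\Sigma$ constructions of \cite{Harper_Modules}; their objectwise $\Delta^\op$-shaped versions and their compatibility with $\pi_0(-)$ are proved by the same arguments used in Section~\ref{sec:filtrations_of_certain_pushouts}. Because no coinvariants intervene, these results hold with $\unit$ an arbitrary commutative ring, which is exactly why the hypothesis here is weaker than in Theorem~\ref{MainTheorem3}.

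With the filtrations in hand, the first step is the non-$\Sigma$ version of Proposition~\ref{prop:simplicial contraction}: the map induced on realizations by the simplicial contractions $\Delta[n_\alpha]\rarrow\Delta[0]\rarrow\Delta[n_\alpha]$ on a coproduct $\coprod_{\alpha}(\capO\circ W_\alpha\cdot\Delta[n_\alpha])$ in $\sLtO$ is a weak equivalence. Its proof is unchanged: $rs=\id$, each $s_\alpha r_\alpha$ is simplicially homotopic to the identity, hence so is $sr$, and realization sends simplicially homotopic maps to weak equivalences --- via level equivalences in symmetric spectra and via $\Tot^\oplus\NN$ (Proposition~\ref{prop:tot_of_normalization}) in chain complexes.

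The second step, which is the heart of the matter, is the non-$\Sigma$ version of Proposition~\ref{prop:comparing_realzn_with_hocolim_LtO}: if $Z\in\sAlgO$ (resp.\ $\sLtO$) is cofibrant, then the natural map $|UZ|\rarrow|(\pi_0 UZ)\cdot\Delta[0]|\iso\pi_0 UZ$ is a weak equivalence. One runs exactly the cell-attachment induction of Proposition~\ref{prop:comparing_realzn_with_hocolim_LtO}: attaching a cell $\capO\circ X\cdot\Delta[z]\rarrow\capO\circ Y\cdot\Delta[z]$, the non-$\Sigma$ filtration reduces the inductive step to checking that
\begin{align*}
  |A\amalg\capO\circ\bigl((Y/X)\cdot\Delta[z]\bigr)|\rarrow
  |\bigl((\pi_0 A)\amalg\capO\circ(Y/X)\bigr)\cdot\Delta[0]|
\end{align*}
is a weak equivalence; expanding both sides by the non-$\Sigma$ analogue of Proposition~\ref{prop:coproduct_modules}, this follows from the first step since $Y/X$ is cofibrant. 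One then passes to (possibly transfinite) composites of such pushouts and to retracts, starting from $Z_0=\capO\circ*\cdot\Delta[0]$, exactly as before.

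Finally, assemble as in the proof of Theorem~\ref{MainTheorem3}: factor $\emptyset\rarrow X$ functorially as a cofibration followed by an acyclic fibration $\emptyset\rarrow X^c\rarrow X$ in $\sAlgO$ (resp.\ $\sLtO$), apply $\hocolim_{\Delta^\op}$, and combine the non-$\Sigma$ version of Proposition~\ref{prop:comparing_realzn_with_hocolim_LtO} with Proposition~\ref{prop:realzn_homotopy_meaningful} (resp.\ Proposition~\ref{prop:realzn_homotopy_meaningful_SymSeq}) and Theorem~\ref{thm:hocolim_realzn} (resp.\ Theorem~\ref{thm:calculate_hocolim_with_realization_symseq_nice}) to produce the asserted natural zig-zags $U\hocolim^{\AlgO}_{\Delta^\op}X\wequiv|UX|\wequiv\hocolim_{\Delta^\op}UX$. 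The only step requiring real care is the transcription of the filtration results of Section~\ref{sec:filtrations_of_certain_pushouts} into the non-$\Sigma$ setting; since the non-$\Sigma$ filtrations of \cite{Harper_Modules} involve no symmetric-group coinvariants, this transcription is routine and I expect no genuine obstacle.
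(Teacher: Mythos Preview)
Your proposal is correct and follows essentially the same approach as the paper: the paper's own treatment of this theorem is a one-sentence remark at the beginning of Section~\ref{sec:non_sigma_operads} stating that ``the arguments are the same as in the previous sections, except using the non-$\Sigma$ versions---described in \cite{Harper_Modules}---of the filtrations in Section~\ref{sec:filtrations_of_certain_pushouts}.'' Your outline is a faithful (and more detailed) expansion of exactly this strategy, including the correct observation that the absence of $\Sigma_q$-coinvariants is what allows $\unit$ to be an arbitrary commutative ring.
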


The following is a non-$\Sigma$ operad version of Theorem \ref{thm:fattened_replacement}.

\begin{thm}
Let $\capO$ be a non-$\Sigma$ operad in symmetric spectra or unbounded chain complexes over $\unit$. If $X$ is an $\capO$-algebra (resp. left $\capO$-module), then there is a zig-zag of weak equivalences
\begin{align*}
  X & \wequiv \hocolim\limits^{\AlgO}_{\Delta^\op}\BAR(\capO,\capO,X)\\
  \Bigl(
  \text{resp.}\quad
  X &\wequiv \hocolim\limits^{\LtO}_{\Delta^\op}\BAR(\capO,\capO,X)
  \Bigr)
\end{align*}
in $\AlgO$ (resp. $\LtO$), natural in $X$. Here, $\unit$ is any commutative ring.
\end{thm}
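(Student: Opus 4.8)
The plan is to repeat the proof of Theorem~\ref{thm:fattened_replacement} essentially verbatim, replacing each $\Sigma$-operad ingredient by its non-$\Sigma$ counterpart. Exactly two inputs are needed. The first is the preceding non-$\Sigma$ operad version of Theorem~\ref{MainTheorem3}, which I take as given. The second is a non-$\Sigma$ operad version of Proposition~\ref{prop:simplicial_resolution_map}, asserting that the natural map $|\BAR(\capO,\capO,X)|\rarrow|X\cdot\Delta[0]|\Iso X$ is a weak equivalence in the underlying category. I would prove this exactly as Proposition~\ref{prop:simplicial_resolution_map} is proved: the unit $\function{\eta}{I}{\capO}$ of the non-$\Sigma$ operad $\capO$ induces extra degeneracy maps $\function{s_{-1}}{\capO^{\circ k}\circ X}{\capO\circ\capO^{\circ k}\circ X}$ ($k\geq 0$) satisfying the usual simplicial identities, so by \cite[9.8]{May} they determine a simplicial homotopy from $sr$ to the identity, where $\function{s}{X\cdot\Delta[0]}{\BAR(\capO,\capO,X)}$ and $\function{r}{\BAR(\capO,\capO,X)}{X\cdot\Delta[0]}$ are induced by $s_{-1}$ and by $d_0=m$; since $rs=\id$ and realization takes such data to a weak equivalence---via level equivalences for symmetric spectra, and via chain homotopies under $\Tot^\oplus\NN$ together with Proposition~\ref{prop:tot_of_normalization} for chain complexes---the map $|r|$ is a weak equivalence. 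Nothing in this argument is sensitive to the symmetric-group actions: it uses only that $\capO$ is a monoid for the relevant composition product $\circ$ and the formal bookkeeping of simplicial identities, so it carries over to non-$\Sigma$ operads unchanged.

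Granting these inputs, fix $X\in\LtO$ and set $B(X):=\BAR(\capO,\capO,X)\in\sLtO$ and $\Delta(X):=X\cdot\Delta[0]\in\sLtO$. Applying the non-$\Sigma$ version of Theorem~\ref{MainTheorem3} to the natural map $\function{r}{B(X)}{\Delta(X)}$ yields a commutative square
\begin{align*}
\xymatrix{
  \hocolim\limits^{\LtO}_{\Delta^\op} B(X)\ar[d]^{(*)}\ar@{-}[r]^-{\wequiv} &
  |B(X)|\ar[d]^{(**)}\\
  \hocolim\limits^{\LtO}_{\Delta^\op} \Delta(X)\ar@{-}[r]^-{\wequiv} &
  |\Delta(X)|
}
\end{align*}
whose rows are zig-zags of weak equivalences in the underlying category; since $(**)$ is a weak equivalence by the non-$\Sigma$ analogue of Proposition~\ref{prop:simplicial_resolution_map}, so is $(*)$. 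Now factor $\emptyset\rarrow X$ functorially in $\LtO$---equipped with its non-$\Sigma$ model structure from \cite{Harper_Modules}---as a cofibration $\emptyset\rarrow X^c$ followed by an acyclic fibration $X^c\rarrow X$. The functor $-\cdot\Delta[0]$ is left Quillen (its right adjoint $\Ev_0$ preserves fibrations and acyclic fibrations), so $\Delta(X^c)$ is a cofibrant diagram; moreover $\colim^{\LtO}_{\Delta^\op}\Delta(X^c)$ is the reflexive coequalizer of a pair of identity maps, hence $\colim^{\LtO}_{\Delta^\op}\Delta(X^c)\Iso X^c$ (Proposition~\ref{prop:colim_of_simplicial_object}), so $\hocolim^{\LtO}_{\Delta^\op}\Delta(X^c)\wequiv X^c$. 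Concatenating, we obtain the natural zig-zag of weak equivalences
\begin{align*}
  \hocolim\limits^{\LtO}_{\Delta^\op} B(X)\wequiv
  \hocolim\limits^{\LtO}_{\Delta^\op} \Delta(X)\wequiv
  \hocolim\limits^{\LtO}_{\Delta^\op} \Delta(X^c)\wequiv
  \colim\limits^{\LtO}_{\Delta^\op} \Delta(X^c)\Iso X^c\wequiv X
\end{align*}
in $\LtO$ (weak equivalences in $\LtO$ being detected by the forgetful functor). The case of $\AlgO$ is identical, working in the underlying category $\Spectra$ or $\Chaincx_\unit$.

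The genuine work is not in this formal argument but in the preceding non-$\Sigma$ version of Theorem~\ref{MainTheorem3}, which in turn rests on non-$\Sigma$ analogues of the pushout filtrations of Section~\ref{sec:filtrations_of_certain_pushouts}. There the combinatorial indexing of the filtration quotients $Q_q^t$ changes, since the non-$\Sigma$ composition product is not symmetrized over $\Sigma_t$, and one must use the non-$\Sigma$ filtrations of \cite{Harper_Modules} in place of Propositions~\ref{prop:coproduct_modules} and~\ref{prop:small_arg_pushout_modules}; this is the step I expect to require the most care. Once those are in hand, every other step above is purely formal.
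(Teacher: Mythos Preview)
Your proposal is correct and follows exactly the approach the paper takes: the paper does not give a separate proof but simply states that the arguments of the previous sections go through unchanged using the non-$\Sigma$ filtrations from \cite{Harper_Modules}, which is precisely the strategy you spell out (mirroring the proof of Theorem~\ref{thm:fattened_replacement} via the non-$\Sigma$ analogues of Theorem~\ref{MainTheorem3} and Proposition~\ref{prop:simplicial_resolution_map}). Your explicit unpacking of these steps, including the observation that the extra-degeneracy argument for Proposition~\ref{prop:simplicial_resolution_map} is insensitive to symmetric-group actions, is more detailed than the paper's one-line justification but in complete agreement with it.
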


The following is a non-$\Sigma$ operad version of Theorem \ref{MainTheorem4}.

\begin{thm}
Let $\capO$ be a non-$\Sigma$ operad in symmetric spectra or unbounded chain complexes over $\unit$. If $\function{f}{\capO}{\capO'}$ is a map of non-$\Sigma$ operads, then the adjunction
\begin{align*}
\xymatrix{
  \Alg_{\capO}\ar@<0.5ex>[r]^-{f_*} & \Alg_{\capO'},\ar@<0.5ex>[l]^-{f^*}
}
\quad\quad
\Bigl(
\text{resp.}\quad
\xymatrix{
  \Lt_{\capO}\ar@<0.5ex>[r]^-{f_*} & \Lt_{\capO'},\ar@<0.5ex>[l]^-{f^*}
}
\Bigr)
\end{align*} 
is a Quillen adjunction with left adjoint on top and $f^*$ the forgetful functor. If furthermore, $f$ is an objectwise weak equivalence, then the adjunction is a Quillen equivalence, and hence induces an equivalence on the homotopy categories. Here, $\unit$ is any commutative ring.
\end{thm}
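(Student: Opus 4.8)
The plan is to follow the argument used for Theorem~\ref{MainTheorem4} (carried out in \cite{Harper_Spectra} and \cite{Harper_Modules}), replacing the $\Sigma$-operad filtrations of Section~\ref{sec:filtrations_of_certain_pushouts} by their non-$\Sigma$ analogues from \cite{Harper_Modules}. First I would dispose of the Quillen adjunction claim: in each of the model structures above, the weak equivalences and fibrations on $\Alg_\capO$ and $\Lt_\capO$ are created by the forgetful functors to the underlying category, and $f^*$ commutes with those forgetful functors; hence $f^*$ preserves fibrations and acyclic fibrations, so $(f_*,f^*)$ is a Quillen adjunction with left adjoint $f_*=\capO'\circ_\capO(-)$.

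For the Quillen equivalence, assume $f$ is an objectwise weak equivalence. Since the weak equivalences in $\Alg_\capO$, $\Alg_{\capO'}$ (resp. $\Lt_\capO$, $\Lt_{\capO'}$) are detected in the underlying category and $f^*$ commutes with the forgetful functors, $f^*$ both preserves and reflects all weak equivalences. By the standard criterion for when a Quillen pair is a Quillen equivalence (see, e.g., \cite[1.3]{Hovey}), it therefore suffices to show that for every cofibrant $X$ the unit map $\eta_X\colon X\rarrow f^*f_*X$ is a weak equivalence in the underlying category.

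I would prove this by cellular induction. Every cofibrant object is a retract of a (possibly transfinite) composition of pushouts of maps of the form $\capO\circ A\rarrow \capO\circ B$ with $A\rarrow B$ a generating cofibration of the underlying category. Using the non-$\Sigma$ version of the pushout filtration of Proposition~\ref{prop:small_arg_pushout_modules}, a single such cell attachment to $Z$ is built from layers $\capO_Z[\mathbf{t}]\tensorcheck Q^t_{t-1}\rarrow \capO_Z[\mathbf{t}]\tensorcheck Y^{\tensorcheck t}$ (with no $\Sigma_t$-quotients in the non-$\Sigma$ setting), and applying the left Quillen functor $f_*$ replaces $\capO_Z$ by the corresponding symmetric array built from $\capO'$ and $f_*Z$. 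The comparison map between the two filtrations is induced by $f$, and is a layerwise weak equivalence because: in the flat stable model structure, $\tensorcheck$ with the relevant cofibrant objects preserves weak equivalences; and over a commutative ring $\unit$ the corresponding layers in $\Chaincx_\unit$ are homotopy-invariant for the same reasons that make the filtrations of \cite{Harper_Modules} work. Since all the maps in sight are monomorphisms, the weak equivalence propagates along the pushouts and transfinite compositions and is inherited by retracts, proving $\eta_X$ is a weak equivalence; the induced equivalence of homotopy categories is then formal.

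The main obstacle is the inductive comparison step: verifying that the non-$\Sigma$ filtration layers $\capO_Z[\mathbf{t}]\tensorcheck Q^t_{t-1}$ depend homotopy-invariantly on $\capO$ and $Z$, so that the map induced by the objectwise weak equivalence $f$ is a weak equivalence on each layer. This requires the pushout-product and cofibrancy bookkeeping of \cite{Harper_Modules, Harper_Spectra}---tracking that $\capO_Z[\mathbf{t}]$ and the $Q^t_{t-1}$ stay sufficiently cofibrant through the induction so that $\tensorcheck$ remains homotopically well-behaved (or, in the chain complex case, invoking exactness when $\unit$ is a field and the analogous flatness arguments for a general commutative ring).
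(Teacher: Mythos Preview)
Your proposal is correct and matches the paper's approach: the paper does not give an explicit proof here but simply declares at the start of Section~\ref{sec:non_sigma_operads} that the arguments are identical to those for Theorem~\ref{MainTheorem4} (which in turn defers to \cite{Harper_Spectra}), except using the non-$\Sigma$ filtrations from \cite{Harper_Modules}; your sketch is precisely an unpacking of that argument. The one point worth sharpening is your closing remark about ``flatness arguments for a general commutative ring'': the reason the non-$\Sigma$ case works over any $\unit$, rather than only fields of characteristic zero, is exactly the absence of the $\Sigma_t$-quotients you already noted---once those are gone, the filtration layers are built from $\tensorcheck$ with cofibrant inputs and no further hypothesis on $\unit$ is needed.
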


The following is a non-$\Sigma$ operad version of Theorem \ref{thm:bar_calculates_derived_circle}.

\begin{thm}\label{thm:non_sigma_change_of_operads}
Let $\function{f}{\capO}{\capO'}$ be a morphism of non-$\Sigma$ operads in symmetric spectra or unbounded chain complexes over $\unit$. Let $X$ be an $\capO$-algebra (resp. left $\capO$-module). If the simplicial bar construction $\BAR(\capO,\capO,X)$ is objectwise cofibrant in $\AlgO$ (resp. $\LtO$), then there is a zig-zag of weak equivalences
\begin{align*}
  \LL f_*(X)&\wequiv
  |\BAR(\capO',\capO,X)|
\end{align*}
in the underlying category, natural in such $X$. Here, $\LL f_*$ is the total left derived functor of $f_*$ and $\unit$ is any commutative ring.
\end{thm}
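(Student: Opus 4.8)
The plan is to repeat the proof of Theorem~\ref{thm:bar_calculates_derived_circle} verbatim, feeding in the non-$\Sigma$ analogues of its three inputs, all of which have already been recorded in this section. Concretely, for each $\capO$-algebra (resp. left $\capO$-module) $X$ I would first invoke the non-$\Sigma$ version of Theorem~\ref{thm:fattened_replacement} to obtain a natural zig-zag $X\wequiv\hocolim_{\Delta^\op}\BAR(\capO,\capO,X)$ in $\AlgO$ (resp. $\LtO$). Applying the total left derived functor $\LL f_*$ and using that left Quillen functors commute with homotopy colimits---that is, the non-$\Sigma$ analogue of Proposition~\ref{prop:commuting_with_hocolim}, which is the special case of Proposition~\ref{prop:hocolim_commutes_with_left_quillen_M} applied to the Quillen adjunction $(f_*,f^*)$ supplied by the non-$\Sigma$ version of Theorem~\ref{MainTheorem4}---gives
\begin{align*}
  \LL f_*(X)
  \wequiv\LL f_*\Bigl(\hocolim\limits_{\Delta^\op}\BAR(\capO,\capO,X)\Bigr)
  \wequiv\hocolim\limits^{\Lt_{\capO'}}_{\Delta^\op}\LL f_*\bigl(\BAR(\capO,\capO,X)\bigr),
\end{align*}
and similarly in the $\capO$-algebra case.

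The next step uses the hypothesis that $\BAR(\capO,\capO,X)$ is objectwise cofibrant: a cofibrant replacement of this $\Delta^\op$-diagram is objectwise cofibrant and objectwise weakly equivalent to it, so by Ken Brown's lemma applied objectwise the objectwise functor $f_*$ carries it to a weak equivalence, whence $\LL f_*\bigl(\BAR(\capO,\capO,X)\bigr)\wequiv f_*\bigl(\BAR(\capO,\capO,X)\bigr)=\BAR(\capO',\capO,X)$ in $\sAlg_{\capO'}$ (resp. $\sLt_{\capO'}$); here the equality is the objectwise identity $f_*(\capO^{\circ k}\circ(X))\iso\capO'\circ\capO^{\circ k}\circ(X)$, compatible with the bar face and degeneracy maps. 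Finally I would apply the non-$\Sigma$ version of Theorem~\ref{MainTheorem3} to replace $\hocolim_{\Delta^\op}\BAR(\capO',\capO,X)$ in the underlying category by its realization $|\BAR(\capO',\capO,X)|$, producing the asserted zig-zag $\LL f_*(X)\wequiv|\BAR(\capO',\capO,X)|$; naturality in $X$ is inherited at each stage since every map used is natural.

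The only genuine content lies beneath the surface, in the non-$\Sigma$ analogues of the filtration results of Section~\ref{sec:filtrations_of_certain_pushouts}---in particular the non-$\Sigma$ version of Proposition~\ref{prop:comparing_realzn_with_hocolim_LtO}, which underpins the non-$\Sigma$ version of Theorem~\ref{MainTheorem3}. But the relevant filtrations of pushouts of free non-$\Sigma$ $\capO$-algebras (resp. free left $\capO$-modules) are already available from \cite{Harper_Modules}, and one checks that the simplicial-contraction arguments (Propositions~\ref{prop:simplicial contraction} and \ref{prop:comparing_realzn_with_hocolim_LtO}) go through unchanged with these filtrations in place; once that is granted, the present theorem follows formally. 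I expect the only point requiring care is bookkeeping: ensuring that the objectwise filtrations for $\Delta^\op$-shaped diagrams are constructed functorially in the non-$\Sigma$ setting, so that all the zig-zags above are genuinely natural in $X$.
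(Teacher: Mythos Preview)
Your proposal is correct and follows essentially the same approach as the paper: the paper does not write out a separate proof for this theorem, but states at the beginning of Section~\ref{sec:non_sigma_operads} that the arguments are the same as in the previous sections, using the non-$\Sigma$ versions of the filtrations from \cite{Harper_Modules}, which is exactly the strategy you outline. One small slip: your objectwise identity should read $f_*(\capO\circ\capO^{\circ k}\circ(X))\iso\capO'\circ\capO^{\circ k}\circ(X)$, since the $k$-th level of $\BAR(\capO,\capO,X)$ is $\capO\circ\capO^{\circ k}\circ X$.
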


\section{Operads in chain complexes over a commutative ring}
\label{sec:chain_complexes_ring}

The purpose of this section is to observe that the main results of this paper remain true in the context of unbounded chain complexes over a commutative ring, provided that the desired model category structures on $\capO$-algebras and left $\capO$-modules exist; the arguments are the same as in the previous sections. Some approaches to establishing an appropriate homotopy theory in this context are studied in \cite{Berger_Moerdijk, Spitzweck}.

\begin{assumption}\label{BasicAssumption}
From now on in this section, we assume that $\capO$ is an operad in unbounded chain complexes over $\unit$ such that the following model structure exists on $\AlgO$ (resp. $\LtO$): the model structure on $\AlgO$ (resp. $\LtO$) has weak equivalences the homology isomorphisms (resp. objectwise homology isomorphisms) and fibrations the dimensionwise surjections (resp. objectwise dimensionwise surjections). Here, $\unit$ is any commutative ring.
\end{assumption}

\begin{thm}\label{thm:forgetful_commutative_ring_case}
Let $\capO$ be an operad in unbounded chain complexes over $\unit$. Assume that $\capO$ satisfies Basic Assumption \ref{BasicAssumption}. If $X$ is a simplicial $\capO$-algebra (resp. simplicial left $\capO$-module), then there are zig-zags of weak equivalences
\begin{align*}
  U\hocolim\limits^{\AlgO}_{\Delta^\op}X & \wequiv |U X|\wequiv
  \hocolim\limits_{\Delta^\op}U X \\
  \Bigl(
  \text{resp.}\quad
  U\hocolim\limits^{\LtO}_{\Delta^\op}X & \wequiv |U X|\wequiv
  \hocolim\limits_{\Delta^\op}U X 
  \Bigr)
\end{align*}
natural in $X$. Here, $U$ is the forgetful functor and $\unit$ is any commutative ring.
\end{thm}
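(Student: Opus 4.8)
The plan is to mirror, essentially verbatim, the proof of Theorem \ref{MainTheorem3}, replacing every appeal to the hypothesis that $\unit$ is a field of characteristic zero by an appeal to Basic Assumption \ref{BasicAssumption}. Recall that the proof of Theorem \ref{MainTheorem3} proceeds by: (i) functorially factoring $\emptyset\rarrow X$ in $\sAlgO$ (resp. $\sLtO$) as a cofibration $\emptyset\rarrow X^c$ followed by an acyclic fibration $X^c\rarrow X$; (ii) comparing $\hocolim_{\Delta^\op}X^c$ with $\colim_{\Delta^\op}X^c$, which under $U$ is $\pi_0 U X^c$ by Proposition \ref{prop:colimits_of_simplicial_objects_and_forgetful}; (iii) identifying $|U X^c|$ with $\pi_0 U X^c$ via Proposition \ref{prop:comparing_realzn_with_hocolim_LtO}; (iv) identifying $|U X^c|$ with $|U X|$ via Proposition \ref{prop:realzn_homotopy_meaningful} (resp. Proposition \ref{prop:realzn_homotopy_meaningful_SymSeq}); and (v) identifying $|U X|$ with $\hocolim_{\Delta^\op}U X$ via Theorem \ref{thm:hocolim_realzn} (resp. Theorem \ref{thm:calculate_hocolim_with_realization_symseq_nice}).

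First I would audit which of these ingredients already hold over an arbitrary commutative ring. The homotopy-theoretic properties of realization in the underlying categories---Propositions \ref{prop:realzn_homotopy_meaningful}, \ref{prop:realzn_homotopy_meaningful_SymSeq}, \ref{prop:realzn_preserves_monomorphisms}, \ref{prop:simplicial contraction}---and Theorems \ref{thm:hocolim_realzn}, \ref{thm:calculate_hocolim_with_realization_symseq_nice} are all stated and proved for any commutative ring $\unit$ and make no reference to a model structure on algebras or modules over operads, so they require no modification. Likewise the filtration results of Section \ref{sec:filtrations_of_certain_pushouts} (Propositions \ref{prop:coproduct_modules} and \ref{prop:small_arg_pushout_modules}) are purely formal consequences of the constructions in \cite{Harper_Spectra, Harper_Modules} and are available over any commutative ring. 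Thus steps (iv) and (v) transfer with no change, and the filtration input needed for step (iii) is in place.

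The only genuine use of ``$\unit$ is a field of characteristic zero'' occurs in step (i) and in Proposition \ref{prop:comparing_realzn_with_hocolim_LtO}: both invoke the model structure on $\AlgO$ (resp. $\LtO$) of Definition \ref{defn:model_structures_chain_complexes}(b) and the projective model structure it induces on $\sAlgO$ (resp. $\sLtO$)---step (i) to obtain a cofibrant replacement $X^c$, and Proposition \ref{prop:comparing_realzn_with_hocolim_LtO} to express an arbitrary cofibration $\capO\circ*\cdot\Delta[0]\rarrow Z$ in $\sLtO$ as a retract of a (possibly transfinite) composition of pushouts of generating cofibrations $\capO\circ X\cdot\Delta[z]\rarrow\capO\circ Y\cdot\Delta[z]$ as in \eqref{eq:glueing_on_cells}. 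Basic Assumption \ref{BasicAssumption} supplies exactly this model structure, and since it is transferred along the free--forgetful adjunction its generating cofibrations and acyclic cofibrations have the form $\capO\circ(-)$ applied to those of the underlying category; the inherited projective model structure on $\sAlgO$ (resp. $\sLtO$) then has generating cofibrations $\capO\circ X\cdot\Delta[z]\rarrow\capO\circ Y\cdot\Delta[z]$ with small domains, which is precisely what the proof of Proposition \ref{prop:comparing_realzn_with_hocolim_LtO} uses. Hence that proof, and then the proof of Theorem \ref{MainTheorem3}, go through word for word.

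The step I would expect to demand the most care---though it is bookkeeping rather than a real obstacle---is verifying this last compatibility: that the model structure granted by Basic Assumption \ref{BasicAssumption} is cofibrantly generated with generating cofibrations and acyclic cofibrations of the stated form and with small domains, so that both the small-object-argument decomposition used in Proposition \ref{prop:comparing_realzn_with_hocolim_LtO} and the filtration machinery of Section \ref{sec:filtrations_of_certain_pushouts} apply. Granting that---it is built into the standard transfer of such model structures along $\capO\circ(-)$, as in \cite{Harper_Modules}---no new ideas are needed, and the statement follows by repeating the argument for Theorem \ref{MainTheorem3}.
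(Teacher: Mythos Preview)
Your proposal is correct and matches the paper's approach: the paper gives no separate proof of this theorem, simply declaring at the start of Section~\ref{sec:chain_complexes_ring} that ``the arguments are the same as in the previous sections'' once Basic Assumption~\ref{BasicAssumption} supplies the model structure. Your audit of exactly which ingredients use the characteristic-zero hypothesis, and your observation that the transferred model structure must be cofibrantly generated with generating cofibrations of the form $\capO\circ X\cdot\Delta[z]\rarrow\capO\circ Y\cdot\Delta[z]$, is more explicit than what the paper writes but is precisely the verification the paper is implicitly asking the reader to perform.
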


\begin{thm}
Let $\capO$ be an operad in unbounded chain complexes over $\unit$. Assume that $\capO$ satisfies Basic Assumption \ref{BasicAssumption}. If $X$ is an $\capO$-algebra (resp. left $\capO$-module), then there is a zig-zag of weak equivalences
\begin{align*}
  X & \wequiv \hocolim\limits^{\AlgO}_{\Delta^\op}\BAR(\capO,\capO,X)\\
  \Bigl(
  \text{resp.}\quad
  X &\wequiv \hocolim\limits^{\LtO}_{\Delta^\op}\BAR(\capO,\capO,X)
  \Bigr)
\end{align*}
in $\AlgO$ (resp. $\LtO$), natural in $X$. Here, $\unit$ is any commutative ring.
\end{thm}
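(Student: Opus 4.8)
The plan is to reproduce the proof of Theorem~\ref{thm:fattened_replacement} line for line, using Theorem~\ref{thm:forgetful_commutative_ring_case} in place of Theorem~\ref{MainTheorem3}; since Basic Assumption~\ref{BasicAssumption} is in force, every ingredient invoked there is available in the present setting. I treat the left $\capO$-module case, the $\capO$-algebra case being identical.

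Write $B(X):=\BAR(\capO,\capO,X)\in\sLtO$ and $\Delta(X):=X\cdot\Delta[0]\in\sLtO$, and recall from the discussion preceding Proposition~\ref{prop:simplicial_resolution_map} that $X\iso\colim^{\LtO}_{\Delta^\op}B(X)$ naturally; let $B(X)\rarrow\Delta(X)$ be the associated natural map of simplicial left $\capO$-modules. Theorem~\ref{thm:forgetful_commutative_ring_case} supplies natural zig-zags of weak equivalences $U\hocolim^{\LtO}_{\Delta^\op}B(X)\wequiv|UB(X)|$ and $U\hocolim^{\LtO}_{\Delta^\op}\Delta(X)\wequiv|U\Delta(X)|$; by naturality these assemble, in the homotopy category of the underlying category, into a square whose lower edge $|UB(X)|\rarrow|U\Delta(X)|\iso UX$ is the weak equivalence of Proposition~\ref{prop:simplicial_resolution_map} (valid over any commutative ring). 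Hence the upper edge $U\hocolim^{\LtO}_{\Delta^\op}B(X)\rarrow U\hocolim^{\LtO}_{\Delta^\op}\Delta(X)$ is a weak equivalence, and since the weak equivalences of $\LtO$ are exactly the maps sent to weak equivalences by $U$, the comparison map $\hocolim^{\LtO}_{\Delta^\op}B(X)\rarrow\hocolim^{\LtO}_{\Delta^\op}\Delta(X)$ is a weak equivalence in $\LtO$.

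It then remains to identify $\hocolim^{\LtO}_{\Delta^\op}\Delta(X)$ with $X$. Factor $\emptyset\rarrow X$ functorially as a cofibration followed by an acyclic fibration $\emptyset\rarrow X^c\rarrow X$ in $\LtO$. The acyclic fibration $X^c\rarrow X$ induces an objectwise weak equivalence $\Delta(X^c)\rarrow\Delta(X)$, so $\hocolim^{\LtO}_{\Delta^\op}\Delta(X^c)\wequiv\hocolim^{\LtO}_{\Delta^\op}\Delta(X)$; and since the left Quillen functor $-\cdot\Delta[0]\colon\LtO\rarrow\sLtO$ (with right adjoint $\Ev_0$) carries the cofibrant object $X^c$ to a cofibrant simplicial object, $\hocolim^{\LtO}_{\Delta^\op}\Delta(X^c)\wequiv\colim^{\LtO}_{\Delta^\op}\Delta(X^c)\iso X^c\wequiv X$. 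Concatenating the two paragraphs yields the asserted natural zig-zag $X\wequiv\hocolim^{\LtO}_{\Delta^\op}\BAR(\capO,\capO,X)$ in $\LtO$.

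I do not expect a genuine obstacle here: the only substantive input beyond Proposition~\ref{prop:simplicial_resolution_map} is Theorem~\ref{thm:forgetful_commutative_ring_case}, which has already been recorded in this section and which itself rests only on the formal filtration machinery of Section~\ref{sec:filtrations_of_certain_pushouts} (available over an arbitrary commutative ring) together with the model structure furnished by Basic Assumption~\ref{BasicAssumption}. The one small point deserving an explicit line is the descent from the underlying category back to $\LtO$ in the second paragraph---that the comparison of homotopy colimits is a weak equivalence in $\LtO$, not merely after applying $U$---which is immediate because the weak equivalences of $\LtO$ are created by the forgetful functor.
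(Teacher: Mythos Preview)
Your proposal is correct and follows exactly the approach the paper intends: the paper does not give a separate proof for this theorem but states that ``the arguments are the same as in the previous sections,'' and you have faithfully reproduced the proof of Theorem~\ref{thm:fattened_replacement} with Theorem~\ref{thm:forgetful_commutative_ring_case} substituted for Theorem~\ref{MainTheorem3}. Your added remarks (that $-\cdot\Delta[0]$ is left Quillen so $\Delta(X^c)$ is cofibrant, and that weak equivalences in $\LtO$ are created by $U$) make explicit two points the paper leaves implicit, but the argument is otherwise identical.
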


\begin{thm}
Let $\function{f}{\capO}{\capO'}$ be a map of operads in unbounded chain complexes over $\unit$. Assume that $\capO$ and $\capO'$ satisfy Basic Assumption \ref{BasicAssumption}. Then the adjunction
\begin{align*}
\xymatrix{
  \Alg_{\capO}\ar@<0.5ex>[r]^-{f_*} & \Alg_{\capO'}\ar@<0.5ex>[l]^-{f^*}
}
\quad\quad
\Bigl(
\text{resp.}\quad
\xymatrix{
  \Lt_{\capO}\ar@<0.5ex>[r]^-{f_*} & \Lt_{\capO'}\ar@<0.5ex>[l]^-{f^*}
}
\Bigr)
\end{align*} 
is a Quillen adjunction with left adjoint on top and $f^*$ the forgetful functor. If furthermore, $f$ is an objectwise weak equivalence and both $\capO$ and $\capO'$ are cofibrant in the underlying category $\SymSeq$, then the adjunction is a Quillen equivalence, and hence induces an equivalence on the homotopy categories. Here, $\unit$ is any commutative ring.
\end{thm}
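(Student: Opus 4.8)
The plan is to mirror the proof of Theorem \ref{MainTheorem4}, given for symmetric spectra in \cite{Harper_Spectra}, with the flat stable model structure replaced by the model structure of Basic Assumption \ref{BasicAssumption}; the hypothesis that $\capO$ and $\capO'$ are cofibrant in $\SymSeq$ plays, in the present setting, the role that the abundant cofibrations of the flat stable model structure play over the sphere. First I would check that $(f_*,f^*)$ is a Quillen adjunction, where $f^*$ is restriction of structure along $f$ (the forgetful functor) and $f_*=\capO'\circ_\capO(-)$ is its left adjoint. Since $f^*$ does not change the underlying chain complex (resp. symmetric sequence), and the model structures on $\AlgO$, $\Alg_{\capO'}$ (resp. $\LtO$, $\Lt_{\capO'}$) are created by the forgetful functors to $\Chaincx_\unit$ (resp. $\SymSeq$) — fibrations the (objectwise) dimensionwise surjections and weak equivalences the (objectwise) homology isomorphisms — the functor $f^*$ preserves fibrations and acyclic fibrations, so it is a right Quillen functor, exactly as in Theorem \ref{MainTheorem4}.

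For the Quillen equivalence I would use a standard criterion (see, e.g., \cite{Hovey}). Every object is fibrant, since $X\rarrow *$ is always a dimensionwise surjection, and $f^*$ reflects weak equivalences because the underlying object of $f^*(g)$ is the underlying object of $g$. It therefore suffices to show that for every cofibrant $X$ the unit $X\rarrow f^*f_*X$ is a weak equivalence. Since weak equivalences are closed under retracts, I would reduce to the case where $X$ is a cell complex, built from the initial object by attaching cells along free maps $\capO\circ C\rarrow\capO\circ D$ with $C\rarrow D$ a coproduct of generating cofibrations of the underlying category.

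The heart of the proof is then an induction over these cells. Because $f_*$ is a left adjoint it commutes with the pushouts and the (transfinite) compositions defining the cell structure, and the filtrations of Section \ref{sec:filtrations_of_certain_pushouts} (Propositions \ref{prop:coproduct_modules} and \ref{prop:small_arg_pushout_modules}) describe the underlying objects of $X$ and of $f_*X$ compatibly: at filtration stage $t$ the map induced by $f$ is assembled from the maps $\capO_{A}[\mathbf{t}]\tensorcheck_{\Sigma_t}Q^t_{t-1}\rarrow\capO'_{A'}[\mathbf{t}]\tensorcheck_{\Sigma_t}Q^t_{t-1}$ and $\capO_{A}[\mathbf{t}]\tensorcheck_{\Sigma_t}D^{\tensorcheck t}\rarrow\capO'_{A'}[\mathbf{t}]\tensorcheck_{\Sigma_t}D^{\tensorcheck t}$, where $A\rarrow A'$ is the unit map at the previous stage. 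Using that $\capO$ and $\capO'$ are cofibrant in $\SymSeq$ — which, together with the $\U$-type filtration of $\capO_{A}[\mathbf{t}]$ in terms of the $\capO[\mathbf{t+j}]\tensorcheck_{\Sigma_j}A^{\tensorcheck j}$ and the argument of Section \ref{sec:cofibrant_operads}, forces the forgetful functor to carry cell objects to cofibrant objects and all the symmetric-array pieces occurring above to be cofibrant — together with the fact that $f$ being an objectwise weak equivalence makes each $\capO[\mathbf{n}]\rarrow\capO'[\mathbf{n}]$ a weak equivalence of cofibrant objects, homotopy invariance of $\tensorcheck$, of $(-)\tensorcheck_{\Sigma_t}(-)$, of pushouts along cofibrations, and of transfinite compositions shows that the unit is a weak equivalence at each stage and hence on the colimit. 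An induction on cells then finishes the proof that $X\rarrow f^*f_*X$ is a weak equivalence for cofibrant $X$, so $(f_*,f^*)$ is a Quillen equivalence, and the equivalence of homotopy categories follows.

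I expect the main obstacle to be precisely this last step: keeping careful track that every object entering the filtrations is cofibrant in the appropriate sense, so that homotopy invariance of the tensor and circle constructions may be applied, and verifying that the map induced by $f$ on the symmetric arrays $\capO_{A}\rarrow\capO'_{A'}$ is a weak equivalence. This is exactly where the cofibrancy of $\capO$ and $\capO'$ in $\SymSeq$ is essential — over a field of characteristic zero it is automatic, which is why it is absent from Theorem \ref{MainTheorem4} — and it is the part of the argument of \cite{Harper_Spectra} that must be transcribed with the most care.
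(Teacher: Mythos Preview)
Your proposal is correct and aligns with the paper's approach: the paper does not give an explicit proof of this theorem, but simply states it in Section~\ref{sec:chain_complexes_ring} under the blanket assertion that ``the arguments are the same as in the previous sections,'' which in turn point to \cite{Harper_Spectra} (see the proof of Theorem~\ref{MainTheorem4}). Your outline --- checking $(f_*,f^*)$ is Quillen via $f^*$, reducing Quillen equivalence to the unit on cofibrant cell objects, and running the filtration argument of Propositions~\ref{prop:coproduct_modules} and~\ref{prop:small_arg_pushout_modules} with the cofibrancy of $\capO,\capO'$ in $\SymSeq$ standing in for the good properties of the flat stable model structure --- is exactly the transcription the paper intends.
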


\begin{thm}\label{thm:sigma_change_of_operads}
Let $\function{f}{\capO}{\capO'}$ be a morphism of operads in unbounded chain complexes over $\unit$. Assume that $\capO$ and $\capO'$ satisfy Basic Assumption \ref{BasicAssumption}. Let $X$ be an $\capO$-algebra (resp. left $\capO$-module). If the simplicial bar construction $\BAR(\capO,\capO,X)$ is objectwise cofibrant in $\AlgO$ (resp. $\LtO$), then there is a zig-zag of weak equivalences
\begin{align*}
  \LL f_*(X)&\wequiv
  |\BAR(\capO',\capO,X)|
\end{align*}
in the underlying category, natural in such $X$. Here, $\LL f_*$ is the total left derived functor of $f_*$ and $\unit$ is any commutative ring.
\end{thm}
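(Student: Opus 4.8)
The plan is to run the proof of Theorem~\ref{thm:bar_calculates_derived_circle} essentially unchanged, replacing each ingredient by the counterpart established in this section under Basic Assumption~\ref{BasicAssumption}. I will describe the argument for left $\capO$-modules; the case of $\capO$-algebras is formally the same.

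First I would collect the two inputs that rely on Basic Assumption~\ref{BasicAssumption}. By the preceding Quillen-adjunction theorem of this section, $\function{f_*}{\LtO}{\Lt_{\capO'}}$ is a left Quillen functor, so Proposition~\ref{prop:hocolim_commutes_with_left_quillen_M} applies to it and supplies the present analogue of Proposition~\ref{prop:commuting_with_hocolim}. Second, the chain-complex versions of Theorems~\ref{MainTheorem3} and~\ref{thm:fattened_replacement} have already been proved above---namely Theorem~\ref{thm:forgetful_commutative_ring_case} and the theorem immediately following it---their proofs using only the filtration machinery of Section~\ref{sec:filtrations_of_certain_pushouts} (valid over any commutative ring), Proposition~\ref{prop:simplicial_resolution_map}, the realization analysis of Section~\ref{sec:hocolim_underlying}, and the model structure postulated in Basic Assumption~\ref{BasicAssumption}.

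With these in hand, for $X\in\LtO$ I would assemble the zig-zag
\begin{align*}
  \LL f_*(X)
  &\wequiv \LL f_*\Bigl(\hocolim\limits^{\LtO}_{\Delta^\op}\BAR(\capO,\capO,X)\Bigr)
  \wequiv \hocolim\limits^{\Lt_{\capO'}}_{\Delta^\op}\LL f_*\bigl(\BAR(\capO,\capO,X)\bigr)\\
  &\wequiv \hocolim\limits^{\Lt_{\capO'}}_{\Delta^\op} f_*\bigl(\BAR(\capO,\capO,X)\bigr)
  \wequiv \hocolim\limits^{\Lt_{\capO'}}_{\Delta^\op}\BAR(\capO',\capO,X)
  \wequiv |\BAR(\capO',\capO,X)|
\end{align*}
in the underlying category $\SymSeq$: the first step is the chain-complex analogue of Theorem~\ref{thm:fattened_replacement} proved above; the second is Proposition~\ref{prop:hocolim_commutes_with_left_quillen_M} applied to the left Quillen functor $f_*$; the third uses the hypothesis that $\BAR(\capO,\capO,X)$ is objectwise cofibrant in $\LtO$, so that there the objectwise total left derived functor $\LL f_*$ coincides with $f_*$; the fourth is the isomorphism $f_*(\BAR(\capO,\capO,X))\Iso\BAR(\capO',\capO,X)$ of simplicial objects, which holds because $f_*=\capO'\circ_\capO(-)$ carries the free $\capO$-module $\capO\circ\capO^{\circ k}\circ X$ to the free $\capO'$-module $\capO'\circ\capO^{\circ k}\circ X$, compatibly with face and degeneracy maps; and the last step is Theorem~\ref{thm:forgetful_commutative_ring_case} together with Theorem~\ref{thm:calculate_hocolim_with_realization_symseq_nice}, which identify the homotopy colimit in $\Lt_{\capO'}$ of a simplicial left $\capO'$-module with the realization of its underlying simplicial symmetric sequence.

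The main obstacle is not located in this argument---which is formal---but in the background results it invokes: one must check that Basic Assumption~\ref{BasicAssumption} for $\capO$ (and, for the comparison, for $\capO'$) is enough to push the proofs of Theorems~\ref{MainTheorem3} and~\ref{thm:fattened_replacement} through over an arbitrary commutative ring, since those are the only places where the field-of-characteristic-zero hypothesis of the earlier sections entered---and it entered precisely through the existence of the model structures on algebras and modules, which Basic Assumption~\ref{BasicAssumption} now supplies by fiat. Granting that, the remaining manipulations---left-Quillen-ness of $f_*$, commuting $\LL f_*$ with homotopy colimits, and computing homotopy colimits of simplicial objects via realization---are all immediate consequences of results already in place.
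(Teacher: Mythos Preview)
Your proposal is correct and follows exactly the paper's approach: the paper explicitly states that the arguments in this section are the same as in the previous sections, so the proof of Theorem~\ref{thm:sigma_change_of_operads} is identical to that of Theorem~\ref{thm:bar_calculates_derived_circle}, with Theorems~\ref{MainTheorem3} and~\ref{thm:fattened_replacement} replaced by their Section~\ref{sec:chain_complexes_ring} analogues (Theorem~\ref{thm:forgetful_commutative_ring_case} and the theorem following it). Your zig-zag and the justification of each step match the paper's proof of Theorem~\ref{thm:bar_calculates_derived_circle} line for line.
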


\section{Right modules over operads}
\label{sec:right_modules}

The purpose of this section is to observe that several of the results of this paper have corresponding versions for right modules over operads; the corresponding arguments are substantially less complicated, since colimits in right modules over an operad are calculated in the underlying category of symmetric sequences \cite{Harper_Modules}.

\begin{defn}
\label{defn:right_modules_spectra}
Let $\capO$ be an operad in symmetric spectra. 
\begin{itemize}
\item[(a)] The \emph{flat stable model structure} on $\RtO$ has weak equivalences the objectwise stable equivalences and fibrations the objectwise flat stable fibrations.
\item[(b)] The \emph{stable model structure} on $\RtO$ has weak equivalences the objectwise stable equivalences and fibrations the objectwise stable fibrations.
\end{itemize}
\end{defn}

\begin{defn}
\label{defn:right_modules_chain_complexes}
Let $\capO$ be an operad in unbounded chain complexes over $\unit$. The model structure on $\RtO$ has weak equivalences the objectwise homology isomorphisms and fibrations the objectwise dimensionwise surjections. Here, $\unit$ is any commutative ring.
\end{defn}

The existence of the model structures in Definition \ref{defn:right_modules_spectra} follows easily from the corresponding argument in \cite{Harper_Spectra} together with the following properties: for symmetric spectra with the flat stable model structure, smashing with a cofibrant symmetric spectrum preserves weak equivalences, and the generating (acyclic) cofibrations have cofibrant domains. Similarly, the existence of the model structure in Definition \ref{defn:right_modules_chain_complexes} follows easily from the following properties: for unbounded chain complexes over $\unit$, tensoring with a cofibrant chain complex preserves weak equivalences, and the generating (acyclic) cofibrations have cofibrant domains. Similar model structures are considered in \cite{Fresse_modules}.

The following is a right $\capO$-module version of Theorem \ref{MainTheorem3}.

\begin{thm}
Let $\capO$ be an operad in symmetric spectra or unbounded chain complexes over $\unit$. If $X$ is a simplicial right $\capO$-module, then there are zig-zags of weak equivalences
\begin{align*}
  U\hocolim\limits^{\RtO}_{\Delta^\op}X & \wequiv |U X| \wequiv
  \hocolim\limits_{\Delta^\op}U X 
\end{align*}
natural in $X$. Here, $U$ is the forgetful functor and $\unit$ is any commutative ring.
\end{thm}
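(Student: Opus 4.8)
The plan is to follow the proof of Theorem \ref{MainTheorem3} essentially verbatim, using the one structural simplification that makes the right-module case ``substantially less complicated'': the forgetful functor $\function{U}{\RtO}{\SymSeq}$ preserves \emph{all} small colimits, because colimits of right $\capO$-modules are computed in the underlying category $\SymSeq$ \cite{Harper_Modules}. In particular the right-module analog of Proposition \ref{prop:colimits_of_simplicial_objects_and_forgetful} is immediate and needs no discussion of reflexive coequalizers, and the pushouts and filtrations of Section \ref{sec:filtrations_of_certain_pushouts} that entered the proof of Proposition \ref{prop:comparing_realzn_with_hocolim_LtO} become, in the right-module setting, simply pushouts and filtrations in $\SymSeq$.

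The one nonformal ingredient is the right-module analog of Proposition \ref{prop:comparing_realzn_with_hocolim_LtO}: if $Z$ is a cofibrant simplicial right $\capO$-module, then the natural map $|U Z|\rarrow|(\pi_0 U Z)\cdot\Delta[0]|\Iso\pi_0 U Z$ is a weak equivalence. I would prove this by a cellular induction in $\sSymSeq$ modeled on the proof of Proposition \ref{prop:realzn_homotopy_meaningful_SymSeq}(c), rather than on the more intricate Proposition \ref{prop:comparing_realzn_with_hocolim_LtO}, since there are no operad-module filtrations to analyze. A cofibrant $Z$ is a retract of a (transfinite) cell complex built by pushouts along the generating cofibrations $(X\circ\capO)\cdot\Delta[z]\rarrow(Y\circ\capO)\cdot\Delta[z]$ of $\sRtO$, and since $U$ preserves colimits, $U Z$ is built the same way in $\sSymSeq$ out of the forgotten maps $W\cdot\Delta[z]\rarrow W'\cdot\Delta[z]$. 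One then runs the usual two-step induction: at each pushout stage the relevant map of realizations is a monomorphism (Proposition \ref{prop:realzn_homotopy_meaningful_SymSeq}(b)) with cofiber $|(W'/W)\cdot\Delta[z]|$, which maps by a weak equivalence to $W'/W$ by the simplicial contraction argument of Proposition \ref{prop:simplicial_contraction_nice} applied objectwise; realization commutes with colimits, so transfinite composition and passage to retracts finish the argument. Note that Proposition \ref{prop:simplicial_contraction_nice} carries no cofibrancy hypothesis; this is exactly what lets the induction go through for an arbitrary commutative ring $\unit$, and it is also why we do not need the forgetful functor to preserve cofibrant objects---a statement which can fail over a general $\unit$.

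With this in hand the theorem is formal. Factor $\emptyset\rarrow X$ functorially in $\sRtO$ as a cofibration followed by an acyclic fibration $\emptyset\rarrow X^c\rarrow X$. Then in $\SymSeq$ there is a zig-zag of weak equivalences $U\hocolim_{\Delta^\op}^{\RtO}X\wequiv U\hocolim_{\Delta^\op}^{\RtO}X^c$; since $X^c$ is a cofibrant diagram and $U$ preserves colimits, $U\hocolim_{\Delta^\op}^{\RtO}X^c\Iso U\colim_{\Delta^\op}^{\RtO}X^c\Iso\colim_{\Delta^\op}U X^c$, and this receives a weak equivalence from $|U X^c|$ by the right-module analog of Proposition \ref{prop:comparing_realzn_with_hocolim_LtO} above, while $|U X^c|\wequiv|U X|$ by Proposition \ref{prop:realzn_homotopy_meaningful_SymSeq}(a) since $U X^c\rarrow U X$ is an objectwise weak equivalence. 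Finally $|U X|\wequiv\hocolim_{\Delta^\op}U X$ is Theorem \ref{thm:calculate_hocolim_with_realization_symseq_nice} applied to $U X\in\sSymSeq$, which is where $\unit$ is allowed to be an arbitrary commutative ring. The main obstacle is the right-module analog of Proposition \ref{prop:comparing_realzn_with_hocolim_LtO}; I expect it to be strictly easier than its left-module counterpart, precisely because one never leaves the underlying category of symmetric sequences.
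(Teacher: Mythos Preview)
Your proposal is correct and follows essentially the same approach as the paper: argue as in the proof of Theorem \ref{MainTheorem3} and Proposition \ref{prop:comparing_hocolim_and_realzn} (equivalently, its $\SymSeq$ variant Proposition \ref{prop:realzn_homotopy_meaningful_SymSeq}(c)), replacing the cell attachments by pushouts along $(X\circ\capO)\cdot\Delta[z]\rarrow(Y\circ\capO)\cdot\Delta[z]$ in $\sRtO$ and using that such pushouts are computed in $\sSymSeq$. Your observation that this is why no cofibrancy hypothesis on $\capO$ and no characteristic-zero assumption on $\unit$ are needed is exactly the point.
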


\begin{proof}
Argue as in the proof of Theorem \ref{MainTheorem3} and Proposition \ref{prop:comparing_hocolim_and_realzn}, except replace \eqref{eq:simplicial_glueing_chain_complex} with pushout diagrams of the form
\begin{align*}
\xymatrix{
  \Delta[z]\cdot X\circ\capO\ar[r]\ar[d] & Z_0\ar[d]\\
  \Delta[z]\cdot Y\circ\capO\ar[r] & Z_1
}
\end{align*}
in $\sRtO$, with $X\rarrow Y$ a generating cofibration in $\SymSeq$, and note that pushouts in $\sRtO$ are calculated in the underlying category $\sSymSeq$.
\end{proof}

The following is a right $\capO$-module version of Theorem \ref{thm:fattened_replacement}.

\begin{thm}
Let $\capO$ be an operad in symmetric spectra or unbounded chain complexes over $\unit$. If $X$ is a right $\capO$-module, then there is a zig-zag of weak equivalences
\begin{align*}
  X &\wequiv \hocolim\limits^{\RtO}_{\Delta^\op}\BAR(X,\capO,\capO)
\end{align*}
in $\RtO$, natural in $X$. Here, $\unit$ is any commutative ring.
\end{thm}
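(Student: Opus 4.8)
The plan is to follow the proof of Theorem~\ref{thm:fattened_replacement} line for line, replacing its two main ingredients by their right $\capO$-module counterparts. The first ingredient is the identification of $X$ with the colimit of its simplicial resolution: regard $\BAR(X,\capO,\capO)$ as a simplicial right $\capO$-module via the right $\capO$-multiplication on its final tensor factor. Since colimits in $\RtO$ are computed in the underlying category $\SymSeq$ \cite{Harper_Modules}, the colimit of $\BAR(X,\capO,\capO)$ is the reflexive coequalizer of the two face maps $X\circ\capO\circ\capO\rarrow X\circ\capO$, and the identity $X\circ_\capO\capO\Iso X$ for the relative composition product gives natural isomorphisms of right $\capO$-modules $X\Iso\colim\limits^{\RtO}_{\Delta^\op}\BAR(X,\capO,\capO)$. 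The second ingredient is the right $\capO$-module analog of Proposition~\ref{prop:simplicial_resolution_map}: the natural map $|\BAR(X,\capO,\capO)|\rarrow|X\cdot\Delta[0]|\Iso X$ in $\SymSeq$ is a weak equivalence.

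For this analog, I would argue exactly as in Proposition~\ref{prop:simplicial_resolution_map}, with the two outer slots interchanged. The unit map $\function{\eta}{I}{\capO}$ induces extra degeneracy maps $\function{s_{-1}}{X\circ\capO^{\circ k}\circ\capO}{X\circ\capO^{\circ(k+1)}\circ\capO}$, for $k\geq0$, obtained by inserting $\eta$ into the module slot; these satisfy the simplicial identities relative to the degeneracies of $\BAR(X,\capO,\capO)$. Taking $s$ and $r$ to be the maps in $\sSymSeq$ of the form $X\cdot\Delta[0]\xrightarrow{s}\BAR(X,\capO,\capO)\xrightarrow{r}X\cdot\Delta[0]$ induced by $X\xrightarrow{s_{-1}}X\circ\capO$ and by the right $\capO$-action $X\circ\capO\rarrow X$, one checks $rs=\id$, and by \cite[9.8]{May} the extra degeneracies determine a simplicial homotopy from $sr$ to the identity map. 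In the case of symmetric spectra every level equivalence is a weak equivalence, and in the case of chain complexes $\Tot^\oplus\NN$ carries simplicially homotopic maps to chain-homotopic maps, so in this case Proposition~\ref{prop:tot_of_normalization} applies; in either case $|r|$ is a weak equivalence, which is the claim.

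Finally, to assemble the proof I would invoke the right $\capO$-module version of Theorem~\ref{MainTheorem3} proved above. Applying the forgetful functor $U$, it provides a commutative square
\begin{align*}
\xymatrix{
  \hocolim\limits^{\RtO}_{\Delta^\op}\BAR(X,\capO,\capO)\ar[d]^{(*)}\ar@{-}[r]^-{\wequiv} & |\BAR(X,\capO,\capO)|\ar[d]^{(**)}\\
  \hocolim\limits^{\RtO}_{\Delta^\op}(X\cdot\Delta[0])\ar@{-}[r]^-{\wequiv} & |X\cdot\Delta[0]|
}
\end{align*}
in the underlying category with each row a zig-zag of weak equivalences; the map $(**)$ is a weak equivalence by the previous paragraph, hence so is $(*)$, and since weak equivalences in $\RtO$ are detected by $U$, the map $(*)$ is a weak equivalence in $\RtO$. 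Factoring $\emptyset\rarrow X$ functorially in $\RtO$ as a cofibration followed by an acyclic fibration $\emptyset\rarrow X^c\rarrow X$, the constant simplicial object $X^c\cdot\Delta[0]$ is cofibrant in $\sRtO$, so $\hocolim\limits^{\RtO}_{\Delta^\op}(X^c\cdot\Delta[0])\wequiv\colim\limits^{\RtO}_{\Delta^\op}(X^c\cdot\Delta[0])\Iso X^c\wequiv X$, and $X^c\cdot\Delta[0]\rarrow X\cdot\Delta[0]$ being an objectwise weak equivalence gives $\hocolim\limits^{\RtO}_{\Delta^\op}(X\cdot\Delta[0])\wequiv\hocolim\limits^{\RtO}_{\Delta^\op}(X^c\cdot\Delta[0])$. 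Splicing the zig-zags produces the asserted natural weak equivalence $X\wequiv\hocolim\limits^{\RtO}_{\Delta^\op}\BAR(X,\capO,\capO)$ in $\RtO$. I expect no genuine obstacle: this is precisely the setting the section introduction calls ``substantially less complicated,'' since colimits in $\RtO$ are formed in $\SymSeq$, so the delicate filtration machinery of Section~\ref{sec:filtrations_of_certain_pushouts} is never invoked. The only point that needs care is the bookkeeping in the extra-degeneracy argument for $\BAR(X,\capO,\capO)$ --- in particular the direction of the simplicial homotopy and the verification that $\BAR(X,\capO,\capO)$ genuinely underlies a simplicial right $\capO$-module compatibly with the forgetful functor to $\sSymSeq$ --- which is dual to the left-module case already carried out in Proposition~\ref{prop:simplicial_resolution_map}.
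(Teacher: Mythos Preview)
Your proposal is correct and follows precisely the approach the paper intends. The paper states this theorem without proof in Section~\ref{sec:right_modules}, noting only that the right-module arguments are the straightforward (and simpler) adaptations of the left-module case; your line-for-line transcription of the proof of Theorem~\ref{thm:fattened_replacement}, together with the right-module analog of Proposition~\ref{prop:simplicial_resolution_map} via extra degeneracies on the opposite side, is exactly this.
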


\section{Proofs}
\label{sec:proofs}

The purpose of this section is to prove Propositions \ref{prop:realzn_homotopy_meaningful}, \ref{prop:realzn_preserves_monomorphisms}, and \ref{prop:tot_of_normalization}. A first step is to recall the decomposition of simplicial chain complexes which lies at the heart of the Dold-Kan correspondence (Section \ref{sec:decompositions_dold_kan}). In Section \ref{sec:skeletal_filtrations} we describe the skeletal filtration of realization, which is a key ingredient in the homotopical analysis of the realization functors (Section \ref{sec:homotopical_analysis_proofs_end}). 

\begin{assumption}
From now on in this section, we assume that $\unit$ is any commutative ring.
\end{assumption}

\subsection{Decomposition of simplicial chain complexes}
\label{sec:decompositions_dold_kan}

The purpose of this section is to recall the decomposition of simplicial chain complexes described in Proposition \ref{prop:natural_decomposition}.

\begin{defn}
\label{defn:normalization_functor}
Let $X$ be a simplicial unbounded chain complex over $\unit$ (resp. simplicial $\unit$-module) and $n\geq 0$. Define the subobject $\NN X_n\subset X_n$ by 
\begin{align*}
  \NN X_0 := X_0,\quad\quad
  &\NN X_n := \bigcap\limits_{0\leq i \leq n-1}\ker(d_i)\subsetof X_n,
  \quad\quad
  (n\geq 1).
\end{align*}
\end{defn}

\begin{prop}\label{prop:natural_decomposition}
Let $X$ be a simplicial unbounded chain complex over $\unit$ (resp. simplicial $\unit$-module). There is a natural isomorphism $\Psi$ in $\sChaincx_\unit$ (resp. $\sMod_\unit$) defined objectwise by
\begin{align}
\label{eq:natural_decomposition}
  \Psi_n\colon\thinspace \coprod\limits_{
  \substack{[n]\twoheadrightarrow [k]\\ 
  \text{in $\Delta$}}}\NN X_k\xrightarrow{\Iso} X_n.
\end{align}
Here, the coproduct is indexed over the set of all surjections in $\Delta$ of the form $\function{\xi}{[n]}{[k]}$, and $\Psi_n$ is the natural map induced by the corresponding maps
$
  \NN X_k\subset X_k\xrightarrow{\xi^*} X_n.
$
\end{prop}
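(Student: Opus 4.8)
This is a standard fact about simplicial objects in an abelian category (see \cite[III.2]{Goerss_Jardine}, \cite[8.3--8.4]{Weibel}); the plan is to prove it directly. Since $\Chaincx_\unit$ and $\Mod_\unit$ are abelian categories with all small colimits, and $\NN$, the operators $d_i$ and $s_j$, and the maps $\xi^*$ are built from the simplicial structure maps and the additive structure alone, it suffices to prove the statement for a simplicial object $X$ in an arbitrary abelian category $\mathsf{A}$; the two cases of the Proposition are the specializations to $\mathsf{A}=\Chaincx_\unit$ and $\mathsf{A}=\Mod_\unit$. I would equip the coproduct on the left-hand side of \eqref{eq:natural_decomposition} with the simplicial structure transported from $X$ along the $\Psi_n$, so that $\Psi$ is a morphism in $\sChaincx_\unit$ (resp.\ $\sMod_\unit$) as soon as each $\Psi_n$ is an isomorphism; a short computation with the simplicial identities then records that in this structure a degeneracy carries the summand indexed by a surjection $\xi\colon[n]\twoheadrightarrow[k]$ isomorphically onto the summand indexed by $\xi\sigma^j$ (so that $k$ is unchanged), while a face carries it into summands indexed by surjections onto $[k']$ with $k'\le k$ --- this is the form in which the Proposition feeds into the analysis of the skeletal filtration in Section \ref{sec:skeletal_filtrations}. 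So the real content is that each $\Psi_n$ is an isomorphism, natural in $X$.

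First I would establish the \emph{normalization splitting}: $X_n=\NN X_n\oplus D_nX$, where $D_nX:=\sum_{i=0}^{n-1}s_iX_{n-1}$. For $0\le p\le n-1$ the endomorphism $s_pd_p$ of $X_n$ is idempotent (since $d_ps_p=\id$); using $d_is_p=s_{p-1}d_i$ and $d_id_p=d_{p-1}d_i$ for $i<p$ one checks it preserves $\bigcap_{i\le p-1}\ker d_i$ and has kernel $\bigcap_{i\le p}\ker d_i$ there, so $R:=(1-s_{n-1}d_{n-1})\circ\cdots\circ(1-s_0d_0)$ is a natural idempotent retraction of $X_n$ onto $\NN X_n$. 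Using $d_ps_i=s_{i-1}d_p$ and $s_ps_{i-1}=s_is_p$ for $p<i$ one sees that $R$ kills each $s_iX_{n-1}$, and expanding $1-R$ as a sum of products each beginning with some $s_p$ shows $\ker R\subseteq D_nX$; hence $\ker R=D_nX$ and the splitting follows. This already identifies the summand of \eqref{eq:natural_decomposition} indexed by $\id_{[n]}$ with $\NN X_n$.

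Next I would identify $D_nX$ with $\coprod_{\xi\colon[n]\twoheadrightarrow[k],\,k<n}\NN X_k$ by induction on $n$. Surjectivity onto $D_nX$ is a direct check: a generator $s_jx$ with $x\in X_{n-1}$ decomposes, via the inductive hypothesis for $X_{n-1}$, into pieces $s_j\xi^*(\NN X_k)=(\xi\sigma^j)^*(\NN X_k)$ with $\xi\colon[n-1]\twoheadrightarrow[k]$ surjective, and every surjection $[n]\twoheadrightarrow[k]$ with $k<n$ has this form; conversely each such $\xi^*(\NN X_k)$ lies in $D_nX$ because $\xi$ factors through a codegeneracy. Directness of the sum is the delicate point; here I would run the standard argument (cf.\ \cite[III.2]{Goerss_Jardine}, \cite[8.3]{Weibel}), which uses the relations $s_is_j=s_{j+1}s_i$ ($i\le j$) describing the overlaps $s_iX_{n-1}\cap s_jX_{n-1}$, together with the inductive decomposition of $X_{n-1}$ and the retraction $R$, to peel off the summands one surjection at a time. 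Combined with the normalization splitting this gives $X_n=\NN X_n\oplus D_nX$ naturally isomorphic to $\coprod_{\xi}\NN X_k$, i.e.\ each $\Psi_n$ is an isomorphism; naturality in $X$ is immediate since every map in the argument is induced by the simplicial operators and the additive structure.

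I expect the main obstacle to be exactly this directness step: distinct surjections $[n]\twoheadrightarrow[k]$ can share a section, so a summand cannot be isolated simply by pulling back along an arbitrary section, and the order in which one extracts the summands (equivalently, the bookkeeping of the degeneracy relations) must be chosen with care. The rest is routine manipulation with the simplicial identities.
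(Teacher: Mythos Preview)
Your argument is correct; it is the standard direct proof of the decomposition underlying the Dold--Kan correspondence, and the references you cite are the right ones. The paper, however, does not reprove this: its proof is a one-line citation of the Dold--Kan equivalence \eqref{eq:Dold_Kan_correspondence_adjunction} between $\sChaincx_\unit$ (resp.\ $\sMod_\unit$) and non-negative chain complexes in $\Chaincx_\unit$ (resp.\ $\Mod_\unit$), from which the decomposition \eqref{eq:natural_decomposition} is read off as the counit description $X\cong \NN^{-1}\NN X$ evaluated at $[n]$.

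The difference is purely one of packaging. Your route unpacks the content of Dold--Kan---the idempotent $R=(1-s_{n-1}d_{n-1})\cdots(1-s_0d_0)$, the splitting $X_n=\NN X_n\oplus D_nX$, and the inductive identification of $D_nX$---whereas the paper treats that as a black box already established in \cite[III.2]{Goerss_Jardine} and \cite[8.4]{Weibel}. Your approach has the advantage of being self-contained and of making visible exactly the pieces (the retraction $R$, the filtration of $D_nX$ by $s_{[r]}X_{n-1}$) that the paper later uses in Propositions \ref{prop:degenerate_subobject_filtration} and \ref{prop:degenerate_subobject}; the paper's approach is shorter and avoids redoing work already in the literature. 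Either is acceptable here.
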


In other words, each $X$ in $\sChaincx_\unit$ (resp. $\sMod_\unit$) is naturally isomorphic to a simplicial object of the form (showing only the face maps)
\begin{align*}
\xymatrix{
  \NN X_0 & 
  \NN X_0\amalg\NN X_1\ar@<-0.5ex>[l]\ar@<0.5ex>[l] &
  \NN X_0\amalg\NN X_1\amalg\NN X_1\amalg\NN X_2
  \ar@<-1.0ex>[l]\ar[l]\ar@<1.0ex>[l] &
  \dotsb\ar@<-1.5ex>[l]\ar@<-0.5ex>[l]\ar@<0.5ex>[l]\ar@<1.5ex>[l]
}
\end{align*}

\begin{proof}[Proof of Proposition \ref{prop:natural_decomposition}]
This follows from the Dold-Kan correspondence \cite[III.2]{Goerss_Jardine}, \cite[8.4]{Weibel} that normalization $\NN$ fits into the following 
\begin{align}
\label{eq:Dold_Kan_correspondence_adjunction}
\xymatrix{
  \sChaincx_\unit\ar@<0.5ex>[r]^-{\NN} & 
  \Chaincx^+(\Chaincx_\unit)\ar@<0.5ex>[l]\quad\quad
  \Bigl(
  \text{resp.}\quad
  \sMod_\unit\ar@<0.5ex>[r]^-{\NN} & 
  \Chaincx^+(\Mod_\unit)\ar@<0.5ex>[l]
  \Bigr)
}
\end{align}
equivalence of categories. Here, $\Chaincx^+(\Chaincx_\unit)$ (resp. $\Chaincx^+(\Mod_\unit)$) denotes the category of non-negative chain complexes in $\Chaincx_\unit$ (resp. in $\Mod_\unit$).
\end{proof}

\subsection{Skeletal filtration of realization}
\label{sec:skeletal_filtrations}

The purpose of this section is to describe the skeletal filtration of realization given in Proposition \ref{prop:skeletal_glueing}.

\begin{defn} 
Let $n\geq 0$. The functors $\realzn_n$ for simplicial symmetric spectra and simplicial unbounded chain complexes over $\unit$ are defined objectwise by the coends
\begin{align*}
  \functor{\realzn_n}{\sSpectra}{\Spectra},
  &\quad\quad
  X\longmapsto X\Smash_{\Delta}\Sk_n\Delta[-]_+\ , \\
  \functor{\realzn_n}{\sChaincx_\unit}{\Chaincx_\unit},
  &\quad\quad
  X\longmapsto X\tensor_{\Delta}\NN\unit\Sk_n\Delta[-].
\end{align*}
\end{defn}

\begin{prop}
Let $n\geq 0$. The functors $\realzn_n$ fit into adjunctions
\begin{align*}
\xymatrix{
  \sSpectra
  \ar@<0.5ex>[r]^-{\realzn_n} & \Spectra,\ar@<0.5ex>[l]
}\quad\quad
\xymatrix{
  \sChaincx_\unit
  \ar@<0.5ex>[r]^-{\realzn_n} & \Chaincx_\unit,\ar@<0.5ex>[l]
}
\end{align*}
with left adjoints on top. Each adjunction is a Quillen pair. 
\end{prop}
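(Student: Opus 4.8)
The plan is to argue exactly as in the proof of Proposition~\ref{prop:realzns_fit_into_adjunctions}, replacing $\Delta[-]$ throughout by its $n$-skeleton $\Sk_n\Delta[-]$. First I would identify the right adjoints: using the universal property of coends, one checks that the functor given objectwise by
\begin{align*}
  \Map(S\tensor G_0\Sk_n\Delta[-]_+,Y)\quad\quad
  \Bigl(
  \text{resp.}\quad
  \Map(\NN\unit\Sk_n\Delta[-],Y)
  \Bigr)
\end{align*}
is a right adjoint of $\realzn_n$. Since $\realzn_n X$ is the coend $X\Smash_\Delta\Sk_n\Delta[-]_+$ (resp. $X\tensor_\Delta\NN\unit\Sk_n\Delta[-]$) over $\Delta$, the natural isomorphism $\hom(\realzn_n X,Y)\Iso\hom\bigl(X,\Map(S\tensor G_0\Sk_n\Delta[-]_+,Y)\bigr)$ (and its chain-complex analogue) drops out of the usual chain of hom--$\Smash$ (resp. hom--$\tensor$) and coend adjunction isomorphisms, word for word as in the $|-|$ case.

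For the second assertion, to see that each of these adjunctions is a Quillen pair it suffices to check that the right adjoints preserve fibrations and acyclic fibrations. Because the model structures on $(\Spectra,\Smash,S)$ and on $(\Chaincx_\unit,\tensor,\unit)$ are monoidal model structures, this reduces to the statement that $S\tensor G_0\Sk_n\Delta[m]_+$ and $\NN\unit\Sk_n\Delta[m]$ are cofibrant for every $m\geq 0$.

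That cofibrancy claim is the only point that needs a word, and it is the mild analogue of the corresponding step in Proposition~\ref{prop:realzns_fit_into_adjunctions}: for each $m\geq 0$ the simplicial set $\Sk_n\Delta[m]$ is cofibrant in $\sSet$ (every object of $\sSet$ is), so $(\Sk_n\Delta[m])_+$ is cofibrant in $\sSet_*$, and applying the left Quillen functor $S\tensor G_0$ (resp. the left Quillen functor $\NN\unit$, the composite of the left Quillen functors $\sSet\rarrow\sMod_\unit\rarrow\Chaincx_\unit^+\rarrow\Chaincx_\unit$) produces a cofibrant object. I do not expect any genuine obstacle here: the entire argument is a verbatim adaptation of Proposition~\ref{prop:realzns_fit_into_adjunctions}, and the only thing worth keeping in mind is that skeleta of standard simplices remain cofibrant simplicial sets.
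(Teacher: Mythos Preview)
Your proposal is correct and is exactly the approach the paper takes: the paper's proof simply reads ``This follows as in the proof of Proposition~\ref{prop:realzns_fit_into_adjunctions},'' and your writeup spells out precisely that adaptation, replacing $\Delta[-]$ by $\Sk_n\Delta[-]$ and noting the resulting objects remain cofibrant.
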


\begin{proof}
This follows as in the proof of Proposition \ref{prop:realzns_fit_into_adjunctions}.
\end{proof}

\begin{prop}
Let $X$ be a symmetric spectrum (resp. unbounded chain complex over $\unit$) and $n\geq 0$. There is a natural isomorphism $\realzn_n(X\cdot\Delta[0])\Iso X$. 
\end{prop}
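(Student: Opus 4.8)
The plan is to argue by uniqueness of left adjoints, exactly as for the isomorphism $|X\cdot\Delta[0]|\Iso X$ recorded above, with the simplices $\Delta[-]$ replaced throughout by their $n$-skeleta $\Sk_n\Delta[-]$. I would treat the case of $\sSpectra$; the case of $\sChaincx_\unit$ is parallel and indicated at the end. First, recall (as in the proof of Proposition~\ref{prop:realzns_fit_into_adjunctions}) that $\realzn_n$ has a right adjoint $\Spectra\rarrow\sSpectra$ given objectwise by $Y\longmapsto\Map(S\tensor G_0\Sk_n\Delta[-]_+,Y)$, and note that $\functor{-\cdot\Delta[0]}{\Spectra}{\sSpectra}$ is the constant diagram functor --- since $(X\cdot\Delta[0])_m=\coprod_{\Delta[0]_m}X\Iso X$ with identity structure maps --- hence is left adjoint to $\lim_{\Delta^\op}$ (this uses that $\Spectra$ has all small limits). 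Consequently the composite $\realzn_n\bigl((-)\cdot\Delta[0]\bigr)$ is a left adjoint, and its right adjoint sends $Y$ to $\lim_{\Delta^\op}\Map(S\tensor G_0\Sk_n\Delta[-]_+,Y)$, which is isomorphic to $\Map\bigl(S\tensor G_0(\colim_\Delta\Sk_n\Delta[-])_+,Y\bigr)$ because $\Map(-,Y)$ sends colimits to limits and $S\tensor G_0$ and $(-)_+$ preserve colimits.

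The one step that is more than bookkeeping is the identification $\colim_\Delta\Sk_n\Delta[-]\Iso\Delta[0]$ in $\sSet$. Since $\Sk_n$ is a left adjoint it preserves colimits, so it is enough to check that $\colim_\Delta\Delta[-]\Iso\Delta[0]$ and then use $\Sk_n\Delta[0]=\Delta[0]$ for $n\geq 0$. For this I would verify directly that $\colim_\Delta\Delta[-]$ corepresents the functor $L\longmapsto L_0$: a cocone from the diagram $[m]\longmapsto\Delta[m]$ to a simplicial set $L$ is a family $x_m\in L_m$, one for each $m\geq 0$, with $\theta^*x_{m'}=x_m$ for every morphism $\functor{\theta}{[m]}{[m']}$ in $\Delta$; because there is a unique morphism $[m]\rarrow[0]$ in $\Delta$ for each $m$, such a family is the same datum as a single element $x_0\in L_0$, and each $x_0$ arises this way. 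Hence $\colim_\Delta\Delta[-]\Iso\Delta[0]$, since $\Delta[0]$ corepresents $L\longmapsto\hom_{\sSet}(\Delta[0],L)=L_0$.

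Putting these together, the right adjoint of $\realzn_n\bigl((-)\cdot\Delta[0]\bigr)$ becomes $Y\longmapsto\Map(S\tensor G_0(\Delta[0])_+,Y)\Iso\Map(S,Y)\Iso Y$ --- using $(\Delta[0])_+\Iso S^0$, that $S\tensor G_0$ applied to the pointed simplicial set $S^0$ is the sphere spectrum $S$, and that $S$ is the $\Smash$-unit --- so this right adjoint is the identity functor, exactly as in the proof of $|X\cdot\Delta[0]|\Iso X$. In the chain complex case one runs the same argument with $\colim_\Delta\NN\unit\Sk_n\Delta[-]\Iso\NN\unit\Delta[0]\Iso\unit$ (the normalization of the constant simplicial $\unit$-module on $\unit$ being $\unit$ concentrated in degree $0$), so the right adjoint becomes $Y\longmapsto\Map(\unit,Y)\Iso Y$. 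Since $\realzn_n\bigl((-)\cdot\Delta[0]\bigr)$ and $\id$ then have isomorphic right adjoints, uniqueness of adjoints yields a natural isomorphism $\realzn_n(X\cdot\Delta[0])\Iso X$. I expect the only real obstacle to be the colimit computation of the second paragraph, together with checking that $(-)_+$, $S\tensor G_0$ (resp.\ $\NN\unit\Sk_n$), and $\Map(-,Y)$ interact with the relevant (co)limits as stated; everything else has already appeared in the treatment of $|-|$.
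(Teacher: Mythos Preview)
Your proof is correct and follows exactly the approach indicated by the paper, which records only the one-line justification ``This follows from uniqueness of left adjoints (up to isomorphism).'' Your argument supplies the details behind that line; a mild simplification is available if you note directly that $[0]$ is terminal in $\Delta$, so that $\lim_{\Delta^\op}=\Ev_0$ and the right adjoint of $\realzn_n\bigl((-)\cdot\Delta[0]\bigr)$ is immediately $Y\longmapsto\Map(S\tensor G_0\Sk_n\Delta[0]_+,Y)\Iso Y$, bypassing the colimit computation.
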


\begin{proof}
This follows from uniqueness of left adjoints (up to isomorphism).
\end{proof}

\begin{prop}\label{prop:skeletal_filtration_useful}
Let $X$ be a simplicial symmetric spectrum (resp. simplicial unbounded chain complex over $\unit$). The realization $|X|$ is naturally isomorphic to a filtered colimit of the form
\begin{align*}
\xymatrix{
  |X|\Iso
  \colim\Bigl(
  \realzn_0(X)\ar[r] & \realzn_1(X)\ar[r] & \realzn_2(X)\ar[r] & \cdots
  \Bigr).
}
\end{align*}
\end{prop}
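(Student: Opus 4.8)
The plan is to exhibit each standard simplex as the filtered colimit of its skeleta and then to observe that every functor appearing in the realization coend preserves colimits. First I would recall that for each $m\geq 0$ the inclusions $\Sk_n\Delta[m]\rarrow\Sk_{n+1}\Delta[m]$ exhibit $\Delta[m]$ as $\colim_n\Sk_n\Delta[m]$ in $\sSet$; indeed this sequential colimit stabilizes at $n=m$. These inclusions are natural in $[m]\in\Delta$, so they assemble to an isomorphism $\colim_n\Sk_n\Delta[-]\xrightarrow{\iso}\Delta[-]$ in the functor category $\sSet^\Delta$, with the colimit computed objectwise.

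Next I would transport this isomorphism through the coefficient functors. In the symmetric spectra case, applying $(-)_+\colon\sSet\rarrow\sSet_*$ objectwise and using that $(-)_+$ is a left adjoint, hence preserves colimits, gives an isomorphism $\colim_n\Sk_n\Delta[-]_+\xrightarrow{\iso}\Delta[-]_+$ in $\sSet_*^\Delta$. In the chain complex case, applying $\NN\unit\colon\sSet\rarrow\Chaincx_\unit$ objectwise and using that $\NN\unit$ is a composite of left adjoints gives $\colim_n\NN\unit\Sk_n\Delta[-]\xrightarrow{\iso}\NN\unit\Delta[-]$.

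Finally, fix $X\in\sSpectra$ (resp. $X\in\sChaincx_\unit$). The realization is defined by the coend $|X|=X\Smash_\Delta\Delta[-]_+$ (resp. $X\tensor_\Delta\NN\unit\Delta[-]$). A coend is a particular colimit, colimits commute with colimits, and the pointwise smash (resp. tensor) functors $X_n\Smash(-)$ (resp. $X_n\tensor(-)$) preserve colimits, being left adjoints; hence I can interchange the coend with the sequential colimit over $n$:
\[
  |X|\Iso X\Smash_\Delta\bigl(\colim_n\Sk_n\Delta[-]_+\bigr)\Iso
  \colim_n\bigl(X\Smash_\Delta\Sk_n\Delta[-]_+\bigr)=\colim_n\realzn_n(X),
\]
and similarly for chain complexes. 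By construction the transition maps of this sequential colimit are exactly the maps $\realzn_n(X)\rarrow\realzn_{n+1}(X)$ induced by the skeletal inclusions, and every step above is natural in $X$, which yields the claim.

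The only place that calls for a word of care---rather than presenting a genuine obstacle---is the interchange of the coend with the filtered colimit over $n$ in the last display; this is just the fact that colimits commute with colimits, legitimate here because $\Spectra$ and $\Chaincx_\unit$ have all small colimits and the pointwise functors used to build the coend are themselves left adjoints.
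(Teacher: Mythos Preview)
Your proof is correct and follows essentially the same approach as the paper's: both start from the isomorphism $\Delta[-]\iso\colim_n\Sk_n\Delta[-]$ in $\sSet^\Delta$, push it through the coefficient functors $(-)_+$ (resp. $\NN\unit$) using that these are left adjoints, and then use that the coend functor $X\Smash_\Delta-$ (resp. $X\tensor_\Delta-$) preserves colimiting cones. Your justification for the last step is slightly more explicit than the paper's, but the argument is the same.
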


\begin{proof}
Consider the case of simplicial unbounded chain complexes over $\unit$. We know that $\Delta[-]\Iso\colim_n\Sk_n\Delta[-]$ in $\sSet^\Delta$. Since the functors 
$\functor{\NN\unit}{\sSet^\Delta}{\Chaincx_\unit^\Delta}$ and
$\functor{X\tensor_\Delta-}{\Chaincx_\unit^\Delta}{\Chaincx_\unit}$ preserve colimiting cones, it follows that there are natural isomorphisms
\begin{align*}
  \NN\unit\Delta[-]&
  \Iso\colim_n\NN\unit\Sk_n\Delta[-],\\
  X\tensor_\Delta\NN\unit\Delta[-]
  &\Iso
  \colim_n X\tensor_\Delta\NN\unit\Sk_n\Delta[-].
\end{align*}
Consider the case of simplicial symmetric spectra. We know there is an  isomorphism $\Delta[-]_+\Iso\colim_n\Sk_n\Delta[-]_+$ in $\sSet_*^\Delta$. Since the functors
\begin{align*}
  \functor{S\tensor G_0}{\sSet_*^\Delta}{\bigl(\Spectra\bigr)^\Delta},
  \quad\quad
  \functor{X\Smash_\Delta-}{\bigl(\Spectra\bigr)^\Delta}{\Spectra},
\end{align*}
preserve colimiting cones, a similar argument finishes the proof.
\end{proof}

\begin{defn}
Let $X$ be a simplicial symmetric spectrum (resp. simplicial unbounded chain complex over $\unit$) and $n\geq 0$. Define the subobject $DX_n\subset X_n$ by 
\begin{align*}
  DX_0 := *,\quad\quad
  &DX_n := \bigcup\limits_{0\leq i \leq n-1}
  s_iX_{n-1}\subsetof X_n,
  \quad\quad
  (n\geq 1)\\
  \Bigl(
  \text{resp.}\quad
  DX_0 := *,\quad\quad
  &DX_n := \sum\limits_{0\leq i \leq n-1}
  s_iX_{n-1}\subsetof X_n,
  \quad\quad
  (n\geq 1)
  \Bigr).
\end{align*}
We refer to $DX_n$ as the \emph{degenerate subobject} of $X_n$.
\end{defn}

\begin{prop}\label{prop:unions_and_intersections}
Let $X$ be a simplicial symmetric spectrum (resp. simplicial unbounded chain complex over $\unit$) and $n\geq 1$. There are pushout diagrams
\begin{align}
\label{eq:unions_and_intersections_spectra}
&\xymatrix{
  DX_{n}\Smash\partial\Delta[n]_+\ar[d]\ar[r] & 
  X_{n}\Smash\partial\Delta[n]_+\ar[d]\\
  DX_{n}\Smash\Delta[n]_+\ar[r] & 
  (DX_{n}\Smash\Delta[n]_+)\cup
  (X_{n}\Smash\partial\Delta[n]_+)
}\\
\label{eq:unions_and_intersections}
\text{resp.}\quad
&\xymatrix{
  DX_{n}\tensor\NN\unit\partial\Delta[n]\ar[d]\ar[r] & 
  X_{n}\tensor\NN\unit\partial\Delta[n]\ar[d]\\
  DX_{n}\tensor\NN\unit\Delta[n]\ar[r] & 
  (DX_{n}\tensor\NN\unit\Delta[n])+
  (X_{n}\tensor\NN\unit\partial\Delta[n])
}
\end{align}
in $\Spectra$ (resp. in $\Chaincx_\unit$). The maps in \eqref{eq:unions_and_intersections_spectra} and \eqref{eq:unions_and_intersections} are monomorphisms. 
\end{prop}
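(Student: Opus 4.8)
The plan is to deduce both pushout squares from elementary bookkeeping with subobjects, using that in $\Spectra$ (resp. $\Chaincx_\unit$) monomorphisms, and intersections and unions of subobjects, are computed levelwise (resp. degreewise), so that everything ultimately reduces to the corresponding constructions in $\Set$ (resp. in $\Mod_\unit$).

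First I would record that all the maps in \eqref{eq:unions_and_intersections_spectra} and \eqref{eq:unions_and_intersections} are monomorphisms. The right-hand and bottom maps are the inclusions of two subobjects of $X_n\Smash\Delta[n]_+$ (resp. $X_n\tensor\NN\unit\Delta[n]$) into their union (resp. sum), hence are monomorphisms. For the top and left-hand maps it suffices that $-\Smash\partial\Delta[n]_+$ and $-\Smash\Delta[n]_+$ (resp. $-\tensor\NN\unit\partial\Delta[n]$ and $-\tensor\NN\unit\Delta[n]$) preserve monomorphisms, together with the fact that the inclusion $\NN\unit\partial\Delta[n]\subseteq\NN\unit\Delta[n]$ (resp. $\partial\Delta[n]_+\subseteq\Delta[n]_+$) is one. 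In $\Spectra$ this is clear, since levelwise and then simplicial-degreewise $-\Smash K_+$ is a coproduct of copies indexed by a set. In $\Chaincx_\unit$ one uses that $\NN\unit\Delta[n]$ and $\NN\unit\partial\Delta[n]$ are degreewise free $\unit$-modules: by the decomposition of Proposition \ref{prop:natural_decomposition} applied to the free simplicial $\unit$-module $\unit\Delta[n]$, together with the analogous decomposition of $\Delta[n]$ as a simplicial set, $\NN(\unit\Delta[n])_q$ is free on the set of non-degenerate $q$-simplices of $\Delta[n]$, and likewise for $\partial\Delta[n]$; moreover the inclusion $\NN\unit\partial\Delta[n]\subseteq\NN\unit\Delta[n]$ is degreewise split. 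Hence tensoring with them, or with $X_n$, stays exact on these monomorphisms.

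It then remains to show that each square is a pushout. Both $DX_n\Smash\Delta[n]_+$ and $X_n\Smash\partial\Delta[n]_+$ are subobjects of $W:=X_n\Smash\Delta[n]_+$, via $DX_n\hookrightarrow X_n$ and $\partial\Delta[n]\hookrightarrow\Delta[n]$, and in $\Spectra$ (resp. $\Chaincx_\unit$) the union $U\cup V$ (resp. sum $U+V$) of two subobjects of $W$ always sits in a pushout square with $U\cap V$ at the initial corner. So the assertion reduces to identifying the intersection, namely to the isomorphism of subobjects of $W$
\begin{align*}
  (DX_n\Smash\Delta[n]_+)\cap(X_n\Smash\partial\Delta[n]_+)\Iso DX_n\Smash\partial\Delta[n]_+
\end{align*}
and its analogue in $\Chaincx_\unit$. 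I would check this levelwise (resp. degreewise). In $\Spectra$, at spectrum level $j$ and simplicial degree $m$ the object $W$ is the coproduct of copies of $(X_n)_{j,m}$ indexed by the $m$-simplices $\sigma$ of $\Delta[n]$; under this description $DX_n\Smash\Delta[n]_+$ is the sub-coproduct with each copy of $(X_n)_{j,m}$ replaced by $(DX_n)_{j,m}$, while $X_n\Smash\partial\Delta[n]_+$ is the sub-coproduct over those $\sigma$ lying in $\partial\Delta[n]$, i.e.\ the non-surjective $\sigma$; hence the intersection is the sub-coproduct of copies of $(DX_n)_{j,m}$ indexed by the $\sigma$ in $\partial\Delta[n]$, which is exactly $DX_n\Smash\partial\Delta[n]_+$. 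In $\Chaincx_\unit$ the same count applies degreewise after writing $X_n\tensor\NN\unit\Delta[n]$, in degree $r$, as the direct sum of copies of $(X_n)_p$ (over $p+q=r$) indexed by the non-degenerate $q$-simplices of $\Delta[n]$, and using that the non-degenerate simplices of $\partial\Delta[n]$ form a subset of those of $\Delta[n]$.

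The main obstacle here is not conceptual but organizational: one must be careful that the abstract operations $\cap$ and $\cup$ (resp.\ $+$) really are computed at the levelwise/degreewise layer where the combinatorics of $\Delta[n]$ and $\partial\Delta[n]$ lives — the fact that the simplices of $\partial\Delta[n]$ are precisely the non-surjective maps $[m]\to[n]$, and that its non-degenerate simplices sit inside those of $\Delta[n]$ — and to keep straight the two nested layers of indexing in each context. Once the intersection formula is established, the pushout squares follow at once from the union/sum description of subobjects, and the monomorphism claims have already been recorded.
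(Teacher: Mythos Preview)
Your argument is correct. For symmetric spectra it is essentially the paper's: both reduce to the bisimplicial-set statement in \cite[IV.1]{Goerss_Jardine}, though you spell out the levelwise combinatorics where the paper simply cites that reference. For chain complexes the routes diverge. The paper applies Proposition~\ref{prop:natural_decomposition} to $X$ itself, obtaining the splitting $X_n\iso DX_n\oplus\NN X_n$; the square then decomposes as a direct sum of the trivial pushout
\[
\xymatrix{
  DX_n\tensor\NN\unit\partial\Delta[n]\ar@{=}[r]\ar[d] &
  DX_n\tensor\NN\unit\partial\Delta[n]\ar[d]\\
  DX_n\tensor\NN\unit\Delta[n]\ar@{=}[r] &
  DX_n\tensor\NN\unit\Delta[n]
}
\]
and an equally trivial $\NN X_n$-summand. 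You instead apply Proposition~\ref{prop:natural_decomposition} only to $\unit\Delta[n]$, extracting that $\NN\unit\Delta[n]$ is degreewise free on non-degenerate simplices, and then compute the intersection of the two subobjects directly. Your approach is more elementary in that it never invokes the splitting of $DX_n$ off $X_n$, and it makes the monomorphism claims transparent; the paper's is shorter once that splitting is in hand. Both are perfectly sound.
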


\begin{proof}
In the case of simplicial symmetric spectra, the pushout diagrams \eqref{eq:unions_and_intersections_spectra} follow from the corresponding pushout diagrams for a bisimplicial set \cite[IV.1]{Goerss_Jardine}. In the case of simplicial unbounded chain complexes over $\unit$, use Proposition \ref{prop:natural_decomposition} to reduce to verifying that the diagram
\begin{align*}
\xymatrix{
  DX_n\tensor\NN\unit\partial\Delta[n]\ar@{=}[r]\ar[d] & 
  DX_n\tensor\NN\unit\partial\Delta[n]\ar[d]\\
  DX_n\tensor\NN\unit\Delta[n]\ar@{=}[r] &
  DX_n\tensor\NN\unit\Delta[n]
}
\end{align*}
is a pushout diagram.
\end{proof}

\begin{prop}\label{prop:skeletal_glueing}
Let $X$ be a simplicial symmetric spectrum (resp. simplicial unbounded chain complex over $\unit$) and $n\geq 1$. There are pushout diagrams
\begin{align}
\label{eq:skeletal_glueing_spectra}
&\xymatrix{
  (DX_{n}\Smash\Delta[n]_+)\cup
  (X_{n}\Smash\partial\Delta[n]_+)\ar[d]\ar[r] & 
  \realzn_{n-1}(X)\ar[d]\\
  X_{n}\Smash\Delta[n]_+\ar[r] & \realzn_{n}(X)
}\\
\label{eq:skeletal_glueing}
\text{resp.}\quad
&\xymatrix{
  (DX_{n}\tensor\NN\unit\Delta[n])+
  (X_{n}\tensor\NN\unit\partial\Delta[n])\ar[d]\ar[r] & 
  \realzn_{n-1}(X)\ar[d]\\
  X_{n}\tensor\NN\unit\Delta[n]\ar[r] & \realzn_{n}(X)
}
\end{align}
in $\Spectra$ (resp. $\Chaincx_\unit$). The vertical maps in \eqref{eq:skeletal_glueing_spectra} and \eqref{eq:skeletal_glueing} are monomorphisms. 
\end{prop}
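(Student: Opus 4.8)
The plan is to obtain both pushout squares \eqref{eq:skeletal_glueing_spectra} and \eqref{eq:skeletal_glueing} by applying $\realzn_n$ to a single pushout square of cosimplicial objects --- the skeletal decomposition of the standard cosimplicial simplicial set $[m]\mapsto\Delta[m]$. The key observation is that the functor sending a cosimplicial pointed simplicial set $K$ to $X\Smash_\Delta K$ (resp.\ a cosimplicial simplicial set $K$ to $X\tensor_\Delta\NN\unit K$) is built from left adjoints and hence preserves all colimits; since by definition $\realzn_n(X)=X\Smash_\Delta\Sk_n\Delta[-]_+$ (resp.\ $X\tensor_\Delta\NN\unit\Sk_n\Delta[-]$), it suffices to (i) write the pushout square of cosimplicial objects and (ii) identify its four images under this functor.

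For step (i): write $\Delta^n$ for the cosimplicial set $[m]\mapsto\hom_\Delta([n],[m])$, write $L^n\subseteq\Delta^n$ for the cosimplicial subset of non-injective maps (closed under the cosimplicial structure maps, since postcomposing a non-injection stays a non-injection), and for a cosimplicial set $C$ and a simplicial set $K$ write $C\cdot K$ for the cosimplicial simplicial set $[m]\mapsto\coprod_{C[m]}K$. I would then record the standard skeletal presentation (cf.\ \cite[IV.1]{Goerss_Jardine}): for each fixed $[m]$ one builds $\Sk_n\Delta[m]$ from $\Sk_{n-1}\Delta[m]$ by attaching a copy of $\Delta[n]$ along $\partial\Delta[n]$ for each non-degenerate $n$-simplex, corrected by the degenerate simplices. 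Packaged cosimplicially this is a pushout square with lower-left corner $\Delta^n\cdot\Delta[n]$, upper-left corner the union $(L^n\cdot\Delta[n])\cup(\Delta^n\cdot\partial\Delta[n])$ taken inside $\Delta^n\cdot\Delta[n]$, and right column $\Sk_{n-1}\Delta[-]\hookrightarrow\Sk_n\Delta[-]$; the left vertical map is a levelwise monomorphism, and the union is the pushout over the intersection $L^n\cdot\partial\Delta[n]$.

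For step (ii): the corners are identified by a co-Yoneda computation. For any simplicial set $K$ there are natural isomorphisms $X\Smash_\Delta(\Delta^n\cdot K)_+\Iso X_n\Smash K$ and $X\Smash_\Delta(L^n\cdot K)_+\Iso DX_n\Smash K$, and likewise with $\tensor\NN\unit$ over $\Chaincx_\unit$: the first comes from the density formula $\int^{[m]}X_m\cdot\hom_\Delta([n],[m])\Iso X_n$, and the second from the fact that every non-injective map $[n]\rarrow[m]$ factors uniquely through a surjection $[n]\twoheadrightarrow[k]$ with $k<n$, so the coend over $L^n$ computes the degenerate subobject $DX_n$. In the chain complex case this last identification is precisely Proposition \ref{prop:natural_decomposition}, whose summands of $DX_n$ are indexed by the surjections $[n]\twoheadrightarrow[k]$, $k<n$; in the symmetric spectra case one uses the analogous Eilenberg--Zilber decomposition of $X_n$. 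Recognizing the image of the upper-left corner as the pushout $(DX_n\Smash\Delta[n]_+)\cup(X_n\Smash\partial\Delta[n]_+)$ via Proposition \ref{prop:unions_and_intersections}, and applying the colimit-preserving functor to the cosimplicial pushout, yields exactly \eqref{eq:skeletal_glueing_spectra} and \eqref{eq:skeletal_glueing}.

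For the monomorphism assertions: the left vertical map is a monomorphism because $DX_n\rarrow X_n$ is a monomorphism (split, by Proposition \ref{prop:natural_decomposition}) and $\partial\Delta[n]\rarrow\Delta[n]$ is one, because smashing with $\Delta[n]_+$ preserves monomorphisms in $\Spectra$ (resp.\ tensoring with the degreewise-free complex $\NN\unit\Delta[n]$ preserves monomorphisms in $\Chaincx_\unit$), and because a union of subobjects is a subobject; the right vertical map $\realzn_{n-1}(X)\rarrow\realzn_n(X)$ is then a monomorphism since monomorphisms are stable under cobase change, checked levelwise in pointed simplicial sets for $\Spectra$ and directly in the abelian category $\Chaincx_\unit$. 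As a cross-check, in the symmetric spectra case every construction above is computed levelwise in pointed simplicial sets, so via Remark \ref{rem:pointed_and_unpointed_realization} the statement reduces to bisimplicial sets, where \eqref{eq:skeletal_glueing_spectra} is literally \cite[IV.1]{Goerss_Jardine}. The main obstacle is step (ii): verifying that the coend over $L^n$ reproduces \emph{exactly} the degenerate subobject $DX_n$, neither coarser nor finer --- this is the content of the Eilenberg--Zilber lemma, which Proposition \ref{prop:natural_decomposition} packages in the chain complex setting, after which the rest is routine colimit bookkeeping.
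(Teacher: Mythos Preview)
Your argument is correct. The cosimplicial skeletal pushout together with the co-Yoneda identifications does the job, and you are right that the only non-formal step is identifying the coend over $L^n$ with the degenerate subobject $DX_n$; this is exactly where Proposition \ref{prop:natural_decomposition} (resp.\ the Eilenberg--Zilber decomposition for bisimplicial sets) is needed, and you invoke it at the right moment.

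The paper's proof takes a somewhat different and shorter route in the chain-complex case. Rather than working with the full latching/cosimplicial apparatus, it uses the splitting $X_n\Iso DX_n\oplus\NN X_n$ from Proposition \ref{prop:natural_decomposition} to \emph{replace} the claimed square \eqref{eq:skeletal_glueing} by the simpler square
\[
\xymatrix{
  \NN X_n\tensor\NN\unit\partial\Delta[n]\ar[r]\ar[d] &
  \realzn_{n-1}(X)\ar[d]\\
  \NN X_n\tensor\NN\unit\Delta[n]\ar[r] & \realzn_n(X)
}
\]
whose pushout property is then checked directly from the simplicial identities and the fact that $\NN\unit$ preserves colimits. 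In effect the paper cancels the $DX_n$ summand from both left-hand corners before verifying the pushout, while you keep it and identify it via the latching-object description. Your approach has the advantage of treating the spectra and chain-complex cases uniformly in one cosimplicial framework; the paper's approach trades that uniformity for a more elementary verification in each case (citing \cite[IV.1]{Goerss_Jardine} for spectra, and reducing to the $\NN X_n$ square for chain complexes). Both rely on Proposition \ref{prop:natural_decomposition} at the crucial point.
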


\begin{proof}
In the case of simplicial symmetric spectra, the pushout diagrams \eqref{eq:skeletal_glueing_spectra} follow from the corresponding pushout diagrams for a bisimplicial set \cite[IV.1]{Goerss_Jardine}. In the case of simplicial unbounded chain complexes over $\unit$, use Proposition \ref{prop:natural_decomposition} to reduce to verifying that the diagram
\begin{align}
\label{eq:glueing_on_non_degenerate_stuff}
\xymatrix{
  \NN X_n\tensor\NN\unit\partial\Delta[n]\ar[r]\ar[d] &
  \realzn_{n-1}(X)\ar[d]\\
  \NN X_n\tensor\NN\unit\Delta[n]\ar[r] &
  \realzn_n(X)
}
\end{align}
is a pushout diagram in $\Chaincx_\unit$, which follows from the simplicial identities and the property that $\functor{\NN\unit}{\sSet}{\Chaincx_\unit}$ preserves colimiting cones.
\end{proof}

\subsection{Proofs}
\label{sec:homotopical_analysis_proofs_end}

\begin{proof}[Proof of Proposition \ref{prop:realzn_preserves_monomorphisms}]
In the case of simplicial symmetric spectra, this follows from the corresponding property for realization of a bisimplicial set \cite[IV.1]{Goerss_Jardine}. Consider the case of simplicial unbounded chain complexes over $\unit$. Use Proposition \ref{prop:natural_decomposition} to argue that $\functor{\NN}{\sChaincx_\unit}{\Chaincx_\unit}$ preserves monomorphisms; either use the Dold-Kan correspondence \eqref{eq:Dold_Kan_correspondence_adjunction} and note that right adjoints preserve monomorphisms, or use \eqref{eq:natural_decomposition} and note that monomorphisms are preserved under retracts. To finish the argument, forget differentials and use the pushout diagrams \eqref{eq:glueing_on_non_degenerate_stuff} to give a particularly simple filtration of $\function{|f|}{|X|}{|Y|}$ in the underlying category of graded $\unit$-modules. Since $\NN X_n\rarrow \NN Y_n$ is a monomorphism for each $n\geq 0$, it follows from this filtration that $|f|$ is a monomorphism.
\end{proof}

\begin{prop}\label{prop:degenerate_subobject}
If $\function{f}{X}{Y}$ in $\sSpectra$ (resp. in $\sChaincx_\unit$) is an objectwise weak equivalence, then $\function{Df_n}{DX_n}{DY_n}$ is a weak equivalence for each $n\geq 1$. 
\end{prop}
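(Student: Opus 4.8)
The plan is to realize $DX_n$ naturally out of the objects $X_k$ with $k<n$, and then to observe that an objectwise weak equivalence $f$ must induce a weak equivalence on these building blocks. I would induct on $n$. The base case $n=1$ is immediate, since the degeneracy $s_0$ is a (split) monomorphism, so $DX_1=s_0X_0\iso X_0$ naturally in $X$ and $Df_1\iso f_0$. In the context of simplicial unbounded chain complexes over $\unit$ the inductive step is essentially immediate from Proposition \ref{prop:natural_decomposition}: unwinding the definition of the degeneracies, the natural isomorphism $\Psi_n\colon\coprod_{[n]\twoheadrightarrow[k]}\NN X_k\rarrow X_n$ of that proposition carries the sub-coproduct indexed by the \emph{non-identity} surjections $[n]\twoheadrightarrow[k]$ --- equivalently, those with $k<n$ --- onto $DX_n$, since a surjection in $\Delta$ factors through a codegeneracy exactly when it is not the identity. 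Hence there are natural isomorphisms $DX_n\iso\coprod_{[n]\twoheadrightarrow[k],\,k<n}\NN X_k$ and $Df_n\iso\coprod_{[n]\twoheadrightarrow[k],\,k<n}\NN f_k$. Since $\NN X_k=\bigcap_{0\leq i\leq k-1}\ker(d_i)$ is functorial in $X$ and, again by Proposition \ref{prop:natural_decomposition}, a natural direct summand of $X_k$, each $\NN f_k$ is a retract of the homology isomorphism $f_k$, hence a homology isomorphism; and homology commutes with finite direct sums, so $Df_n$ is a homology isomorphism. This step uses no cofibrancy hypotheses and works over any commutative ring.

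For simplicial symmetric spectra there is no functorial normalized part, so I would instead work with the description $DX_n=\bigcup_{i=0}^{n-1}s_iX_{n-1}$ directly. Put $D^{(j)}X_n:=\bigcup_{i=0}^{j}s_iX_{n-1}$, so that $D^{(0)}X_n=s_0X_{n-1}\iso X_{n-1}$ and $D^{(n-1)}X_n=DX_n$. Using the simplicial identities \cite[I.1]{Goerss_Jardine}, one identifies $D^{(j-1)}X_n\cap s_jX_{n-1}$ with $s_j$ applied to a (partial) degenerate subobject of $X_{n-1}$, and thereby obtains natural pushout squares
\begin{align*}
\xymatrix{
  D^{(j-1)}X_{n-1}\ar[r]\ar[d] & X_{n-1}\ar[d]\\
  D^{(j-1)}X_n\ar[r] & D^{(j)}X_n
}
\end{align*}
in $\Spectra$ whose top and left maps are monomorphisms. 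Working with the injective stable model structure on symmetric spectra \cite[5.3]{Hovey_Shipley_Smith} --- in which the cofibrations are exactly the monomorphisms and which is left proper --- these pushouts are homotopy pushouts, so the usual gluing (cube) lemma applies. One then shows, by induction on $n$ with the strengthened hypothesis that $D^{(j)}f_m$ is a stable equivalence for every $m<n$ and every $j$, that $D^{(j)}f_n$ is a stable equivalence for every $j$: one inducts on $j$, feeding the gluing lemma the stable equivalences $D^{(j-1)}f_{n-1}$, $f_{n-1}$, and $D^{(j-1)}f_n$ through the square above; taking $j=n-1$ finishes. The chain complex case could be run the same way, using that in $\Chaincx_\unit$ a pushout along a monomorphism is automatically a homotopy pushout, but the direct-summand argument is cleaner.

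I expect the main obstacle to lie in the symmetric spectra case, and specifically in two bookkeeping points: identifying the intersections $D^{(j-1)}X_n\cap s_jX_{n-1}$ with (partial) degenerate subobjects of $X_{n-1}$ via the simplicial identities, so that the pushout squares assemble into a recursion, and choosing the strengthened induction hypothesis so that the double induction (on $n$ and on $j$) actually closes. Once these are in place, the conclusion is a formal consequence of left properness and the gluing lemma, in the same spirit as the homotopical arguments already used in the paper (for instance in the proof of Proposition \ref{prop:comparing_hocolim_and_realzn}).
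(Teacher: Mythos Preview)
Your proposal is correct and, for symmetric spectra, is essentially the paper's own argument. Your partial filtration $D^{(j)}X_n$ is exactly the paper's $s_{[j]}X_{n-1}$, and your pushout square is (up to reflection along the diagonal) the square of Proposition~\ref{prop:degenerate_subobject_filtration}; the paper then runs the same double induction on $n$ and $j$, phrasing the gluing step in terms of cofibers rather than invoking the gluing lemma and left properness of the injective stable model structure, but this is the same mechanism.

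Where you genuinely diverge is in the chain complex case. The paper treats $\sChaincx_\unit$ uniformly with $\sSpectra$, running the same filtration argument in both. Your route through the Dold-Kan decomposition of Proposition~\ref{prop:natural_decomposition}---observing that $DX_n$ is naturally the summand indexed by non-identity surjections, so $Df_n\iso\coprod_{k<n}\NN f_k$ with each $\NN f_k$ a retract of the homology isomorphism $f_k$---is more direct and avoids the induction entirely. The trade-off is that the paper's uniform argument requires no additive structure and so transports verbatim to spectra, whereas your shortcut exploits the splitting available only in the chain complex setting; but since you handle spectra separately anyway, your chain complex argument is a net simplification.
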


Before proving this, it will be useful to filter the degenerate subobjects.

\begin{defn}
Let $X$ be a simplicial symmetric spectrum (resp. simplicial unbounded chain complex over $\unit$) and $n\geq 1$. For each $0\leq r\leq n-1$, define the subobjects $s_{[r]}X_{n-1}\subsetof X_n$ by
\begin{align*}
   &s_{[r]}X_{n-1} := \bigcup\limits_{0\leq i \leq r}
   s_iX_{n-1}\subsetof X_n\quad\quad
   \Bigl(
   \text{resp.}\quad
   &s_{[r]}X_{n-1} := \sum\limits_{0\leq i \leq r}
   s_iX_{n-1}\subsetof X_n
   \Bigr).
\end{align*}
In particular, $s_{[0]}X_{n-1}\Iso X_{n-1}$ and $s_{[n-1]}X_{n-1}=DX_n$.
\end{defn}

\begin{prop}\label{prop:degenerate_subobject_filtration}
Let $X$ be a simplicial symmetric spectrum (resp. simplicial unbounded chain complex over $\unit$) and $n\geq 1$. For each $0\leq r\leq n-1$, the diagram
\begin{align}
\label{eq:filtering_the_degenerate_subobject}
\xymatrix{
  s_{[r]}X_{n-1}\ar[d]^{\subset}\ar[r]^-{s_{r+1}} & 
  s_{[r]}X_n\ar[d]^{\subset}\\
  X_n\ar[r]^-{s_{r+1}} & s_{[r+1]}X_n
}
\end{align}
is a pushout diagram. The maps in \eqref{eq:filtering_the_degenerate_subobject} are monomorphisms. 
\end{prop}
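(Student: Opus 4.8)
The plan is to treat the two cases in parallel and to reduce each one---just as in the proofs of Propositions~\ref{prop:unions_and_intersections} and~\ref{prop:skeletal_glueing}---to a single combinatorial pushout statement about surjections in $\Delta$. For simplicial symmetric spectra, the diagram~\eqref{eq:filtering_the_degenerate_subobject} lives in $\Spectra$, and evaluating at each spectrum level reduces it to the corresponding diagram for a pointed bisimplicial set, which is the classical filtration of the degenerate part of a simplicial set by the images $s_{[r]}X_{n-1}$ of its iterated degeneracies in the outer direction, as in \cite[IV.1]{Goerss_Jardine}. There the four maps are visibly injective, so it only remains to see that the square is a pushout, and for this I would use the combinatorial identity recorded below.

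For simplicial unbounded chain complexes over $\unit$, I would first invoke Proposition~\ref{prop:natural_decomposition} and forget differentials: pushouts in $\Chaincx_\unit$ are detected degreewise in $\Mod_\unit$, so it suffices to treat~\eqref{eq:filtering_the_degenerate_subobject} in the underlying graded $\unit$-modules. There the decomposition $\Psi$ identifies $X_m$ with $\coprod_{\eta\colon[m]\twoheadrightarrow[k]}\NN X_k$ in such a way that the degeneracy $s_j\colon X_m\to X_{m+1}$ becomes the split monomorphism carrying the summand indexed by a surjection $\eta$ identically onto the summand indexed by $\eta\circ\sigma^j$, where $\sigma^j\colon[m+1]\to[m]$ is the $j$th codegeneracy in $\Delta$. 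Under this identification $s_{[r]}X_{n-1}\subset X_n$ is the sum of the summands indexed by surjections $\eta\colon[n]\twoheadrightarrow[k]$ with $\eta(i)=\eta(i+1)$ for some $0\le i\le r$, while $s_{r+1}X_n\subset X_{n+1}$ is the sum of the summands indexed by surjections $\tau$ with $\tau(r+1)=\tau(r+2)$; similarly for $s_{[r]}X_n$ and for $s_{[r+1]}X_n=s_{[r]}X_n+s_{r+1}X_n$.

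The heart of the argument is then the identity
\[
  s_{[r]}X_n\cap s_{r+1}X_n=s_{r+1}\bigl(s_{[r]}X_{n-1}\bigr)
\]
inside $X_{n+1}$. Any surjection $\tau\colon[n+1]\twoheadrightarrow[l]$ with $\tau(r+1)=\tau(r+2)$ factors uniquely as $\tau=\sigma\circ\sigma^{r+1}$ with $\sigma$ a surjection, and this $\sigma$ indexes the unique summand of $X_n$ that $s_{r+1}$ carries to the $\tau$-summand; since $\sigma^{r+1}$ restricts to the identity on $\{0,\dots,r+1\}$, we have $\tau(i)=\sigma(i)$ for all $i\le r+1$, and therefore $\tau(i)=\tau(i+1)$ for some $0\le i\le r$ if and only if $\sigma(i)=\sigma(i+1)$ for some such $i$. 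Hence $\tau$ indexes a summand of $s_{[r]}X_n\cap s_{r+1}X_n$ exactly when $\sigma$ indexes a summand of $s_{[r]}X_{n-1}$, which proves the displayed identity. Granting it, the square~\eqref{eq:filtering_the_degenerate_subobject} exhibits $s_{[r+1]}X_n$ as the pushout of $s_{[r]}X_n\leftarrow s_{[r]}X_{n-1}\rightarrow X_n$; the four maps are monomorphisms because the degeneracies are split monomorphisms and the remaining two maps are subobject inclusions; and in the chain complex case one reinstates the differentials at no cost, since every map in sight is a morphism of $\sChaincx_\unit$ and a degreewise pushout of chain complexes along chain maps is again a pushout in $\Chaincx_\unit$.

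The step I expect to need real care is precisely the intersection identity: a pushout along a monomorphism is an ``intersection equals image'' assertion, so it is not enough to note the easy covering $s_{[r+1]}X_n=s_{[r]}X_n+s_{r+1}X_n$---one genuinely needs the exact description of $s_{[r]}X_n\cap s_{r+1}X_n$, and this is where the uniqueness built into the decomposition of Proposition~\ref{prop:natural_decomposition} (respectively, the Eilenberg--Zilber uniqueness of degeneracy representations in a simplicial set) gets used. Once that identity is in place, both cases fall out formally.
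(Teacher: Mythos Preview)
Your proposal is correct and follows essentially the same route as the paper: for symmetric spectra you reduce levelwise to bisimplicial sets and cite \cite[IV.1]{Goerss_Jardine}, and for chain complexes you invoke Proposition~\ref{prop:natural_decomposition} and the simplicial identities. The paper's proof is just a two-line pointer to these ingredients, whereas you have unpacked exactly the combinatorial intersection identity that makes the chain-complex case go through.
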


\begin{proof}
Consider the case of simplicial symmetric spectra. This follows from the corresponding pushout diagrams for a bisimplicial set \cite[IV.1]{Goerss_Jardine}. Consider the case of simplicial unbounded chain complexes over $\unit$. This follows from Proposition \ref{prop:natural_decomposition} and the simplicial identities.
\end{proof}

\begin{proof}[Proof of Proposition \ref{prop:degenerate_subobject}]
Consider the case of simplicial symmetric spectra (resp. simplicial unbounded chain complexes over $\unit$). We know that $Df_1$ is a weak equivalence since $Df_1\Iso f_0$. Let $n=2$. By Proposition \ref{prop:degenerate_subobject_filtration}, $Df_2$ fits into the commutative diagram
\begin{align*}
\xymatrix{
  s_0X_0\ar[d]\ar[r]^-{s_1}\ar[drr]^(0.7){(a)} & 
  s_0X_1\ar[d]\ar[drr]^(0.7){(d)}\\
  X_1\ar[r]^-{s_1}\ar[d]\ar[drr]^(0.7){(b)} & 
  DX_2\ar[d]\ar[drr]^(0.7){Df_2} & 
  s_0Y_0\ar[r]_-{s_1}\ar[d] &
  s_0Y_1\ar[d]\\
  X_1/s_0X_0\ar[r]^-{\iso}\ar[drr]^(0.7){(c)} & 
  DX_2/s_0X_1\ar[drr]^(0.7){(c)} &
  Y_1\ar[r]_-{s_1}\ar[d] &
  DY_2\ar[d] \\
   & & Y_1/s_0Y_0\ar[r]_-{\iso} & DY_2/s_0Y_1
}
\end{align*}
Since we know the maps $(a)$ and $(b)$ are weak equivalences, it follows that each map $(c)$ is a weak equivalence. Since we know the map $(d)$ is a weak equivalence, it follows that $Df_2$ is a weak equivalence. Similarly, use Proposition \ref{prop:degenerate_subobject_filtration} in an induction argument to verify that $\function{Df_n}{DX_n}{DY_n}$ is a weak equivalence for each $n\geq 3$.
\end{proof}

\begin{proof}[Proof of Proposition \ref{prop:realzn_homotopy_meaningful}]
Consider the case of simplicial symmetric spectra (resp. simplicial unbounded chain complexes over $\unit$). Skeletal filtration gives a commutative diagram of the form
\begin{align*}
\xymatrix{
  \realzn_0(X)\ar[d]^{\realzn_0(f)}\ar[r] & 
  \realzn_1(X)\ar[d]^{\realzn_1(f)}\ar[r] & 
  \realzn_2(X)\ar[d]^{\realzn_2(f)}\ar[r] & 
  \dotsb\\
  \realzn_0(Y)\ar[r] & 
  \realzn_1(Y)\ar[r] & 
  \realzn_2(Y)\ar[r] & 
  \dotsb
}
\end{align*}
We know that $\realzn_0(f)\Iso f_0$ is a weak equivalence. Since the horizontal maps are monomorphisms and we know that
\begin{align*}
  \realzn_{n}(X)/\realzn_{n-1}(X)
  &\Iso (X_{n}/D_{n}X)\Smash(\Delta[n]/\partial\Delta[n]) \\
  \Bigl(
  \text{resp.}\quad
  \realzn_{n}(X)/\realzn_{n-1}(X)
  &\Iso (X_{n}/D_{n}X)\tensor (\NN\unit\Delta[n]/\NN\unit\partial\Delta[n])
  \Bigr)
\end{align*}
it is enough to verify that $\function{Df_n}{DX_n}{DY_n}$ is a weak equivalence for each $n\geq 1$, and Proposition \ref{prop:degenerate_subobject} finishes the proof.
\end{proof}

\begin{proof}[Proof of Proposition \ref{prop:tot_of_normalization}]
If $A,B\in\Chaincx_\unit$, define the objectwise tensor $A\tensordot B\in\Chaincx(\Chaincx_\unit)$ such that $A\tensor B=\Tot^\oplus(A\tensordot B)$. It follows that there are natural isomorphisms
\begin{align*}
  X\tensor_\Delta\NN\unit\Delta[-]\Iso
  \Tot^\oplus\bigl(X\tensordot_\Delta\NN\unit\Delta[-]\bigr)\Iso
  \Tot^\oplus\bigl(\NN\unit\Delta[-]\tensordot_\Delta X\bigr).
\end{align*} 
Arguing as in the proof of Proposition \ref{prop:skeletal_filtration_useful}, verify that
\begin{align*}
  \NN\unit\Delta[-]\tensordot_\Delta X \Iso
  \colim_n\bigl(\NN\unit\Sk_n\Delta[-]\tensordot_\Delta X\bigr)
\end{align*}
and use the pushout diagrams
\begin{align*}
\xymatrix{
\NN\unit\partial\Delta[n]\tensordot\NN X_n\ar[d]\ar[r] & 
\NN\unit\Sk_{n-1}\Delta[-]\tensordot_\Delta X\ar[d]\\
\NN\unit\Delta[n]\tensordot\NN X_n\ar[r] &
\NN\unit\Sk_{n}\Delta[-]\tensordot_\Delta X
}
\end{align*}
in $\Chaincx(\Chaincx_\unit)$ to verify that $\NN\unit\Delta[-]\tensordot_\Delta X \Iso \NN(X)$, which finishes the proof.
\end{proof}

\section{Forgetful functors preserve cofibrant objects}
\label{sec:cofibrant_operads}

The purpose of this section is to prove Proposition \ref{prop:cofibrant_operads}, which shows that certain forgetful functors preserve cofibrant objects.

\begin{defn}
\label{defn:cofibrant_operads}
Let $\capO$ be an operad in symmetric spectra (resp. unbounded chain complexes over $\unit$) and consider the underlying category $\SymSeq$ with any of the monoidal model category structures in Section \ref{sec:model_structures_symmetric_spectra}. Then $\capO$ is a  \emph{cofibrant operad} if the following lifting property is satisfied: given a solid diagram 
\begin{align*}
\xymatrix{
  & B\ar[d]^{p}  \\
  \capO\ar@{.>}[ur]^-{\xi}\ar[r] & C
}
\end{align*}
of operad maps such that $p$ is an acyclic fibration in the underlying category $\SymSeq$, then there exists a morphism of operads $\xi$ which makes the diagram commute.
\end{defn}

The following proposition is motivated by a similar argument given in \cite[Section 13.2]{Rezk_notes} and \cite[4.1]{Rezk} for the case of algebras over an operad.

\begin{prop}
\label{prop:cofibrant_operads}
Let $\capO$ be an operad in symmetric spectra (resp. unbounded chain complexes over $\unit$). Consider $\LtO$, $\SymSeq$, $\AlgO$, and $\Spectra$ (resp. $\Chaincx_\unit$) with the same type of model structure (e.g., the positive flat stable model structure). If $\capO$ is a cofibrant operad, then the forgetful functors 
\begin{align*}
  \LtO\rarrow\SymSeq,
\quad\quad
  \AlgO\rarrow\Spectra,
\quad\quad
\Bigl(
\text{resp.}\quad
  \AlgO\rarrow\Chaincx_\unit,
\Bigr)
\end{align*}
preserve cofibrant objects. 
\end{prop}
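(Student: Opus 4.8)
The plan is to follow Rezk's strategy: replace $Y$ (resp.\ $X$) by a functorial cofibrant object in the \emph{underlying} category, promote this to a replacement in $\LtO$ (resp.\ $\AlgO$) by transporting the operad action along the defining lifting property of a cofibrant operad, and then split it off using cofibrancy of $Y$ (resp.\ $X$). Two reductions come first. In the context of unbounded chain complexes the model structures on $\AlgO$ and $\LtO$ (Definition~\ref{defn:model_structures_chain_complexes}) only exist when $\unit$ is a field of characteristic zero, and then every object of $\Chaincx_\unit$ and of $\SymSeq$ is cofibrant (Remark~\ref{rem:satisfying_the_conditions}), so the forgetful functors preserve cofibrant objects trivially; hence it suffices to argue in the context of symmetric spectra. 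Moreover the functor $X\longmapsto\hat{X}$ from $\AlgO$ to $\LtO$ is left Quillen: it preserves colimits (being left adjoint to $M\longmapsto M[\mathbf 0]$) and it sends the generating (acyclic) cofibrations $\capO\circ(A)\rarrow\capO\circ(B)$ of $\AlgO$ to the maps $\capO\circ\hat{A}\rarrow\capO\circ\hat{B}$, which are (acyclic) cofibrations in $\LtO$. Since the underlying symmetric sequence of $\hat{X}$ is $\widehat{UX}$, and $\widehat{UX}$ is cofibrant in $\SymSeq$ precisely when $UX$ is cofibrant in $\Spectra$, it is enough to prove: if $Y$ is a cofibrant left $\capO$-module, then $UY$ is cofibrant in $\SymSeq$.

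To prove this, I would factor $\emptyset\rarrow UY$ functorially in $\SymSeq$ as a cofibration followed by an acyclic fibration, $\emptyset\rarrow Q\xrightarrow{\,q\,}UY$, so that $Q$ is cofibrant in $\SymSeq$. It then suffices to produce a section of $q$ in $\SymSeq$, for then $UY$ is a retract of the cofibrant object $Q$ and hence cofibrant. The left $\capO$-module structure on $Y$ is an operad map $\function{m}{\capO}{\Map^\circ(UY,UY)}$ (see~\eqref{eq:operad_action}).

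Next I would introduce the operad $\Map^\circ(q)$ defined as the pullback (a limit of operads, formed in $\SymSeq$) of $\function{q_*}{\Map^\circ(Q,Q)}{\Map^\circ(Q,UY)}$ and $\function{q^*}{\Map^\circ(UY,UY)}{\Map^\circ(Q,UY)}$; it comes with operad projections to $\Map^\circ(Q,Q)$ and to $\Map^\circ(UY,UY)$. The key point is that the projection $\Map^\circ(q)\rarrow\Map^\circ(UY,UY)$ is an acyclic fibration in $\SymSeq$. Indeed it is the base change of $q_*$, and $q_*=\Map^\circ(Q,-)(q)$ is an acyclic fibration because $q$ is and because the adjunction $\bigl((-)\circ Q,\Map^\circ(Q,-)\bigr)$ of~\eqref{eq:circle_mapping_sequence_adjunction} is a Quillen adjunction whenever $Q$ is cofibrant: one checks that $(-)\circ Q$ carries each generating (acyclic) cofibration of $\SymSeq$ to a map of the form $\hat{i}\tensorcheck Q^{\tensorcheck t}$, which is an (acyclic) cofibration since $(\SymSeq,\tensorcheck,1)$ is a monoidal model category and $Q^{\tensorcheck t}$ is cofibrant. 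Since $\capO$ is a cofibrant operad (Definition~\ref{defn:cofibrant_operads}), the solid diagram consisting of $m$ and $\Map^\circ(q)\rarrow\Map^\circ(UY,UY)$ admits an operad lift $\function{\tilde m}{\capO}{\Map^\circ(q)}$; composing $\tilde m$ with $\Map^\circ(q)\rarrow\Map^\circ(Q,Q)$ equips $Q$ with a left $\capO$-module structure, and the defining property of the pullback forces $\function{q}{Q}{UY}$ to be a morphism of left $\capO$-modules.

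Finally, since the weak equivalences and fibrations of $\LtO$ (Definition~\ref{defn:stable_flat_positive_model_structures}) are created objectwise in $\SymSeq$, the map $q$ is now an acyclic fibration in $\LtO$; as $Y$ is cofibrant in $\LtO$, the identity map on $Y$ lifts along $q$ to a section in $\LtO$, and applying the forgetful functor exhibits $UY$ as a retract of $Q$ in $\SymSeq$. Hence $UY$ is cofibrant, and the statements for $\capO$-algebras follow from the first reduction (equivalently, by running the same argument verbatim with $\hat{UX}$ and $\hat{Q}$ in place of $UY$ and $Q$). The step I expect to be the main obstacle is the verification that $\Map^\circ(q)\rarrow\Map^\circ(UY,UY)$ is an acyclic fibration of operads---that is, the compatibility of the mapping-sequence functors with acyclic fibrations in the presence of a cofibrant source---whereas the rest is bookkeeping with adjunctions and with the lifting property of a cofibrant operad.
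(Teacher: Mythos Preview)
Your argument is correct and follows essentially the same route as the paper: both factor the underlying map through a cofibrant replacement in $\SymSeq$, transport the $\capO$-action across the pullback operad $\Map^\circ(Q,Q)\times_{\Map^\circ(Q,UY)}\Map^\circ(UY,UY)$ using cofibrancy of $\capO$, and then split off $UY$ as a retract via cofibrancy of $Y$ in $\LtO$. Your preliminary reductions (the chain-complex case being trivial, and reducing $\AlgO$ to $\LtO$ via $X\mapsto\hat X$) are harmless additions; the paper instead simply says ``argue similarly'' for algebras and cites \cite{Harper_Modules} for the fact that $\Map^\circ(Q,q)$ is an acyclic fibration when $Q$ is cofibrant.
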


\begin{proof}
Consider the case of left $\capO$-modules. Let $Y$ be a cofibrant left $\capO$-module and consider the map $*\rarrow Y$ in the underlying category $\SymSeq$. Use functorial factorization in $\SymSeq$ to obtain a diagram
\begin{align*}
\xymatrix{
  {*}\ar[r] & X\ar[r]^-{p} & Y
}
\end{align*}
giving a cofibration followed by an acyclic fibration. We want to show there exists a left $\capO$-module structure on $X$ such that $p$ is a morphism of left $\capO$-modules. Consider the solid diagram
\begin{align*}
\xymatrix{
  & \Map^\circ(X,X)\times_{\Map^\circ(X,Y)}\Map^\circ(Y,Y)
  \ar[d]^{(*)}\ar[r]^-{(**)}
  & \Map^\circ(X,X)\ar[d]^{(\id,p)}\\
  \capO\ar[r]^-{m}\ar@{.>}[ur]^-{\ol{m}} & 
  \Map^\circ(Y,Y)\ar[r]^{(p,\id)} & \Map^\circ(X,Y)
}
\end{align*}
in $\SymSeq$ such that the square is a pull-back diagram. It is easy to verify  there exists an operad structure on $\Map^\circ(X,X)\times_{\Map^\circ(X,Y)}\Map^\circ(Y,Y)$ such that $(*)$ and $(**)$ are morphisms of operads. Since $X$ is cofibrant in $\SymSeq$ and $p$ is an acyclic fibration in $\SymSeq$, we know from \cite{Harper_Modules} that $(\id,p)$ is an acyclic fibration, and hence $(*)$ is an acyclic fibration in $\SymSeq$. By assumption, $\capO$ is a cofibrant operad, hence there exists a morphism of operads $\ol{m}$ which makes the diagram commute. It follows that the composition 
\begin{align*}
  \capO\xrightarrow{\ol{m}}\Map^\circ(X,X)\times_{\Map^\circ(X,Y)}\Map^\circ(Y,Y)
  \xrightarrow{(**)}\Map^\circ(X,X)
\end{align*}
of operad maps determines a left $\capO$-module structure on $X$ such that $p$ is a morphism of left $\capO$-modules. To finish the proof, we want to show that $Y$ is cofibrant in the underlying category $\SymSeq$. Consider the solid commutative diagram
\begin{align*}
\xymatrix{
  \emptyset\ar[d]\ar[r]& X\ar[d]^{p}\\
  Y\ar@{.>}[ur]^-{\xi}\ar@{=}[r] & Y
}
\end{align*}
in $\LtO$. Since $Y$ is cofibrant in $\LtO$ and $p$ is an acyclic fibration, this diagram has a lift $\xi$ in $\LtO$. In particular, $Y$ is a retract of $X$ in the underlying category $\SymSeq$, and noting that $X$ is cofibrant in $\SymSeq$ finishes the proof. Argue similarly for the case of $\capO$-algebras.
\end{proof}

\bibliographystyle{plain}
\bibliography{QuillenHomology.bib}

\end{document}